\documentclass[10pt]{article}

\usepackage{amsmath,amssymb}

\usepackage{amsthm,amsfonts,amssymb,euscript}
\usepackage{latexsym, multicol, fancybox}
\usepackage{graphicx}
\usepackage{color}
\usepackage{amsmath, amsthm, amssymb,bm}
\usepackage{epstopdf}
\usepackage{caption}
\usepackage{psfrag}
\usepackage{comment}
\usepackage[]{upgreek}
\usepackage[normalem]{ulem}
\usepackage{cancel}

\usepackage{amsmath, amssymb}
\usepackage{pgfplots}

\usepackage{mathrsfs}
\usepackage{xcolor}
\usepackage[citebordercolor={green}]{hyperref}
\usepackage{tikz}
\usepackage{bbm}
\usepackage{dsfont}

\numberwithin{equation}{section}

\newtheorem{theorem}{Theorem}[section]
\newtheorem{lemma}[theorem]{Lemma}
\newtheorem{proposition}[theorem]{Proposition}
\newtheorem{corollary}[theorem]{Corollary}
\newtheorem{definition}[theorem]{Definition}
\newtheorem{remark}[theorem]{Remark}

\newcommand{\bea}{\begin{eqnarray}}
	\newcommand{\eea}{\end{eqnarray}}
\newcommand{\beq}{\begin{equation}}
	\newcommand{\eeq}{\end{equation}}
\def\beaa{\begin{eqnarray*}}
	\def\eeaa{\end{eqnarray*}}
\def\ba{\begin{array}}
	\def\ea{\end{array}}

\DeclareFontFamily{U}{mathx}{\hyphenchar\font45}
\DeclareFontShape{U}{mathx}{m}{n}{
	<5> <6> <7> <8> <9> <10>
	<10.95> <12> <14.4> <17.28> <20.74> <24.88>
	mathx10
}{}
\DeclareSymbolFont{mathx}{U}{mathx}{m}{n}
\DeclareFontSubstitution{U}{mathx}{m}{n}
\DeclareMathAccent{\widecheck}{0}{mathx}{"71}

\newcommand{\Romanupper}[1]
{\MakeUppercase{\romannumeral #1}}

\def\a{{\alpha}}

\def\b{{\beta}}
\def\be{{\beta}}

\def\Ga{\Gamma}
\def\de{\delta}
\def\De{\Delta}
\def\ep{\epsilon}

\def\la{\lambda}

\def\si{\sigma}
\def\Si{\Sigma}
\def\om{\omega}

\def\varep{\varepsilon}
\def\vphi{{\varphi}}

\def\th{\theta}

\def\ze{\zeta}

\def\nab{\nabla}

\def\pr{{\partial}}
\def\les{\lesssim}
\def\c{\cdot}

\def\BB{{\mathcal B}}

\def\NN{{\mathcal N}}

\def\II{{\mathcal I}}

\def\FF{{\mathcal F}}
\def\EE{{\mathcal E}}
\def\HH{{\mathcal H}}

\def\TT{{\mathcal T}}

\def\OO{{\mathcal O}}

\def\KK{{\mathcal K}}

\def\DD{{\mathcal D}}

\def\RR{{\mathcal R}}
\def\QQ{{\mathcal Q}}

\def\HH{{\mathcal H}}

\def\D{{\bf D}}

\def\h{{\bf h}}
\def\H{{\bf H}}

\def\g{{\bf g}}

\def\T{T}
\def\Z{Z}

\def\SSS{{\Bbb S}}
\def\RRR{{\Bbb R}}

\def\CCC{{\Bbb C}}
\def\ZZZ{{\Bbb Z}}
\def\f12{{\frac 1 2}}

\def\Wk{\mathfrak{W}}

\def\Lk{\mathfrak{L}}

\def\sk{\mathfrak{s}}

\def\dk{\mathfrak{d}}

\def\Div{{\mbox{ \bf Div} \hspace{- 0.2pt}}}

\def\ov{\overline}

\def\f12{\frac 1 2}
\def\lab{\label}

\def\bsplit{\begin{split}}

	\def\Rhat{{{\widehat R}}}	
	
	\def\rhat{{\widehat{r}}}

	\def\gt{\widetilde{\g}}
	\def\tt{{\widetilde{t}}}
	  \def\hbt{{\widetilde{\h}}}
	  \def\Hbt{{\widetilde{\H}}}
	  
	  \def\KKt{\widetilde{\KK}}

	\def\ntrap{trap\mkern-18 mu\big/\,}

	\def\DDntrap{{\, ^{(\ntrap)}\DD}}

	\def\ntrap{trap\mkern-18 mu\big/\,}

	\def\BE{B\hspace{-2.5pt} E}

	\def\BP{B\hspace{-2.5pt} P}
	\def\BPE{B\hspace{-2.5pt}P \hspace{-2.5pt} E}

	\def\psit{\widetilde{\psi}}

	\def\Fdeg{\,^{(deg)} F}
	
	\def\wt{\widetilde}

	\def\Rt{\widetilde{R}}
	\def\rt{{\widetilde{r}}}
	\def\ut{{\widetilde{u}}}
	\def\vt{{\widetilde{v}}}
	\def\wt{{\widetilde{w}}}

	\def\qt{{\widetilde{q}}}
	\def\gt{{\widetilde{\g}}}
	\def\at{{\widetilde{\a}}}
	\def\bt{{\widetilde{\b}}}
	\def\vphit{{\widetilde{\vphi}}}
	\def\tht{\widetilde{\th}}

	\def\Nt{\widetilde{N}}

\def\DDtrap{\,^{(trap)} \DD}

\def\phim{{\phi_{-}}}
\def\phip{{\phi_{+}}}
\def\square{\Box}

\def\ghat{\widehat{\g}}	

\def\psihat{\widehat{\psi}}
\def\Sit{\widetilde{\Si}}

\def\DDt{\widetilde{\DD}}

\def\DDt{{\widetilde{\DD}}}

\def\Lk{\mathfrak{L}}
\def\Lks{\Lk^{[\sk]}}
\def\Lkshat{\widehat{\Lk}^{[\sk]}}

\def\as{\upalpha^{[\sk]}} 
\def\As{A^{[\sk]}}
\def\Bs{B^{[\sk]}}
\def\Vs{V^{[\sk]}}
\def\psits{\psit^{[\sk]} }

\def\Lkst{\widetilde{\Lk}^{[\sk]}}

\def\psihat{\widehat{\psi}}
\def\HHt{{\widetilde{\HH}}}
\def\DDt{{\widetilde{\DD}}}
\def\IIt{\widetilde{\II}}
\def\rhot{\widetilde{\rho}}
\def\taut{\widetilde{\tau}}

  \def\ntrap{trap\mkern-18 mu\big/\,}
 \def \tauh{\widehat{\tau}}
  
\begin{document}

		\title{Generalized Whiting transform and           Flux  Estimates  for   Wave Equations in  Subextremal Kerr 
		 }
		\author{Lili He  and Sergiu Klainerman }

		\maketitle
		
		\begin{abstract}
		All   recent   advances  
		\cite{DRS}, \cite{S-Rita1}, \cite{S-Rita2}, \cite{Millet}, \cite{MaS}, \cite{MaS2}  on the  boundedness and decay for  wave  equations  on subextremal  Kerr  spacetimes  $\KK(a,m)$,  
		 with  non-small  angular  rotation,   rely in an essential way on  Whiting's  groundbreaking    discovery of a powerful
		   integro-differential  transformation    with   the help of   which  he       ruled out exponentially growing modes   in \cite{W} (see also \cite{Yacov}, \cite{AMPW} and \cite{Rita} for further developments).     We give  a purely   \textit{physical space}, generalized version,    of the transformation  which takes general solutions of {spin-$\sk$} wave equations  in $\KK=\KK(a,m)$  to solutions  of  a  secondary  wave equation on a new  metric  manifold $\KKt=\KKt(a,m)$ with two stationary, asymptotically flat,  ends. Moreover,   for a certain  subdomain  $\DDt\subset\DDt_-$,  which can be identified as   the exterior  of a black hole region,  the new metric $\gt$  of $\KKt(a, m)$  is Lorentzian   and the time  translation $\T  $ is timelike.  Using the  geometric properties  of the  two  ends  one  can then deduce,  by   classical    asymptotic Fourier  analysis techniques,      the desired  bound  for  the    flux of  $\psi$ at the future event horizon.
		  This estimate played  a crucial role   in  our   companion paper  \cite{He-K1}.
		   
			\end{abstract}

		\tableofcontents

		
		
		\section{Introduction}

		
		\subsection{Broad perspective}
		The analysis of solutions to wave equations in the exterior domain
		$\DD=\{r>r_+\}$ of a Kerr spacetime $\KK(a,m)$, $|a|<m$, is
		greatly hampered by the presence of the ergoregion, a subdomain of
		$\DD$ in which the vectorfield $\T$ is spacelike.
		Consequently, the $\T$-energy fails to be coercive, leaving open, in
		principle, the possibility of rapidly growing mode solutions which
		would render the Kerr solution unstable.  In his seminal paper
		\cite{W}, Whiting produced a remarkable  integro-differential transformation which associates
		to a mode solution of the form
		\[
		\psi=e^{-i\om t}e^{in\phi}S(\th)R(r),
		\]
		separated relative to the Boyer--Lindquist (BL) coordinates, a corresponding
		mode solution of the form
		$e^{-i\om t}e^{in\phi}S'(\th)R'(r)$ of a similar wave equation
		corresponding to a new Lorentzian metric $\g'$, possessing the same
		 $ (T, Z)$ symmetries   as the original Kerr metric \(\g\), but for which,
		miraculously, no ergoregion\footnote{In fact the time translation
		$\T$ for the Whiting metric is everywhere null in the domain of outer
		communication.} is present.  Using his
		transformation\footnote{In \cite{W}, Whiting makes in fact two
		transformations: an integral transformation from $R(r)$ to $R'(r)$,
		and a differential transformation from $S(\th)$ to $S'(\th)$.},
		Whiting was able to show that there are no exponentially growing modes
		(i.e. with $\Im(\om)>0$) for all spin-$\sk$ wave equations in Kerr.

Whiting's result was extended by  Shlapentokh-Rothman
\cite{Yacov}, for zero spin $\sk=0$, and by \cite{AMPW}, for arbitrary
spin, to exclude nonzero modes in the entire upper half-plane.  Moreover
\cite{Yacov} made the mode stability statement quantitative and  derived a bound for the future horizon flux\footnote{Later extended to the
Teukolsky equation in \cite{Rita}.}  restricted to
bounded mode frequencies.  Together with    the separation between superradiant and trapping
frequencies in the  high-frequency region,  this  bound played a  crucial role in  \cite{DRS}  to derive decay for scalar wave equations
 in the full sub-extremal case.
  The analogous
frequency-space analysis for the spin-2 Teukolsky equation was carried out
in \cite{S-Rita1}, \cite{S-Rita2}, while \cite{Millet} gives a substantially
simpler proof of the same boundedness and decay result by Hintz--Vasy
microlocal techniques.\footnote{Note that \cite{Millet} does not contain
energy bounds, a feature typical of the Hintz--Vasy method.}

	       These above  results rely in an essential way on frequency
analysis  and as such  they appeared difficult to adapt directly to realistic perturbations
of Kerr, of the type expected in a proof of nonlinear stability. This
difficulty has recently been overcome by Ma and Szeftel in \cite{MaS} and
\cite{MaS2}, where they derive full energy--Morawetz estimates for both the
scalar wave equation and the Teukolsky equation on realistic perturbations of
subextremal Kerr, using the results of \cite{DRS} and \cite{Millet} as black
boxes.

  Broadly speaking, there are two  approaches to the  nonlinear   black hole stability problem. The first treats the
 linearized Einstein equations, after a suitable choice of  a wave map gauge,
 as a coupled system of wave equations on metric perturbations.  This approach,  followed most
 prominently by Hintz--Vasy,  relies on  microlocal and spectral methods to obtain estimates for  general systems of linear wave
 equations, modulo lower order terms.  A crucial part  of their  argument is the microlocal treatment,
 by semiclassical methods, of the normally hyperbolic trapping set. 
   To remove  the dependence on the   lower order  terms, and thus obtain an invertibility estimate for the  linearized 
   Einstein equations, they  appeal to mode stability.\footnote{   Whiting mode stability  result allows them to  control  the bounded frequencies. The treatment of the  very low frequencies requires  additional spectral-theoretic techniques.}  It is important to note   that in this part of the argument they need  to  make use of the specific algebraic  structure  of the linearized  Einstein equations because they rely   on Whiting's   mode stability result for the    Teukolsky wave  equation.\footnote{Mode stability for the linearized Einstein equation on Kerr, for metric perturbations, is obtained by reducing it to the mode stability of Teukolsky equations, see \cite{AHW}.  It is notable that the Hintz-Vasy technique requires  mode stability even  for small angular momentum. Examples of   other  linear   results  based on mode stability  are  \cite{HHV21},  
 \cite{He26} and \cite{Millet}.}
      The first important success  of that approach  is  
		 the proof of the nonlinear stability theorem for the Kerr--de Sitter family
	\cite{HV18b}, based on  a specific version  of mode stability on 
		Schwarzschild--de Sitter.  An  more sophisticated   adaptation  of this strategy was
recently used by Hintz \cite{Hintz} to  derive   a  version of the  nonlinear stability of Kerr
in the full subextremal range.\footnote{ The result holds for a restricted set of initial conditions.   See the introduction of
\cite{He-K1} for a comparison between Hintz \cite{Hintz} and Ma--Szeftel
\cite{MaS2}.} Finally, it is important to note that, in both  nonlinear  results, the Hintz-Vasy technique  leads to
  a loss of derivatives in the linear estimates. This  forces them to appeal to  a Nash-Moser  argument.

	  The second approach  uses  more closely the  special  algebraic properties  of the Kerr  background to
 pass first to decoupled Teukolsky equations for the  gauge-invariant curvature
 components.  To  recover the remaining curvature  components, as well as the   metric perturbation components, it relies on a  dynamically  dependent   gauge  condition.
  Its central  achievement  is
 the nonlinear stability of Kerr for small angular momentum, contained in 
 \cite{KS-GCM1}, \cite{KS-GCM2}, \cite{KS:Kerr}, \cite{GKS}, and
 \cite{Shen}.  This result, based entirely on physical space methods, 
  does not require  mode stability.  It was  however widely expected  that 
 the full subextremal range is not  treatable by the existing
 purely physical-space methods alone, requiring  instead   delicate microlocal
 analysis techniques, centered on some version of  Whiting's mode stability
 result.\footnote{For another  recent result, in the second approach
 framework, using mode stability  as an essential ingredient, see
 \cite{ABBM}.}
 The recent results of Ma--Szeftel appear  to confirm this expectation.  As
 noted in the introduction of \cite{He-K1}, their result in \cite{MaS2},
 combined with the results and methods in \cite{KS-GCM1}, \cite{KS-GCM2},
 \cite{KS:Kerr}, \cite{GKS}, and \cite{Shen}, settle the Kerr stability
 problem in the physically realistic situation of perturbations of Kerr
 initial data\footnote{This is the same type of data used in the proof of the
 nonlinear stability of the Minkowski space \cite{CK}, compatible with
 non-smooth scri.} which decay like $r^{-3/2-\de}$ as $r\to \infty$.  Finally, these stability  results   are based on  bootstrap arguments 
 and as such no losses of derivatives are allowed.

 In  \cite{He-K1}  we have initiated a program  of extending the physical space methods used in \cite{GKS} to the full subextremal case.
  An essential ingredient of that program  is  the development of a physical space version of the Whiting transform,   based on which we can  control the horizon flux of  solutions.

       In this work we deliver on that promise. To start with, we   introduce  a   class of \textit{physical space}, integral,  transformations $  \Wk  $
               that take   solutions $\psi$  of a   spin $\sk$  wave equations in $\KK(a, m)$     to solutions $\psit=\Wk[\psi]$  of a  similar    equation  for a new  metric   $\gt$    defined on a   manifold   $\KKt=\KKt(a,m)$  with two asymptotically flat branches  $\KKt=\DDt_{-}\cup \DDt_+$.   The metric   preserves     \textit{all  the  symmetries} of the Kerr metric and,      for a certain  subdomain $\DDt\subset\DDt_-$  which can be identified as   the exterior  of a black hole region in $\KKt$,  the metric $\gt$ is Lorentzian   and the time  translation $\T  $ is timelike.

Using these remarkable  geometric properties of $\gt$ on $\DDt$    we can derive  a  coercive  bound  for the   flux of  $\psit=\Wk[\psi]$ at the future  null infinity of  $\DDt$.     We can then deduce,  by   classical    asymptotic Fourier  analysis techniques,        a bound  for  the frequency localized  flux of the original  solution $\psi$ at the future event horizon.  More precisely, with the notation  used  in our previous paper  \cite{He-K1},   we  are  able to control  $\Fdeg[\T^2\psi]$,   the degenerate  flux of $\T^2\psi$,  when the frequencies of $\psi$ with respect to $(\T, \Z)$  are either    bounded\footnote{Thus recovering the  main result of  \cite{Yacov}.}   or non-superradiant (in the terminology of section~3.5.2 of \cite{He-K1}).
                
                To derive information about the remaining  unbounded superradiant
                frequencies, we need to consider  the second branch
                $\DDt_+$ of $\KKt$.  Although $\gt$ is not Lorentzian
                throughout $\DDt_+$, its  far region  is Lorentzian
                and possesses a well-defined future null infinity.
                Remarkably, using the special  geometric properties of
                $(\DDt_+,\gt)$, we can still derive a coercive bound for the
                flux of $\psit$ at this second future  null infinity and use it, by
                a similar asymptotic analysis, to control
                $\Fdeg[\T^2\psi]$ in the superradiant frequency regime.

                To make this strategy work, and to make sense of the integral
                expressions defining $\Wk$, we localize $\psi$ with
                respect to a suitably chosen time function.  This
                localization generates source terms in the transformed
                equation.  To estimate them, we  rely on  integrated bilinear
                identities for the transformed fields, see Proposition
                \ref{prop:bilinear-outgoing-general}.  The
                resulting $L^2$ estimates allow us
	                to transfer the  source terms back to the original
	                Kerr spacetime, where they can be controlled by the energy,
	                $r^p$, and Morawetz norms used in \cite{He-K1}.

		             A slightly modified version of our integral transform also
		             applies to higher spin wave equations.  In particular, for the
		             spin $\sk=\pm2$ Teukolsky equations, the transformed equations
		             \eqref{eq:Lkst-psits} have a strikingly simple structure: the
		             full transformed operator $\mathfrak{P}_{\gt}^{[s]}$, defined in
		             \eqref{eq:def-Pgt-spin}, consists of the wave operator of the
		             transformed metric $\gt$, together with explicit lower order
		             spin-dependent terms which preserve the $(T,Z)$
		             symmetries, has a Lagrangian structure  which,  allows one to derive coercive energy estimates in $\DDt$.
		             Remarkably the  form of  the operator   $\mathfrak{P}_{\gt}^{[s]}$ is identical to that of the  main part of the 
		             Chandrasekhar  transformed spin $\sk$ wave equation  in Kerr, according to Ma's formulas in \cite{MaLG}. In that sense 
		              $\mathfrak{P}_{\gt}^{[s]}$  is considerably better since, in addition to the main (Lagrangian)   part,  the Chandrasekhar-Ma transformed equation contains  a non-trivial  coupling  with the original  Teukolsky variables,  see Remark \ref{rem:Ma-comparison}.\footnote{The analogy decribed   here is purely formal; our
		             transformation is integral rather than differential (as is the case of the Chandraseckhar-Ma  transformation)  and is
		             adapted to  a new  geometry.}

			               Based on the coercive   estimates available for  the  transformed operator   $\mathfrak{P}_{\gt}^{[s]}$
						       the     proof of the horizon-flux estimates  follows  as in the spin $0$ case.   For simplicity  we 
						        describe the main details  of the proof of Theorem           \ref{thm:cutoff-horizon-flux-estimate} only in the spin $\sk=0$  case; the same argument extends to the higher spin equations.

In summary, our new  physical-space version of the  Whiting transform can be regarded as a
mirror correspondence between two different metric structures, extending,  in  a striking  way, the internal symmetries
 of the Kerr family.  It converts
horizon fluxes for the original solution into radiation fluxes at the
asymptotic ends of the transformed geometry.  The two relevant branches of
$\KKt$ then act as complementary mirrors, reflecting  two different  frequency regimes
which together make up the full horizon flux.

One can reasonably ask whether this one-sided mirror symmetry can be
extended to a full duality.  The enemy in  this regard  is  presence of the $r$-Hilbert transform
on the  right hand side of the  integrated bilinear identities  of Proposition  \ref{prop:bilinear-outgoing-general}. 
To eliminate it  would require    some control   for  $\psit$ on the entire manifold $\KKt$.  Though this is  improbable,
 one can show that a weaker form of duality  is valid, based on the fact that we  can derive  coercive  energy  bounds 
 for $\psit$ on  $\KKt\setminus\{ X\in [-\de, 0]\}$, see  Remark
\ref{rem:bil.smallangle}.

In our  forthcoming   paper  \cite{He-K3}  we plan   to derive Morawetz--energy estimates directly  for  the transformed  $\psit$ and, together  with   the partial duality   mentioned above, use them      to vastly simplify the results of \cite{He-K1}.  More importantly, since the transformed nonzero-spin equations  have  the  favorable geometric structure mentioned above, this strategy
should also extend to derive Morawetz-energy  estimates  for  the more important  Teukolsky equations. Finally, given the  physical space nature of our technique, our results   can be  easily extended  to realistic perturbations of     Kerr.

	       \subsection{Statement  of   the   Main Theorem and  strategy of  the proof}
	       		
\begin{theorem}
\lab{thm:mainthm}
Let $\psi$ be a solution of $\square_\g\psi=N$. For every $\epsilon>0$, we have for    $p>0$\footnote{See  \eqref{eq:defofnorms} for  the definition of the norms used here. Note that the $E_p$ norms  differs slightly from  the  corresponding norms in \cite{He-K1}. }  and     $r_0>0$   sufficiently large.
\beq\lab{eq:Thm-mainintro}
\bsplit
\int_{\HH^+(0,\tauh)}
\big|T_+T^2\psi\big|^2\,dv\,d\omega
&\leq \epsilon \BPE_p^2[\psi](0,\tauh)+C(\epsilon)\int_{\widehat{\tau}}^{\widehat{\tau}+1}
E_{\leq r_0}^2[\psi](\widehat{s})\,d\widehat{s}\\
&\quad+C(\epsilon)\Big(E_p^2[\psi](0)+\NN_p^2[\psi, N](0,\tauh)\Big).
\end{split}
\eeq
\end{theorem}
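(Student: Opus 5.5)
We follow the strategy outlined in the introduction and reduce the estimate to coercive flux bounds for the transformed field $\psit=\Wk[\psi]$ on the two branches $\DDt_-$ and $\DDt_+$ of $\KKt(a,m)$.

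\textbf{Localization.} We first localize $\psi$ in time. Fix a smooth cutoff $\chi(\tau)$ with $\chi=1$ on $[1,\tauh]$, supported in $[0,\tauh+1]$, where $\tau$ is the hyperboloidal time function of \cite{He-K1}. Set $\psi_\chi=\chi\psi$, so that $\square_\g\psi_\chi=\chi N+[\square_\g,\chi]\psi=:N_\chi$. The commutator term is supported in the two slabs $\{0\le\tau\le1\}$ and $\{\tauh\le\tau\le\tauh+1\}$. Its contribution near the initial slab is bounded by $E_p^2[\psi](0)+\NN_p^2$. Near the final slab, the part with $r\le r_0$ is bounded by $\int_{\tauh}^{\tauh+1}E^2_{\le r_0}[\psi]$. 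The part with $r\ge r_0$ is handled with the $r^p$-weighted norms and absorbed into $\epsilon\,\BPE_p^2$. Since $\psi_\chi$ is compactly supported in time, its Fourier transform in $(t,\phi)$ is well defined. The integral transform $\Wk$ then makes sense, and $\psit$ solves the transformed equation on $\KKt$ with source $\widetilde{N}_\chi=\Wk[N_\chi]$.

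\textbf{Frequency splitting and transfer of fluxes.} Next we split $\psi_\chi$ into two frequency pieces. The first piece, $\psi_\flat$, carries the bounded and non-superradiant $(\om,n)$ frequencies. The second, $\psi_\sharp$, carries the unbounded superradiant frequencies. For $\psi_\flat$ we work on $\DDt\subset\DDt_-$, where $\gt$ is Lorentzian and $\T$ is timelike. The $\T$-energy identity therefore gives a coercive bound for the flux of $\psit_\flat$ at the future null infinity of $\DDt$, in terms of bulk integrals $\int \T\psit\cdot\widetilde N_\chi$. By the asymptotic Fourier analysis of the kernel of $\Wk$ at the two ends, the radiation field of $\psit$ at that infinity equals, mode by mode, a nonvanishing multiplier times the horizon trace of $T_+\psi$. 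For $T^2\psi$ at bounded or non-superradiant frequencies this multiplier is bounded below (mode stability in quantitative form). Plancherel then gives
\[
\int_{\HH^+}|T_+T^2\psi_\flat|^2\lesssim \text{flux of }\T^2\psit_\flat \text{ at }\mathcal{I}^+(\DDt).
\]
For $\psi_\sharp$ we argue in the same way on $\DDt_+$. Here we use the Lorentzian far region of $\DDt_+$ and the special geometric structure there to obtain a coercive flux bound at its null infinity. The corresponding multiplier is bounded below in the superradiant regime.

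\textbf{Source terms (main obstacle).} The last step is to estimate the bulk terms $\int \T\psit\,\widetilde N_\chi$ on $\KKt$ back on Kerr. For this we use the integrated bilinear identities of Proposition~\ref{prop:bilinear-outgoing-general}. They express such integrals as bilinear forms in $(\psi_\chi,N_\chi)$ on $\KK$, up to an $r$-Hilbert-transform term. We bound these by Cauchy--Schwarz, using the $L^2$ boundedness of the Hilbert transform, with $r$-weights matched to the $\NN_p$, Morawetz and $r^p$ norms. This is where the $\epsilon$ appears: the Morawetz/energy pieces over $(0,\tauh)$ are bounded by $\epsilon\BPE_p^2[\psi]$ plus $C(\epsilon)$ times initial and local-in-time data.

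We expect this source control to be the main difficulty. The Hilbert transform and the nonlocality of $\Wk$ spread the cutoff error of the final slab over all $r$. Controlling its far-$r$ part requires $p>0$ together with the weight $r_0$ large, and keeping the near part localized to $[\tauh,\tauh+1]$ requires care. We would first try to bound these terms using the decay in $r$ of the Whiting kernel at both ends.
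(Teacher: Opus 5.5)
Your outline is the paper's architecture: cut off in time, transform, use $\DDt\subset\DDt_-$ for bounded and non-superradiant frequencies and the far end of $\DDt_+$ for large superradiant ones, transfer fluxes via the Erd\'elyi asymptotics, then control the sources. However, the step you defer as ``the main obstacle'' is where the proof actually lives. As you set it up, it does not close, for two concrete reasons.

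\textbf{Trapping.} You send \emph{all} non-superradiant frequencies through $\DDt$ and plan to Cauchy--Schwarz the bulk pairing, bounding the $\psi$-side by Morawetz norms. Because $\Wk$ and the $H_u$ and $r$-Hilbert terms of Proposition \ref{prop:bilinear-outgoing-general} are nonlocal in time, the $\psi$-side is a nondegenerate spacetime $L^2$ norm of third derivatives over all of $(0,\tauh)$. In the paper this is $\int_{\DD}|e_4T(T,Z)\psi|^2|q|^{-2}dv_\g$, coming from Corollary \ref{cor:filtered-L2-outgoing-transform}. The trapped frequencies are all non-superradiant. At these frequencies $B$ degenerates and $P$ vanishes, so $\BPE_p^2$ does not control this quantity.

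The paper never transforms the trapped frequencies. It splits off $\mathcal A^c_{M,\sigma}=\{\beta_+\xi>\sigma\beta_+^2,\ |\beta_+|>M\}$, on which the horizon $\T$-flux density $\Re(T_+T^2\psi\,\overline{TT^2\psi})$ is coercive. That piece is handled by the Kerr $\T$-energy identity, which is the last term in Proposition \ref{prop:combined-horizon-flux}. On the remaining sets $\mathcal A_{M,\sigma}$ and $\mathcal B_M$, choosing $\sigma$ small and $K$ large gives $|\TT(r)\xi-a(r-m)n|\gtrsim|\xi|$ on the trapping region, apart from a bounded-frequency piece. There $P$ controls the $\psi$-side.

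\textbf{Weights of the source.} $L^2$-boundedness of Hilbert transforms gives no gain in $r$. $\Wk$ is an unweighted line integral, so there is no kernel decay to exploit. The function $\tau$ of \cite{He-K1} equals $t$ for large $r$; it is not hyperboloidal. For $\chi(t)$ the commutator $[|q|^2\square_\g,\chi]\psi$ contains $r^2\chi'T\psi$ and $r^2\chi''\psi$, and applying $e_4$ does not improve this. A cutoff adapted to $u$, as a hyperboloidal one would be, does not give the $r$-decay needed to define $\Wk$ (see the remark after Lemma \ref{Le:comm-tauh}).

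The paper uses $\tauh=u+m\ln r$, for which $e_4$ of the commutator has bounded coefficients (Lemma \ref{Le:comm-tauh}). It combines this with $B(\rt)\pr_\ut\Wk[T^{1/2}F]=-\Wk[T^{1/2}\pr_rF]$ from Corollary \ref{cor:filtered-L2-outgoing-transform}. Where $|B(\rt)|=|X|\gtrsim1$, one may insert $B(\rt)$ and trade a $\T$ on $\Nt$ for $e_4$ on $N$; this yields the $r_0^{-p}E_p$ smallness. That holds on $\DDt$, where $|X|\geq2$, but fails near $X=0$ in $\DDt_+$, so ``arguing in the same way'' there is not possible.

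The paper instead uses the multiplier $f(X)\T$ with $f=0$ for $X<2\de$. The resulting $f'$ bulk term on $\{2\de<X<3\de\}$ carries no small constant (Proposition \ref{Prop:conf-en(sk-0)Xpos}). It is controlled by the localized elliptic estimate, Proposition \ref{prop:positive-localized-elliptic-estimate}. That estimate rests on coercivity of the $(T,Z)$-part of $\gt$ near $X=0$ for superradiant frequencies (Lemma \ref{lem:positive-superradiant-tangential-coercivity}). The leftover $\de^{-1}$ lower-order term is made small by $M^{-2}$, using $|\beta_+|>M$. This $M^{-2}$ is a third source of the $\epsilon$ in the theorem, alongside the Cauchy--Schwarz parameter and $r_0^{-p}$; your sketch accounts only for the latter two.
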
		

The proof of the Theorem \ref{thm:mainthm}  is based 	on the following ingredients:
\begin{itemize}
\item A new  notion of integral transformation, which we call  generalized Whiting transform,   and the manifold structure $\KKt$ it generates. 
\item The metric $\gt $  induced     on $\KKt$    and the relation  between $\square_\g \psi=N$  and $\square_\gt \psit=\Nt$.
   \item       Use the  remarkable  geometric properties of $\gt$ to derive   flux estimates  for  $\psi$.
\end{itemize} 
		

		\subsubsection{Generalized Whiting transform  and   transformed manifold } 
		
		
		 We  give the precise form of the physical-space transform used in
		the proof, in the simpler case of the scalar wave equation
		$\square_\g\psi=0$.  We use the  outgoing Eddington--Finkelstein
		coordinates $(u,r,\theta,\phim)$, write
		$\Delta=(r-r_+)(r-r_-), r_-<r_+$, and recall that
		$|q|^2\g^{-1}=\h+O$, with
		\[
		\h^{rr}=\Delta,\qquad
		\h^{ur}=-(r^2+a^2),\qquad
		\h^{r\phim}=-a,\qquad
		\h^{uu}=\h^{u\phim}=0,
		\]
		the remaining angular part being denoted by $O$.  In the canonical
		representation\footnote{This is the canonical outgoing specialization
		of the general family of transforms introduced in Definition
		\ref{Def:WhitingTr}.} the general
		transform $\Wk$ takes the form
		\[
		\Wk[\psi](\vt,\rt,\tht,\vphit)
		=\int_{r_+}^{\infty}
		\psi\left(\vt+X(r-r_-),r,\tht,\vphit\right)\,dr,
		\qquad
		X=-2\frac{\rt-r_-}{r_+-r_-}.
			\]
			The parameter $X$ is allowed to range over all of $\RRR$.  Thus
			the transform defines a new function on a new two-ended manifold $\KKt$,
			with transformed time coordinate $\vt$ and radial coordinate $\rt$. 	
			
			 We apply this transform after localizing $\psi$ relative to an appropriate 
			time function. We also  assume
			that $\psi$ has compact support in the original radial variable
			$r$.  These assumptions make the integral defining $\Wk[\psi]$
			well defined and also remove the endpoint contribution at
			$r=r_+$ in Lemma \ref{Lemma:Whiting-metric-general}.		
			
		\begin{remark}
 We note that our  generalized Whiting transform defined in \eqref{eq:notationoftransgeneral}  is closely connected   to the  John's transform \cite{John}  except  that, as in the case of the Radon transform,  we restrict to  rays in  the two dimensional  space spanned by  $u, r$.
   \end{remark}

			\subsubsection{Transformed metric}
			\lab{section;1.2.2}

			The transform intertwines the Kerr wave operator with a wave operator
			for a new metric $\gt$.  More precisely,
		\[
		\Wk[|q|^2\square_\g\psi]
		=
		|\qt|^2\square_\gt\Wk[\psi].
		\]
		The rescaled inverse metric has the same  splitting  structure as Kerr, $|\qt|^2\gt^{-1}=\hbt+O$, with the nonzero components of $\hbt$, 
		\[
		\hbt^{\rt\rt}=\Delta(\rt),\quad
		\hbt^{\vt\rt}=\rt^2-r_-^2,\quad
		\hbt^{\vt\vt}=-\frac{8mr_-(\rt-r_-)}{r_+-r_-},\quad
		\hbt^{\vt\vphit}=-\frac{2a(\rt-r_-)}{r_+-r_-},
		\]
		with the symmetric components understood and with the angular
		operator $O$ unchanged.

			The transformed geometry has two distinct exterior branches.
			The branch $X<-2$ is Lorentzian, asymptotically flat, and is the
			exterior of a regular subextremal black hole with event horizon given by $X=-2$. Moreover on this branch the
			Killing field $\T$ is timelike in the exterior and null at
		$X=-2$.   The branch
		$X>0$ contains a second asymptotically flat Lorentzian end, beyond
		the degeneracy hypersurface $X=X_+(\th)$ (see Proposition
		\ref{Prop:outgoing-Snegative-regions}), and \(\T\) is timelike in that
			end.  On this positive branch there is no proper horizon; instead
			the branch terminates at \(X=0\).  Between \(X=0\) and the Lorentzian
			end $X\to \infty$,  $\gt$ changes character, and \(X=0\) is a genuine
			singular hypersurface. 
			
			The following very approximate  figure  on the left  below  may      be  helpful 
			 to convey the main features   structure  of  $(\KKt, \gt)$.   It is also helpful to compare it to the case of Schwarzschild, 
			  in which case the transformed metric  $\gt$  coincides  with the  original Schwarzschild  metric.
		\begin{figure}[!ht]
			\includegraphics[width=7.0in]{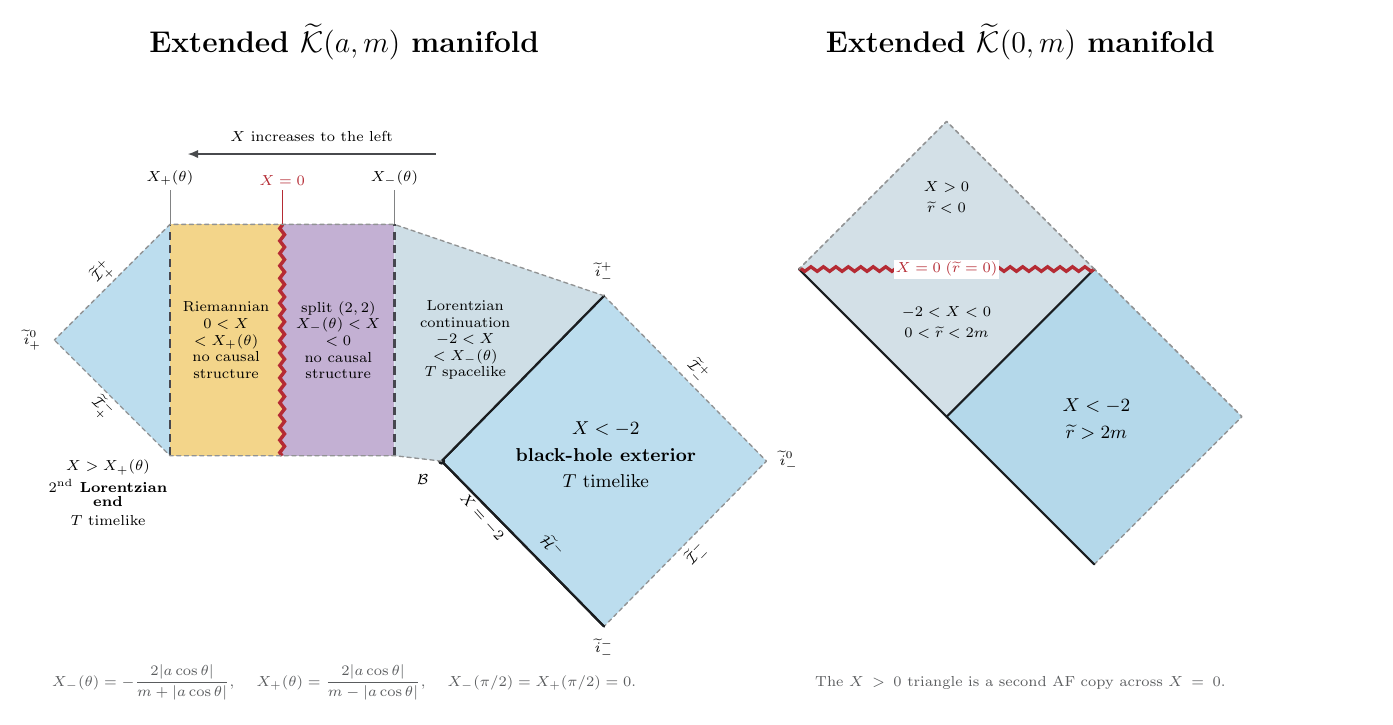}  
			\caption{Extended $\KKt(a,m)$ and $\KKt(0,m)$ manifolds.}
			\lab{fig:KKt}
						\end{figure}

		\subsubsection{Main  estimates  in the proof  of Theorem \ref{thm:mainthm}}

		Using conformal energy estimates ((Proposition \ref{Prop:conf-en(sk-0)}), we control the flux through the future null infinity on the negative branch $X<-2$. Using weighted conformal energy estimate (Proposition \ref{Prop:conf-en(sk-0)Xpos}) and localized elliptic estimate (Proposition \ref{prop:positive-localized-elliptic-estimate}), we control the flux through the future null infinity on the positive branch $X>0$.					
			
			Based on  these  fluxes, the two branches control complementary frequency ranges of the
		original horizon flux.  The radiation field at $\IIt^+(X<-2)$
		controls the non-superradiant frequencies, together with the bounded
		superradiant frequencies, see \eqref{eq:controlled-frequency-region} and
		Proposition \ref{prop:psi-psit-flux-strong-Xminus}.  The radiation field at $\IIt^+(X>0)$
		controls the remaining large superradiant frequencies, see Proposition \ref{prop:positive-branch-large-superradiant-flux}.

Combining these we derive	 the following full horizon flux estimate (see Proposition \ref{prop:combined-horizon-flux}) 	
	\beq
	\lab{eq:fullhorizonintro}
	\int_{\HH^+}
|T_+T^2(\chi\psi)|^2\les \text{ inhomegeneous terms }|\qt|^2\Nt +\text{ lower order terms}.
	\eeq
where $|\qt|^2\Nt=\Wk[|q|^2N]$ and $N$ is the source term of the wave equation verified by the $\chi$ cut-off of $\psi$.
We note that,  although frequency decompositions\footnote{The frequency decomposition required for  each of the two branches, in Proposition
		\ref{prop:psi-psit-flux-strong-Xminus} for the negative branch and
		Proposition \ref{prop:positive-branch-large-superradiant-flux} for the
		positive branch,   is internal to the proof. The  term  on
		the left is the full, geometrically defined,   horizon flux. }   are central to the proof, the final result in the main theorem  is a purely physical-space estimate.
		
				  Theorem \ref{thm:cutoff-horizon-flux-estimate} states
		that, for an appropriate  time cut-off  $\chi$, 
		 the right side of \eqref{eq:fullhorizonintro} and thus the  flux integral  
		 \[
		 \int_{\HH^+(0,\tauh)}
		\big|T_+T^2\psi\big|^2
		\]
		can be  estimated   by the type of  norms  used in the
		Morawetz--energy  estimates of   \cite{He-K1}.  More precisely, it can be estimated by a small multiple of the main spacetime norms,
		namely the $\BE^2[\psi]$ norm plus  the trapping norm $P^2[\psi]$, together with a large multiple of initial energy and compact energy norm on final time slab,
		and $r_0^{-p}$ multiple of $r^p$ weighted energy norm.  The latter error
		is made absorbable by choosing $r_0$ large.

	
	\section{Preliminaries}\lab{section:preliminaries-chapter5}


\subsection{Basic facts about the Kerr metric}
\lab{section:Kerr-metric}
The  Kerr metric  $\g=\g_{a,m}$ in  Boyer-Lindquist  (BL) coordinates 
is  given by 
\beq
\lab{eq:coordsBL}
   -\frac{\left(\Delta-a^2\sin^2\theta\right)}{|q|^2}dt^2-
\frac{ 4 amr }{|q|^2}   \sin^2\theta     dt  d\phi+\frac{|q|^2}{\Delta}dr^2+ |q|^2 d\theta^2+
\frac{ \Si^2}{|q|^2}\sin^2\theta
d\phi^2
\eeq
where $q=r+ i a \cos\th,$ \,$\De= r^2-2mr+a^2$ and  $\Sigma^2 = (r^2+a^2)^2-a^2(\sin\theta)^2\Delta$ 
and the  volume form is $ d\mu=|q|^2 \sin\th dt dr d\th d\phi$. The roots of  the quadratic  polynomial $\De  $       are   $r_{-}<  r_+$ with $r=r_+=m+\sqrt{m^2-a^2}$
denoting the event horizon  $\HH$.

The inverse  metric is given by
\beq
\lab{eq:inversemetric-Kerr}
-\frac{\Si^2}{|q|^2 \De}\pr_t^2-\frac{4 a mr }{|q|^2\De}\pr_t\pr_\phi+\frac{\De- a^2 \sin^2\th}{|q|^2 \De \sin^2 \th}\pr_\phi^2 
+\frac{\De}{|q|^2}\pr_r^2+\frac{1}{|q|^2}\pr_\th^2.
\eeq

The inverse metric  can also be written in the form
\beq\lab{inverse-metric}
|q|^2 \g^{\a\b}=\h^{\a\b}+O^{\a\b}
\eeq
Where
\beq
\lab{eq:h-metricBL}
\h^{rr}=\De, \quad \h^{tt}=-\frac{(r^2+a^2)^2}{\De}, \quad \h^{t\phi} =-\frac{a(r^2+a^2)}{\De}, \quad \h^{\phi\phi}=-\frac{a^2}{\De}
\eeq
and 
\beq
\lab{equation:def-O}
\bsplit
O^{\a\b}&= \pr_\th^\a \pr_\th^\b +\frac{1}{\sin^2\th} \pr_\phi^\a \pr_\phi^\b +  2 a \pr_t^{(\a} \pr_\phi^{\b)}+ a^2\sin^2 \th \pr_t^\a \pr_t^\b\\
&=\pr_\th^\a \pr_\th^\b + \frac{1}{\sin^2\th}\big(a\sin^2\th \pr_t +\pr_\phi\big)^{\a}  \big(a\sin^2\th \pr_t +\pr_\phi\big)^{\b}
\end{split}
\eeq

Using the above   decomposition  the scalar, spin $0$,  wave operator takes the form
\beq
\lab{eq:decompose-square}
|q|^2 \square_{a,m} =\pr_r\big(\De \pr_r \big) - \frac{1}{\De}\Big( (r^2+a^2)\pr_t + a\pr_\phi\Big)^2+\OO
\eeq
with  $\OO$ the second order operator
\beq
\lab{eq:RR-OO}
\OO=\frac{1}{\sin \th}\pr_\a(\sin \th O^{\a\b}\pr_\b )=\frac{1}{\sin\th}\pr_\th\big(\sin \th \pr_\th)+\frac{1}{\sin^2\th} \Big(a\sin^2\th \pr_t +\pr_\phi\Big)^2.
\eeq

\subsection{Eddington-Finkelstein  (EF)  coordinates}

\subsubsection{Ingoing PN frame and adapted EF(+) coordinates}
\lab{section:ingoingPNframe}


The ingoing PN  frame (with $\D_3 e_3=0$), regular toward the future  for all $r>0$,  is given by the null pair
\beq
\lab{eq:null-pair-in}
e^{(in)}_4=\frac{r^2+a^2}{|q|^2} \pr_t +\frac{\De}{|q|^2} \pr_r +\frac{a}{|q|^2} \pr_\phi, \qquad
e^{(in)}_3   =\frac{r^2+a^2}{\De} \pr_t -\pr_r +\frac{a}{\De} \pr_\phi.
\eeq
Given a fixed    $r_0> r_+$ we introduce  the  adapted ingoing Eddington-Finkelstein,   \textit{advanced time},  function $v$ defined by
\[
v =    t+r_*(r), \qquad  r_*(r)=     \int_{r_0}^r\frac{{r'}^2+a^2}{\Delta(r')}dr'.
\]

  The associated, ingoing,  Eddington-Finkelstein coordinates  $(v, r, \th, \phip)$   are given by   
\[
v= t+ r_*, \quad r_*'(r)=\frac{r^2+ a^2}{ \De}, \qquad \phip:= \phi +\phi_*(r), \quad \phi_*'(r)=\frac{a}{\Delta},
\]
such that,
\[
e^{(in)}_3(r)=-1, \qquad  e^{(in)}_3(v)=e^{(in)}_3(\th)=e^{(in)}_3(\phip)=0. 
\]
\begin{lemma}
\lab{Lemma:EFin-coords}
In ingoing coordinates  the inverse metric  takes the form   $|q|^2 \g^{\a\b}=\h^{\a\b}+O^{\a\b}$   where 
the   non-vanishing   components of $\h $  are given by
\beq
\lab{eq:inverseEF-}
\h^{rr}=\De,\qquad  \h^{vr}=\h^{rv}=r^2+a^2, \qquad  \h^{r\phip}=\h^{\phip r}=a
\eeq
and 
\beq
O^{\a\b}=\pr_\th^\a \pr_\th^\b + \frac{1}{\sin^2\th}\big(a\sin^2\th \pr_v +\pr_\phip\big)^{\a}  \big(a\sin^2\th \pr_v +\pr_\phip\big)^{\b},
\eeq
 The Killing vectorfields are  $\T=\pr_v$  and $\Z=\pr_\phip$ and  the  canonical null pair is given by \eqref{eq:null-pair-in}.
 The  scalar wave operator  is  given by the formula
\begin{align*}
|q|^2 \square_{a,m}  \psi&=  \partial_{r}\!\big(\Delta \,\partial_{r} \psi\big)+\partial_v\big((r^2+a^2) \partial_{r} \psi\big) +\partial_r\big((r^2+a^2) \partial_{v} \psi\big)+ 2a\,\partial_{r} \partial_{\phip} \psi+\OO\\
\OO\psi&=  \frac{1}{\sin\theta} \partial_{\theta}(\sin\theta \, \partial_{\theta} \psi) + \frac{1}{\sin^2\theta}\,\partial_{\phip}^2 \psi + 2a\,\partial_v \partial_{\phip} \psi +a^2 \sin^2\theta \, \partial_v^2 \psi.
\end{align*}
Finally
 the covariant form   of the metric is
\begin{align*}
&-\left(1-\frac{2mr}{|q|^2}\right)dv^2
+2\,dv\,dr
+|q|^2\,d\theta^2
+\left(r^2+a^2+\frac{2ma^2 r\sin^2\theta}{|q|^2}\right)\sin^2\theta\,d\phip^2\\
&-2a\sin^2\theta\,dr\, d \phip
-\frac{4mar\sin^2\theta}{|q|^2}\,dv\,d\phip.
\end{align*}
\end{lemma}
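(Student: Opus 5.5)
This is a direct change of variables from Boyer--Lindquist coordinates $(t,r,\th,\phi)$ to $(v,r,\th,\phip)$ with $v=t+r_*(r)$ and $\phip=\phi+\phi_*(r)$. I will transform the inverse metric \eqref{inverse-metric} as a contravariant tensor. The coordinate differentials are $dv=dt+\frac{r^2+a^2}{\De}dr$, $d\phip=d\phi+\frac{a}{\De}dr$, with $dr$ and $d\th$ unchanged. Hence the coordinate vector fields transform as $\pr_t=\pr_v$ and $\pr_\phi=\pr_\phip$, which immediately gives $\T=\pr_v$ and $\Z=\pr_\phip$. The radial field becomes
\[
\pr_r^{BL}=\pr_r+\frac{r^2+a^2}{\De}\pr_v+\frac{a}{\De}\pr_\phip .
\]

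The component $O$ involves only $\pr_\th$, $\pr_t$ and $\pr_\phi$, so it carries over verbatim with $\pr_t\to\pr_v$ and $\pr_\phi\to\pr_\phip$. For $\h$, I will first rewrite \eqref{eq:h-metricBL} as
\[
\h=\De\,\pr_r\otimes\pr_r-\frac1\De\big((r^2+a^2)\pr_t+a\pr_\phi\big)^{\otimes2}.
\]
Write $W=(r^2+a^2)\pr_v+a\pr_\phip$. Then $\pr_r^{BL}=\pr_r+\De^{-1}W$, and
\[
\De\big(\pr_r+\De^{-1}W\big)^{\otimes 2}-\De^{-1}W^{\otimes2}=\De\,\pr_r\otimes\pr_r+2\,\pr_r\odot W .
\]
The singular $W^{\otimes2}/\De$ terms cancel exactly. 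Reading off components gives $\h^{rr}=\De$, $\h^{vr}=r^2+a^2$ and $\h^{r\phip}=a$, with all other components of $\h$ vanishing. This is \eqref{eq:inverseEF-}.

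For the wave operator, I will use $\square\psi=|\det g|^{-1/2}\pr_\a(|\det g|^{1/2}g^{\a\b}\pr_\b\psi)$. The Jacobian of the change of coordinates is $1$, since $v$ and $\phip$ differ from $t$ and $\phi$ only by functions of $r$. So the volume density remains $|q|^2\sin\th$. Therefore
\[
|q|^2\square\psi=\frac1{\sin\th}\pr_\a\big(\sin\th(\h+O)^{\a\b}\pr_\b\psi\big).
\]
Since the components of $\h$ depend only on $r$, expanding this yields the stated terms. The mixed $r\phip$ terms combine to $\pr_r(a\pr_\phip\psi)+\pr_\phip(a\pr_r\psi)=2a\pr_r\pr_\phip\psi$. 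The $O$ part gives $\OO$ after expanding the square, using that $\pr_v$ and $\pr_\phip$ commute with $\sin\th$.

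For the covariant form, I will substitute $dt=dv-\frac{r^2+a^2}{\De}dr$ and $d\phi=d\phip-\frac a\De dr$ into \eqref{eq:coordsBL} and collect the $dr^2$, $dv\,dr$ and $dr\,d\phip$ terms. This is the only step with real algebra, and it is where I expect the main obstacle to lie. One must check that the coefficient of $dr^2$ vanishes and that $dv\,dr$ has coefficient $2$ (counting the symmetric product); this uses the identity $\Si^2-a^2\sin^2\th\,\De$-type relations together with $|q|^2=r^2+a^2\cos^2\th$. Alternatively, I can avoid this bookkeeping by verifying directly that the displayed covariant matrix is inverse to $|q|^{-2}(\h+O)$. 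It suffices to do this in the $(v,r,\phip)$ block, since $\th$ decouples with $g_{\th\th}=|q|^2$. Finally, the null pair \eqref{eq:null-pair-in} in the new coordinates satisfies $e_3^{(in)}=-\pr_r$, which confirms the stated normalization $e_3(r)=-1$ and $e_3(v)=e_3(\th)=e_3(\phip)=0$.
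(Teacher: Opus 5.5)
Your proposal is correct. The chain-rule transformation of $\h$ works, including the exact cancellation of the $\De^{-1}W\otimes W$ terms. The unit Jacobian gives the wave operator. Substituting $dt=dv-\frac{r^2+a^2}{\De}dr$ and $d\phi=d\phip-\frac{a}{\De}dr$ into \eqref{eq:coordsBL} also checks out; the $dr^2$ coefficient vanishes because the bracket collapses to $-\De|q|^4$.

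The paper states this lemma without proof. Your argument is the same routine computation the paper uses for the analogous Lemma \ref{Lemma:tauhat-coords}: transform $\h$ by the chain rule, then substitute the new differential ($d\tauh=dt+F'(r)dr$ there) into the Boyer--Lindquist covariant metric.
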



\subsubsection{Outgoing PN frame and  adapted EF(-)  coordinates}


The  outgoing PN  frame  (with $ \D_4 e_4=0$),  regular toward the future  for all $r>r_+$,   is given by  the null pair
 $ e^{(out)}_4=\frac{|q|^2}{\De} e_4^{(in)}, \,  e^{(out)}_3=\frac{\De}{|q|^2} e_3^{(in)}$, i.e. 
\beq
\lab{eq:Out.PGdirections-Kerr}
 e^{(out)}_4=\frac{r^2+a^2}{\Delta}\pr_t+\pr_r+\frac{a}{\Delta}\pr_\phi,\quad 
  e^{(out)}_3=\frac{r^2+a^2}{|q|^2}\pr_t-\frac{\Delta}{|q|^2}\pr_r+\frac{a}{|q|^2}\pr_\phi,
\eeq
 The  outgoing  Eddington-Finkelstein, \textit{retarded time},   function     $u$ is  defined by
\[
u := t- r_*, \qquad r_*=\int_{r_0}^r\frac{{r'}^2+a^2}{\Delta(r')}dr'.
\]
   The associated, outgoing,  Eddington-Finkelstein coordinates  $(u, r, \th, \phim)$   are given by   
\[
u:= t- r_*, \quad r_*'=\frac{r^2+ a^2}{ \De}, \qquad \phim:= \phi -\phi_*(r), \quad \phi_*'(r)=\frac{a}{\Delta},
\]
such that,
\[
e^{(out)}_4(r)=1, \qquad  e^{(out)}_4(u)=e^{(out)}_4(\th)=e^{(out)}_4(\phim)=0. 
\]

\begin{lemma} 
\lab{Lemma:EFout-coords}
In outgoing coordinates  the inverse metric  takes the form   $|q|^2 \g^{\a\b}=\h^{\a\b}+O^{\a\b}$   where 
the   non-vanishing   components of $\h $  are given by
\beq
\lab{eq:Kmertic-outgoing}
\h^{rr}=\De, \quad \h^{ur}=\h^{ru}=-(r^2+a^2), \quad  \h^{r\phim}=\h^{\phim r}=-a
\eeq
and 
\beq
O^{\a\b}=\pr_\th^\a \pr_\th^\b + \frac{1}{\sin^2\th}\big(a\sin^2\th \pr_u +\pr_\phim\big)^{\a}  \big(a\sin^2\th \pr_u +\pr_\phim\big)^{\b},
\eeq
 The Killing vectorfields  are  $\T=\pr_u$  and $\Z=\pr_\phim$ and the  outgoing null frame is given by   \eqref{eq:Out.PGdirections-Kerr}.
 The wave operator takes the form
\begin{align*}
|q|^2 \square_{a,m}  \psi&= \partial_{r}\!\big(\Delta \,\partial_{r} \psi\big)-\partial_{u}\big(( r^2+a^2)  \partial_{r} \psi\big) -\partial_{r}\big(( r^2+a^2)  \partial_{u} \psi\big)- 2a\,\partial_{r} \partial_{\phim} \psi+\OO\\
\OO\psi&=  \frac{1}{\sin\theta} \partial_{\theta}(\sin\theta \, \partial_{\theta} \psi) + \frac{1}{\sin^2\theta}\,\partial_{\phim}^2 \psi + 2a\,\partial_{u} \partial_{\phim} \psi +a^2 \sin^2\theta \, \partial_{u}^2 \psi.
\end{align*}
The covariant form  of the metric is 
\begin{align*}
&-\left(1-\frac{2mr}{|q|^2}\right)du^2
-2\,du\,dr
+|q|^2\,d\theta^2
+\left(r^2+a^2+\frac{2ma^2 r\sin^2\theta}{|q|^2}\right)\sin^2\theta\,d\phim^2\\
&+2a\sin^2\theta\,dr\,d\phim
-\frac{4mar\sin^2\theta}{|q|^2}\,du\,d\phim.
\end{align*}
\end{lemma}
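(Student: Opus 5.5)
The plan is to derive everything from the Boyer--Lindquist expressions \eqref{eq:coordsBL}, \eqref{eq:inversemetric-Kerr}, \eqref{inverse-metric}, \eqref{eq:h-metricBL} by the change of variables
\[
u=t-r_*(r),\qquad \phim=\phi-\phi_*(r),\qquad r_*'=\frac{r^2+a^2}{\De},\qquad \phi_*'=\frac{a}{\De},
\]
with $r,\th$ unchanged. This is exactly parallel to Lemma \ref{Lemma:EFin-coords}, with $r_*\to -r_*$ and $\phi_*\to-\phi_*$, so I would either repeat that computation or observe that the formulas follow from it by these sign flips.

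\textbf{Inverse metric.} On differentials the change of variables reads
\[
du=dt-\frac{r^2+a^2}{\De}\,dr,\qquad d\phim=d\phi-\frac{a}{\De}\,dr .
\]
Since $\h$ has no $tr$, $\phi r$ or $\th$ cross terms in BL coordinates, the new components of $\h$ are obtained by contracting with these differentials. First,
\[
\h^{ur}=-\frac{r^2+a^2}{\De}\,\h^{rr}=-(r^2+a^2),\qquad \h^{r\phim}=-\frac{a}{\De}\,\De=-a .
\]
Second,
\[
\h^{uu}=\h^{tt}+\Big(\frac{r^2+a^2}{\De}\Big)^2\De=0 ,
\]
and in the same way $\h^{u\phim}=\h^{t\phi}+\frac{a(r^2+a^2)}{\De}=0$ and $\h^{\phim\phim}=\h^{\phi\phi}+\frac{a^2}{\De}=0$. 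The components involving only $t,\phi,\th$ are unchanged.

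For the vector fields, the chain rule gives $\pr_t=\pr_u$, $\pr_\phi=\pr_\phim$ and $\pr_\th=\pr_\th$ (the last at fixed $u,r,\phim$). Therefore $O$, which is built only from $\pr_\th,\pr_t,\pr_\phi$, keeps its form with $\pr_t,\pr_\phi$ replaced by $\pr_u,\pr_\phim$. The Killing fields become $\T=\pr_u$ and $\Z=\pr_\phim$.

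\textbf{Frame.} Using $\pr_r^{BL}=\pr_r-r_*'\pr_u-\phi_*'\pr_\phim$, I would check directly that $e^{(out)}_4=\pr_r$ in the new coordinates. Hence $e_4^{(out)}(r)=1$ and $e_4^{(out)}(u)=e_4^{(out)}(\th)=e_4^{(out)}(\phim)=0$, and $e_3^{(out)}$ then follows from \eqref{eq:Out.PGdirections-Kerr}.

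\textbf{Wave operator.} The Jacobian of $(t,r,\th,\phi)\mapsto(u,r,\th,\phim)$ is triangular with unit diagonal, so $d\mu=|q|^2\sin\th\,du\,dr\,d\th\,d\phim$ and $\sqrt{|\det\g|}=|q|^2\sin\th$. Hence
\[
|q|^2\square\psi=\frac{1}{\sin\th}\pr_\a\big(\sin\th\,(\h^{\a\b}+O^{\a\b})\pr_\b\psi\big).
\]
Expanding with the components above gives the stated formula. The $\h$ part contributes $\pr_r(\De\pr_r\psi)-\pr_u((r^2+a^2)\pr_r\psi)-\pr_r((r^2+a^2)\pr_u\psi)-2a\pr_r\pr_\phim\psi$, using that $a$ is constant. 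The $O$ part gives $\OO$, since its coefficients are independent of $u$ and $\phim$.

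\textbf{Covariant metric.} I would substitute $dt=du+\frac{r^2+a^2}{\De}dr$ and $d\phi=d\phim+\frac{a}{\De}dr$ into \eqref{eq:coordsBL}. This is the one step requiring care, and I expect it to be the main (purely computational) obstacle: the singular $\De^{-1}$ and $\De^{-2}$ contributions must cancel.
\begin{itemize}
\item For the $dr^2$ coefficient, the combination
\[
\frac{1}{\De^2}\Big[-(\De-a^2\sin^2\th)(r^2+a^2)^2-4a^2mr(r^2+a^2)\sin^2\th+\Si^2a^2\sin^2\th\Big]+\frac{|q|^2}{\De}
\]
must vanish identically. I would verify this using $\Si^2=(r^2+a^2)^2-a^2\sin^2\th\,\De$ and $2mr=r^2+a^2-\De$.
\item The $du\,dr$ coefficient reduces to $-2$.
\item The $dr\,d\phim$ coefficient reduces to $2a\sin^2\th$.
\end{itemize}
The remaining terms are read off directly, using $\frac{\Si^2}{|q|^2}=r^2+a^2+\frac{2ma^2r\sin^2\th}{|q|^2}$. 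As a cross-check, one can instead verify that the stated covariant matrix is inverse to $|q|^{-2}(\h+O)$.
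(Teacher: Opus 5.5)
Your proposal is correct and is the same direct change-of-variables computation from the Boyer--Lindquist form that the paper relies on; the paper states this lemma without proof, but proves the analogous Lemma \ref{Lemma:tauhat-coords} in exactly this way (chain rule for the $\h$-components, $O$ unchanged, substitution of differentials for the covariant form). There is one small slip in your $dr^2$ check: the bracket must carry the prefactor $\frac{1}{|q|^2\Delta^2}$, not $\frac{1}{\Delta^2}$ (every BL coefficient $g_{tt},g_{t\phi},g_{\phi\phi}$ has a $|q|^{-2}$). With that correction the bracket simplifies to $-\Delta|q|^4$, so the coefficient is $-\frac{|q|^2}{\Delta}+\frac{|q|^2}{\Delta}=0$ as you claim; as written, your expression would not vanish.
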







\subsection{Modified  time  function} 
 We recall that  in  \cite{He-K1}   we made use of  the time function
 \[
\tau=t+ f(r), \qquad f'(r)= \big(\frac{r^2+a^2}{\Delta}-\frac{m^2}{r^2} \big) \chi
\]
with $\chi\ge 0$ a smooth cut-off function of $r$,   with $\chi=1$    for $r\le  4m $ and $\chi=0$ for $r\ge  8m $.   In particular  the level hypersurfaces   of $\tau$ are spacelike, transversal to the   horizon and    $\tau=t$  for $r$ sufficiently large. In this paper  we rely on a modified   time function defined as follows.
\beq
\lab{eq:defoftaut}
\widehat{\tau}=\begin{cases}
\tau\quad r\leq 4m\\
u+m\ln r\quad r\geq 5m
\end{cases}.
\eeq
with a smooth interpolation in the intermediate region $4m<r<5m$.  In the
exterior region $r\geq 5m$, the azimuthal coordinate $\phi$  is the  corresponding BL  one.
  
\begin{lemma} 
\lab{Lemma:tauhat-coords}
In coordinates  $(\widehat{\tau}, r, \th, \phi)$, the inverse metric  takes the form   $|q|^2 \g^{\a\b}=\h^{\a\b}+O^{\a\b}$   where 
the    components of $\h $ in the region $r\geq 5m$ are given by
\beq
\lab{eq:Kmertic-tauhat}
\bsplit
\h^{rr}&=\De, \quad \h^{\widehat{\tau}r}=\h^{r\widehat{\tau}}=-(r^2-mr+2m^2+a^2-\frac{ma^2}{r}), \quad  \h^{r\phi}=\h^{\phi r}=0, \quad \\
\h^{\widehat{\tau}\widehat{\tau}}&=-m(2r-m+\frac{2m^2+2a^2}{r}-\frac{ma^2}{r^2}),\quad \h^{\widehat{\tau}\phi} =\h^{\phi\widehat{\tau}}=-a\frac{r^2+a^2}{\De},\quad \h^{\phi\phi}=-\frac{a^2}{\De}
\end{split} 
\eeq
and 
\beq
\lab{eq:Kmertic-Ohat}
O^{\a\b}=\pr_\th^\a \pr_\th^\b
+ \frac{1}{\sin^2\th}\big(a\sin\th \pr_{\widehat{\tau}} +\pr_\phi\big)^{\a}
  \big(a\sin\th \pr_{\widehat{\tau}}+\pr_\phi\big)^{\b}.
\eeq
 The wave operator takes the form
\begin{align*}
|q|^2 \square_{a,m}  \psi&= \partial_{r}\!\big(\Delta \,\partial_{r} \psi\big)+\partial_{\widehat{\tau}}\big(\h^{\widehat{\tau}r}\partial_{r} \psi\big) +\partial_{r}\big(\h^{r\widehat{\tau}}  \partial_{\widehat{\tau}} \psi\big)+\h^{\widehat{\tau}\widehat{\tau}}\pr_{\widehat{\tau}}^2\psi+2\h^{\widehat{\tau}\phi}\pr_{\widehat{\tau}}\pr_{\phi}\psi+\h^{\phi\phi}\pr_{\phi}^2\psi+\OO\psi\\
\OO\psi&=  \frac{1}{\sin\theta} \partial_{\theta}(\sin\theta \, \partial_{\theta} \psi) + \frac{1}{\sin^2\theta}\,\partial_{\phi}^2 \psi + 2a\,\partial_{\widehat{\tau}} \partial_{\phi} \psi +a^2 \sin^2\theta \, \partial_{\widehat{\tau}}^2 \psi.
\end{align*}
The covariant form  of the metric is 
\begin{align*}
&-\left(1-\frac{2mr}{|q|^2}\right)d\widehat{\tau}^2-\frac{4mar\sin^2\theta}{|q|^2}\,d\widehat{\tau}\,d\phi+|q|^2\,d\theta^2
+\left(r^2+a^2+\frac{2ma^2 r\sin^2\theta}{|q|^2}\right)\sin^2\theta\,d\phi^2\\
&+\g_{rr}dr^2+2\g_{\widehat{\tau}r}\,d\widehat{\tau}\,dr+2\g_{r\phi}\,dr\,d\phi.
\end{align*}
where for $r\geq 5m$
\[
\g_{rr}\sim mr^{-1},\quad \g_{\widehat{\tau}r}\sim -1,\quad \g_{r\phi}\sim -ar^{-1}\sin^2\th.
\]
Finally,  the hypersurfaces
$\{\widehat{\tau}=const\}$ are spacelike.
\end{lemma}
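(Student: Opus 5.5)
The plan is to obtain everything from the Boyer--Lindquist expressions \eqref{eq:inversemetric-Kerr}, \eqref{eq:h-metricBL}, \eqref{equation:def-O} by a change of variables that only touches the time coordinate. For $r\geq 5m$ we have $\widehat{\tau}=t+F(r)$, where
\[
F(r)=-r_*(r)+m\ln r,\qquad F'(r)=-\frac{r^2+a^2}{\De}+\frac mr,
\]
and $\phi$ is the BL azimuth.

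Hence $d\widehat{\tau}=dt+F'dr$, while $dr,d\th,d\phi$ are unchanged. The transformation rule $\h'^{\a\b}=\h^{\mu\nu}\pr_\mu x'^\a\pr_\nu x'^\b$ then gives
\[
\h^{\widehat{\tau}\widehat{\tau}}=\h^{tt}+F'^2\De,\qquad \h^{\widehat{\tau}r}=F'\De,\qquad \h^{\widehat{\tau}\phi}=\h^{t\phi},
\]
with $\h^{rr},\h^{\phi\phi}$ unchanged. These are short algebraic checks. First,
\[
F'\De=-(r^2+a^2)+\frac{m}{r}(r^2-2mr+a^2)=-\Big(r^2-mr+2m^2+a^2-\frac{ma^2}{r}\Big).
\]
Second, after the $(r^2+a^2)^2/\De$ terms cancel,
\[
\h^{tt}+F'^2\De=-\frac{2m(r^2+a^2)}{r}+\frac{m^2\De}{r^2},
\]
and expanding $\De$ reproduces the stated $\h^{\widehat{\tau}\widehat{\tau}}$. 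Since the coordinate vector fields satisfy $\pr_t\mapsto\pr_{\widehat{\tau}}$, $\pr_\phi\mapsto\pr_\phi$, $\pr_\th\mapsto\pr_\th$, the tensor $O$ keeps its form with $\pr_t$ replaced by $\pr_{\widehat{\tau}}$, i.e.\ with the factor $a\sin^2\th\,\pr_{\widehat{\tau}}$ coming from \eqref{equation:def-O} (I would read \eqref{eq:Kmertic-Ohat} in this way).

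For the wave operator, the Jacobian of $(t,r,\th,\phi)\mapsto(\widehat{\tau},r,\th,\phi)$ is $1$, so the volume density is still $|q|^2\sin\th$. Therefore
\[
|q|^2\square_{a,m}\psi=\frac{1}{\sin\th}\pr_\a\big(\sin\th(\h+O)^{\a\b}\pr_\b\psi\big).
\]
Expanding this with the components above, and using that $\h^{\widehat{\tau}\widehat{\tau}},\h^{\widehat{\tau}\phi},\h^{\phi\phi}$ are differentiated only in the Killing directions $\pr_{\widehat{\tau}},\pr_\phi$, yields the stated formula for $|q|^2\square_{a,m}$ and for $\OO$.

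For the covariant form, substitute $dt=d\widehat{\tau}-F'dr$ into \eqref{eq:coordsBL}. The $d\widehat{\tau}^2$, $d\widehat{\tau}d\phi$, $d\th^2$, $d\phi^2$ coefficients are inherited directly, and one finds
\[
\g_{\widehat{\tau}r}=-F'\g_{tt},\qquad \g_{r\phi}=-F'\g_{t\phi},\qquad \g_{rr}=\frac{|q|^2}{\De}+F'^2\g_{tt}.
\]
Asymptotics then follow from $F'=-1-\frac mr+O(r^{-2})$ and $\g_{tt}=-1+\frac{2mr}{|q|^2}$, which give $\g_{\widehat{\tau}r}\sim-1$ and $\g_{r\phi}\sim -ar^{-1}\sin^2\th$. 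The cancellation in
\[
\g_{rr}=\frac{|q|^2}{\De}-F'^2+\frac{2mr}{|q|^2}F'^2=\Big(1+\frac{2m}{r}\Big)-\Big(1+\frac{2m}{r}\Big)+\frac{2m}{r}\cdot1+O(r^{-2})
\]
leaves $\g_{rr}\sim m r^{-1}$ (more precisely the cancellation of the leading terms leaves an $O(m/r)$ term, which is the claim). This expansion needs care and is the only step where I expect bookkeeping errors.

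Finally, for spacelikeness, compute $|q|^2\g^{-1}(d\widehat{\tau},d\widehat{\tau})=\h^{\widehat{\tau}\widehat{\tau}}+a^2\sin^2\th$. For $r\geq5m$ this is $\le -m(2r-m)+a^2<0$. For $r\le4m$ the hypersurfaces are those of $\tau$, which are spacelike by \cite{He-K1}. The main obstacle is the interpolation region $4m<r<5m$. There, writing $\widehat{\tau}=t+G(r)$, the quantity
\[
|q|^2\g^{-1}(d\widehat{\tau},d\widehat{\tau})=\h^{tt}+a^2\sin^2\th+\De\,G'^2
\]
is a convex quadratic in $G'$. It is negative on an open interval of $G'$ values, uniformly in $\th$, and both endpoint choices, $f'$ and $F'$, lie in that interval. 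I would therefore define the interpolation at the level of the derivative, $G'=\eta f'+(1-\eta)F'$ with a cutoff $\eta(r)$. Convexity of the admissible set keeps $G'$ admissible, and the additive constant in $G$ is adjusted so that $G$ glues smoothly to $f$ and $F$.
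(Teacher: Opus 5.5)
Your proposal is correct. For the inverse components, the form of $O$, the formula for $|q|^2\square_{a,m}$ and the covariant components, it follows the paper's argument. You write $\widehat{\tau}=t+F(r)$ with $F'=\frac mr-\frac{r^2+a^2}{\De}$, transform $\h$ by the Jacobian, and note that $O$ is unchanged because $\pr_t\mapsto\pr_{\widehat{\tau}}$; your reading of \eqref{eq:Kmertic-Ohat} with $a\sin^2\th$ is the right one. You then substitute $dt=d\widehat{\tau}-F'dr$ into \eqref{eq:coordsBL}, and your expansion $\g_{rr}=2m/r+O(r^{-2})$ is correct.

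Where you depart from the paper is the spacelikeness claim. The paper only observes $\g^{\widehat{\tau}\widehat{\tau}}=-2m/r+O(r^{-2})$ and concludes for $r$ sufficiently large. It leaves both moderate $r\ge 5m$ and the interpolation region $4m<r<5m$ unaddressed. Your bound $\h^{\widehat{\tau}\widehat{\tau}}+a^2\sin^2\th\le -m(2r-m)+a^2<0$ holds for all $r\ge 4m$, not just $r\ge5m$. This is exactly what certifies that $F'$ is admissible on $[4m,5m]$; admissibility of $f'$ comes from \cite{He-K1}. Your convexity observation then finishes the job: the admissible set $\{G' : \De\, G'^2<\Sigma^2/\De \ \hbox{for all } \th\}$, i.e. $|G'|<\De^{-1}\sqrt{(r^2+a^2)^2-a^2\De}$, is an interval. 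This gives an interpolation for which the statement is actually true everywhere, so on this point your argument is more complete than the paper's.

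One small imprecision remains. With $G'=\eta f'+(1-\eta)F'$, a single additive constant cannot in general make $G$ agree with $f$ near $4m$ and with $F$ near $5m$ simultaneously. You get instead $\widehat{\tau}=u+m\ln r+c$ for $r\ge5m$ with some constant $c$, unless you tune $\eta$ so that $\int_{4m}^{5m}G'\,dr=F(5m)-f(4m)$. This is harmless, since only $d\widehat{\tau}$ enters the lemma.
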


\begin{proof}
In the region $r\geq 5m$ we have
\[
\widehat{\tau}=t+F(r),\qquad
F'(r)=\frac mr-\frac{r^2+a^2}{\Delta}
\]
while $r,\theta,\phi$ are unchanged. Put $A=r^2+a^2$.  In BL coordinates the rescaled inverse
metric has the split form
\[
|q|^2\g^{\alpha\beta}=\h^{\alpha\beta}+O^{\alpha\beta},
\]
where 
\[
\h^{rr}=\Delta,\qquad
\h^{tt}=-\frac{A^2}{\Delta},\qquad
\h^{t\phi}=-a\frac{A}{\Delta},\qquad
\h^{\phi\phi}=-\frac{a^2}{\Delta},
\]
and $\h^{tr}=\h^{r\phi}=0$.  Since $\widehat{\tau}=t+F$, we obtain
\[
\h^{\widehat{\tau}r}
=F'\h^{rr}
=(\frac mr-\frac{A}{\Delta})\Delta
=-(r^2-mr+2m^2+a^2-\frac{ma^2}{r}),
\]
and $\h^{r\phi}=0$.  Similarly,
\begin{align*}
\h^{\widehat{\tau}\widehat{\tau}}
&=\h^{tt}+(F')^2\h^{rr}  =-\frac{A^2}{\Delta}
+(\frac mr-\frac{A}{\Delta})^2\Delta  =-\frac{2mA}{r}+\frac{m^2\Delta}{r^2}  \\
&=-m(2r-m+\frac{2m^2+2a^2}{r}-\frac{ma^2}{r^2}).
\end{align*}
The remaining components of $\h$ and $O$ are unchanged. This proves \eqref{eq:Kmertic-tauhat} and \eqref{eq:Kmertic-Ohat}.  The covariant metric compomemts follow by applying
$d\widehat{\tau}=dt+F'(r)dr$ to the covariant metric in BL coordinates and
keeping only the leading powers of $r$.

Since
\[
\g(\D\widehat{\tau}, \D\widehat{\tau})
=\g^{\widehat{\tau}\widehat{\tau}}
=-\frac{2m}{r}+O(r^{-2})<0
\]
for $r$ sufficiently large, the hypersurfaces
$\{\widehat{\tau}=const\}$ are spacelike.
\end{proof}

\begin{remark}
All the results  derived in \cite{He-K1}    remain true  with appropriate modifications of the   $E$ and $ E_p$ norms (see \eqref{eq:defofnorms}) if we replace  the time function $\tau$ by $\tauh$.
\end{remark}

We make use of the following commutator  lemma.
\begin{lemma}
\lab{Le:comm-tauh}
For a given smooth cut-off function $\chi=\chi(\tauh)$  we have for sufficiently large $r$
\beq
\lab{eq:Comm-tauh1}
\big[|q|^2\square_{\g}, \chi(\tauh)\big]\psi= -2\chi'r^2\c\frac{1}{r}e_4(r\psi)-2mr\chi'T\psi-2mr\chi''\psi+
O(1)(|\chi'|+|\chi''|)\dk^{\leq 1}\psi
\eeq
and
\beq
\lab{eq:Comm-tauh2}
e_4\left(\big[|q|^2\square_{\g}, \chi(\tauh)\big]\psi\right)=
O(1)(|\chi'|+|\chi''|+|\chi'''|)(\dk^{\leq 1}\psi+re_4\dk^{\leq 1}\psi).
\eeq
\end{lemma}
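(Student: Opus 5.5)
The commutator is computed from the general product rule for the wave operator. For any function $f$,
\[
|q|^2\square_\g(f\psi)=f\,|q|^2\square_\g\psi+2|q|^2\g^{\a\b}\pr_\a f\,\pr_\b\psi+\big(|q|^2\square_\g f\big)\psi .
\]
With $f=\chi(\tauh)$ this gives
\[
\big[|q|^2\square_\g,\chi\big]\psi=2\chi'\,|q|^2\g^{\tauh\b}\pr_\b\psi+\chi''\,|q|^2\g^{\tauh\tauh}\psi+\chi'\,\big(|q|^2\square_\g\tauh\big)\psi .
\]
Everything then reduces to reading off the components $|q|^2\g^{\tauh\b}=\h^{\tauh\b}+O^{\tauh\b}$ from Lemma \ref{Lemma:tauhat-coords} in the region $r\ge 5m$, and expanding in powers of $r$.

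\textbf{Leading vector field.} The relevant components are:
\begin{itemize}
\item $\h^{\tauh r}=-r^2+mr+O(1)$;
\item $\h^{\tauh\tauh}+O^{\tauh\tauh}=-2mr+m^2+O(r^{-1})$, since the $a^2\sin^2\th$ contribution from $O$ is $O(1)$;
\item $\h^{\tauh\phi}+O^{\tauh\phi}=O(1)$.
\end{itemize}
Hence
\[
2\chi'|q|^2\g^{\tauh\b}\pr_\b\psi=2\chi'\big(-r^2\pr_r\psi+mr\pr_r\psi-2mr\,T\psi\big)+O(1)|\chi'|\dk^{\le1}\psi .
\]
Next I rewrite $-r^2\pr_r\psi$ using $e_4$. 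In the coordinates $(\tauh,r,\th,\phi)$ for $r\ge5m$, the outgoing null vector $e_4=e_4^{(out)}$ equals
\[
\pr_r+\Big(\frac{r^2+a^2}{\Delta}+F'\Big)T+\frac{a}{\Delta}Z=\pr_r+\frac mr T+O(r^{-2})\{T,Z\}.
\]
Therefore
\[
-r^2\pr_r\psi=-r\,e_4(r\psi)+r\psi+mr\,T\psi+O(1)\dk\psi .
\]
Substituting, the $-2mr\,T\psi$ term combines with the $+mr\,T\psi$ coming from $e_4$, while the $\pr_r$ pieces $mr\pr_r\psi$ and $r\psi$ must combine with the zeroth-order term $\chi'(|q|^2\square_\g\tauh)\psi$.

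\textbf{Zeroth-order terms.} I compute $|q|^2\square_\g\tauh=\pr_r(\h^{r\tauh})+\ldots=-2r+m+O(r^{-1})$. This cancels the $2r\psi\chi'$ coming from $e_4(r\psi)$ up to $O(1)$. The leftover $mr\pr_r$ term becomes $m\,e_4(r\psi)$, which is absorbed into the $O(1)\dk^{\le1}\psi$ remainder, where $\dk$ includes $re_4$ in the paper's conventions. The $\chi''$ term is $-2mr\chi''\psi+O(1)|\chi''|\psi$.

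\textbf{Main obstacle.} The delicate step is the exact bookkeeping of the $mr$ coefficients in front of $T\psi$: whether the net coefficient is $-2mr$ as claimed. This depends on the precise $O(r^{-1})$ correction in $e_4$ relative to $\pr_r$, and on the interplay between $\h^{\tauh\tauh}$ and $\h^{\tauh r}$. I would verify it carefully with $A=r^2+a^2$ and $F'=m/r-A/\Delta$, keeping every term of order $r$ and discarding only $O(1)$.

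\textbf{Second estimate.} For \eqref{eq:Comm-tauh2} I apply $e_4$ to each term of the expansion, using $e_4(\tauh)=O(r^{-1})$, more precisely $e_4\tauh=F'+A/\Delta=m/r$. Then $e_4(\chi^{(k)})=O(r^{-1})\chi^{(k+1)}$, so derivatives falling on the cutoffs cost $r^{-1}$ and produce $\chi'''$. Derivatives falling on the $r$-weights give $e_4(r^k)=kr^{k-1}$. The main term $e_4\big(r\,e_4(r\psi)\big)$ is handled via the wave equation structure. Alternatively, and more naturally, one writes $r\,e_4e_4(r\psi)=r\,e_4\dk\psi$-type terms, since $r\,e_4(r\psi)=O(1)(\psi+re_4\psi)r$.

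Here I expect the power counting to be the main risk: a naive bound leaves a factor of $r$. To remove it I would use the precise cancellation in the first formula, namely that the order-$r$ terms are $-2\chi'\big(r\,e_4(r\psi)+mr\,T\psi\big)-2mr\chi''\psi$. I would then check that $e_4$ of these terms produces only $re_4\dk^{\le1}\psi$ and $\dk^{\le1}\psi$, using $e_4(rT\psi)=T\psi+rTe_4\psi+O(r^{-1})$ and the commutation $[e_4,T]=0$.
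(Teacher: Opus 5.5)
Your proposal is correct and follows the paper's proof essentially step by step: product rule for the commutator, components read off from Lemma \ref{Lemma:tauhat-coords}, and the substitution $\pr_r=e_4-\frac{m}{r}T-\frac{a}{\Delta}Z$. This substitution turns $2\h^{\tauh r}\pr_r\psi+\pr_r\h^{\tauh r}\psi$ into $-2r\,e_4(r\psi)+2mr\,T\psi$ modulo $O(1)\dk^{\le 1}\psi$, so the $T\psi$ coefficient is $-4mr+2mr=-2mr$; one then applies $e_4$ using $e_4(\tauh)=m/r$.

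The worries in your last two paragraphs are unnecessary: \eqref{eq:Comm-tauh2} needs no wave-equation input and no further cancellation. Every $e_4$ falling on $\chi^{(j)}(\tauh)$ gains a factor $m/r$, and $e_4(r\psi)=\psi+re_4\psi$ and $T\psi$ are already $\dk^{\le1}\psi$-type quantities, with $[e_4,T]=0$ exactly.
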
 
\begin{proof}
For $r\geq 5m$, using
\[
e^{(out)}_4=\pr_r+\frac mr T+\frac a\Delta Z,\qquad
\pr_r=e^{(out)}_4-\frac mr T-\frac a\Delta Z,
\]
where $\pr_r$ is taken at fixed $(\tauh,\theta,\phi)$, we compute
\beq
\lab{eq:Comm-tauh1detail}
\bsplit
\big[|q|^2\square_{\g}, \chi(\tauh)\big]\psi
&=2|q|^2\g^{\a\b}\pr_\a\chi\pr_\b\psi+(|q|^2\square_{\g}\chi)\psi\\
&=2(\h^{\widehat{\tau}\widehat{\tau}}+O^{\widehat{\tau}\widehat{\tau}})\chi' T\psi
  +2\h^{\widehat{\tau}r}\chi' \pr_r\psi
  +2(\h^{\widehat{\tau}\phi}+O^{\widehat{\tau}\phi})\chi' Z\psi\\
&\quad +(\h^{\widehat{\tau}\widehat{\tau}}+O^{\widehat{\tau}\widehat{\tau}})\chi''\psi
  +\pr_r\h^{\widehat{\tau}r}\chi'\psi\\
&=(2\h^{\widehat{\tau}r}\chi' \pr_r\psi+\pr_r\h^{\widehat{\tau}r}\chi'\psi)
  +\big(-4mr+O(1)\big)\chi'T\psi\\
&\quad +\big(-2mr+O(1)\big)\chi''\psi+O(1)\chi'Z\psi\\
&= -2\chi'r^2\c\frac{1}{r}e^{(out)}_4(r\psi)-2mr\chi'T\psi-2mr\chi''\psi
   +O(1)(|\chi'|+|\chi''|)\dk^{\leq 1}\psi.
   \end{split}
\eeq
This proves \eqref{eq:Comm-tauh1}.  Applying $e_4$ to \eqref{eq:Comm-tauh1} and using
\[
e^{(out)}_4(r)=1,\qquad e^{(out)}_4(\tauh)=\frac{m}{r},\qquad e^{(out)}_4(\chi^{(j)}(\tauh))=\frac{m}{r}
\chi^{(j+1)}(\tauh),\qquad e^{(out)}_4(a/\Delta)=O(r^{-3})
\]
give \eqref{eq:Comm-tauh2}.
\end{proof}

\begin{remark}
The choice of the modified time function is used only in the asymptotic
region.  If, instead, we used the original time function, which agrees with
$t$ for $r$ large, the same calculation would give
\[
\big[|q|^2\square_{\g},\chi(t)\big]\psi
=\big(-2r^2+O(r)\big)\chi'T\psi
 +\big(-r^2+O(r)\big)\chi''\psi
 +O(1)\chi'Z\psi,
\]
which contains the unfavorable terms
\(r^2\chi'T\psi\) and \(r^2\chi''\psi\). More importantly, applying $e_4$ to the commutator does not improve the decay in $r$ (because $e_4(\chi(t))=O(1)$). 

At the opposite extreme, if one uses $u$ itself, then 
\begin{align*}
\big[|q|^2\square_{\g},\chi(u)\big]\psi
&=2\h^{ur}\chi'e_4\psi+\partial_r\h^{ur}\chi'\psi+O(1)\chi'\dk^{\leq1}\psi\\
&=-2\chi' r^2\frac1r e_4(r\psi)
  +O(1)\chi'(\dk^{\leq1}\psi+e_4\psi),
\end{align*}
which has the desired (radiation-field) structure. However, with compactly supported initial data, cutting off $\psi$ in a bounded $u$  region does not imply that $\psi$ has sufficient decay in $r$, which is necessary for justifying the definition of generalized Whiting transform.
\end{remark}

\subsection{Time cutoff}
\lab{subsubsection:energy-time-cutoff-bookkeeping}

Let $\chi=\chi(\widehat\tau)$ be a smooth time cutoff satisfying
\beq
\lab{eq:cut-off-chi}
\chi=0\quad\hbox{for }\widehat\tau\notin(0,\widehat\tau^*),
\qquad
\chi=1\quad\hbox{for }1<\widehat\tau<\widehat\tau^*-1.
\eeq
Thus $\chi'$ and $\chi''$ are supported in the initial and final
transition slabs
\[
0<\widehat\tau<1,
\qquad
\widehat\tau^*-1<\widehat\tau<\widehat\tau^*.
\]
Recall
\[
\QQ_{\a\b } [\psi] =\D_\a \psi \D_\b \psi -\frac12 \g_{\a\b }\D^\mu \psi \D_\mu \psi.
\]
We also introduce 
\[
\QQ_{\a\b }[\phi, \psi]=\frac 1 2 \big( \D_\a \phi \D_\b\psi+ \D_\b \phi \D_\a \psi- \g_{\a\b}\D^\mu \phi \D_\mu \psi\big)
\]
\begin{lemma}
We have the following identity
\[
\QQ_{\a\b }[\phi\c \psi]= 
\phi^2 \QQ_{\a\b }[\psi]
+
\psi^2 \QQ_{\a\b }[\phi]
+
2\phi\psi\, \QQ_{\a\b}[\phi,\psi].
\]
\end{lemma}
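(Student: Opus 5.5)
This is a direct pointwise computation: expand the derivatives of the product by the Leibniz rule and regroup the terms using the definitions of $\QQ[\psi]$ and of the bilinear form $\QQ[\phi,\psi]$. The plan is as follows.

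\emph{Expansion.} We use $\D_\a(\phi\psi)=\psi\D_\a\phi+\phi\D_\a\psi$. The first term of $\QQ_{\a\b}[\phi\c\psi]$ is then
\[
\D_\a(\phi\psi)\D_\b(\phi\psi)=\psi^2\D_\a\phi\D_\b\phi+\phi^2\D_\a\psi\D_\b\psi+\phi\psi\big(\D_\a\phi\D_\b\psi+\D_\b\phi\D_\a\psi\big),
\]
and the contracted term is
\[
\D^\mu(\phi\psi)\D_\mu(\phi\psi)=\psi^2\D^\mu\phi\D_\mu\phi+\phi^2\D^\mu\psi\D_\mu\psi+2\phi\psi\,\D^\mu\phi\D_\mu\psi.
\]

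\emph{Regrouping.} Multiply the second expansion by $-\frac12\g_{\a\b}$ and add it to the first. The coefficient of $\phi^2$ is exactly $\QQ_{\a\b}[\psi]$, and the coefficient of $\psi^2$ is exactly $\QQ_{\a\b}[\phi]$.

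\emph{Cross terms.} The remaining terms are
\[
\phi\psi\big(\D_\a\phi\D_\b\psi+\D_\b\phi\D_\a\psi-\g_{\a\b}\D^\mu\phi\D_\mu\psi\big).
\]
By the definition of the bilinear form, the bracket equals $2\QQ_{\a\b}[\phi,\psi]$, which gives the identity as stated.

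There is no real obstacle. The only point that needs checking is the normalization: the factor $\frac12$ in the definition of $\QQ[\phi,\psi]$ is exactly what produces the coefficient $2$ in front of $\phi\psi\,\QQ_{\a\b}[\phi,\psi]$. As a sanity check, setting $\phi=\psi$ gives $\QQ[\psi,\psi]=\QQ[\psi]$, and then $\QQ[\psi^2]=4\psi^2\QQ[\psi]$, which is consistent.
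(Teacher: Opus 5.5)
Your proposal is correct and follows essentially the same route as the paper: expand $\D_\a(\phi\psi)$ by the Leibniz rule in both the tensor product term and the contracted term, identify the $\phi^2$ and $\psi^2$ coefficients as $\QQ_{\a\b}[\psi]$ and $\QQ_{\a\b}[\phi]$, and recognize the remaining cross terms as $2\phi\psi\,\QQ_{\a\b}[\phi,\psi]$ via the factor $\frac12$ in the polarized definition. Your sanity check $\QQ[\psi^2]=4\psi^2\QQ[\psi]$ is also consistent.
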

\begin{proof}
We have,
\[
\D_\a(\phi\psi)=\phi\D_\a\psi+\psi\D_\a\phi.
\]
Thus
\begin{align*}
\D_\a(\phi\psi)\D_\b(\phi\psi)
&=
\phi^2\D_\a\psi\D_\b\psi
+
\psi^2\D_\a\phi\D_\b\phi  \\
&
+
\phi\psi
\left(
\D_\a\phi\D_\b\psi
+
\D_\a\psi\D_\b\phi
\right),
\end{align*}
and similarly
\begin{align*}
\D^\mu(\phi\psi)\D_\mu(\phi\psi)
&=
\phi^2\D^\mu\psi\D_\mu\psi
+
\psi^2\D^\mu\phi\D_\mu\phi  \\
&
+
2\phi\psi\,\D^\mu\phi\D_\mu\psi .
\end{align*}
Substituting these two identities into the definition of
$\QQ_{\a\b}[\phi\psi]$, we obtain
\begin{align*}
\QQ_{\a\b}[\phi\psi]
&=
\phi^2
(
\D_\a\psi\D_\b\psi
-\frac12\g_{\a\b}\D^\mu\psi\D_\mu\psi
)\\
&+
\psi^2
(
\D_\a\phi\D_\b\phi
-\frac12\g_{\a\b}\D^\mu\phi\D_\mu\phi
)\\
&+
\phi\psi
(
\D_\a\phi\D_\b\psi
+
\D_\a\psi\D_\b\phi
-
\g_{\a\b}\D^\mu\phi\D_\mu\psi
).
\end{align*}
By the definition of the polarized tensor,
\[
\QQ_{\a\b}[\phi,\psi]
=
\frac12
\left(
\D_\a\phi\D_\b\psi
+
\D_\a\psi\D_\b\phi
-
\g_{\a\b}\D^\mu\phi\D_\mu\psi
\right),
\]
the last line is precisely
\[
2\phi\psi\,\QQ_{\a\b}[\phi,\psi].
\]
Therefore
\[
\QQ_{\a\b }[\phi\psi]
=
\phi^2 \QQ_{\a\b }[\psi]
+
\psi^2 \QQ_{\a\b }[\phi]
+
2\phi\psi\, \QQ_{\a\b}[\phi,\psi],
\]
as desired.
\end{proof}
We apply the Lemma  to solutions $\psi $ of $\square \psi=N$ and  with $\phi$ a smooth cut-off function $\chi$.
We deduce
\begin{align*}
 \D^\b \QQ_{\a\b }[\chi \psi]&= \chi^2\D^\b  \QQ_{\a\b }[\psi] +\D^\b (\chi^2)  \QQ_{\a\b }[\psi]  + \D^\b \Big(\psi^2 \QQ_{\a\b }[\chi]+2\chi \psi\, \QQ_{\a\b}[\chi, \psi]\Big)\\
 &= \chi^2  \D_\a \psi  N+ \D^\b (\chi^2)  \QQ_{\a\b }[\psi]  + \D^\b \Big(\psi^2 \QQ_{\a\b }[\chi]+2\chi\psi\, \QQ_{\a\b}[\chi,\psi]\Big)
\end{align*}
Multiplying by $\T$ we further deduce, if $\T$ is Killing,
\[
 \D^\b \big( \T^\a \QQ_{\a\b }[\chi \psi]\big)= \QQ_{\a\b }[\psi]\T^\a  D^\b (\chi^2) +  \D^\b \Big(\psi^2\T^\a  \QQ_{\a\b }[\chi]+2\chi\psi\, \T^\a \QQ_{\a\b}[\chi,\psi]\Big) +\chi^2  \T \psi  N
\]
\begin{lemma}
In the particular  case when $\chi$ is a function of $\tauh$,
\beq
\lab{eq:cutoffdivergence}
\D^\b \big( \T^\a \QQ_{\a\b }[\chi \psi]\big)=   2 \chi \chi'  \QQ[\psi] (T, \D\tau)+  \D^\b \Big(\psi^2\T^\a  \QQ_{\a\b }[\chi]+2\chi\psi\, \T^\a \QQ_{\a\b}[\chi,\psi]\Big) +\chi^2  \T \psi\c N.
\eeq
Note that  the divergence  term on the  right    vanishes  in regions where  $\chi'=0$.
\end{lemma}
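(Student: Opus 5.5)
The identity \eqref{eq:cutoffdivergence} is a direct specialization of the general divergence identity derived just above it. My plan is to start from
\[
\D^\b \big( \T^\a \QQ_{\a\b }[\chi \psi]\big)= \QQ_{\a\b }[\psi]\T^\a  \D^\b (\chi^2) +  \D^\b \Big(\psi^2\T^\a  \QQ_{\a\b }[\chi]+2\chi\psi\, \T^\a \QQ_{\a\b}[\chi,\psi]\Big) +\chi^2  \T \psi  N .
\]
This identity follows from the product lemma for $\QQ$, together with $\D^\b\QQ_{\a\b}[\psi]=\D_\a\psi\,\square_\g\psi$ and the fact that $\T$ is Killing. The Killing property makes $\D^{(\b}\T^{\a)}=0$, so contracting with the symmetric tensors $\QQ[\cdot]$ and $\QQ[\cdot,\cdot]$ produces no additional terms.

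The only remaining step is to rewrite the first term when $\chi=\chi(\tauh)$. By the chain rule, $\D^\b(\chi^2)=2\chi\chi'(\tauh)\,\D^\b\tauh$. Hence
\[
\QQ_{\a\b}[\psi]\T^\a\D^\b(\chi^2)=2\chi\chi'\,\QQ[\psi](\T,\D\tauh),
\]
which is the term written as $2\chi\chi'\QQ[\psi](T,\D\tau)$ in the statement, with $\tau$ read as $\tauh$. The last two terms of the general identity are carried over unchanged.

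For the final remark, I would observe that $\QQ_{\a\b}[\chi]$ and $\QQ_{\a\b}[\chi,\psi]$ are built from $\D\chi=\chi'\D\tauh$. Both therefore vanish identically on any open set where $\chi'=0$, and so does their divergence. By \eqref{eq:cut-off-chi}, the divergence term is then supported in the transition slabs $0<\tauh<1$ and $\tauh^*-1<\tauh<\tauh^*$.

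There is no real obstacle in this argument. The only point needing care is the Killing step: one must check that $\D^\b\T^\a$ contracts to zero against the symmetric $\QQ$ tensors, including the polarized one.
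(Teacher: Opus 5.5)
Your proposal is correct and follows the paper's derivation. Both arguments use the product identity for $\QQ[\chi\psi]$ and the divergence identity $\D^\b\QQ_{\a\b}[\psi]=\D_\a\psi\, N$. Both then contract with the Killing field $\T$, where symmetry of $\QQ[\psi]$, $\QQ[\chi]$ and the polarized $\QQ[\chi,\psi]$ removes the $\D^{(\b}\T^{\a)}$ contributions, and finish with the chain rule $\D^\b(\chi^2)=2\chi\chi'\,\D^\b\tauh$. Your reading of $\D\tau$ as $\D\tauh$ is right, and so is your justification that the divergence term vanishes where $\chi'=0$.
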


\section{Generalized $\Wk$--transform} 
\lab{sec:GenWhitingTr}


\begin{definition}
\lab{Def:WhitingTr}
We consider the Kerr metric given in either incoming or outgoing coordinates $w, r, \th, \vphi$, where 
$w=u$, $\vphi=\phim$  in outgoing coordinates and  $w=v$, $\vphi=\phip$ in ingoing coordinates.
We define the generalized Whiting transform  $\psi\to \psit= \Wk[\psi] $  as follows:
  \beq\lab{eq:notationoftransgeneral}
\psit(\wt, \rt,  \tht, \vphit ):=\int_{r_+}^\infty \psi\big(\wt+B(\rt,r), r, \tht,  \vphit\big)\,dr.
\eeq	
where
\beq
 B(\rt,r):=S(\rt-\Rt)(r-R)
 \eeq
 with $\Rt, R\in\RRR$ and $S<0$\footnote{The sign of $S$ is of little consequence.}, and suppressing the variables\footnote{We will  need to take them into consideration  when  deriving integral estimates.  } $(\tht, \vphit) $ we rewrite in the simplified form 
 \beq
 \lab{eq:notationoftrans'}
 \psit(\wt, \rt)=\int_{r_+}^\infty \psi\big(\wt+B(\rt,r), r\big)\,dr.
 \eeq
 \end{definition}
 Throughout this section, for simplicity, we use the same letter for the
 two one-variable factors in the phase,
 \[
 B(\rt):=S(\rt-\Rt),\quad B(r):=S(r-R).
  \]
 Thus 
 \[
 B(\rt,r)=B(\rt)(r-R)=B(r)(\rt-\Rt).
 \]
 We shall also use the notation
 \[
 X:=B(\rt)=S(\rt-\Rt).
 \]
 
 \begin{remark}
 We note that our  generalized Whiting transform defined in \eqref{eq:notationoftransgeneral}  is closely connected   to the  John's transform \cite{John}  except  that, as in the case of the Radon transform,  we restrict to two dimensional rays in $u, r$.
   \end{remark}
   
   \begin{definition}
    \lab{def:acceptablesol}
   We say that $\psi$ is an acceptable solution of $\square_{\g}\psi=N$ if, relative to $\tauh$, 
   \beq
    \lab{eq:acceptablesol}
   \psi\in C_c^\infty
\bigl(\RRR_{\tauh}\times[r_+,\infty)_r\times\SSS^2\bigr).  
\eeq
In particular, if $\psi$ solves $\square_{\g}\psi=N$ with compactly supported initial data at $\tauh=c$ and $\psi\in C_c^\infty(\RRR_{\tauh})$, then $\psi$ is acceptable.
   \end{definition}
   As immediate results of the definition, we have the following conclusions.
    \begin{lemma}
   \lab{lem:acceptablesol}
   The followings hold true for an acceptable solution\footnote{Acceptable solutions of $\square \psi=N'$, with a modified inhomogeneous term $N'$,    are generated by  
   cutting  off solutions  $\psi$ of $\square \psi=N$ in $\tauh$.}   $\psi$.
   \begin{enumerate}
   \item The generalized  Whiting transform $\psit=\Wk[\psi]$ is well defined for any $\wt, X\in\RRR$.
   \item The Fourier transform of $\psi$ in $\tauh$ and of $\psit$ in $\wt$ are well defined.
   \item Let $w=u$ in the definition of generalized  Whiting transform. Let 
 \[
    \EE[\psit]:=X^2|T\psit|^2+X^2|\pr_{\rt}\psit|^2+|\pr_{\tht}\psit|^2+\frac{1}{\sin^2\tht}|Z\psit|^2+|\psit|^2.
\]
Then there exist sequences $s_i\to   \pm \infty$ such that 
\beq\lab{eq:positive-final-slice-vanishing}
\lim_{i\to \infty}
\int_{\widetilde\Sigma(s_i)}
 \EE[TT^{1/2}\psit]\, d\rt d\omega=0
\eeq
where $\widetilde\Sigma(s)=\{\taut=s\}\cap\{-\infty<X<\infty\}$.
      \end{enumerate}
   \end{lemma}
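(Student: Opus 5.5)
The plan is to prove the three items in order, with essentially all the work in the third.

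\emph{Item 1.} This is a support argument. For $r\geq 5m$ we have $u=\tauh-m\ln r$, and $\tauh$ and $u$ differ by a smooth function of $r$ on every compact $r$-set. Since $\psi$ is supported in a compact set $K\subset\RRR_{\tauh}\times[r_+,R_0]\times\SSS^2$, for fixed $(\wt,\rt,\tht,\vphit)$ the integrand $r\mapsto\psi(\wt+X(r-R),r,\tht,\vphit)$ is smooth and supported in $[r_+,R_0]$. Hence the integral converges. Differentiating under the integral sign shows that $\psit$ is $C^\infty$ in all variables, for every $\wt,X\in\RRR$.

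\emph{Item 2.} The Fourier transform of $\psi$ in $\tauh$ exists because $\psi$ has compact support in $\tauh$. For $\psit$, observe that for fixed $X$ the condition $\wt+X(r-R)\in\pi_u(K)$ with $r\in[r_+,R_0]$ confines $\wt$ to an interval of length $\les 1+|X|$. So $\psit(\cdot,\rt,\tht,\vphit)\in C_c^\infty(\RRR_{\wt})$ and its Fourier transform is well defined. We also record the formula that will drive item 3:
\[
\widehat{\psit}(\om,\rt)=\int_{r_+}^\infty e^{i\om X(r-R)}\,\widehat{\psi}(\om,r)\,dr ,
\]
where $\widehat\psi$ denotes the transform in $u$. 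This also makes sense of $T^{1/2}$ as the multiplier $|\om|^{1/2}$.

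\emph{Item 3.} The strategy is to show that the spacetime integral $\int_{\RRR}\int_{\widetilde\Sigma(s)}\EE[TT^{1/2}\psit]\,d\rt\,d\omega\,ds$ is finite. Once that holds, the function $s\mapsto\int_{\widetilde\Sigma(s)}\EE[TT^{1/2}\psit]$ is in $L^1(\RRR)$, so its $\liminf$ at $+\infty$ and at $-\infty$ is zero, which yields the sequences $s_i\to\pm\infty$. The steps are:
\begin{enumerate}
\item Use the coordinates $(\taut,\rt,\tht,\vphit)$, where $\taut-\wt$ is a function of $\rt$ only, so the change of variables has unit Jacobian. The spacetime integral then equals $\int\!\!\int\EE[TT^{1/2}\psit]\,d\wt\,d\rt\,d\omega$.
\item Apply Plancherel in $\wt$ for each fixed $(\rt,\tht,\vphit)$. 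This reduces the integral to
\[
\int\!\!\int |\om|^3\Big(X^2\om^2|\widehat{\psit}|^2+X^2|\pr_{\rt}\widehat{\psit}|^2+|\pr_\tht\widehat{\psit}|^2+\sin^{-2}\tht\,|Z\widehat{\psit}|^2+|\widehat{\psit}|^2\Big)\,d\om\,d\rt .
\]
Here $\pr_{\rt}\widehat{\psit}=iS\om\int(r-R)e^{i\om X(r-R)}\widehat\psi\,dr$, which has the same structure as $\widehat{\psit}$.
\item Bound $\widehat{\psit}$ in two regimes. For $|\om X|\les1$, use the trivial bound by $\|\widehat\psi(\om,\cdot)\|_{L^1_r}$; the $\rt$-region involved has size $\sim|\om|^{-1}$, and the factor $|\om|^3$ compensates. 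For $|\om X|\gg1$, integrate by parts in $r$ (with $\pr_r$ taken at fixed $u$), gaining powers of $(\om X)^{-1}$. Throughout, the rapid decay of $\widehat\psi$ in $\om$ controls large frequencies.
\end{enumerate}

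The main obstacle is the endpoint $r=r_+$. An acceptable $\psi$ need not vanish there, so the first integration by parts leaves a boundary term $\sim\widehat\psi(\om,r_+)/(i\om X)$. With the weight $X^2$, this gives $X^2\om^2|\widehat{\psit}|^2\sim|\widehat\psi(\om,r_+)|^2$, which is not integrable in $\rt$. Three remedies, in the order I would try them:
\begin{enumerate}
\item Check whether the boundary phase $e^{i\om X(r_+-R)}$ interacts with the chosen slicing $\taut$: on $\widetilde\Sigma(s)$, large $|X|$ is paired with shifted $\wt$. If so, estimate the boundary contribution separately on each slice. It is a pure oscillatory term, $\psi(\wt+X(r_+-R),r_+)/X$ in physical space, whose slice integral of $X^2|\cdot|^2$ is bounded by horizon data at horizon times $\approx s-X(r_+-R)$. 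These leave the compact $\tauh$-support as $|s|\to\infty$, except on a bounded $X$-range.
\item Otherwise, subtract the explicit boundary profile and treat the remainder with the Plancherel argument above.
\item As a last resort, use the horizon-regular structure $e_3^{(in)}$ near $r_+$ to see that $\psi(\cdot,r_+)$ enters only through $T$-derivatives. This would reduce the weight by a factor and give integrability.
\end{enumerate}
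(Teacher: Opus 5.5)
\textbf{Item~3 is not proved, and as stated it cannot be.} Your plan makes the same reduction as the paper: show that $\int_{\RRR}\int_{\Sit(s)}\EE[TT^{1/2}\psit]\,d\rt\,d\omega\,ds$ is finite. But you do not carry it out; the proposal ends with a list of possible remedies.

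The obstacle you single out is not actually there. In outgoing coordinates, $\{r=r_+,\ u<\infty\}$ is the \emph{past} horizon, and an acceptable $\psi$ vanishes identically there (Remark~\ref{rem:noincomingrad}). For fixed $(\wt,X)$ the integrand of $\Wk$ therefore vanishes near $r=r_+$. So there is no endpoint term, and Lemma~\ref{lem:Rk-commutation} gives $X\pr_{\wt}\Wk[F]=-\Wk[\pr_rF]$ exactly. The paper combines this with the Plancherel change of variables $\eta=-\xi X$ of Corollary~\ref{cor:filtered-L2-outgoing-transform}, so no two-regime pointwise analysis is needed.

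Conversely, your items~1--2 treat the support of $\psi$ as compact in $(u,r)$, which is false. Near the horizon $u=\tauh-2r_*(r)+O(1)\to+\infty$, so the \emph{future} horizon sits at $u=+\infty$. For fixed $X$, $\psit(\cdot,\rt)$ is therefore not compactly supported in $\wt$; it only decays exponentially as $\wt\to+\infty$. That is still enough for item~2.

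\textbf{The real obstruction is at the future horizon, and it is fatal for the $X^2$-weighted energy.} Near $\HH^+$ we have $\pr_r|_u=e_4^{(out)}=\frac{2(r^2+a^2)}{\Delta}T_+ +(\text{terms regular at }\HH^+)$. Hence $\int_{\DD}|e_4^{(out)}T\psi|^2|q|^{-2}dv_\g=\infty$ unless $\psi|_{\HH^+}\equiv0$.

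Over the full line $X\in\RRR$, the change of variables $\eta=-\xi X$ is an identity, not just an inequality. So $\int\!\!\int X^2|T^2T^{1/2}\psit|^2\,d\vt\,d\rt\,d\omega$ is a fixed multiple of $\|e_4^{(out)}T\psi\|^2_{L^2(du\,dr\,d\omega)}$, and is therefore infinite. Equivalently, the asymptotics in the proof of Proposition~\ref{prop:psi-psit-flux-strong-Xminus} show that $\rt^2|\xi|^2|\FF[\psit_n](\xi,\rt)|^2$ tends to a nonzero multiple of $|\FF_v[\psi_n](\xi,r_+)|^2$. In other words, $X^2|T\psit|^2\to|T\psit_\infty|^2$ at $\IIt^+$.

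So your heuristic ``$X^2\om^2|\widehat{\psit}|^2\sim|\widehat\psi(\om,r_+)|^2$, not integrable in $\rt$'' is right in substance. The only correction is that $\widehat\psi(\om,r)$ has no boundary value at $r_+$. It carries the oscillating factor $(r-r_+)^{i\beta_+}$ in front of $\FF_v[\psi](\om,r_+)$, which is the endpoint singularity handled by Lemma~\ref{Le:Erdelyi-Xminus}.

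Consequences:
\begin{itemize}
\item The spacetime integral is infinite for every acceptable $\psi$ with nonzero horizon trace, which are precisely the solutions of interest.
\item None of your three remedies can help, since the divergence is genuine and not an artifact of integrating by parts.
\item On slices ending at $\IIt^+$, even the individual slice integrals diverge whenever $T^2T^{1/2}\psit_\infty(s,\cdot)\not\equiv0$.
\end{itemize}

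The paper's proof has the same gap. It uses \eqref{eq:e_4trick}, whose hypothesis that $f$ be supported away from $r=r_+$ fails here, and asserts that $\int_{\DD}|e_4^{(out)}T\psi|^2|q|^{-2}dv_\g$ is finite.

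What can be proved is the variant in which the weight $X^2$ on $T\psit$ and $\pr_{\rt}\psit$ is replaced by a bounded one. Apply the first estimate of Corollary~\ref{cor:filtered-L2-outgoing-transform} to $T^2\psi$, $B(r)T^2\psi$, $\pr_\th T\psi$, $ZT\psi$ and $T\psi$, all of which are regular at $\HH^+$. This bounds the spacetime integral, and the sequences $s_i\to\pm\infty$ follow.
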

   \begin{proof}
   The proof of the first two statements is straightforward.  The proof of   the last statement requires however   notations and 
   results which are proved  later in this section.   As there is no danger of circularity,   we  present the proof here. To prove the last statement 
   it suffices  to show that
  \[
  \int_{\RRR}\int_{\widetilde\Sigma(s)}
 \EE[TT^{1/2}\psit]\, d\rt d\omega ds<\infty.
   \]
   Using Lemma \ref{lem:Rk-commutation} and Corollary  \ref{cor:filtered-L2-outgoing-transform}, we derive
   \begin{align*}
   & \int_{\RRR}\int_{\widetilde\Sigma(s)}
 \EE[TT^{1/2}\psit]\, d\rt d\omega ds\\
   &\quad= \int\Big(X^2|TTT^{1/2}\psit|^2+X^2|\T \pr_{\rt}T^{1/2}\psit|^2+|\pr_{\tht}TT^{1/2}\psit|^2+\frac{1}{\sin^2\tht}|ZTT^{1/2}\psit|^2+|TT^{1/2}\psit|^2\Big)\, d\taut d\rt d\omega \\
   &\quad\les \int_{\DD}\Big(|e_4T\psi|^2+| e_4(B(r)T\psi)|^2\Big)\, |q|^{-2}dv_{\g}+ \int_{\DD}\Big(|\pr_{\th}T\psi|^2+\frac{1}{\sin^2\th}|ZT\psi|^2+|T\psi|^2\Big)\, |q|^{-2}dv_{\g} \\
   &\quad<\infty
      \end{align*} 
      where we use the   $ L^2 $ estimates of the Corollary \ref{cor:filtered-L2-outgoing-transform},   and in particular,  the estimate    \eqref{eq:e_4trick}  to deal with the first two terms.   In the last step we  have also used the        fact $ \psi\in C_c^\infty
\bigl(\RRR_{\tauh}\times[r_+,\infty)_r\times\SSS^2\bigr)$  
     \end{proof}

  
\subsection{Transformed metrics}
 \subsubsection{Commutation properties of the $\Wk$ transform}
 \lab{subsubsection:Commutations-general}
 

 \begin{lemma}\lab{lem:Rk-commutation}
	Let
\[
\psi_+(\wt,\rt):=
\psi\big(\wt+B(\rt,r_+),r_+\big).
\]
Then the operator $\Wk$ verifies the following commutation properties.
\begin{enumerate}
\item
\[
\pr_\wt \Wk[\psi]=\Wk[\pr_w \psi],\qquad
\pr_\vphit \Wk[\psi]=\Wk[\pr_\vphi \psi], \qquad
\pr_{\tht}\Wk[\psi]=\Wk[\pr_\th \psi].
\]
\item For the radial derivative and an arbitrary function $f(r)$,
	 \beq
	 \lab{eq:Rk-commutation}
	 \bsplit
	 \pr_{\rt }\Wk[\psi](\wt,\rt)
	 &=\pr_{\wt}\Wk[B(r)\psi](\wt,\rt)
	 =\Wk[B(r)\pr_w\psi](\wt,\rt).\\
	  \Wk[f\pr_r\psi](\wt,\rt)
	 &= -B(\rt)\pr_\wt\Wk[f\psi](\wt,\rt)-\Wk[\pr_rf\psi](\wt,\rt)
	 -f(r_+)\psi_+(\wt,\rt).
 \end{split}
 \eeq
\end{enumerate}
In particular, if the horizon trace
\(\psi(\wt+B(\rt,r_+),r_+)\) vanishes, then
\[
\Wk[\pr_r\psi]= -B(\rt)\pr_\wt\Wk[\psi].
\]
\end{lemma}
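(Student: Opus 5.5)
The argument is differentiation under the integral sign in \eqref{eq:notationoftrans'}, followed by one integration by parts in $r$. Throughout, $\psi$ is an acceptable solution in the sense of Definition \ref{def:acceptablesol}. Smoothness and compact support in $(\tauh,r)$ give, for each fixed $(\wt,\rt)$, an integrand with compact support in $r$. This justifies all interchanges of derivative and integral, and it also kills the boundary term at $r=\infty$.

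For item 1, the phase $B(\rt,r)$ does not depend on $\wt$, $\tht$ or $\vphit$. Differentiating in these variables therefore passes directly onto $\psi$ in its slots $w$, $\th$, $\vphi$, which gives the three identities at once.

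For the first identity in \eqref{eq:Rk-commutation}, I will use $\pr_{\rt}B(\rt,r)=S(r-R)=B(r)$. The chain rule gives
\[
\pr_{\rt}\Wk[\psi]=\int_{r_+}^\infty B(r)\,(\pr_w\psi)\big(\wt+B(\rt,r),r\big)\,dr=\Wk[B(r)\pr_w\psi].
\]
Since $B(r)$ does not depend on $w$, this equals $\Wk[\pr_w(B(r)\psi)]$, and item 1 turns it into $\pr_\wt\Wk[B(r)\psi]$.

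For the second identity I will take the total $r$-derivative of the composite $r\mapsto f(r)\psi(\wt+B(\rt,r),r)$. Using $\pr_r B(\rt,r)=B(\rt)$, this gives
\[
\frac{d}{dr}\Big[f\,\psi\big(\wt+B(\rt,r),r\big)\Big]=f'\psi+B(\rt)\,f\,\pr_w\psi+f\,\pr_r\psi ,
\]
with everything evaluated at $(\wt+B(\rt,r),r)$. I then integrate over $[r_+,\infty)$. The right endpoint contributes nothing by compact support, and the lower endpoint contributes $-f(r_+)\psi_+(\wt,\rt)$. Rearranging, and rewriting $\Wk[B(\rt)f\pr_w\psi]$ as $B(\rt)\pr_\wt\Wk[f\psi]$ (the factor $B(\rt)$ is constant in $r$, and item 1 applies), gives exactly the stated formula.

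The special case follows by taking $f\equiv1$, so that $\pr_r f=0$, and assuming the horizon trace vanishes. The only point needing care is the bookkeeping of the partial derivative $\pr_r$. It is taken at fixed $(w,\th,\vphi)$, that is, in the EF coordinates of Definition \ref{Def:WhitingTr}, and not in the $\tauh$ coordinates used to define acceptability. Since the map $(\tauh,r)\mapsto(w,r)$ is a smooth change of coordinates, compact support in $(\tauh,r)$ transfers to compact support in $r$ along each integration line, so the integration by parts is legitimate.
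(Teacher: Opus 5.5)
Your proposal is correct and is essentially the paper's own proof. The first identity in \eqref{eq:Rk-commutation} comes from differentiating under the integral with $\pr_{\rt}B(\rt,r)=B(r)$. The second comes from integrating $\frac{d}{dr}\big[f\psi(\wt+B(\rt,r),r)\big]$ over $[r_+,\infty)$ using $\pr_rB(\rt,r)=B(\rt)$, with the upper endpoint killed by compact support in $r$.

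One small remark: for $w=u$ the map $(\tauh,r)\mapsto(u,r)$ is not a smooth change of coordinates up to $r=r_+$, since $u-\tauh\to+\infty$ there. This is harmless, because $r$ is common to both coordinate systems. Moreover, in that case the integrand actually vanishes near $r_+$, so $\psi_+=0$, consistent with Remark \ref{rem:noincomingrad}.
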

 \begin{proof}
	 The commutation relations with $\pr_w,\pr_\vphi,\pr_\th$ are immediate, since
	 the kernel only shifts the $w$ variable.  
	 Since $\pr_r B(\rt,r)=B(\rt)$,
	 \[
	 \frac{d}{dr}(f\psi)\big(\wt+B(\rt,r),r\big)
	 =f\pr_r\psi+(\pr_rf)\psi+B(\rt)\pr_w(f\psi).
	 \]
	 Integrating from $r_+$ to $\infty$ gives
	 \[
	 \Wk[f\pr_r\psi]
	 =
	 -B(\rt)\pr_\wt\Wk[f\psi]
		 -\Wk[\pr_rf\psi](\wt,\rt)
	 -f(r_+)\psi\big(\wt+B(\rt,r_+),r_+\big),
	 \]
	 where the upper boundary term at $r=\infty$ vanishes because $\psi$ has compact support in $r$.  This proves the first identity in \eqref{eq:Rk-commutation}, including
 the lower boundary term at the horizon.  Finally, since
 $\pr_\rt B(\rt,r)=B(r)$,
 \[
 \pr_\rt\Wk[\psi]=\Wk[B(r)\pr_w\psi]
 =\pr_\wt\Wk[B(r)\psi].
 \]
 \end{proof}

\subsubsection{Specific  form of the transformed metric}
\begin{proposition}
\lab{Lemma:Whiting-metric-general}
There exists a metric $\gt$, of the same split form as the Kerr metric $\g$,
whose rescaled inverse metric is
 \beq
|\qt|^2  \gt^{\at\bt}=\hbt^{\at\bt}+O^{\at\bt},
 \eeq
 such that
 \beq
 \lab{eq:Whiting-metric-general}
\Wk[|q|^2 \square_\g\psi]
=
|\qt|^2  \square_\gt \Wk[ \psi] +\mathcal E_+[\psi]
 \eeq
where
\beq
\lab{eq:defofqtsquaregt}
|\qt|^2\square_{\gt}\psit:=\pr_{\at}\big(\hbt^{\at\bt}\pr_{\bt}\psit\big)+\frac{1}{\sin\tht}\pr_{\at}\big(\sin\tht O^{\at\bt}\pr_{\bt}\psit\big)
\eeq
and the horizon boundary term is (using the fact that $\h^{rr}(r_+)=\Delta(r_+)=0$)
\beq
\lab{eq:Whiting-boundary-error}
\mathcal E_+[\psi]
=
-2\h^{wr}(r_+)(\pr_w\psi)_+
-2\h^{r\vphi}(r_+)(\pr_\vphi\psi)_+.
\eeq
Moreover the
non-vanishing components of $\hbt$ are given by (using $\varep=1$ or $\varep=-1$ for  the  ingoing or outgoing coordinates choice in the definition of generalized Whiting transform)
 \beq
 \lab{eq:hbt-gencomponents'}
 \bsplit
\hbt^{\rt\rt}
&=S^{-2}X(X-2\varepsilon),\\
\hbt^{\wt\rt}
&=S^{-1}\left(\frac12\Delta'_R X^2-2\varepsilon RX\right)
=\hbt^{\rt\wt},\\
\hbt^{\wt\wt}
&=\Delta_R X^2-2\varepsilon A_R X,\\
\hbt^{\wt\vphit}
&=-\varepsilon aX=\hbt^{\vphit\wt}.
\end{split}
\eeq
where 
\beq
\lab{eq:radial-multiplier-XA_RDe_RDeprime_R}	
X:=B(\rt)=S(\rt-\Rt),\quad
A_R:=R^2+a^2,\quad
\Delta_R:=\Delta(R),\quad
\Delta'_R:=2(R-m).
\eeq
 \end{proposition}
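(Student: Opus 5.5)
The plan is a direct term-by-term computation. I write $|q|^2\square_\g$ in the chosen Eddington--Finkelstein chart (Lemma \ref{Lemma:EFin-coords} or Lemma \ref{Lemma:EFout-coords}) in the unified form
\[
|q|^2\square_\g\psi=\pr_r(\Delta\pr_r\psi)+\varepsilon\big[\pr_w(A\pr_r\psi)+\pr_r(A\pr_w\psi)\big]+2\varepsilon a\,\pr_r\pr_\vphi\psi+\OO\psi,\qquad A=r^2+a^2 .
\]
I apply $\Wk$ to each piece. The angular operator $\OO$ involves only $\pr_\th,\pr_\vphi,\pr_w$ with $\th$-dependent coefficients. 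By item 1 of Lemma \ref{lem:Rk-commutation} it therefore commutes with $\Wk$, which accounts for the unchanged $O^{\at\bt}$.

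For the radial pieces I use the second identity of \eqref{eq:Rk-commutation} repeatedly, with $f=\Delta,A,1$, to trade every $\pr_r$ for $-X\pr_\wt$, up to lower-order terms and horizon traces. The outer derivative in $\pr_r(\Delta\pr_r\psi)$ produces no boundary term because $\Delta(r_+)=0$. The same holds for the inner $\Delta\pr_r$. This gives
\[
\Wk[\pr_r(\Delta\pr_r\psi)]=X^2\pr_\wt^2\Wk[\Delta\psi]+X\pr_\wt\Wk[\Delta'\psi].
\]
In the $\varepsilon$-terms the traces $A(r_+)(\pr_w\psi)_+$ appear twice, and the $\pr_r\pr_\vphi$ term gives $a(\pr_\vphi\psi)_+$. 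Together these produce exactly $\mathcal E_+[\psi]$ in \eqref{eq:Whiting-boundary-error}, since $\h^{wr}=\varepsilon A$ and $\h^{r\vphi}=\varepsilon a$.

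The key step is to remove the remaining explicit $r$-dependence. I Taylor expand $\Delta$, $\Delta'$ and $A$ around $R$ in the variable $\rho=r-R$:
\[
\Delta=\Delta_R+\Delta_R'\rho+\rho^2,\qquad \Delta'=\Delta_R'+2\rho,\qquad A=A_R+2R\rho+\rho^2 .
\]
Since $B(r)=S\rho$, the first line of \eqref{eq:Rk-commutation} gives $\pr_\wt\Wk[\rho\psi]=S^{-1}\pr_\rt\Wk[\psi]$. Iterating, $\pr_\wt^2\Wk[\rho^2\psi]=S^{-2}\pr_\rt^2\Wk[\psi]$. Hence every term becomes a constant-coefficient combination of $\pr_\wt,\pr_\rt,\pr_\vphit$ applied to $\psit$, multiplied by powers of $X$. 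The last step is to regroup these into divergence form $\pr_\at(\hbt^{\at\bt}\pr_\bt\psit)$, using $\pr_\rt X=S$. For example,
\[
S^{-2}X^2\pr_\rt^2\psit+2S^{-1}X\pr_\rt\psit=\pr_\rt\big(S^{-2}X^2\pr_\rt\psit\big).
\]
The $\Delta$-terms then produce $S^{-2}X^2$, $\tfrac12 S^{-1}\Delta_R'X^2$ and $\Delta_R X^2$ in the $\rt\rt$, $\wt\rt$ and $\wt\wt$ slots. The $\varepsilon A$-terms contribute the linear-in-$X$ parts $-2\varepsilon S^{-2}X$, $-2\varepsilon S^{-1}RX$ and $-2\varepsilon A_RX$, and the $a$-term gives $\hbt^{\wt\vphit}=-\varepsilon aX$. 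This reproduces \eqref{eq:hbt-gencomponents'}.

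To justify calling the result $|\qt|^2\square_\gt$ for a genuine metric, I set $H=\hbt+O$ and define $\gt^{-1}=|\qt|^{-2}H$ with
\[
|\qt|^2:=\sin\tht\,\sqrt{|\det H|}.
\]
Then $\sqrt{|\gt|}=|\qt|^4/\sqrt{|\det H|}$, so $\sqrt{|\gt|}\,\gt^{-1}=\sin\tht\, H$. The Laplace--Beltrami operator therefore coincides with the definition \eqref{eq:defofqtsquaregt}. Here $H$ has the Kerr block structure, so $\det H$ factors as $\sin^{-2}\tht$ times a function of $(\rt,\tht)$, and $|\qt|^2$ is smooth wherever $H$ is nondegenerate.

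I expect the main obstacle to be bookkeeping rather than any conceptual step. The $\varepsilon$-cross terms must combine symmetrically into $\hbt^{\wt\rt}$ with the stated coefficient. The first-order remainders, such as $\Wk[A'\pr_w\psi]$ and $X\pr_\wt\Wk[\Delta'\psi]$, must be absorbed exactly by the derivatives of $X$ in the divergence form, leaving no stray first-order terms. This is where I would check coefficients most carefully, starting with the $\rt\rt$ component as a test case.
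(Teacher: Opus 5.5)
Your proposal is correct and follows essentially the same route as the paper. You write $|q|^2\square_\g$ in unified form, apply Lemma \ref{lem:Rk-commutation} repeatedly and collect the horizon traces into $\mathcal E_+$, Taylor-expand the quadratic coefficients $\h^{rr},\h^{wr}$ about $R$ so that powers of $S(r-R)$ become $\pr_{\rt}$-derivatives, and regroup into divergence form using $\pr_{\rt}X=S$. The only differences are that you substitute the explicit Kerr coefficients from the start, instead of first deriving the general formula \eqref{eq:hbt-gencomponents}, and that your justification of the metric via $|\qt|^2=\sin\tht\sqrt{|\det H|}$ is what the paper does later, in Remark \ref{rem:conformalfactor}.
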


\begin{proof}
Since the angular operator $\OO$ only contains derivatives in \(w,\theta,\vphi\),
it commutes with $\Wk$.  It is therefore enough to prove
\[
 \Wk \big[\pr_\a (\h^{\a\b} \pr_\b\psi)\big]
 =
 \pr_\at(\hbt^{\at\bt}\pr_\bt\Wk[\psi])
 +\mathcal E_+[\psi].
\]
We write the radial part of the wave operator in the unified form
\beq
\lab{eq:wave-hpart-general}
\bsplit
\pr_\a (\h^{\a\b} \pr_\b\psi)
&= \pr_r\big(\h^{rr}\pr_r\psi\big)
+\pr_w\big(\h^{wr}\pr_r\psi\big)
+\pr_r\big(\h^{wr}\pr_w\psi\big)\\
&\quad
+\pr_r\big(\h^{r\vphi}\pr_\vphi\psi\big)
+\pr_\vphi\big(\h^{\vphi r}\pr_r\psi\big).
\end{split}
\eeq
Here $\h^{rr}$ and $\h^{wr}$ are quadratic polynomials in $r$, while
$\h^{r\vphi}$ is constant in $r$.  Thus
\begin{align*}
\h^{rr}(r)&=\frac 12\pr_r^2\h^{rr}(R)S^{-2}B(r)^2
+\pr_r\h^{rr}(R)S^{-1}B(r)+\h^{rr}(R),\\
\pr_r\h^{rr}(r)&=\pr_r^2\h^{rr}(R)S^{-1}B(r)+\pr_r\h^{rr}(R),\\
\h^{wr}(r)&=\frac 12\pr_r^2\h^{wr}(R)S^{-2}B(r)^2
+\pr_r\h^{wr}(R)S^{-1}B(r)+\h^{wr}(R),\\
\pr_r\h^{wr}(r)&=\pr_r^2\h^{wr}(R)S^{-1}B(r)+\pr_r\h^{wr}(R).
\end{align*}
Using Lemma \ref{lem:Rk-commutation},
\begin{align*}
 \Wk[f\pr_r\psi](\wt,\rt)
	 &= -B(\rt)\pr_\wt\Wk[f\psi](\wt,\rt)-\Wk[\pr_rf\psi](\wt,\rt)
	 -f(r_+)\psi_+(\wt,\rt),\\
	 \Wk[B(r)\pr_w\psi]&=\pr_\rt\Wk[\psi].
\end{align*}
and $\h^{rr}(r_+)=0$, we compute
\begin{align*}
\Wk[\pr_r(\h^{rr}\pr_r\psi)]
&=
B(\rt)^2\pr_\wt^2\Wk[\h^{rr}\psi]
 +B(\rt)\pr_\wt\Wk[\pr_r\h^{rr}\psi]\\
&=
B(\rt)^2
\Big(
\frac12\pr_r^2\h^{rr}(R)S^{-2}\pr_\rt^2
+\pr_r\h^{rr}(R)S^{-1}\pr_\rt\pr_\wt
+\h^{rr}(R)\pr_\wt^2
\Big)\Wk[\psi]\\
&\quad+
B(\rt)
\Big(
\pr_r^2\h^{rr}(R)S^{-1}\pr_\rt
+\pr_r\h^{rr}(R)\pr_\wt
\Big)\Wk[\psi],
\end{align*}
and
\begin{align*}
\Wk[\pr_w(\h^{wr}\pr_r\psi)]
&=
-B(\rt)\pr_\wt^2\Wk[\h^{wr}\psi]
-\pr_\wt\Wk[\pr_r\h^{wr}\psi]
-\h^{wr}(r_+)(\pr_w\psi)_+\\
&=
-B(\rt)
\Big(
\frac12\pr_r^2\h^{wr}(R)S^{-2}\pr_\rt^2
+\pr_r\h^{wr}(R)S^{-1}\pr_\rt\pr_\wt
+\h^{wr}(R)\pr_\wt^2
\Big)\Wk[\psi]\\
&\quad-
\Big(
\pr_r^2\h^{wr}(R)S^{-1}\pr_\rt
+\pr_r\h^{wr}(R)\pr_\wt
\Big)\Wk[\psi]
-\h^{wr}(r_+)(\pr_w\psi)_+,
\end{align*}
while
\begin{align*}
\Wk[\pr_r(\h^{wr}\pr_w\psi)]
&=
-B(\rt)\pr_\wt^2\Wk[\h^{wr}\psi]-\h^{wr}(r_+)(\pr_w\psi)_+\\
&=
-B(\rt)
\Big(
\frac12\pr_r^2\h^{wr}(R)S^{-2}\pr_\rt^2
+\pr_r\h^{wr}(R)S^{-1}\pr_\rt\pr_\wt
+\h^{wr}(R)\pr_\wt^2
\Big)\Wk[\psi]\\
&\quad-\h^{wr}(r_+)(\pr_w\psi)_+.
\end{align*}
Denoting the sum of these three terms by $\II[\psi]$, we obtain
\begin{align*}
\II[\psi]&=
\frac12 B(\rt)
\Big(B(\rt)\pr_r^2\h^{rr}(R)-2\pr_r^2\h^{wr}(R)\Big)
S^{-2}\pr_\rt^2\Wk[\psi]\\
&\quad+
B(\rt)
\Big(B(\rt)\pr_r\h^{rr}(R)-2\pr_r\h^{wr}(R)\Big)
S^{-1}\pr_\rt\pr_\wt\Wk[\psi]\\
&\quad+
B(\rt)
\Big(B(\rt)\h^{rr}(R)-2\h^{wr}(R)\Big)
\pr_\wt^2\Wk[\psi]\\
&\quad+
\Big(B(\rt)\pr_r^2\h^{rr}(R)-\pr_r^2\h^{wr}(R)\Big)
S^{-1}\pr_\rt\Wk[\psi]\\
&\quad+
\Big(B(\rt)\pr_r\h^{rr}(R)-\pr_r\h^{wr}(R)\Big)
\pr_\wt\Wk[\psi]\\
&\quad-2\h^{wr}(r_+)(\pr_w\psi)_+.
\end{align*}
With $\hbt^{\rt\rt}, \hbt^{\rt\wt}$, and $\hbt^{\wt\wt}$ as below
 \beq
 \lab{eq:hbt-gencomponents}
 \bsplit
 \hbt^{\rt\rt }&= \frac 1 2 S^{-2} \Big(  B^2(\rt) \pr_r^2 \h^{rr}(R) - 2  B(\rt)   \pr_r^2 \h^{wr}(R)  \Big), \\
   \hbt^{\wt\rt}&=\frac 1 2 S^{-1} \Big(  B(\rt)^2 \pr_r \h^{rr}(R) - 2 B(\rt) \pr_r \h^{wr}(R)  \Big)= \hbt^{\rt\wt},\\
   \hbt^{\wt\wt}&= B(\rt)^2 \h^{rr}(R)-2 B(\rt)\h^{wr}(R),
 \end{split}
 \eeq
 and
using $\pr_\rt B(\rt)=S$, we have
\begin{align*}
\pr_\rt\hbt^{\rt\rt}
&=
S^{-1}\Big(B(\rt)\pr_r^2\h^{rr}(R)-\pr_r^2\h^{wr}(R)\Big),\\
\pr_\rt\hbt^{\rt\wt}
&=
B(\rt)\pr_r\h^{rr}(R)-\pr_r\h^{wr}(R).
\end{align*}
Therefore
\[
\II[\psi]
=
\pr_\rt(\hbt^{\rt\rt}\pr_\rt\Wk[\psi])
+\pr_\rt(\hbt^{\rt\wt}\pr_\wt\Wk[\psi])
+\pr_\wt(\hbt^{\wt\rt}\pr_\rt\Wk[\psi])
+\pr_\wt(\hbt^{\wt\wt}\pr_\wt\Wk[\psi])
+\mathcal E^{(rw)}_+[\psi],
\]
where
\[
\mathcal E^{(rw)}_+[\psi]
=-2\h^{wr}(r_+)(\pr_w\psi)_+.
\]
It remains to transform the $r\vphi$ components in
\eqref{eq:wave-hpart-general}.  Since $\h^{r\vphi}=\h^{\vphi r}$ is constant
in $r$, Lemma \ref{lem:Rk-commutation} gives
\begin{align*}
\Wk[\pr_r(\h^{r\vphi}\pr_\vphi\psi)]
+\Wk[\pr_\vphi(\h^{\vphi r}\pr_r\psi)]
&=
-2B(\rt)\h^{r\vphi}(R)\pr_\wt\pr_{\vphit}\Wk[\psi]
-2\h^{r\vphi}(r_+)(\pr_\vphi\psi)_+.
\end{align*}
Thus setting
\beq
 \lab{eq:hbt-gencomponentsextra}
 \hbt^{\wt\vphit}=\hbt^{\vphit\wt}
=-B(\rt)\h^{r\vphi}(R)
\eeq
gives
\[
 \Wk \big[\pr_\a (\h^{\a\b} \pr_\b\psi)\big]
 =
 \pr_\at(\hbt^{\at\bt}\pr_\bt\Wk[\psi])
 +\mathcal E_+[\psi],
\]
with $\mathcal E_+[\psi]$ given by
\eqref{eq:Whiting-boundary-error}.  
Adding back the unchanged angular operator $\OO$ gives
\[
	\Wk[|q|^2\square_\g\psi]
	=|\qt|^2\square_\gt\Wk[\psi]+\mathcal E_+[\psi],
	\]
	as desired.

	It remains to prove	\eqref{eq:hbt-gencomponents'}.  With the convention
	\[
	\varepsilon=-1\quad\hbox{in outgoing coordinates }(w=u),
	\qquad
	\varepsilon=+1\quad\hbox{in ingoing coordinates }(w=v),
	\]
	the rescaled inverse Kerr metric has the split form
	\[
	|q|^2\g^{\a\b}=\h^{\a\b}+O^{\a\b},
	\]
	with non-vanishing $\h$-components
	\[
	\h^{rr}=\Delta,\qquad
	\h^{wr}=\h^{rw}=\varepsilon(r^2+a^2),\qquad
	\h^{r\vphi}=\h^{\vphi r}=\varepsilon a.
	\]
	Therefore, at $r=R$
	\[
	\pr_r^2\h^{rr}(R)=2,\qquad
	\pr_r\h^{rr}(R)=\Delta'_R=2(R-m),\qquad
	\h^{rr}(R)=\Delta_R,
	\]
	and
	\[
	\pr_r^2\h^{wr}(R)=2\varepsilon,\qquad
	\pr_r\h^{wr}(R)=2\varepsilon R,\qquad
	\h^{wr}(R)=\varepsilon A_R,\qquad
	\h^{r\vphi}(R)=\varepsilon a .
	\]
	Substituting these expressions in \eqref{eq:hbt-gencomponents} and \eqref{eq:hbt-gencomponentsextra}, and writing
	$X=B(\rt)$, yield
	\begin{align*}
	\hbt^{\rt\rt}
	&=\frac12S^{-2}(2X^2-4\varepsilon X)
	=S^{-2}X(X-2\varepsilon),\\	
	\hbt^{\wt\rt}
	&=\frac12S^{-1}\big(2(R-m)X^2-4\varepsilon RX\big)
	=S^{-1}\left(\frac12\Delta'_R X^2-2\varepsilon RX\right),\\
	\hbt^{\wt\wt}
	&=\Delta_R X^2-2\varepsilon A_RX,\qquad
	\hbt^{\wt\vphit}=-\varepsilon aX.
	\end{align*}
	This proves \eqref{eq:hbt-gencomponents'}.
	\end{proof}

\begin{remark}
 In outgoing EF coordinates $w=u$,
we have $\h^{ur}(r_+)=-(r_+^2+a^2), \h^{r\phim}(r_+)=-a$, and thus
\[
\mathcal E_+[\psi]
=2\Big((r_+^2+a^2)(\pr_u\psi)_+
+a(\pr_\phim\psi)_+\Big).
\]
In ingoing EF coordinates $w=v$, we have $\h^{vr}(r_+)=r_+^2+a^2, \h^{r\phip}(r_+)=a$, and thus
\[
\mathcal E_+[\psi]
=-2\Big((r_+^2+a^2)(\pr_v\psi)_+
+a(\pr_\phip\psi)_+\Big).
\]
\end{remark}
\begin{remark}  
\lab{rem:noincomingrad}
   In outgoing coordinates  the boundary $r=r_+$  corresponds to the past
   event horizon. For solutions of $\square_\g\psi=N$ supported in a bounded region of $\tauh$\footnote{More generally, for solutions supported to the
   future of a spacelike hypersurface.}, the boundary term \(\mathcal E_+\) in
   \eqref{eq:Whiting-metric-general} vanishes. In the ingoing case, however,
   $r=r_+$ is the future event horizon and the corresponding boundary term
   cannot be neglected.
\end{remark}

\subsubsection{Geometric character of the  transformed metric}
We continue to  treat simultaneously the outgoing and ingoing choices  for the Kerr metric. 
\[
\varepsilon=-1\quad\hbox{for }w=u,\qquad
\varepsilon=+1\quad\hbox{for }w=v.
\]
We write the rescaled inverse metric  $\hbt+O$ in coordinates $(\wt, \rt, \vphit, \tht )$ in the following matrix form 
\beq\lab{eq:Hbt-rescaled-inverse-matrix}
\Hbt=\begin{pmatrix}
\hbt^{\wt\wt}+a^2\sin^2\tht & \hbt^{\wt\rt} & a(1-\varepsilon X) &0\\
\hbt^{\wt\rt} & \hbt^{\rt\rt} & 0&0\\
a(1-\varepsilon X) & 0 & \csc^2\tht&0\\
0&0&0 &1
\end{pmatrix}.
\eeq

\begin{lemma}
\lab{lemma:det-Hbt-Pepsilon}
We have
\beq\lab{eq:det-Hbt-Pepsilon}
\bsplit
\det \Hbt
&=S^{-2}X^2\csc^2\tht\,
P_\varepsilon(X,\tht),\\
P_\varepsilon(X,\tht)
&=-(m^2-a^2\cos^2\tht)X^2
-4\varepsilon a^2\cos^2\tht\,X
+4a^2\cos^2\tht.
\end{split}
\eeq
\end{lemma}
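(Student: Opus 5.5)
The plan is a direct cofactor expansion of \eqref{eq:Hbt-rescaled-inverse-matrix}, followed by substitution of the explicit components \eqref{eq:hbt-gencomponents'} from Proposition \ref{Lemma:Whiting-metric-general}. First, the $\tht$-row and $\tht$-column of $\Hbt$ are trivial: the only nonzero entry there is the diagonal $1$. So $\det\Hbt$ equals the determinant of the $3\times3$ block in $(\wt,\rt,\vphit)$. I expand that block along the $\rt$-row, which has a zero in the $\vphit$ slot, to obtain
\[
\det\Hbt=\csc^2\tht\Big[\hbt^{\rt\rt}\big(\hbt^{\wt\wt}+a^2\sin^2\tht-a^2\sin^2\tht\,(1-\varepsilon X)^2\big)-(\hbt^{\wt\rt})^2\Big].
\]
Here I have pulled out the common factor $\csc^2\tht$ after writing $a^2(1-\varepsilon X)^2\hbt^{\rt\rt}=\csc^2\tht\cdot a^2\sin^2\tht(1-\varepsilon X)^2\hbt^{\rt\rt}$.

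Next I use $\varepsilon^2=1$ to simplify the angular correction:
\[
1-(1-\varepsilon X)^2=2\varepsilon X-X^2=-X(X-2\varepsilon).
\]
Then I substitute $\hbt^{\rt\rt}=S^{-2}X(X-2\varepsilon)$, $\hbt^{\wt\rt}=S^{-1}X\big((R-m)X-2\varepsilon R\big)$ and $\hbt^{\wt\wt}=X(\Delta_RX-2\varepsilon A_R)$. Every term then carries the factor $S^{-2}X^2$, and the bracket becomes $S^{-2}X^2Q$ with
\[
Q=(X-2\varepsilon)(\Delta_RX-2\varepsilon A_R)-a^2\sin^2\tht\,(X-2\varepsilon)^2-\big((R-m)X-2\varepsilon R\big)^2 .
\]

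The remaining step, and the only real content, is to show that $Q=P_\varepsilon(X,\tht)$. In particular the $R$-dependence must cancel completely, which is the nontrivial consistency check on the transform. I will compare coefficients of powers of $X$ using $\Delta_R=R^2-2mR+a^2$ and $A_R=R^2+a^2$.
\begin{itemize}
\item The $X^2$ coefficient is $\Delta_R-a^2\sin^2\tht-(R-m)^2=a^2\cos^2\tht-m^2$.
\item The $X$ coefficient is $\varepsilon\big(-2A_R-2\Delta_R+4a^2\sin^2\tht+4R(R-m)\big)=-4\varepsilon a^2\cos^2\tht$.
\item The constant term is $4A_R-4a^2\sin^2\tht-4R^2=4a^2\cos^2\tht$.
\end{itemize}
These match \eqref{eq:det-Hbt-Pepsilon}. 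The main obstacle is purely bookkeeping: getting the signs of the $\varepsilon$ cross terms right and keeping the $a^2\sin^2\tht$ contributions from both the $\wt\wt$ entry and the $\wt\vphit$ entry. This is why I combine those two contributions into the single factor $-X(X-2\varepsilon)$ before expanding.
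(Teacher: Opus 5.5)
Your proof is correct and is essentially the paper's argument: the paper writes $\det\Hbt=\csc^2\tht\,\det K$, where $K$ is the Schur complement of the angular block. The $\wt\wt$ entry of $K$ is $\hbt^{\wt\wt}+a^2\sin^2\tht\big(1-(1-\varepsilon X)^2\big)$, exactly your combined term, and your cofactor expansion along the $\rt$-row produces the same bracket $d(A-c^2/e)-b^2$; the only cosmetic difference is that you keep the factor $X-2\varepsilon$ and compare coefficients, while the paper first collects $A-c^2/e$ into $(R^2-2mR+a^2\cos^2\tht)X^2-2\varepsilon(R^2+a^2\cos^2\tht)X$ before expanding.
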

\begin{proof}
The matrix   can be written in the form.
\[
\Hbt=
\begin{pmatrix}
A&b&c&0\\
b&d&0&0\\
c&0&e&0\\
0&0&0&1
\end{pmatrix},
\]
where
\[
A=\hbt^{\wt\wt}+a^2\sin^2\tht,\qquad
b=\hbt^{\wt\rt},\qquad
c=a(1-\varepsilon X),\qquad
d=\hbt^{\rt\rt},\qquad
e=\csc^2\tht .
\]
The angular block $\operatorname{diag}(e,1)$ is positive.  Hence, by the
Schur complement criterion (see Lemma \ref{lemma:Schur-complement-positivity})
$\Hbt$ is positive definite if and only if
\[
K:=
\begin{pmatrix}
A-\frac{c^2}{e}&b\\
b&d
\end{pmatrix}
=
\begin{pmatrix}
\hbt^{\wt\wt}
+a^2\sin^2\theta\big(1-(1-\varepsilon X)^2\big)&\hbt^{\wt\rt}\\
\hbt^{\wt\rt}&\hbt^{\rt\rt}
\end{pmatrix}
\]
is positive definite.  We compute
\begin{align*}
A-\frac{c^2}{e}&=\hbt^{\wt\wt}+a^2\sin^2\tht\big(1-(1-\varepsilon X)^2\big)\\
&=\Delta_R X^2-2\varepsilon A_R X
+a^2\sin^2\tht(2\varepsilon X-X^2)\\
&=(R^2-2mR+a^2\cos^2\tht)X^2
-2\varepsilon(R^2+a^2\cos^2\tht)X,\\
b&=S^{-1}X\big((R-m)X-2\varepsilon R\big),\\
d&=\hbt^{\rt\rt}=S^{-2}X(X-2\varepsilon).
\end{align*}
Therefore, writing $Y=a^2\cos^2\tht$, we have
\[
\begin{split}
\det K
&=d\Big(A-\frac{c^2}{e}\Big)-b^2\\
&=S^{-2}X^2\Big[
(X-2\varepsilon)\big((R^2-2mR+Y)X
-2\varepsilon(R^2+Y)\big)
-\big((R-m)X-2\varepsilon R\big)^2
\Big]\\
&=S^{-2}X^2\Big[
\big(R^2-2mR+Y-(R-m)^2\big)X^2\\
&\hspace{2.4cm}+
\big(-2\varepsilon(2R^2-2mR+2Y)
+4\varepsilon R(R-m)\big)X
+4(R^2+Y)-4R^2
\Big]\\
&=S^{-2}X^2\Big[
-(m^2-a^2\cos^2\tht)X^2
-4\varepsilon a^2\cos^2\tht\,X
+4a^2\cos^2\tht
\Big]\\
&=S^{-2}X^2\,P_\varepsilon(X,\tht).
\end{split}
\]
Equivalently, since $e=\csc^2\tht$, this proves
$\det\Hbt=e\det K=\csc^2\tht\det K$.
\end{proof}

We then have the following result.
\begin{proposition}
\lab{Prop:Lorentzian}
The rescaled metric $|\qt|^2\gt$ of Proposition \ref{Lemma:Whiting-metric-general} is Lorentzian  in the region where 
\beq
P_\varepsilon(X,\tht)
=-(m^2-a^2\cos^2\tht)X^2
-4\varepsilon a^2\cos^2\tht\,X
+4a^2\cos^2\tht<0,
\eeq
i.e. in the complement of the interval between  the roots of  $P_\varepsilon(X,\tht)$
\begin{align*}
X_-^{(\varepsilon)}(\tht)
&=-\frac{2|a\cos\tht|}{m-\varepsilon|a\cos\tht|}, \qquad 
X_+^{(\varepsilon)}(\tht)
=\frac{2|a\cos\tht|}{m+\varepsilon|a\cos\tht|}.
\end{align*}
The vectorfield $\T=\pr_\wt$  is timelike  in the region where the conformal metric $|\qt|^2\gt$ is Lorentzian  and  
\beq
X(X-2\varepsilon)>0.
\eeq
\end{proposition}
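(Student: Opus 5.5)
The plan is to read off the signature of $|\qt|^2\gt$ from the signature of its inverse, the matrix $\Hbt$ of \eqref{eq:Hbt-rescaled-inverse-matrix}; a symmetric matrix and its inverse have the same signature. I would work away from the axis $\sin\tht=0$, where the coordinates $(\wt,\rt,\vphit,\tht)$ are regular, and assume $X\neq 0$, since $\det\Hbt$ vanishes there.

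\textbf{Step 1 (reduction to a $2\times 2$ block).} As in the proof of Lemma \ref{lemma:det-Hbt-Pepsilon}, the block in $(\vphit,\tht)$ is $\operatorname{diag}(\csc^2\tht,1)$, which is positive definite. By Sylvester's law of inertia (the Schur complement argument of Lemma \ref{lemma:Schur-complement-positivity}), the signature of $\Hbt$ is $(+,+)$ plus the signature of the Schur complement
\[
K=\begin{pmatrix} A-c^2/e & b\\ b & d\end{pmatrix}.
\]
Hence $\Hbt$ has Lorentzian signature $(-,+,+,+)$ if and only if $K$ is indefinite, that is, if and only if $\det K<0$. Lemma \ref{lemma:det-Hbt-Pepsilon} gives $\det K=S^{-2}X^2P_\varepsilon(X,\tht)$, so for $X\neq0$ the Lorentzian condition is exactly $P_\varepsilon<0$.

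\textbf{Step 2 (roots of $P_\varepsilon$).} $P_\varepsilon$ is a quadratic in $X$ with leading coefficient $-(m^2-a^2\cos^2\tht)<0$, using $|a|<m$. So $P_\varepsilon<0$ exactly outside the closed interval between its roots. The discriminant is
\[
16a^4\cos^4\tht+16(m^2-a^2\cos^2\tht)a^2\cos^2\tht=16a^2m^2\cos^2\tht .
\]
The roots are therefore
\[
X=\frac{2|a\cos\tht|\,(\pm m-\varepsilon|a\cos\tht|)}{m^2-a^2\cos^2\tht},
\]
and simplifying gives exactly $X_+^{(\varepsilon)}$ and $X_-^{(\varepsilon)}$ as stated.

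\textbf{Step 3 (timelike character of $\T$).} Write $\gt_{\wt\wt}=\gt(\T,\T)$. It equals $|\qt|^{-2}$ times the $(\wt,\wt)$ entry of $\Hbt^{-1}$; I take $|\qt|^2>0$ as the positive conformal factor. By Cramer's rule, $(\Hbt^{-1})_{\wt\wt}$ is the minor obtained by deleting the $\wt$ row and column, divided by $\det\Hbt$. That minor is block diagonal, $\operatorname{diag}(d,\csc^2\tht,1)$, with determinant $S^{-2}X(X-2\varepsilon)\csc^2\tht$. Therefore
\[
(\Hbt^{-1})_{\wt\wt}=\frac{X(X-2\varepsilon)}{X^2P_\varepsilon(X,\tht)} .
\]
In the Lorentzian region $P_\varepsilon<0$, so $\T$ is timelike exactly when $X(X-2\varepsilon)>0$.

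\textbf{Main obstacle.} The only delicate point is bookkeeping rather than analysis:
\begin{itemize}
\item checking that the inverse-matrix/cofactor identification is legitimate, which requires $\det\Hbt\neq0$, hence $X\neq 0$ and $P_\varepsilon\neq0$;
\item treating the axis $\sin\tht=0$ separately, e.g. by passing to Cartesian-type angular coordinates or by continuity, since the $\csc^2\tht$ factors cancel in the final formulas.
\end{itemize}
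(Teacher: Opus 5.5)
Your proposal is correct and follows essentially the paper's route: the Schur-complement reduction to $K$ with $\det K=S^{-2}X^2P_\varepsilon$ from Lemma \ref{lemma:det-Hbt-Pepsilon}, then the cofactor formula $(\Hbt^{-1})_{\wt\wt}=\hbt^{\rt\rt}\csc^2\tht/\det\Hbt=\hbt^{\rt\rt}/\det K$. You also check the roots of $P_\varepsilon$ explicitly and flag the excluded sets $X=0$ and $\sin\tht=0$, both of which the paper leaves implicit; there is one harmless slip: since $\Hbt=|\qt|^2\gt^{-1}$, one has $\gt_{\wt\wt}=|\qt|^{2}(\Hbt^{-1})_{\wt\wt}$, not $|\qt|^{-2}(\Hbt^{-1})_{\wt\wt}$, which does not affect the sign argument.
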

\begin{proof}
According to Lemma \ref{lemma:det-Hbt-Pepsilon},  the rescaled metric $|\qt|^2\gt$ is Lorentzian if and only if $ P_\varepsilon(X,\tht)<0$. That is, $|\qt|^2\gt$ is Lorentzian in the complement of the interval between  the roots of  $P_\varepsilon(X,\tht)$
\begin{align*}
X_-^{(\varepsilon)}(\tht)
&=-\frac{2|a\cos\tht|}{m-\varepsilon|a\cos\tht|}, \qquad 
X_+^{(\varepsilon)}(\tht)
=\frac{2|a\cos\tht|}{m+\varepsilon|a\cos\tht|}.
\end{align*}
 We next observe that
\[
 |\qt|^{-2}\gt_{\wt\wt}=\frac{\hbt^{\rt\rt}\csc^2\theta}{\det \Hbt}=\frac{\hbt^{\rt\rt}}{\det K}.
\]
Therefore,  the vectorfield $\pr_\wt= T$ is timelike precisely  in the region where  the conformal metric $|\qt|^2\gt$ is Lorentzian and $\hbt^{\rt\rt}=S^{-2}X(X-2\varepsilon)>0$.
\end{proof}
 
 We now discuss the geometric character of $|\qt|^2\gt$.
\begin{proposition}
\lab{Prop:general-signature-classification}
The rescaled metric $|\qt|^2\gt$ is degenerate on $X=0$ and on the hypersurfaces $X=X_{\pm}^{(\varepsilon)}(\tht)$ where $P_\varepsilon=0$.  

At the points where $X\neq0,X_{\pm}^{(\varepsilon)}(\tht)$, the signature of 
$|\qt|^2\gt$ is determined by the signs of
\[
P_\varepsilon(X,\tht),\qquad X(X-2\varepsilon).
\]
More precisely,
\[
\begin{array}{c|c|c}
\hbox{Signs} & \hbox{Character of }||\qt|^2\gt & \hbox{Character of }\T
\\ \hline
P_\varepsilon<0,\ X(X-2\varepsilon)>0
& \hbox{Lorentzian} & \hbox{timelike} 
\\
P_\varepsilon<0,\ X(X-2\varepsilon)=0
& \hbox{Lorentzian} & \hbox{null} 
\\
P_\varepsilon<0,\ X(X-2\varepsilon)<0
& \hbox{Lorentzian} & \hbox{spacelike} 
\\
P_\varepsilon>0,\ X(X-2\varepsilon)>0
& \hbox{Riemannian} & \hbox{--} 
\\
P_\varepsilon>0,\ X(X-2\varepsilon)<0
& \hbox{split }(2,2) & \hbox{--} 
\end{array}
\]
In the Lorentzian region, the hypersurface $X=2\varepsilon$ is null.
\end{proposition}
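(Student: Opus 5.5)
The plan is to work entirely with the matrix $\Hbt$ of \eqref{eq:Hbt-rescaled-inverse-matrix} and its reduced $2\times2$ block $K$ from the proof of Lemma \ref{lemma:det-Hbt-Pepsilon}. The signature of $\Hbt$ is the signature of $K$ plus two positive directions, coming from the positive angular block $\operatorname{diag}(\csc^2\tht,1)$ via the Schur complement. Since $|\qt|^2\gt$ is the inverse of $\Hbt$, it has the same signature wherever $\Hbt$ is invertible.

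\emph{Degeneracy.} By Lemma \ref{lemma:det-Hbt-Pepsilon}, $\det\Hbt=S^{-2}X^2\csc^2\tht\,P_\varepsilon$. Hence $\Hbt$ is singular exactly on $X=0$ and on $P_\varepsilon=0$, i.e. on $X=X_\pm^{(\varepsilon)}(\tht)$. There the (inverse) metric degenerates. Away from these sets, $\det K=S^{-2}X^2P_\varepsilon$ has the sign of $P_\varepsilon$.

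\emph{Signature.} A real symmetric $2\times2$ matrix $K$ with nonzero determinant behaves as follows:
\begin{itemize}
\item if $\det K<0$, it is indefinite, with signature $(1,1)$;
\item if $\det K>0$, it is definite, with sign given by that of either diagonal entry, in particular $d=\hbt^{\rt\rt}=S^{-2}X(X-2\varepsilon)$.
\end{itemize}
Consequently:
\begin{itemize}
\item $P_\varepsilon<0$ gives $\Hbt$ of signature $(1,3)$, i.e. Lorentzian, regardless of the sign of $X(X-2\varepsilon)$.
\item $P_\varepsilon>0$ and $X(X-2\varepsilon)>0$ give $K$ positive definite, so $\Hbt$ is Riemannian.
\item $P_\varepsilon>0$ and $X(X-2\varepsilon)<0$ give $K$ negative definite, so $\Hbt$ has split signature $(2,2)$.
\end{itemize}
The case $P_\varepsilon>0$ with $X(X-2\varepsilon)=0$ cannot occur: $X=0$ is excluded, and at $X=2\varepsilon$ one computes $P_\varepsilon(2\varepsilon,\tht)=-4m^2<0$. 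This check is routine and also justifies leaving that row out of the table.

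\emph{Character of $\T$.} As in the proof of Proposition \ref{Prop:Lorentzian}, Cramer's rule gives $|\qt|^{-2}\gt_{\wt\wt}=\hbt^{\rt\rt}/\det K$. In the Lorentzian region $\det K<0$, so $\gt(\T,\T)$ has the sign opposite to $X(X-2\varepsilon)$. Thus $\T$ is timelike, null or spacelike according as $X(X-2\varepsilon)$ is $>0$, $=0$ or $<0$. This fills the first three rows of the table. No statement is made about $\T$ in the non-Lorentzian rows.

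\emph{Nullity of $X=2\varepsilon$.} The hypersurface $\{X=2\varepsilon\}$ is a level set of $\rt$ (since $X=S(\rt-\Rt)$). Its conormal $d\rt$ has squared length $|\qt|^{-2}(\hbt^{\rt\rt}+O^{\rt\rt})=|\qt|^{-2}\hbt^{\rt\rt}$, because $O$ has no $\rt$ components. This vanishes exactly at $X=2\varepsilon$. A hypersurface with null conormal in a Lorentzian metric is null, and we checked above that $P_\varepsilon<0$ there, so the hypersurface lies in the Lorentzian region.

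The only delicate points are bookkeeping: Schur complement signature additivity (Sylvester inertia) and confirming $P_\varepsilon(2\varepsilon)<0$. I expect no real obstacle; the main care is to state the inertia argument cleanly, rather than only the positive-definiteness criterion of Lemma \ref{lemma:Schur-complement-positivity}.
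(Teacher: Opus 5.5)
Your proposal is correct and follows the paper's proof essentially step by step. You use the determinant formula of Lemma \ref{lemma:det-Hbt-Pepsilon} for the degeneracy sets, reduce the signature to the $2\times2$ Schur complement $K$ and add back the positive angular block, use the Cramer's-rule identity $|\qt|^{-2}\gt_{\wt\wt}=\hbt^{\rt\rt}/\det K$ for the causal character of $\T$, and use the conormal $d\rt$ for the nullity of $X=2\varepsilon$. Two small additions go beyond what the paper states: the check $P_\varepsilon(2\varepsilon,\tht)=-4m^2<0$, which shows $X=2\varepsilon$ lies entirely in the Lorentzian region and explains the absent table row, and the explicit appeal to inertia additivity, which the paper leaves implicit.
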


\begin{proof}
By Lemma \ref{lemma:det-Hbt-Pepsilon}, $\det\Hbt=S^{-2}X^2\csc^2\tht\,P_\varepsilon$.  Hence the metric
is degenerate when $X=0$ or when $P_\varepsilon=0$, i.e. at
$X=X_\pm^{(\varepsilon)}(\tht)$.

We recall the matrix $\Hbt$ introduced in \eqref{eq:Hbt-rescaled-inverse-matrix}. The
$(\vphit,\tht)$-angular Schur block is positive, so the signature of
$\Hbt$ is determined by the reduced $2\times2$ Schur complement
\[
K=
\begin{pmatrix}
\hbt^{\wt\wt}
+a^2\sin^2\theta\big(1-(1-\varepsilon X)^2\big)&\hbt^{\wt\rt}\\
\hbt^{\wt\rt}&\hbt^{\rt\rt}
\end{pmatrix}.
\]
Since $S^{-2}>0, X\neq 0$, the sign of $\det K=S^{-2}X^2P_{\varepsilon}$ is the same as that of $P_{\varepsilon}$.

If $P_\varepsilon<0$, then $K$ has one positive and one negative
eigenvalue.  Adding the positive angular block gives Lorentzian signature.
In this Lorentzian region,
\[
|\qt|^{-2}\gt_{\wt\wt}
=\frac{\hbt^{\rt\rt}\csc^2\tht}{\det\Hbt}=\frac{\hbt^{\rt\rt}}{\det K},
\]
and $\det K<0$.  Hence $\T=\pr_\wt$ is timelike, null, or spacelike
accordingly as $\hbt^{\rt\rt}=S^{-2}X(X-2\varepsilon)$, equivalently $X(X-2\varepsilon)$, is
positive, zero, or negative.

If $P_\varepsilon>0$, then $K$ is definite.  It is positive definite when
$\hbt^{\rt\rt}>0$, i.e. when $X(X-2\varepsilon)>0$, and negative definite
when $\hbt^{\rt\rt}<0$, i.e. when $X(X-2\varepsilon)<0$.  Adding the positive
angular block gives respectively a Riemannian metric or a split-signature
$(2,2)$ metric.

The hypersurface $X=2\varepsilon$ is
null in the Lorentzian region because its normal is proportional to $d\rt$
and
\[
|\qt|^2\gt^{\alpha\beta}\pr_\alpha \rt\pr_\beta \rt
=\hbt^{\rt\rt}=S^{-2}X(X-2\varepsilon)
\]
which vanishes there.  This completes the proof.
\end{proof}


\subsubsection{Relation between the cases
$\varepsilon=-1$ and $\varepsilon=1$}
\lab{subsection:precise-relation-epsilon-pm}

There are two relations.  Firstly, let $X$ be the outgoing ($\varepsilon=-1$)
slope and $Y$ the ingoing ($\varepsilon=1$) slope.  The algebraic reflection
\beq\lab{eq:epsilon-signature-reflection}
Y=-X
\eeq
gives
\[
P_1(Y,\tht)=P_{-1}(X,\tht),\qquad Y(Y-2)=X(X+2),
\]
and therefore identifies the complete signature tables in Proposition \ref{Prop:general-signature-classification}, in reversed order.
In particular, $Y_-^{(1)}=-X_+^{(-1)}, Y_+^{(1)}=-X_-^{(-1)}$.

Secondly, the outgoing transform is adapted to the past horizon,
whereas the ingoing transform carries the future-horizon endpoint term which
has to be kept in the identity.

Therefore, modulo the horizon endpoint distinction
just mentioned, the two cases $\varep=-1$ and $\varep=+1$ provide equivalent geometric character of the rescaled metric $|\qt|^2\gt$. Consequently, in what follows, {\bf we will treat
only the case $\varep=-1$}. We keep track of the  quantity $X=S(\rt-\Rt)$   which, in principle, 
can take any value  in  $(-\infty, +\infty)$.

\subsection{Geometric properties of the transformed metric}

As mentioned above  we restrict our attention to the case $\varep=-1$.  Hence in Definition \ref{Def:WhitingTr}   $w$ is to be replaced by the 
 outgoing coordinate $u$ and $\vphi=\phim$.  We shall also replace $\wt$ by $\vt$ rather than $\ut$. The reason for this, as  we shall see below, is that   in the case  $X<-2$,     $\vt$  corresponds in fact to  an ingoing variable. Therefore
 \[
\psit(\vt, \rt,  \tht, \vphit ):=\int_{r_+}^\infty \psi\big(\vt+B(\rt,r), r, \tht,  \vphit\big)\,dr, \qquad B(\rt,r):=S(\rt-\Rt)(r-R).
\]
\begin{definition}[Canonical representation]
\lab{def:canonical-representation}
We define the
canonical representation by
\beq\lab{eq:canonicalrepr}
S=-\frac{2}{r_+-r_-},\qquad \Rt=R=r_-,\qquad
X=-2\frac{\rt-r_-}{r_+-r_-}.
\eeq
Equivalently,
\[
\rt=r_--\frac{r_+-r_-}{2}X.
\]
 In particular,
\[
X<-2\Longleftrightarrow \rt>r_+,\qquad
-2<X<0\Longleftrightarrow r_-<\rt<r_+,\qquad
X>0\Longleftrightarrow \rt<r_-,
\]
while $X=-2$ corresponds to $\rt=r_+$ and $X=0$ corresponds to
$\rt=r_-$.  

The canonical representation of Whiting transform is
\beq\lab{eq:canonical-Whiting-transform-exterior}
\psit(\vt,\rt,\tht,\vphit)
=\int_{r_+}^{\infty}
\psi\left(\vt-2\frac{\rt-r_-}{r_+-r_-}(r-r_-),r,\tht,\vphit\right)\,dr .dence 
\eeq
\end{definition}

\begin{figure}[!ht]
\qquad \qquad  \includegraphics[width=5.4in]{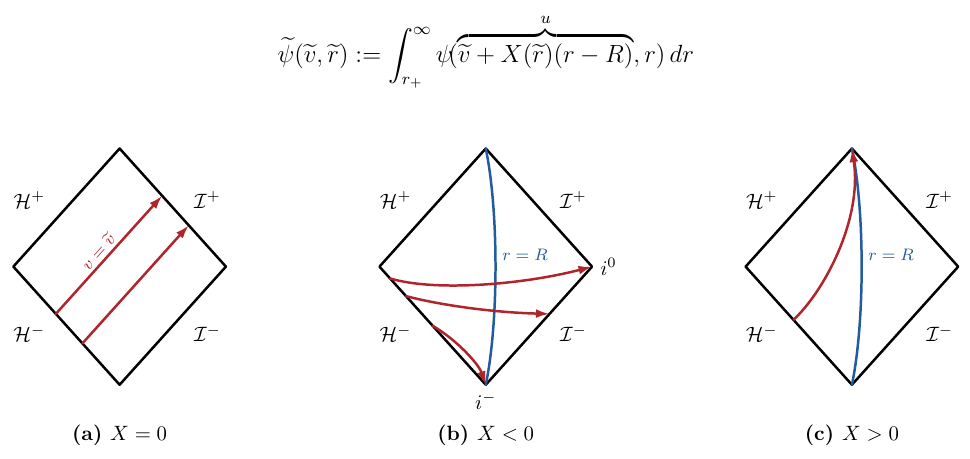}  
\caption{ Integration curves for  the Whiting transform  depending  on the sign of $X$}
\label{fig:Wtransform}
\end{figure}

Expressed relative to this representation the metric $\hbt$ takes the form 
 \beq
 \lab{eqhbt-canonical}
 \bsplit
 \hbt^{\rt\rt}&=\De(\rt),\qquad
 \hbt^{\vt\rt}=\hbt^{\rt\vt}=\rt^2-r_-^2,\qquad
 \hbt^{\vt\vt}=-\frac{8mr_-}{r_+-r_-}(\rt-r_-)\\
 \hbt^{\vt\vphit}&=\hbt^{\vphit\vt}=-\frac{2a(\rt-r_-)}{r_+-r_-}.
 \end{split}
 \eeq
  \begin{remark}
 In the particular case of the Schwarzschild  $\g$, i.e.  $a=0$,  the metric $\gt$ coincides (with the choice $|\qt|^2=\rt^2$) with  the original metric  $\g$ in ingoing coordinates. 
\end{remark}
\begin{proof}
Indeed  in  that case $r_-=0$,  $r_+ = 2m$,  therefore  in  the canonical  representation,  
\[
\hbt^{\rt\rt} = \De(\rt), \qquad  \hbt^{\vt\rt} =\hbt^{\rt\vt}= \rt^2,\qquad  \hbt^{\vt\vt}=
 \hbt^{\vt\vphit}=\hbt^{\vphit\vt}=0.
\]
which corresponds precisely to the Schwarzschild metric in ingoing coordinates.
\end{proof} 
 
\begin{remark}
The canonical representation\footnote{This is also  closer in spirit with the original Whiting  mode based integral   transformation.} is only useful to make comparisons with the original Kerr  or Schwarzschild spacetime, such as in the remark above.   Throughout  our  work, however, we will use the general representation.
\end{remark}


\subsubsection{Geometric character of the transformed metric and vectorfield $\T$}

  Denoting
	  \[
	  P(X, \tht)= P_{-1} (X, \tht),  \qquad
	  X_{-}(\tht)=X_-^{(-1)}(\tht), \quad
	  X_{+}(\tht)=X_+^{(-1)}(\tht),
	  \]
	  we have
	  \begin{align*}
X_-(\theta)
&=-\frac{2|a\cos\tht|}{m+|a\cos\tht|}, \qquad 
X_+(\theta)
=\frac{2|a\cos\tht|}{m-|a\cos\tht|}.
\end{align*}
Note  that $-2 <X_-(\tht) \leq 0\leq  X_+(\tht)$. Away from the equator $\cos\tht=0$ these inequalities are strict; at $\cos\tht=0$, the two roots collapse to $X_-(\tht)=X_+(\tht)=0$.

 The following proposition describes the geometric character 
 of the rescaled metric  $|\qt|^2\gt$ and the causality properties  of the vectorfield $\T$  depending on the  range  of values of $X$.
 
\begin{proposition}
\lab{Prop:outgoing-Snegative-regions}
Away from the equatorial set
$\cos\tht=0$, the following holds true.
\[
\begin{array}{c|c|c}
\hbox{Region} & \hbox{Character of }|\qt|^2\gt & \hbox{Character of }\T
\\ \hline
X<-2 & \hbox{Lorentzian} & \hbox{timelike} 
\\
X=-2 & \hbox{Lorentzian} & \hbox{null} 
\\
-2<X<X_-(\th) & \hbox{Lorentzian} & \hbox{spacelike} 
\\
X=X_-(\th) & \hbox{degenerate} & \hbox{--} 
\\
X_-(\th)<X<0 & \hbox{split }(2,2) & \hbox{--} 
\\
X=0 & \hbox{singular} & \hbox{--} 
\\
0<X<X_+(\th) & \hbox{Riemannian} & \hbox{--} 
\\
X=X_+(\th) & \hbox{degenerate} & \hbox{--} 
\\
X>X_+(\th) & \hbox{Lorentzian} & \hbox{timelike} 
\end{array}
\]
\end{proposition}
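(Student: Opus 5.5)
The plan is to specialize Proposition \ref{Prop:general-signature-classification} to $\varepsilon=-1$ and then locate the relevant quantities along the $X$-axis. For $\varepsilon=-1$ the two controlling quantities are
\[
P(X,\tht)=-(m^2-a^2\cos^2\tht)X^2+4a^2\cos^2\tht\,X+4a^2\cos^2\tht,
\qquad X(X+2).
\]
Set $c=|a\cos\tht|$, so that $0<c<m$ away from the equator (using $|a|<m$). The leading coefficient $-(m^2-c^2)$ of $P$ is then negative, so $P<0$ outside the closed interval $[X_-,X_+]$ and $P>0$ strictly inside it. The roots are
\[
X_\pm=\frac{2c^2\pm 2cm}{m^2-c^2},
\]
which give $X_-=-2c/(m+c)$ and $X_+=2c/(m-c)$, in agreement with Proposition \ref{Prop:Lorentzian}.

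The key ordering step is to check that $-2<X_-<0<X_+$. Indeed $X_-<0<X_+$ is clear since $c>0$ and $c<m$. For the lower bound, $X_->-2$ is equivalent to $c<m+c$, which always holds. Consequently the second quantity $X(X+2)$ is positive for $X<-2$ and for $X>0$, vanishes at $X=-2$ and at $X=0$, and is negative on $(-2,0)$.

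The table is then read off region by region from the signature table of Proposition \ref{Prop:general-signature-classification}.
\begin{itemize}
\item For $X<-2$, both $X<X_-$ (so $P<0$) and $X(X+2)>0$ hold, giving a Lorentzian metric with $\T$ timelike.
\item At $X=-2$ we still have $P<0$ while $X(X+2)=0$, so the metric is Lorentzian and $\T$ is null. This matches the statement in that proposition that $X=2\varepsilon$ is null.
\item On $-2<X<X_-$, $P<0$ and $X(X+2)<0$, giving a Lorentzian metric with $\T$ spacelike.
\item On $X_-<X<0$, $P>0$ and $X(X+2)<0$, giving split signature $(2,2)$.
\item On $0<X<X_+$, $P>0$ and $X(X+2)>0$, giving a Riemannian metric.
\item For $X>X_+$, $P<0$ and $X(X+2)>0$, giving a Lorentzian metric with $\T$ timelike.
\end{itemize}

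At $X=X_\pm$ the determinant $\det\Hbt=S^{-2}X^2\csc^2\tht\,P$ from Lemma \ref{lemma:det-Hbt-Pepsilon} vanishes with $X\neq 0$. The inverse rescaled metric therefore has a kernel, which is what "degenerate" means in the table. At $X=0$ the determinant vanishes to higher order because of the factor $X^2$. More precisely, every entry of the $(\wt,\rt)$ block of $\hbt$ is $O(X)$ by \eqref{eq:hbt-gencomponents'}, so the Schur complement $K$ vanishes identically at $X=0$ and $\hbt$ has rank-two degeneracy there. This is the sense in which $X=0$ is singular: the covariant metric blows up in two directions rather than losing a single eigenvalue. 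I expect this to be the only step needing care, namely distinguishing the "singular" label at $X=0$ from the "degenerate" label at $X_\pm$, and I would justify it by exhibiting $K|_{X=0}=0$. The remaining steps are sign bookkeeping.
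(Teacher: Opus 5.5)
Your proposal is correct and is essentially the paper's argument: the paper proves the statement simply by specializing Proposition \ref{Prop:general-signature-classification} to $\varepsilon=-1$, using the ordering $-2<X_-(\tht)\le 0\le X_+(\tht)$ recorded just before the statement, and this is exactly your sign bookkeeping. Your extra justification of the label at $X=0$ goes beyond what the paper does and is sound, with two small corrections: the angular correction $a^2\sin^2\tht\,(2\varepsilon X-X^2)$ in $K_{11}$ must also be noted to vanish at $X=0$, and it is the full matrix $\Hbt$ that acquires a two-dimensional kernel there, not $\hbt$, which vanishes outright at $X=0$.
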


\begin{proof}
   The proof follows from Proposition \ref{Prop:general-signature-classification}. See also the Figure  \ref{fig:KKt}.
   \end{proof}

\begin{remark}
\lab{rem:conformalfactor}
Recall from \eqref{eq:Hbt-rescaled-inverse-matrix} that $\Hbt$ is the
matrix of components of the rescaled inverse metric $|\qt|^2\gt$.  In the region where $\det \Hbt\neq 0$, we choose the conformal factor as
\beq\lab{eq:conformal-factor-L2-branches}
|\qt|^2
:=\sin\tht\,\sqrt{|\det\Hbt|}
=\frac{|X|}{|S|}\sqrt{|P(X,\tht)|}.
\eeq
Since
\[
\det\Hbt
=|\qt|^8\det(\gt^{-1})
=\frac{|\qt|^8}{\det\gt},
\]
it follows that 
\begin{align*}
|\qt|^2\frac{1}{\sqrt{|\det \gt|}}\pr_{\at}(\sqrt{|\det\gt|}\gt^{\at\bt}\pr_{\bt}\psit)&=\frac{1}{\sin\tht}\pr_{\at}(\sin\tht|\qt|^2\gt^{\at\bt}\pr_{\bt}\psit)\\
&=\pr_{\at}\big(\hbt^{\at\bt}\pr_{\bt}\psit\big)+\frac{1}{\sin\tht}\pr_{\at}\big(\sin\tht O^{\at\bt}\pr_{\bt}\psit\big).
\end{align*}
This explains why we use the notation $|\qt|^2\square_{\gt}$ for the expression $\pr_{\at}\big(\hbt^{\at\bt}\pr_{\bt}\psit\big)+\frac{1}{\sin\tht}\pr_{\at}\big(\sin\tht O^{\at\bt}\pr_{\bt}\psit\big)$ in \eqref{eq:defofqtsquaregt}. We note that this expression in \eqref{eq:defofqtsquaregt} is defined everywhere in $\DDt$ while the Laplace-Beltrami operator of $\gt$ is only defined for $\det \Hbt\neq 0$.

We also compute
\[
|\qt|^{-2}dv_\gt
=\frac{|\qt|^2}{\sqrt{|\det\Hbt|}}\,
d\taut\,d\rt\,d\tht\,d\vphit.
\]
The choice \eqref{eq:conformal-factor-L2-branches} therefore gives
\beq
\lab{eq:flatvolume}
|\qt|^{-2}dv_\gt
=\sin\tht\,d\taut\,d\rt\,d\tht\,d\vphit
=d\taut\,d\rt\,d\omega.
\eeq
The identity is first obtained in the region where $\det \Hbt\neq 0$ and defines
the corresponding integral density. It extends to the region where $\det \Hbt=0$ by the right hand side of
\eqref{eq:flatvolume}; this defines $|\qt|^{-2}dv_\gt$ over
all of $\DDt$. 
\end{remark}

\subsubsection{BL--type  coordinates}  
\lab{subsection:BLcoords}


To   write the  metric $\gt$  in a more  symmetric form   we   make the change of coordinates
\beq
\lab{eq:BL-Whiting-general}
\tt=\vt-\rt^*(\rt), \qquad
\frac{d\rt^*}{d\rt}
=\frac{(R-m)S(\rt-\Rt)+2 R}
{\rt-\Rt+\frac{2}{S}} .
\eeq
designed to  imply  $\hbt^{\tt\rt}=0$. Indeed,
keeping the other coordinates  unchanged,
\begin{align*}
\hbt^{\tt\rt}
&=\hbt^{\vt\vt}\frac{\pr\tt}{\pr\vt}\frac{\pr\rt}{\pr\vt}
+\hbt^{\vt\rt}\frac{\pr\tt}{\pr\vt}\frac{\pr\rt}{\pr\rt}
+\hbt^{\vt\rt}\frac{\pr\rt}{\pr\vt}\frac{\pr\tt}{\pr\rt}
+\hbt^{\rt\rt}\frac{\pr\tt}{\pr\rt}\frac{\pr\rt}{\pr\rt}\\
&= \hbt^{\vt\rt}
+\hbt^{\rt\rt}
\bigg( -\frac{(R-m)S(\rt-\Rt)+ 2R}
{\rt-\Rt+\frac{2}{S}}\bigg)=0,\\
\hbt^{\tt \vphit}
&=\hbt^{\vt \vphit} \frac{\pr\tt}{\pr\vt}
\frac{\pr\vphit}{\pr\vphit}= \hbt^{\vt \vphit}.
\end{align*}
Also,
\begin{align*}
\hbt^{\tt\tt}
&=\hbt^{\vt\vt}\frac{\pr\tt}{\pr\vt}\frac{\pr\tt}{\pr\vt}
+2\hbt^{\vt\rt}\frac{\pr\tt}{\pr\vt}\frac{\pr\tt}{\pr\rt}
+\hbt^{\rt\rt}\frac{\pr\tt}{\pr\rt}\frac{\pr\tt}{\pr\rt}\\
&=\hbt^{\vt\vt}+\hbt^{\vt\rt}\frac{\pr\tt}{\pr\rt}
+\frac{\pr\tt}{\pr\rt}
\bigg(\hbt^{\vt\rt}+\hbt^{\rt\rt}\frac{\pr\tt}{\pr\rt}\bigg)\\
&=\hbt^{\vt\vt}+\hbt^{\vt\rt}\frac{\pr\tt}{\pr\rt}
=\hbt^{\vt\vt}+\hbt^{\vt\rt}
\bigg( -\frac{(R-m)S(\rt-\Rt)+2 R}
{\rt-\Rt+\frac{2}{S}}\bigg)
=\frac{f(\rt)}{\rt-\Rt+\frac{2}{S}},
\end{align*}
where
\beq
\lab{def-f(rt)-general}
\bsplit
f(\rt)&=(a^2-m^2)S^2(\rt-\Rt)^3
+4 a^2S(\rt-\Rt)^2+4a^2(\rt-\Rt).
\end{split}
\eeq
The non-vanishing components of $\hbt$, in BL type coordinates  are therefore
 \beq
 \lab{eq:BL-hbt.coordinates-general}
  \hbt^{\tt\tt}=\frac{f(\rt)}{\rt-\Rt+\frac{2}{S}}, \quad
  \hbt^{\tt\rt}=0, \quad
  \hbt^{\rt\rt}=(\rt-\Rt)^2+\frac{2}{S}(\rt-\Rt),\quad
 \hbt^{\tt\vphit}=\hbt^{\vphit\tt}= aS(\rt-\Rt).
\eeq
  The $O$ part of the metric remains unchanged, with $\tht=\th, \vphit=\vphi$,
  \[
  O^{\a\b}= \pr_{\tht}^\a \pr_{\tht}^\b +\frac{1}{\sin^2\tht} \pr_{\vphit}^\a \pr_{\vphit}^\b +  2 a \pr_\tt^{(\a} \pr_{\vphit}^{\b)}+ a^2\sin^2 \tht \pr_\tt^\a \pr_\tt^\b.
  \]

\subsubsection{Outgoing  coordinates.}  
 \lab{section:BL-Whiting-out}

 To describe the null infinities of $\gt$, we introduce the coordinates $(\ut,\rt, \tht, \vphit_-)$  as follows (the upper sign corresponds to
  $\rt>\Rt-\frac{2}{S}$ and the lower sign to $\rt<\Rt$).
   \beq\lab{eq:BL-Whiting-outXneg}
  \ut=\tt\mp\int \sqrt{-\frac{\hbt^{\tt\tt}}{\hbt^{\rt\rt}}},\qquad \vphit_-=\vphit\mp\int\frac{ a }{\sqrt{m^2-a^2}}\frac{1}{\rt-\Rt}.
  \eeq 
  where
  \begin{align*}
 \frac{\hbt^{\tt\tt}}{\hbt^{\rt\rt}}&=-\frac{(m^2-a^2)S^2(\rt-\Rt)^2
-4 a^2S(\rt-\Rt)-4a^2}{(\rt-\Rt+\frac{2}{S})^2}\\
&=-\frac{(m^2-a^2)S^2(\rt-\Rt+\frac{2}{S})^2
-2 (m^2+a^2)S(\rt-\Rt+\frac{2}{S})+4m^2}{(\rt-\Rt+\frac{2}{S})^2} 
\end{align*}

\begin{lemma}[Metric $\gt$  in outgoing coordinates]
\lab{lem:exact-outgoing-inverse-metric}
  In the coordinates $(\ut,\rt,\tht,\vphit_-)$ defined above, the
  nonvanishing inverse metric components are given by the following
  formulas (the upper sign corresponds to
  $\rt>\Rt-\frac{2}{S}$ and the lower sign to $\rt<\Rt$):
  \beq
  \lab{eq:metricoutgoing}
  \begin{aligned}
  |\qt|^2\gt^{\ut\ut}
  &=a^2\sin^2\tht,\\
  |\qt|^2\gt^{\ut\rt}
  &=\mp\sqrt{-\hbt^{\tt\tt}\hbt^{\rt\rt}}=\mp \sqrt{m^2-a^2}\,|S|(\rt-\Rt)^2
  +O(|\rt-\Rt|),\\
  |\qt|^2\gt^{\ut\vphit_-}
  &=a+aS(\rt-\Rt)
  +\frac{ a}{\sqrt{m^2-a^2}(\rt-\Rt)}
  \sqrt{-\hbt^{\tt\tt}\hbt^{\rt\rt}}=O(a),\\
  |\qt|^2\gt^{\rt\rt}
  &=(\rt-\Rt)\left(\rt-\Rt-\frac{2\varepsilon}{S}\right),\\
  |\qt|^2\gt^{\rt\vphit_-}
  &=\mp\frac{ a}{\sqrt{m^2-a^2}}
  \left(\rt-\Rt+\frac{2}{S}\right),\\
  |\qt|^2\gt^{\tht\tht}
  &=1,\\
  |\qt|^2\gt^{\vphit_-\vphit_-}
  &=\frac{1}{\sin^2\tht}
  +\frac{a^2}{m^2-a^2}
  \frac{\rt-\Rt+\frac{2}{S}}{\rt-\Rt}.
  \end{aligned}
    \eeq
\end{lemma}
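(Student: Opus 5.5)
The plan is a direct change-of-variables computation for the inverse metric, starting from the BL-type components \eqref{eq:BL-hbt.coordinates-general} and the unchanged angular part $O$. The new coordinates are
\[
\ut=\tt\mp G(\rt),\qquad \vphit_-=\vphit\mp H(\rt),\qquad G'=\sqrt{-\hbt^{\tt\tt}/\hbt^{\rt\rt}},\qquad H'=\frac{a}{\sqrt{m^2-a^2}}\frac{1}{\rt-\Rt}.
\]
Because $\rt,\tht$ are unchanged and the Jacobian only mixes $\rt$ into $\ut,\vphit_-$, the transformation law $\gt^{\alpha'\beta'}=\pr_\mu x^{\alpha'}\pr_\nu x^{\beta'}\gt^{\mu\nu}$ gives each component in one line. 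We use $d\ut=d\tt\mp G'd\rt$, $d\vphit_-=d\vphit\mp H'd\rt$, and $\hbt^{\tt\rt}=\hbt^{\rt\vphit}=O^{\rt\cdot}=0$.

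\textbf{Step 1: the diagonal $\tt$-part.} Write the full $\tt\tt$ component as $\hbt^{\tt\tt}+a^2\sin^2\tht$ (from $O$). Then
\[
|\qt|^2\gt^{\ut\ut}=\hbt^{\tt\tt}+a^2\sin^2\tht+(G')^2\hbt^{\rt\rt}=a^2\sin^2\tht,
\]
since $G'$ was chosen exactly to cancel. Likewise
\[
|\qt|^2\gt^{\ut\rt}=\mp G'\hbt^{\rt\rt}=\mp\sqrt{-\hbt^{\tt\tt}\hbt^{\rt\rt}}.
\]
For this step I will check the sign of $\hbt^{\rt\rt}=(\rt-\Rt)(\rt-\Rt+2/S)$, which is positive in both ranges $\rt>\Rt-2/S$ and $\rt<\Rt$ (recall $S<0$). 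With $\varepsilon=-1$ this is the stated $(\rt-\Rt)(\rt-\Rt-2\varepsilon/S)$, and it also fixes $|\rt-\Rt+2/S|\,|\rt-\Rt|$ inside the square root. The asymptotic $\sqrt{m^2-a^2}|S|(\rt-\Rt)^2+O(|\rt-\Rt|)$ then follows by expanding
\[
-\hbt^{\tt\tt}\hbt^{\rt\rt}=\frac{(\rt-\Rt)\big((m^2-a^2)S^2(\rt-\Rt)^2-4a^2S(\rt-\Rt)-4a^2\big)}{\rt-\Rt+2/S}\cdot(\rt-\Rt)
\]
for large $|\rt-\Rt|$. This uses the completed-square identity displayed just before the lemma.

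\textbf{Step 2: the mixed and angular components.} We have $|\qt|^2\gt^{\rt\vphit_-}=\mp H'\hbt^{\rt\rt}$, which simplifies to the stated expression after cancelling one factor $\rt-\Rt$. For the $\ut\vphit_-$ component,
\[
|\qt|^2\gt^{\ut\vphit_-}=\big(\hbt^{\tt\vphit}+O^{\tt\vphit}\big)+G'H'\hbt^{\rt\rt},
\]
with $\hbt^{\tt\vphit}=aS(\rt-\Rt)$ and $O^{\tt\vphit}=a$. The cross term is $(\pm)(\pm)=+$, and $G'\hbt^{\rt\rt}=\sqrt{-\hbt^{\tt\tt}\hbt^{\rt\rt}}$, so we get the formula as stated. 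The $O(a)$ bound holds because $aS(\rt-\Rt)$ cancels against the leading part of $a\sqrt{m^2-a^2}|S|(\rt-\Rt)^2/(\sqrt{m^2-a^2}(\rt-\Rt))$. I need to track the sign of $\rt-\Rt$ versus $S<0$ here, and it should give cancellation in both branches. Finally,
\[
|\qt|^2\gt^{\vphit_-\vphit_-}=\csc^2\tht+(H')^2\hbt^{\rt\rt}=\csc^2\tht+\frac{a^2}{m^2-a^2}\frac{\rt-\Rt+2/S}{\rt-\Rt},
\]
and $\gt^{\tht\tht}$ is unchanged.

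The main obstacle is bookkeeping of signs and branches. This means choosing the correct branch of the square root so that $G'>0$ in the region considered, and verifying that in the $O(a)$ claim for $\gt^{\ut\vphit_-}$ the quadratic growth genuinely cancels in each branch rather than doubling. I would settle this by substituting $x=\rt-\Rt$ and expanding $\sqrt{-\hbt^{\tt\tt}\hbt^{\rt\rt}}=|S|\sqrt{m^2-a^2}\,|x|\,|x+2/S|\big(1+O(x^{-1})\big)^{1/2}$ separately for $x>-2/S$ and $x<0$.
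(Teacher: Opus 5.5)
Your proposal is correct and is exactly the direct Jacobian computation from the BL-type components \eqref{eq:BL-hbt.coordinates-general} together with the unchanged angular part $O$, which is all the paper's ``straightforward calculation'' amounts to. There is one small slip to fix. In your displayed expansion the factor $\rt-\Rt+\frac{2}{S}$ coming from $\hbt^{\rt\rt}$ is missing; it cancels the denominator and gives $-\hbt^{\tt\tt}\hbt^{\rt\rt}=(\rt-\Rt)^2\big((m^2-a^2)X^2-4a^2X-4a^2\big)$. From this form the $(\rt-\Rt)^2$ asymptotics follow at once, and so does the $O(a)$ cancellation in $\gt^{\ut\vphit_-}$ (using $|S|=-S$), in either branch.
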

\begin{proof}
Straightforward calculation.
\end{proof}

      \begin{remark} Note that  the  formulas   \eqref{eq:metricoutgoing}   are all  independent of $R$.
      \end{remark}
   
   \begin{proposition}
   \lab{prop:nullinfinities}
 In the outgoing coordinates $(\ut, \rt, \tht, \vphit_-)$ defined in \eqref{eq:BL-Whiting-outXneg}, the followings hold true.
     \begin{enumerate}
\item In the region $X=S(\rt-\Rt)\le -2$ where $\gt$ is Lorentzian,  the metric components are smooth  and  $\gt$ has a smooth past event horizon $\widetilde{\HH}^-=\{\ut, X=-2, \tht, \vphit_-\}$. Moreover, the metric $\gt$ is asymptotically flat and has a complete smooth future null infinity 
\beq
\lab{eq:negativebranchI.I^_+}
\widetilde{\II}^+=\{\ut, X=-\infty, \tht, \vphit_-\}.
\eeq
  \item In the region $X>X_+(\tht)=\frac{2|a\cos\tht|}{m-|a\cos\tht|}$ where $\gt$ is Lorentzian, $\gt$ is asymptotically flat and has a complete smooth future null infinity 
  \beq
  \lab{eq:positivebranchI.I^_+}
  \widetilde{\II}^+=\{\ut, X=\infty, \tht, \vphit_-\}. 
	  \eeq
	   \end{enumerate} 
\end{proposition}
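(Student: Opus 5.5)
Both parts of the statement will be read off directly from the explicit inverse metric in Lemma \ref{lem:exact-outgoing-inverse-metric}, working with the variable $X=S(\rt-\Rt)$ and $\varepsilon=-1$. The plan has three steps.

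\textbf{Step 1: regularity of the coordinate change.} I first record the expansions of the two radicands in \eqref{eq:BL-Whiting-outXneg} in terms of $\rho:=\rt-\Rt+\frac{2}{S}$, which is proportional to $X+2$. From the identity already written before the lemma,
\[
-\frac{\hbt^{\tt\tt}}{\hbt^{\rt\rt}}
=(m^2-a^2)S^2-\frac{2(m^2+a^2)S}{\rho}+\frac{4m^2}{\rho^2}.
\]
I then check that this quantity is strictly positive on both branches.
\begin{itemize}
\item On $X\le -2$, each term is nonnegative since $S<0$ and $\rho\ge 0$ there.
\item On $X>0$, the same expression equals $-\hbt^{\tt\tt}/\hbt^{\rt\rt}$ with $\hbt^{\rt\rt}>0$. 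Moreover $-\hbt^{\tt\tt}\propto -f$ has the sign of $(m^2-a^2)X^2-4a^2X+4a^2=-P(X,0)$. So positivity holds exactly for $X>X_+(0)$, which contains the region $X>X_+(\tht)$.
\end{itemize}
Hence $\ut$ and $\vphit_-$ are well defined and smooth, away from $X=-2$ for the first branch.

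\textbf{Step 2: the horizon at $X=-2$.} Near $X=-2$ I use that $\sqrt{-\hbt^{\tt\tt}\hbt^{\rt\rt}}$ equals
\[
\sqrt{(m^2-a^2)S^2\rho^2-2(m^2+a^2)S\rho+4m^2}\;\big|\rt-\Rt\big|/|\rho|\cdot|\rho|,
\]
so it is smooth and positive at $\rho=0$. Then every entry of \eqref{eq:metricoutgoing} is visibly smooth across $\rho=0$; in particular $|\qt|^2\gt^{\rt\vphit_-}$ is linear in $\rho$.

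Next, $|\qt|^2\gt^{\rt\rt}=S^{-2}X(X+2)$ vanishes at $X=-2$, while $\gt^{\ut\rt}\ne0$ there. So $\{X=-2\}$ is a smooth null hypersurface with null generator proportional to the gradient of $\rt$. Combined with Proposition \ref{Prop:outgoing-Snegative-regions} ($\T$ null on $X=-2$ and timelike for $X<-2$), this identifies it as a Killing horizon. Because the sign choice in $\ut$ makes $\ut$ outgoing, it is the past horizon $\widetilde{\HH}^-$.

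I will also check nondegeneracy, i.e. that the determinant is nonzero at $X=-2$. This follows from Lemma \ref{lemma:det-Hbt-Pepsilon}, since $P(-2,\tht)=-4m^2+\dots<0$; more precisely $P(-2,\tht)=-4(m^2-a^2\cos^2\tht)-8a^2\cos^2\tht+4a^2\cos^2\tht=-4m^2$.

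\textbf{Step 3: asymptotic flatness at $|X|\to\infty$.} For both branches I pass to $x:=1/|\rt-\Rt|$ and compare the leading behaviour with Kerr in outgoing EF coordinates (Lemma \ref{Lemma:EFout-coords}).
\begin{itemize}
\item The leading inverse components are $\gt^{\ut\rt}\sim \mp\sqrt{m^2-a^2}|S|(\rt-\Rt)^2$ and $\gt^{\rt\rt}\sim(\rt-\Rt)^2$.
\item The remaining components are $O(1)$, as are $\gt^{\tht\tht}$ and $\gt^{\vphit_-\vphit_-}$.
\end{itemize}
After rescaling $|\qt|^2$ with the conformal factor of Remark \ref{rem:conformalfactor}, which grows like $|X|^2\sqrt{m^2-a^2\cos^2\tht}/|S|$, I define a rescaled area radius $\hat r\sim c|\rt-\Rt|$. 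In the coordinates $(\ut,\hat r)$ the metric has Minkowski-type leading terms $-2d\ut\,d\hat r+\hat r^2\gamma_{\SSS^2}$ plus $O(\hat r^{-1})$ corrections, with $\gamma$ a smooth round-type metric.

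Compactifying with $x$, the conformal metric $x^2\gt$ extends smoothly to $x=0$ for $\ut\in\RRR$. This gives the complete future null infinity $\{X=\mp\infty\}$. Completeness follows because $\ut$ ranges over all of $\RRR$, the integrals being in $\rt$ only.

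\textbf{Main obstacle.} I expect the hardest part to be Step 3: confirming that the leading angular part is a genuine Riemannian metric on $\SSS^2$ after the $\vphit_-$ shift. The $\frac{a^2}{m^2-a^2}$ term in $\gt^{\vphit_-\vphit_-}$ and the $\sin^2\tht$ in $\gt^{\ut\ut}$ must combine consistently with the anisotropic conformal factor $\sqrt{|P|}$. I would first verify this by inverting the $(\ut,\rt,\vphit_-)$ block to leading order, exactly as in the Schur-complement computation of Lemma \ref{lemma:det-Hbt-Pepsilon}.
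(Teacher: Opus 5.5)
Your route is the paper's own: the paper's proof consists only of inspecting \eqref{eq:metricoutgoing}. Your Step 2 is a correct and more detailed version of that inspection: all entries extend smoothly across $\rho=0$, $|\qt|^2\gt^{\rt\rt}$ vanishes there while $|\qt|^2\gt^{\ut\rt}\neq0$, and $P(-2,\tht)=-4m^2$ keeps $|\qt|^2$ and $\det\Hbt$ nonzero. Two misprints are harmless for your sign and smoothness arguments. First, $-P(X,0)=(m^2-a^2)X^2-4a^2X-4a^2$. Second, the middle coefficient in your formula for $-\hbt^{\tt\tt}/\hbt^{\rt\rt}$ should be $-4m^2S/\rho$: substitute $S(\rt-\Rt)=S\rho-2$ into the first line of the display preceding the lemma, and note that for $a=0$ the radicand becomes $m^2X^2/\rho^2$. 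However, two of your intermediate claims are wrong, although neither breaks the final conclusion.

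In Step 1 the final inclusion is reversed. Since $X_+(\tht)\le X_+(0)$, with equality only on the axis, one has $\{X>X_+(0)\}\subset\{X>X_+(\tht)\}$, not the converse. Consider the set $X_+(\tht)<X\le X_+(0)$, which is nonempty off the axis and reaches down to $X=0$ near the equator. There $\gt$ is Lorentzian, but the radicand $S^2\big(-P(X,0)\big)/(X+2)^2$ is $\le0$. Since $\ut$ depends on $(\tt,\rt)$ only, it does not exist there. So on the positive branch the outgoing chart covers only $X>X_+(0)$, not the Lorentzian region. This does not affect the proposition, which concerns only the end $X\to+\infty$.

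In Step 3 the claimed normal form is false. Not all remaining components are $O(1)$: $|\qt|^2\gt^{\rt\vphit_-}=\mp\frac{a}{\sqrt{m^2-a^2}}\rho$ grows linearly. Invert the $(\ut,\rt,\vphit_-)$ block by cofactors, using $\det\Hbt=S^{-2}X^2\csc^2\tht\,P$. On the branch $X<-2$ (the other is analogous) this gives exactly $\gt_{\ut\ut}=-\rho/\sqrt{|P|}$ and $\gt_{\ut\vphit_-}=a\rho(1+X)\sin^2\tht/\sqrt{|P|}$. With $D=m^2-a^2\cos^2\tht$, it follows that:
\begin{itemize}
\item $\gt_{\ut\ut}\to-1/(|S|\sqrt D)$ and $\gt_{\ut\rt}\to-\sqrt{D/(m^2-a^2)}$, both $\tht$-dependent;
\item $\gt_{\ut\vphit_-}\simeq-\frac{a\sin^2\tht}{\sqrt D}(\rt-\Rt)$ grows linearly;
\item $(\rt-\Rt)^{-2}\gt|_{\SSS^2}\to|S|\big(\sqrt D\,d\tht^2+\frac{(m^2-a^2)\sin^2\tht}{\sqrt D}\,d\vphit_-^2\big)$.
\end{itemize}
This sphere metric is smooth and regular at the poles, so your ``main obstacle'' does resolve. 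But it is not round for $a\neq0$: the bracket has Gauss curvature $\frac{2m^2-a^2}{2m^3}$ at the equator and $\frac{m^2+a^2}{(m^2-a^2)^{3/2}}$ at the poles. Together with the two other facts above, this rules out the form $-2d\ut\,d\hat r+\hat r^2\gamma_{\SSS^2}+O(\hat r^{-1})$ with $\hat r=c|\rt-\Rt|$ and $\gamma$ round.

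What survives, and what the argument should rest on, is your last sentence. With $x=1/|\rt-\Rt|$, the metric $x^2\gt$ extends smoothly and nondegenerately to $x=0$:
\begin{itemize}
\item its $\ut x$ component tends to $\pm\sqrt{D/(m^2-a^2)}\neq0$;
\item its $\ut\ut$ and $\ut\vphit_-$ components tend to $0$;
\item its angular block tends to the metric above.
\end{itemize}
Moreover $\{x=0\}$ is null, since $x^{-2}\gt^{-1}(dx,dx)=x^2|\qt|^{-2}\hbt^{\rt\rt}\to0$, and its generator is proportional to $\pr_\ut$. So asymptotic flatness holds in this conformal sense, with cuts of $\IIt^+$ only conformally round. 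That is the sense the statement needs and the sense in which it is used in Sections~\ref{sec:Xneg} and~\ref{sec:Xpos}.
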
	   
\begin{proof}
By inspecting the metric components \eqref{eq:metricoutgoing} in the outgoing coordinates $(\ut, \rt, \tht, \vphit_-)$.
\end{proof}	   
	 

\subsection{The general  bilinear identity}
\lab{subsection:general-outgoing-bilinear-identity}

We recall that, with $X=S(\rt-\Rt)$,
\beq\lab{eq:general-outgoing-bilinear-transform}
\psit_i(\vt,\rt)
=\Wk[\psi_i](\vt, \rt)
:=\int_{r_+}^{\infty}
\psi_i\bigl(\vt+X(r-R),r\bigr)\,dr.
\eeq
The angular variables are suppressed and all identities remain valid after
integrating them with the same angular measure.  We use
\[
\widehat f(\xi)=\int_{\RRR}e^{-iu\xi}f(u)\,du,
\qquad
\widehat{H_uf}(\xi)=-i\operatorname{sgn}(\xi)\widehat f(\xi).
\]
we also define the fractional derivatives 
\beq
\lab{eq:fractionalT}
\widehat{|T|^{1/2}f}=|\xi|^{1/2}\widehat{f}.
\eeq
For simplicity we will often write $T^{1/2}$ instead of $ |T|^{1/2}$.

\begin{proposition}[General bilinear identity]
\lab{prop:bilinear-outgoing-general}
Assume that $\psi_i$ are sufficiently regular and decaying, and let
$I\subset\RRR$ be any interval in the $X$-line.  Then
\beq\lab{eq:bilinear-outgoing-general}
|S|\int_{\{X\in I\}}\int_{\RRR}
\pr_{\taut}\psit_1\,\overline{\psit_2}\,d\taut\,d\rt
=\frac1{2\pi}\int_{\RRR}\int_{r_+}^{\infty}\int_{r_+}^{\infty}
\widehat{\psi_1}(\xi,r)\overline{\widehat{\psi_2}(\xi,s)}
\left(\int_I i\xi e^{i\xi X(r-s)}\,dX\right)ds\,dr\,d\xi
\eeq
where $\taut$ is a regular time function in the region $\DDt\cap\{X\in I\}$.

In particular, if \(I=(\alpha,\beta)\) is finite, then
\beq\lab{eq:bilinear-outgoing-finite-interval}
\bsplit
&|S|\int_{X\in I}\int_{\RRR}
\pr_{\taut}\psit_1\,\overline{\psit_2}\,d\taut\,d\rt
\\
&\qquad
=\operatorname{p.v.}\int_{\RRR}
\int_{r_+}^{\infty}\int_{r_+}^{\infty}
\frac{
\psi_1\bigl(u+\beta r,r\bigr)
\overline{\psi_2\bigl(u+\beta s,s\bigr)}
}{r-s}\,ds\,dr\,du\\
&\qquad
-\operatorname{p.v.}\int_{\RRR}
\int_{r_+}^{\infty}\int_{r_+}^{\infty}
\frac{
\psi_1\bigl(u+\alpha r,r\bigr)
\overline{\psi_2\bigl(u+\alpha s,s\bigr)}
}{r-s}\,ds\,dr\,du .
\end{split}
\eeq
If one of the endpoints
is infinite, for example, for $I=(-\infty,-2)$,
\beq\lab{eq:bilinear-outgoing-half-line}
\bsplit
&|S|\int_{\{X<-2\}}\int_{\RRR}
\pr_{\taut}\psit_1\,\overline{\psit_2}\,d\taut\,d\rt
=\pi\int_{\RRR}\int_{r_+}^{\infty}
\psi_1(u,r)\,\overline{H_u\psi_2(u,r)}\,dr\,du\\
&+\operatorname{p.v.}\int_{\RRR}
\int_{r_+}^{\infty}\int_{r_+}^{\infty}
\psi_1\bigl(u-2(r-R),r\bigr) \times
\overline{\psi_2\bigl(u-2(s-R),s\bigr)}
\frac{ds\,dr\,du}{r-s}.
\end{split}
\eeq
where $H_u$ is the Hilbert transform  with respect to $u$.
Also, for  the entire interval  $I=\RRR$,
\beq\lab{eq:bilinear-outgoing-full-line}
|S|\int_{X\in\RRR}\int_{\RRR}
\pr_{\taut}\psit_1\,\overline{\psit_2}\,d\taut\,d\rt
=2\pi\int_{\RRR}\int_{r_+}^{\infty}
\psi_1(u,r)\,\overline{H_u\psi_2(u,r)}\,dr\,du .
\eeq
\end{proposition}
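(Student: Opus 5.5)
The identity is pure Fourier analysis in the time variable; the plan is to use Plancherel in $\vt$ and then push everything down to the integral in $X$ of an exponential kernel.

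First, the left side. On $\DDt\cap\{X\in I\}$ the time function $\taut$ differs from $\vt$ by a function of $\rt$ (and possibly the angles). So for fixed $\rt$, $\int_{\RRR}\pr_{\taut}\psit_1\overline{\psit_2}\,d\taut=\int_{\RRR}\pr_{\vt}\psit_1\overline{\psit_2}\,d\vt$, by translation invariance in the time variable. Since $dX=S\,d\rt$, the factor $|S|\,d\rt$ becomes $dX$ over $I$, with the orientation absorbed by the absolute value. The left side therefore equals $\int_I\int_{\RRR}\pr_{\vt}\psit_1\overline{\psit_2}\,d\vt\,dX$.

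Next, the Fourier transform of $\psit$. From \eqref{eq:general-outgoing-bilinear-transform}, a shift of the $u$-argument by $X(r-R)$ gives
\[
\widehat{\psit_i}(\xi,X)=\int_{r_+}^\infty e^{i\xi X(r-R)}\widehat{\psi_i}(\xi,r)\,dr .
\]
Plancherel in $\vt$ then yields
\[
\frac1{2\pi}\int_{\RRR}i\xi\,\widehat{\psit_1}\,\overline{\widehat{\psit_2}}\,d\xi .
\]
Expanding the product, the $R$-dependence cancels and leaves $e^{i\xi X(r-s)}$. Fubini, which is justified by the assumed decay (for instance acceptable solutions, Lemma \ref{lem:acceptablesol}), then gives \eqref{eq:bilinear-outgoing-general}.

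For $I=(\alpha,\beta)$, the $X$-integral is explicit:
\[
\int_\alpha^\beta i\xi e^{i\xi X(r-s)}dX=\frac{e^{i\xi\beta(r-s)}-e^{i\xi\alpha(r-s)}}{r-s}.
\]
Each endpoint term is then Fourier-inverted. Writing $e^{i\xi\beta r}\widehat{\psi_1}(\xi,r)$ as the transform of $\psi_1(u+\beta r,r)$, Parseval turns $\frac{1}{2\pi}\int d\xi$ into $\int du$ of $\psi_1(u+\beta r,r)\overline{\psi_2(u+\beta s,s)}$, divided by $r-s$. The difference itself is regular at $r=s$. Splitting it into two singular terms, each understood as a principal value, gives \eqref{eq:bilinear-outgoing-finite-interval}.

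The half-line and full-line cases are the main obstacle, because the $X$-integral no longer converges absolutely. I would insert a damping factor, or equivalently write $I=(-L,-2)$ and let $L\to\infty$ in the finite formula. The $\beta=-2$ endpoint stays as in \eqref{eq:bilinear-outgoing-half-line}; the shift by $R$ there comes from the $e^{-i\xi XR}$ bookkeeping and is harmless. For the $\alpha=-L$ term, I would go back to frequency space, where it reads
\[
\frac{1}{2\pi}\int\widehat{\psi_1}(\xi,r)\overline{\widehat{\psi_2}(\xi,s)}\,\frac{e^{-i\xi L(r-s)}}{r-s}.
\]
Substituting $\rho=L(r-s)$, the distribution $\mathrm{p.v.}\frac{e^{-i\xi L(r-s)}}{r-s}$ converges as $L\to\infty$ to $-i\pi\operatorname{sgn}(\xi)\,\delta(r-s)$, since $\mathrm{p.v.}\int\frac{\sin(\xi\rho)}{\rho}\,d\rho=\pi\operatorname{sgn}\xi$. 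After the minus sign this produces
\[
\frac{1}{2\pi}\int i\pi\operatorname{sgn}(\xi)\,\widehat{\psi_1}\,\overline{\widehat{\psi_2}}=\pi\int\psi_1\,\overline{H_u\psi_2},
\]
using $\widehat{H_u f}=-i\operatorname{sgn}(\xi)\widehat f$. The full line gives the same contribution at both ends, hence $2\pi$.

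The delicate part is justifying this distributional limit uniformly in $\xi$ and $r$, including near $r_+$ and near $\xi=0$. I would handle it by a Riemann–Lebesgue argument on the smooth, compactly supported functions $\widehat{\psi_i}$, after an even/odd splitting in $r-s$.
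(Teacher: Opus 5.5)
Your proposal is correct and follows the paper's own proof. Both use the Fourier transform in the time variable (the phases $e^{-i\xi XR}$ and the $\rt$-dependent shift between $\taut$ and $\vt$ cancel in the product), Plancherel, the explicit integral $(e^{i\xi\beta\rho}-e^{i\xi\alpha\rho})/\rho$ for finite $I$, and, for infinite $I$, the distributional identities $\int_2^\infty i\xi e^{-i\xi\lambda\rho}\,d\lambda=\pi i\operatorname{sgn}(\xi)\delta(\rho)+e^{-2i\xi\rho}\operatorname{p.v.}\frac1\rho$ and $\int_{-\infty}^{\infty}i\xi e^{i\xi X\rho}\,dX=2\pi i\operatorname{sgn}(\xi)\delta(\rho)$; the paper simply asserts these, and you derive them by truncating to $(-L,-2)$ and letting $L\to\infty$.

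For that limit, a Riemann--Lebesgue argument needs smoothness that may fail, since $\widehat{\psi_i}(\xi,r)$ need not be smooth in $r$ up to $r_+$ in outgoing variables. It is cleaner to note that convolution in $r$ with $\operatorname{p.v.}\frac{e^{-i\xi L\rho}}{\rho}$ is the Fourier multiplier $-i\pi\operatorname{sgn}(\eta+\xi L)$. This multiplier is bounded by $\pi$ and converges pointwise to $-i\pi\operatorname{sgn}(\xi)$, so dominated convergence, with the bound $\pi\|\widehat{\psi_1}(\xi,\cdot)\|_{L^2_r}\|\widehat{\psi_2}(\xi,\cdot)\|_{L^2_r}$ integrable in $\xi$, gives the limit using only $L^2$ control.
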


\begin{proof}
Fourier transformation of \eqref{eq:general-outgoing-bilinear-transform}
in $\taut=\vt+f(\rt)$ gives
\[
\widehat{\psit_i}(\xi,X)
=e^{-i\xi XR}e^{-i\xi f(\rt)}\int_{r_+}^{\infty}
e^{i\xi Xr}\widehat{\psi_i}(\xi,r)\,dr.
\]
Since $X=S(\rt-\Rt)$, integration over $I$ with respect to $|S|d\rt$ and applying Plancherel give
\eqref{eq:bilinear-outgoing-general}.  If \(I=(\alpha,\beta)\), then
\[
\int_\alpha^\beta i\xi e^{i\xi X\rho}\,dX
=\frac{e^{i\xi \beta\rho}-e^{i\xi \alpha\rho}}{\rho},
\qquad \rho=r-s,
\]
which gives \eqref{eq:bilinear-outgoing-finite-interval} after application of Plancherel, with the singularity at $r=s$ understood in the
principal value sense.

Finally, for the half-line $I=(-\infty,-2)$,  we have
\beq\lab{eq:bilinear-outgoing-half-line-distribution}
\int_{-\infty}^{-2} i\xi e^{i\xi X\rho}dX=\int_2^\infty i\xi e^{-i\xi\lambda\rho}\,d\lambda
=\pi i\operatorname{sgn}(\xi)\delta(\rho)
+e^{-2i\xi\rho}\operatorname{p.v.}\frac1\rho.
\eeq
The delta term, together with the Hilbert-transform convention above,
gives the first term on the right of
\eqref{eq:bilinear-outgoing-half-line}.  Plancherel converts the phase
$e^{-2i\xi(r-s)}$ into the two shifts
$u-2r$ and $u-2s$, proving the principal value term.

For the full interval \(I=\RRR\), we use instead
\[
\int_{-\infty}^{\infty} i\xi e^{i\xi X\rho}\,dX
=2\pi i\,\operatorname{sgn}(\xi)\delta(\rho).
\]
The $\delta(\rho)$ collapses the $s$-integral to $s=r$, and the
multiplier $i\operatorname{sgn}(\xi)$ is exactly the multiplier which gives
$2\pi\,\psi_1\overline{H_u\psi_2}$ after Plancherel with the convention
fixed above.  This proves \eqref{eq:bilinear-outgoing-full-line}.
\end{proof}

\begin{remark}
\lab{rem:bil.smallangle}
Note that the full-line bilinear estimate
\eqref{eq:bilinear-outgoing-full-line} does not contain the hard to handle
Hilbert transform with respect to $r$ on the right hand side. If we were able to control $\psit_1,\psit_2$ in the region $\{X>0\}\cup \{X\leq -\de\}$, then we have control on the whole transformed manifold except for the small region $-\de<X<0$.  In that case the corresponding bilinear identity has only a
small radial correction on the right hand side.
\beq\lab{eq:bilinear-small-aperture-volume}
|S|\int_{\KKt_\de}
\bigl(\pr_\vt\psit_1\bigr)\overline{\psit_2}\,
|\qt|^{-2}\,dV_\gt
=2\pi\int_{\DD}\psi_1\,\overline{H_u\psi_2}\,
|q|^{-2}\,dV_\g
-\int_{\DD}\psi_1\,\overline{\mathcal A_\de\psi_2}\,
|q|^{-2}\,dV_\g ,
\eeq
where $\KKt_\de:
=\KKt\setminus\{-\de\leq X\leq0\}$ and  the small radial correction is
\beq\lab{eq:def-small-aperture-operator}
(\mathcal A_\de f)(u,r,\omega)
:=\operatorname{p.v.}\int_{r_+}^{\infty}
\frac{
f(u,s,\omega)
-f\bigl(u+\de(r-s),s,\omega\bigr)
}{r-s}\,ds .
\eeq
\end{remark}

\begin{corollary}[$L^2$ boundedness]
\lab{cor:filtered-L2-outgoing-transform}
Set
\[
I_-:=(-\infty,0),\qquad I_+:=(0,\infty),
\]
and let $\DDt_\sigma:=\{X\in I_\la\}$, where
$\la\in\{-,+\}$.  Here $\DD=\{r>r_+\}$ denotes the domain of outer
communication of the original Kerr spacetime.  Then, for sufficiently regular
$f=f(u,r,\omega)$,
\bea
\lab{eq:MainL2estimate}
\int_{\DDt_\la}
\left|\Wk[T^{1/2}f]\right|^2
|\qt|^{-2}\,dv_\gt
\lesssim
\int_{\DD}|f|^2|q|^{-2}\,dv_\g .
\eea
If $f$ is supported away from $r=r_+$, then with $B(\rt)=S(\rt-\Rt)=X$
\bea
\lab{eq:e_4trick}
\int_{\DDt_\la}
\left|B(\rt)\partial_\ut\Wk[T^{1/2}f]\right|^2
|\qt|^{-2}\,dv_\gt
\lesssim
\int_{\DD}\left|e^{(out)}_4f\right|^2|q|^{-2}\,dv_\g .
\eea
The integral density $|\qt|^{-2}\,dv_\gt$ is defined as in \eqref{eq:flatvolume}.
\end{corollary}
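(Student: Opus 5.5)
The plan is to reduce everything to a one-dimensional oscillatory-integral estimate, frequency by frequency, using the Fourier representation already derived in the proof of Proposition \ref{prop:bilinear-outgoing-general}. By \eqref{eq:flatvolume}, the density $|\qt|^{-2}dv_\gt$ is $d\taut\,d\rt\,d\omega$. On the Kerr side, the outgoing coordinates give $|q|^{-2}dv_\g=du\,dr\,d\omega$ up to the harmless change $u\to\tauh$ at fixed $r$. We fix $\omega$ and take the Fourier transform in $\taut$. As computed before, this gives
\[
\widehat{\Wk[T^{1/2}f]}(\xi,X)=e^{-i\xi XR}e^{-i\xi f(\rt)}|\xi|^{1/2}\int_{r_+}^\infty e^{i\xi Xr}\widehat f(\xi,r)\,dr .
\]
The phase factors have modulus one. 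After Plancherel in $\taut$ and the change $d\rt=|S|^{-1}dX$, the left side of \eqref{eq:MainL2estimate} becomes
\[
\frac{1}{2\pi |S|}\int_{\RRR}\int_{I_\la}|\xi|\Big|\int_{r_+}^\infty e^{i\xi Xr}\widehat f(\xi,r)\,dr\Big|^2dX\,d\xi .
\]

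For fixed $\xi\neq0$, substitute $\eta=\xi X$, so that $|\xi|\,dX=d\eta$. The half-line $I_\la$ maps to a half-line in $\eta$. The inner expression is then bounded by
\[
\int_{\RRR}\Big|\int_{r_+}^\infty e^{i\eta r}\widehat f(\xi,r)\,dr\Big|^2d\eta=2\pi\int_{r_+}^\infty|\widehat f(\xi,r)|^2dr,
\]
by Plancherel in $r$, after extending $\widehat f(\xi,\cdot)$ by zero for $r<r_+$. Integrating in $\xi$ and applying Plancherel in $u$ gives \eqref{eq:MainL2estimate}, with a constant depending only on $|S|$. The factor $|\xi|^{1/2}$ from $T^{1/2}$ is exactly what makes the substitution $\eta=\xi X$ scale-invariant. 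This is why the estimate holds uniformly, with no loss at low or high frequency.

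For \eqref{eq:e_4trick}, I would use Lemma \ref{lem:Rk-commutation}. Since $f$ vanishes near $r_+$, the horizon trace disappears and
\[
B(\rt)\pr_\vt\Wk[g]=-\Wk[\pr_r g].
\]
Here $\pr_\ut$ and $\pr_\vt$ agree, since both are $T$ at fixed $\rt$. Taking $g=T^{1/2}f$ and using that $T^{1/2}$ commutes with $\pr_r$, we get $X\pr_\ut\Wk[T^{1/2}f]=-\Wk[T^{1/2}\pr_r f]$. In outgoing coordinates $\pr_r|_{u,\th,\phim}=e^{(out)}_4$, so \eqref{eq:MainL2estimate} applied to $\pr_r f$ gives the claim.

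The main point requiring care is justification rather than estimation. We need Fubini and Plancherel for $f$ that is only $L^2$ with the needed derivatives, which is handled by density from $C_c^\infty$. We also need the identification of $\taut$-slices with $\vt$ through the shift $f(\rt)$, which is innocuous because it is only a phase. Finally, we must handle the degenerate hyperplane $X=0$, which has measure zero and is excluded from both $I_\pm$. If the coordinate $\tauh$ is used instead of $u$ for large $r$, the shift $u=\tauh-m\ln r$ is again only a phase at fixed $r$, so the same argument applies.
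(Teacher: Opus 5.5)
Your proposal is correct and follows essentially the same route as the paper. You take the Fourier transform in $\taut$, use the substitution $\eta=\xi X$ (which absorbs the factor $|\xi|$ from $T^{1/2}$), and apply Plancherel in $r$ to the zero extension of $f$; the paper packages this last step as the two-dimensional transform $\widehat{E_0f}(\xi,-\xi X)$. For \eqref{eq:e_4trick} you then use the commutation identity of Lemma \ref{lem:Rk-commutation} with vanishing horizon trace, together with $\pr_r=e^{(out)}_4$ in outgoing coordinates, exactly as the paper does.
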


\begin{proof}
We recall
\[
\Wk[f](\vt)=\int_{r_+}^{\infty}f(\vt+X(r-R),r)\,dr .
\]
If $E_0f$ denotes the zero extension of $f$ from $r>r_+$ to the full
radial line, define its two-dimensional Fourier transform by
\[
\widehat{E_0f}(\xi,\eta)
:=\int_{\RRR}\int_{\RRR}
e^{-i(u\xi+r\eta)}E_0f(u,r)\,du\,dr .
\]
For each fixed angular variable, the computation in the proof
of Proposition \ref{prop:bilinear-outgoing-general} gives
\[
\widehat{\Wk[T^{1/2}f]}(\xi)
=|\xi|^{1/2}e^{-i\xi XR}e^{-i\xi f(\rt)}\widehat{E_0f}(\xi,-\xi X),
\]
Hence, by Plancherel and the change of variables $\eta=-\xi X$,
\begin{align*}
\int_{I_\sigma}\int_{\RRR}
\left|\Wk^X[T^{1/2}f]\right|^2\,d\taut\,dX
&\lesssim
\int_{\RRR}\int_{I_\sigma}
|\xi|\,|\widehat{E_0f}(\xi,-\xi X)|^2\,dX\,d\xi\\
&\le
\int_{\RRR}\int_{\RRR}|\widehat{E_0f}(\xi,\eta)|^2\,d\eta\,d\xi\\
&\lesssim \int_{\RRR}\int_{r_+}^{\infty}|f(u,r)|^2\,dr\,du .
\end{align*}
This proves the first estimate.

For the second estimate, apply \eqref{eq:Rk-commutation} to $f$.  Since $f$
is supported away from $r=r_+$, the horizon trace term vanishes.  We therefore obtain
\[
B(\rt)\partial_\ut\Wk[T^{1/2}F]
=-\Wk[T^{1/2}\partial_rF].
\]
where $\pr_r$ is the radial coordinate derivative in the outgoing coordinates, that is, $\pr_r=e_4^{(out)}$. The already proved $L^2$ estimate, applied to $f=\partial_rF=e_4^{(out)}F$ gives the second estimate.
\end{proof}


\section{The  $X<-2$ branch}
\lab{sec:Xneg}
In this section we   analyze the  equation $\square_\gt \psit=\Nt $  in the domain  
$
\DDt
:= \{X<-2\} \subset \DDt_-.
$


\subsection{\(\T\)-energy identity}
\lab{sec:energyXleq-2}
As in Remark \ref{rem:conformalfactor}, we choose the conformal factor as 
\[
|\qt|^2=\sqrt{\sin^2\tht |\det\Hbt|}=\frac{|X|}{|S|}\sqrt{|P(X,\tht)|}\sim \rt^2\]
in the region $\DDt=\{X<-2\}$.  By Proposition \ref{Prop:outgoing-Snegative-regions}, we know that the Killing vectorfield
$\T$ is timelike throughout $\DDt=\{X<-2\}$ and the boundary $X=-2$
is its  bifurcate event horizon. By Proposition \ref{prop:nullinfinities}
, $\gt$ is asymptotically flat and has a complete smooth future null infinity
\[
\widetilde{\II}^+=\{\ut, X=-\infty, \tht, \vphit_-\}.
\]
Then
$(\DDt,\gt)$ can be regarded as the exterior of a stationary black
hole.   

For a complex
scalar field \(\phi\), set
\beq\lab{eq:Whiting-EF-norms}
\QQ_{\at\bt}[\psit]
:=\pr_{\at}\psit\,\pr_{\bt}\overline\psit
+\pr_{\bt}\psit\,\pr_{\at}\overline\psit
-\gt_{\at\bt}\gt^{\widetilde{\mu}\widetilde{\nu}}\pr_{\widetilde{\mu}}\psit\,\pr_{\widetilde{\nu}}\overline\psit .
\eeq
Since $\T$ is Killing,
\beq\lab{eq:negative-null-flux-current-divergence}
\D_{\at}\bigl(\QQ[\psit]^{\at}{}_{\bt}\T^{\bt}\bigr)
=2\Re\bigl((\T\psit)\overline{\square_\gt\psit}\bigr).
\eeq
We define
\begin{align*}
&\DDt(\taut_1, \taut_2):=\DDt\cap \{\taut_1\leq \taut\leq \taut_2\}\\
&\HHt^+(\taut_1, \taut_2):=\HHt^+\cap \{\taut_1\leq \taut\leq \taut_2\}\\
&\IIt^+(\taut_1, \taut_2):=\IIt^+\cap \{\taut_1\leq \taut\leq \taut_2\}\\
&\Sit(s):=\{\taut=s\}\cap\DDt
\end{align*}
where $\taut$ is a smooth time function\footnote{ Near the future event
horizon we set $\taut=\vt$, and smoothly interpolate between $\vt$ and
$\ut-m^2/\rt$ in a fixed intermediate $\rt$-region so that the level
hypersurfaces remain spacelike} transversal to both $\HHt^+$ and $\IIt^+$.

\begin{proposition}[$\T$-energy identity]
\lab{prop:energy-gt}
Let $\psit$ solve $|\qt|^2\square_\gt\psit=|\qt|^2\Nt$ in a region
$\DDt(\taut_1,\taut_2)\subset\DDt$ bounded by spacelike hypersurfaces
$\Sit(\taut_1),\Sit(\taut_2)$, future event horizon
$\HHt^+(\taut_1,\taut_2)$ and future null infinity $\IIt^+(\taut_1, \taut_2)$.  Then
\beq\lab{eq:T-energy-Xlessminus2}
\bsplit
&\int_{\Sit(\taut_2)}\QQ[\psit](\T,\nu)\,d\mu_{\Sit(\taut_2)}
+\int_{\HHt^+(\taut_1,\taut_2)}\QQ[\psit](\T,\nu_{\HHt^+})\,d\mu_{\HHt^+}+\int_{\IIt^+(\taut_1,\taut_2)}\QQ[\psit](\T,\nu_{\IIt^+})\,d\mu_{\IIt^+}\\
&\qquad
=\int_{\Sit_{\taut_1}}\QQ[\psit](\T,\nu)\,d\mu_{\Sit(\taut_1)}
+2\Re\int_{\DDt(\taut_1,\taut_2)}
(\T\psit)\c\overline{\Nt}\,dv_\gt .
\end{split}
\eeq
where $\nu_{\HHt^+}$ and $\nu_{\IIt^+}$ denote the null generators of $\HHt^+$ and $\IIt^+$. All four boundary terms are nonnegative.
\end{proposition}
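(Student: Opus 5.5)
The identity is the divergence theorem applied to the current $J^{\at}:=\QQ[\psit]^{\at}{}_{\bt}\T^{\bt}$ on the region $\DDt(\taut_1,\taut_2)$. We first justify \eqref{eq:negative-null-flux-current-divergence}. Since $\QQ$ is symmetric, we have $\D^{\at}\QQ_{\at\bt}=2\Re(\pr_{\bt}\psit\,\overline{\square_\gt\psit})$. The Killing property of $\T$ kills the term $\QQ^{\at\bt}\D_{\at}\T_{\bt}$, because a symmetric tensor contracted with an antisymmetric one vanishes. Next we integrate $\D_{\at}J^{\at}=2\Re(\T\psit\,\overline{\Nt})$ against $dv_\gt$ over the compact truncations $\DDt(\taut_1,\taut_2)\cap\{X\ge -X_0\}$, where $X_0$ is large. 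The domain $\{X<-2\}$ avoids the degeneracy sets $X=0$ and $X=X_\pm(\tht)$, so $\gt$ is a smooth Lorentzian metric there and Stokes applies. The boundary consists of the two slices $\Sit(\taut_i)$, the portion $\HHt^+(\taut_1,\taut_2)$ of $X=-2$ (null by Proposition \ref{Prop:general-signature-classification}), and a timelike cylinder $\{X=-X_0\}$. Orienting normals in the standard way (future directed on the spacelike slices, null generators on the null pieces) gives the stated signs: the future boundary terms go on the left, and the initial slice and the bulk source go on the right.

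Two points require care. The first is the passage $X_0\to\infty$. Using the outgoing coordinates $(\ut,\rt,\tht,\vphit_-)$ of Lemma \ref{lem:exact-outgoing-inverse-metric} and the asymptotic flatness of Proposition \ref{prop:nullinfinities}, the flux through the cylinder $\{X=-X_0\}\cap\{\taut_1\le\taut\le\taut_2\}$ converges to the flux through $\IIt^+(\taut_1,\taut_2)$, taken with respect to the null generator $\nu_{\IIt^+}$. The footnote's choice $\taut\approx\ut-m^2/\rt$ near infinity makes the slices hit $\IIt^+$ transversally, so this region is compact in the conformal picture. The limit is justified for $\psit$ with a finite radiation field (say, $\psit=\Wk[\psi]$ with $\psi$ acceptable, extended by density). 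Concretely, we expand $J$ in the coordinates $\ut,\rt$: the flux density is $\rt^2\QQ(\T,\pr_\ut)$ plus terms that are $O(\rt^{-1})$ after multiplying by the area element. The second point is the horizon. Near $X=-2$ we take $\taut=\vt$ and use the Eddington--Finkelstein-type coordinates $(\vt,\rt)$ from \eqref{eqhbt-canonical}, in which $\gt$ is regular across $\rt=r_+$. This is the statement that $X=-2$ is a smooth null hypersurface.

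Nonnegativity follows because $\T$ is future-directed causal on the closure of $\DDt$: it is timelike in $X<-2$ and null on $X=-2$ (Proposition \ref{Prop:outgoing-Snegative-regions}). For a future causal $\T$ and a future causal or timelike normal $\nu$, the dominant energy condition for $\QQ$ gives $\QQ(\T,\nu)\ge0$. We apply this to the spacelike slices. On $\HHt^+$ the normal is the null generator, which is proportional to $\T+\Omega\,\Z$ in Kerr-like fashion, so $\QQ(\T,\nu)\ge0$ again since both vectors are future causal. At $\IIt^+$ the limit of $\QQ(\T,\pr_\ut)$ is $|\pr_\ut(\rt\psit)|^2$-type, hence nonnegative. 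The main obstacle is checking that the horizon generator is indeed future causal and compatible with $\T$. This needs an explicit computation of the null generator $\pr_\vt+\Omega_H\pr_{\vphit}$ from the inverse metric \eqref{eqhbt-canonical} at $\Delta(\rt)=0$. The computation must show that $\gt(\T,\nu_{\HHt^+})\le 0$; I would verify this by computing $\gt_{\vt\vt}$ and $\gt_{\vt\vphit}$ there.
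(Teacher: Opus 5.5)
Your proposal is correct and follows the paper's own proof: the divergence theorem for $\QQ[\psit]^{\at}{}_{\bt}\T^{\bt}$ on $\DDt(\taut_1,\taut_2)$, with positivity coming from $\T$ being timelike in $X<-2$ and null on $X=-2$; you simply make the truncation at $X=-X_0$ and the horizon regularity in the $(\vt,\rt)$ chart explicit. Two minor points: the obstacle you flag at the end is empty, because $\hbt^{\rt\rt}=0$ at $X=-2$ and $\hbt^{\rt\vphit}\equiv0$, so the generator $|\qt|^2\gt^{\rt\at}\pr_{\at}=\hbt^{\rt\vt}\pr_{\vt}$ is a positive multiple of $\T$ itself (there is no $\Omega\Z$ shift) and $\QQ[\psit](\T,\nu_{\HHt^+})\propto 2|\T\psit|^2$; and careful orientation bookkeeping ($\nabla\taut$ is past-directed) actually gives $-2\Re\int(\T\psit)\overline{\Nt}\,dv_\gt$ on the right, so the $+$ in the statement is a harmless sign slip (only the absolute value of this term is used later) rather than something your orientation argument produces.
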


\begin{proof}
Integrate \eqref{eq:negative-null-flux-current-divergence} over
$\DDt(\taut_1,\taut_2)$ and apply Divergence theorem.  The boundary terms are
nonnegative because $\T$ is timelike in $\DDt$ and null at 
$\HHt^+$.
\end{proof}

\begin{remark}
Since the null generator $\nu_{\IIt^+}$  behaves like $\nu_{\IIt^+}\sim \widetilde{e}_3$, we have $\QQ[\psit](\T,\nu_{\IIt^+})\sim |\widetilde{e}_3\psit|^2+|\widetilde{\nab}\psit|^2$. However, we are aiming for the control $|T\psit|^2$ at $\IIt^+$ while $\widetilde{e}_3$ differs from $T$ by $\widetilde{e}_4$. If we were able to prove that $\widetilde{e}_4\psit$ decays faster and thus is negligible at $\IIt^+$, then we are done. One way to prove that $\widetilde{e}_4\psit$ has faster decay is using $r^p$-weighted estimate. However, to close the $r^p$-weighted estimate, one needs the Morawetz estimate\footnote{We delay such estimates  in our paper \cite{He-K3}.} for $\square_{\gt}\psit=\Nt$. Instead, we use the conformal energy estimate to obtain the control of $|T\psit|^2$ at $\IIt^+$.
\end{remark}
\subsection{Conformal energy estimate}
\lab{sec:conformal-energy-Xlessminus2}

We now use the Penrose compactification of the asymptotic end in $\DDt$.  Set
\[
\rhot:=|X+1|^{-1},\qquad
\ghat:=|X+1|^{-2}\gt,\qquad
\psihat:=|X+1|\psit ,\qquad \widehat{\DDt}:=\DDt\cup\{\rhot=0\}.
\]
By Lemma \ref{lem:appendix-conformal-wave-equation},
\beq\lab{eq:conformal-wave-equation-Xminus}
\square_{\ghat}\psihat+V\psihat=|X+1|^3\Nt,
\qquad
V<0
\quad\hbox{in }\widehat{\DDt},
\eeq
The compactified metric extends smoothly to
\(\IIt^+=\{\rhot=0\}\).  The radiation field at $\IIt^+$ is defined as follows
\beq\lab{eq:null-flux-radiation-field}
\psit_\infty
:=\left.\psihat\right|_{\IIt^+}
=\lim_{X\to-\infty}|X+1|\psit .
\eeq



\begin{proposition}[Conformal Energy]
\lab{Prop:conf-en(sk-0)}
Let $\psi$ be an acceptable solution of $\square_{\g}\psi=N$. Then the following estimate holds true for $\psit$.
\beq
\lab{eq:confenergy-psit}
\int_{\HHt^+}|\T^2 T^{1/2}\psit|^2\,d\vt\,d\omega
+\int_{\IIt^+}|\T^2 T^{1/2}\psit_\infty|^2\,d\ut\,d\omega
\lesssim
\left|\int_{\DDt}
\Re{\big((\T^2T^{1/2}\psit)\c\overline{TT^{1/2}\Nt}\big)}\,dv_\gt\right|.
\eeq
\end{proposition}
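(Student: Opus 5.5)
The plan is to apply the $\T$-energy identity of Proposition \ref{prop:energy-gt} to the commuted field $\T^2 T^{1/2}\psit$, but to do so for the conformally compactified equation \eqref{eq:conformal-wave-equation-Xminus} instead of the physical one. Concretely, I would work with $\widehat\Psi:=|X+1|\,\T^2T^{1/2}\psit=\T^2T^{1/2}\psihat$. Since $\T$ is Killing for both $\gt$ and $\ghat$ (the weight $|X+1|$ depends only on $\rt$), and $T^{1/2}$ is a Fourier multiplier in $\taut$, $\widehat\Psi$ solves
\[
\square_{\ghat}\widehat\Psi+V\widehat\Psi=|X+1|^3\,\T^2T^{1/2}\Nt .
\]

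On $\widehat{\DDt}$ I would use the modified current $\QQ_{\ghat}[\widehat\Psi](\T,\cdot)-V|\widehat\Psi|^2\T^\flat$. Its divergence equals $2\Re\big(\T\widehat\Psi\,\overline{|X+1|^3\T^2T^{1/2}\Nt}\big)$, because $V$ is $\T$-invariant.

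Next I would integrate this divergence over $\widehat{\DDt}(\taut_1,\taut_2)$. The boundary then consists of $\Sit(\taut_1)$, $\Sit(\taut_2)$, $\HHt^+$, and now also the \emph{finite} hypersurface $\IIt^+=\{\rhot=0\}$, where $\ghat$ is smooth. The key positivity facts are the following.
\begin{itemize}
\item $\T$ is timelike in $\DDt$ (Proposition \ref{Prop:outgoing-Snegative-regions}) and null on $\HHt^+$.
\item $V<0$, so $-V|\widehat\Psi|^2\ghat(\T,\nu)\ge 0$.
\item On $\IIt^+$ the vector field $\T$ is transversal and future directed. The flux $\QQ_{\ghat}(\T,\nu_{\IIt^+})$, with $\nu_{\IIt^+}$ the null generator $\sim\pr_\ut$, contains $|\pr_\ut\widehat\Psi|^2=|\T\widehat\Psi|^2$. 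This is exactly what Proposition \ref{prop:energy-gt} could not give without compactification.
\end{itemize}
Thus the null-infinity boundary term bounds $\int_{\IIt^+}|\T^2T^{1/2}\psit_\infty|^2\,d\ut\,d\omega$ up to constants, after shifting one $\T$ derivative. To make the powers match the statement, I would apply the identity to $\widehat\Psi'=\T T^{1/2}\psihat$, so that the flux controls $|\T\widehat\Psi'|^2=|\T^2T^{1/2}\psit_\infty|^2$ and the bulk term becomes $\Re\big(\T^2T^{1/2}\psit\,\overline{TT^{1/2}\Nt}\big)$. On $\HHt^+$ one similarly gets $|\T\widehat\Psi'|^2\sim|\T^2T^{1/2}\psit|^2$, since $|X+1|\sim1$ there. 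The volume factors $|X+1|^{-4}$ from $dv_{\ghat}$, times $|X+1|^{4}$ from $|X+1|\,\cdot\,|X+1|^3$, combine to give exactly $dv_\gt$ in the bulk term.

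It then remains to let $\taut_1\to-\infty$ and $\taut_2\to+\infty$ along the sequences $s_i$ of Lemma \ref{lem:acceptablesol}(3). That lemma says the $\EE[TT^{1/2}\psit]$ energy on $\widetilde\Sigma(s_i)$ tends to zero. I need the conformal slice energy of $\T T^{1/2}\psihat$ to be dominated by this quantity. Near $\IIt^+$ this uses the weights $X^2|T\cdot|^2+X^2|\pr_\rt\cdot|^2$ in $\EE$, together with a Hardy inequality in $\rhot$ for the zeroth-order and $V$ terms.

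\textbf{Main obstacle.} The delicate step is precisely this control of the slice terms, and the fact that the slices $\Sit(s)$ of the compactified picture reach $\IIt^+$. I would treat it by choosing $\taut$ hyperboloidal, i.e. $\ut-m^2/\rt$ at large $\rt$ as in the footnote, so that $\Sit$ meets $\{\rhot=0\}$ transversally. I would then verify that $\QQ_{\ghat}(\T,\nu)$ on such slices is bounded by $\EE[TT^{1/2}\psit]$, using $\gt$ in the outgoing coordinates of Lemma \ref{lem:exact-outgoing-inverse-metric}.

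A secondary issue is the justification of $\T T^{1/2}$ commuting through the equation and of the finiteness of the bulk term. Both follow from the $L^2$ bounds of Corollary \ref{cor:filtered-L2-outgoing-transform}, since $\Nt$ is the transform of $|q|^2N$ with $N$ compactly supported.
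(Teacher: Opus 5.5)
Your proposal follows essentially the same route as the paper: compactify with $\rhot=|X+1|^{-1}$, use the $\T$-current for $\square_{\ghat}+V$ with $V<0$, read off the $\HHt^+$ and $\IIt^+$ fluxes from the fact that $\T$ is the null generator of both, apply this to $\T T^{1/2}\psihat$, and let $\taut_1\to-\infty$, $\taut_2\to\infty$ along the sequences of Lemma~\ref{lem:acceptablesol}. There are two small slips. First, with the paper's conventions ($\D^\alpha\QQ_{\alpha\beta}\T^\beta=2\Re(\T\psi\,\overline{\square\psi})$, mostly-plus signature) the conserved current is $\widehat{\QQ}(\T,\cdot)+V|\widehat\Psi|^2\T^\flat$, and its positivity comes from $V\,\ghat(\T,\nu)\ge 0$ since $\ghat(\T,\nu)\le 0$, not from $-V\,\ghat(\T,\nu)\ge 0$; second, on $\IIt^+$ the field $\T=\pr_\ut$ is tangent to $\IIt^+$ (its null generator), not transversal.
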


\begin{proof}
For the conformal equation \eqref{eq:conformal-wave-equation-Xminus}, use the
potential stress-energy tensor
\[
\widehat{\QQ}^{\,V}_{\at\bt}[\psihat]
:=\widehat{\QQ}_{\at\bt}[\psihat]
+V|\psihat|^2\ghat_{\at\bt}.
\]
Since $V$ and $\ghat$ are stationary, the current $
\widehat J^{\at}_{\T}
:=(\widehat{\QQ}^{\,V})^{\at}{}_{\bt}\T^{\bt}$ satisfies
\beq\lab{eq:conformal-T-current-divergence}
\widehat\D_{\at}\widehat J^{\at}_{\T}
=2\Re\bigl((\T\psihat)\c\overline{|X+1|^3\Nt}\bigr).
\eeq
The sign $V<0$, together with the causality of $\T$, makes the energy
density of this current nonnegative on spacelike future boundaries.

For $\taut_1<\taut_2$, let
\[
\widehat{\DDt}(\taut_1,\taut_2)
:=\{\taut_1\leq\taut\leq\taut_2\}\cap \widehat{\DDt}\]
The boundary of this
compactified domain consists of
$\Sit_{\taut_1}, \Sit_{\taut_2}$, and the
future event horizon $\HHt^+(\taut_1,\taut_2)$ and future null infinity
$\IIt^+(\taut_1,\taut_2)$.  Integrating
\eqref{eq:conformal-T-current-divergence} over this domain gives
\begin{align*}
&\int_{\widetilde\Sigma_{\taut_2}}
\widehat{\QQ}^{\,V}[\psihat](\T,\nu)\,d\mu_{\widetilde\Sigma_{\taut_2}}
+\int_{\HHt^+(\taut_1,\taut_2)}
\widehat{\QQ}^{\,V}[\psihat](\T,\nu_{\HHt^+})\,d\mu_{\HHt^+}\\
&\qquad
+\int_{\IIt^+(\taut_1,\taut_2)}
\widehat{\QQ}^{\,V}[\psihat](\T,\nu_{\IIt^+})\,d\mu_{\IIt^+}
=\int_{\widetilde\Sigma_{\taut_1}}
\widehat{\QQ}^{\,V}[\psihat](\T,\nu)\,d\mu_{\widetilde\Sigma_{\taut_1}}+2\Re\int_{\widehat{\DDt}(\taut_1,\taut_2)}
(\T\psihat)\overline{|X+1|^3\Nt}\,dv_{\ghat}.
\end{align*}
On $\HHt^+$, the potential term has no flux because $\T$ is null. Therefore, we have
\[
\widehat{\QQ}^{\,V}[\psihat](\T, \nu_{\HHt^+})\,d\mu_{\HHt^+}
\sim |\T\psit|^2\,d\vt\,d\omega .
\]
On $\IIt^+$, the null generator of $\IIt^+$ is $
\widehat{\D}\rhot\sim\pr_\ut=\T$. Then we have
\[
\widehat{\QQ}^{\,V}[\psihat](\T,\nu_{\IIt^+})
\sim |\pr_\ut\psihat|^2.
\]
Moreover, we note that $d\mu_{\IIt^+}\simeq d\ut\,d\omega$ and $\psihat|_{\IIt^+}=\psit_\infty$, and thus we obtain
\[
\widehat{\QQ}^{\,V}[\psihat](\T,\nu_{\IIt^+})\,d\mu_{\IIt^+}
\sim |\T\psit_\infty|^2\,d\ut\,d\omega .
\]
Since $\T\psihat=|X+1|\,\T\psit$ and $dv_{\ghat}=|X+1|^{-4}dv_\gt$, it follows that 
\[
\int_{\widehat{\DDt}}
(\T\psihat)\overline{|X+1|^3\Nt}\,dv_{\ghat}
=\int_{\DDt}
(\T\psit)\overline{\Nt}\,dv_\gt .
\]
All the above analysis also apply to $TT^{1/2}\psit$.

Finally, by Lemma \ref{lem:acceptablesol}, the energy fluxes at $\taut_1, \taut_2$ vanish as $\taut_1\to-\infty$ and $\taut_2\to\infty$.\footnote{     The statement here should be  interpreted in terms of  sequences  of $\taut_1, \taut_2$.  Note also that  the flux through
$\widetilde\Sigma_{\taut_2}$ is nonnegative.} Therefore, letting
$\taut_1\to-\infty$ and $\taut_2\to\infty$ gives \eqref{eq:confenergy-psit}.
\end{proof}


\subsection{Control of horizon flux of $\psi$}
\lab{sec:Fourier-trace-horizon-flux}

The goal of this section is to prove a relation relating the flux through the future event
horizon of a function $\psi$, defined on the domain of outer communication of Kerr spacetimes, to the
flux at future null infinity of the metric $\gt$ of $\psit$  in $ \DDt$. 

For a function $f(\ut, \rt, \tht, \vphit_-)$ on $\DDt$, we decompose it in Fourier series as follows:
\[
f(\ut,\rt,\tht,\vphit_-)=\sum_n e^{in\vphit_-}f_n(\ut,\rt,\tht),
\qquad
f_n(\ut,\rt,\tht)
=\frac1{2\pi}\int_0^{2\pi}
f(\ut,\rt,\tht,\vphit_-)e^{-in\vphit_-}\,d\vphit_- .
\]
Denote the Fourier transform of $f_n$ in $\ut$ by
\[
\FF_{\ut}[f_n](\xi,\rt, \tht)
:=\int_{\RRR}e^{-i\ut\xi}f_n(\ut,\rt, \tht)\,d\ut.
\]
Similarly, for a function $f(v, r, \th, \vphi_+)$ on Kerr space time, we have the Fourier series decomposition
\[
f(v ,r, \th, \vphi_+)=\sum_n e^{in\vphi_+}f_n(v, r,\th),
\qquad
f_n(v,r, \th)
=\frac1{2\pi}\int_0^{2\pi}
f(v,r,\th, \vphi_+)e^{-in\vphi_+}\,d\vphi_+
\]
and define the Fourier transform of $f_n(v, r, \th)$ in $v$
\[
\FF_{v}[f_n](\xi, r, \th)=\int_{\RRR}e^{-i\xi v}f_n(v, r,\th)\, dv.
\]
For each Fourier mode $n$ and each time frequency $\xi$, we define with $\la_+=\frac{4mr_+}{r_+-r_-}, \la_0=\frac{2a}{r_+-r_-}$
\beq
\lab{eq:defofbeta_+}
\beta_+:=\la_+\xi+\la_0 n=\la_+(\xi+\omega_+n),\qquad \omega_+=\frac{a}{2mr_+}.
\eeq
Recall the  definition of  the  fractional multipliers, see \eqref{eq:fractionalT},
\[
\FF[T^{1/2}f_n]
=|\xi|^{1/2}\FF[f_n],
\]
We also define
\beq
\FF[T_+^{1/2}f_n]
=|\xi+\omega_+n|^{1/2}\FF[f_n].
\eeq
where $T_+=T+\omega_+Z$ is the Hawking vectorfield.

For $M>0$, define the controlled frequency region by
\beq
\lab{eq:controlled-frequency-region}
\mathcal A_M
:=\{\beta_+\xi\ge0\}\cup\{|\beta_+|\le M\}.
\eeq
For each $n\in\ZZZ$, denote its $\xi$-slice by
\[
\mathcal A_M(n)
:=\{\xi\in\RRR:(\xi,n)\in\mathcal A_M\}.
\]
Here $M$ denotes the size of the bounded frequency window in the variable
$\beta_+$.  We denote by $\Pi_{\mathcal A_M}$ the corresponding frequency
projection.

\begin{proposition}
\lab{prop:psi-psit-flux-strong-Xminus}
Let $\psi$ be an acceptable solution to $\square_{\g}\psi=N$. 
 Let\footnote{Assume that the radiation field $\psit$ has  enough regularity.}
$\psit=\Wk[\psi]$.  The following estimate holds true.
\beq
\lab{eq:psi-psit-flux-strong-Xminus}
\int_{\HH^+}
\big|\Pi_{\mathcal A_M}\T_+^{1/2}\psi\big|^2+\big|\Pi_{\mathcal A_M}\psi\big|^2
\;dv\,d\omega
\lesssim_M
\int_{\IIt^+}\big|\Pi_{\mathcal A_M}\T\psit_\infty\big|^2\,d\ut\,d\omega .
\eeq
The conclusion remains true after applying  $T, \Z$ of  appropriate  $1/2$ versions of them.
\end{proposition}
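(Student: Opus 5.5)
The plan is to compute the radiation field $\psit_\infty$ explicitly, mode by mode, in terms of the Fourier transform of $\psi$. Then compare it with the horizon trace of $\psi$, which is also expressed through the Fourier transform in $v$ at $r=r_+$. Throughout, fix the azimuthal mode $n$ and the time frequency $\xi$.

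\emph{Step 1: Fourier representation of $\psit$.} From the proof of Proposition \ref{prop:bilinear-outgoing-general},
\[
\widehat{\psit}(\xi,X)=e^{-i\xi XR}\int_{r_+}^\infty e^{i\xi Xr}\widehat{\psi}(\xi,r)\,dr ,
\]
up to the phase coming from the time normalization. Here $\widehat\psi$ is taken in the outgoing coordinates $(u,\phim)$.

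\emph{Step 2: pass to the asymptotic coordinates $(\ut,\vphit_-)$.} Pass from $(\vt,\vphit)$ to $(\ut,\vphit_-)$ using \eqref{eq:BL-Whiting-outXneg}. The relevant phases are $\vt-\ut=\int\sqrt{-\hbt^{\tt\tt}/\hbt^{\rt\rt}}+\rt^*$ and the logarithmic shift $\vphit_--\vphit$. Both phases grow only linearly plus logarithmically in $|X|$.

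\emph{Step 3: asymptotics as $X\to-\infty$.} Multiplying by $|X+1|$ and letting $X\to-\infty$ is a stationary-phase/Abel-type limit. For $\psi$ compactly supported in $r$ and smooth, integrate by parts in $r$: the endpoint $r=r_+$ dominates. This gives
\[
|X|\,\widehat{\psit}(\xi,X)\approx \frac{i}{\xi}\,e^{i\xi X(r_+-R)}\,\widehat\psi(\xi,r_+)\cdot(\text{phase}),
\]
plus $O(|X|^{-1})$. The outgoing $\widehat\psi(\xi,r_+)$ must then be rewritten as the ingoing horizon data. The coordinate change $u\mapsto v$, $\phim\mapsto\phip$ near $r_+$ is singular. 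Therefore I would instead use the $e_4$ trick, \eqref{eq:Rk-commutation}, and work directly with the regular ingoing trace.

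\emph{Step 4: form of the limit.} The oscillating factor $e^{i\xi X(r_+-R)}$, combined with the phases of Step 2, must cancel the linear growth in $X$ of $\ut-\vt$. What survives is a logarithmic phase of the form $|X|^{i\beta_+}$ and a Gamma-function-type amplitude. The expected outcome, with $\beta_+$ as in \eqref{eq:defofbeta_+}, is
\[
\FF_{\ut}[(\psit_\infty)_n](\xi)=c\,\frac{\Gamma(i\beta_+)}{\xi}\,(\text{unimodular})\,\FF_v[(\psi|_{\HH^+})_n](\xi+\ldots).
\]
Equivalently, it is a Mellin-type relation. The exponent $\beta_+=\la_+(\xi+\om_+n)$ arises from the horizon surface gravity: $\la_+=2/\kappa_+$-scaled.

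\emph{Step 5: Plancherel and the lower bound.} Square the relation and apply Plancherel in $\ut$, Parseval in $n$, and integration in $\theta$ using the flat density \eqref{eq:flatvolume}. Use $|\Gamma(i\beta)|^2=\pi/(\beta\sinh\pi\beta)$. The estimate then reduces to a lower bound for the multiplier
\[
|\xi|^2\,|\Gamma(i\beta_+)|^2\,(\text{possible }e^{\pm\pi\beta_+}\text{ factor from the branch of }\log)
\]
relative to $|\xi+\om_+ n|+1$, restricted to $\mathcal A_M$. This is where the region $\mathcal A_M$ enters. When $\beta_+\xi\ge0$, the branch factor $e^{\pi\beta_+\operatorname{sgn}\xi}$ compensates the exponential decay of $\sinh$, so the multiplier is bounded below by $c|\beta_+|^{-1}\xi^2$. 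Using $\beta_+\sim(\xi+\om_+n)$, this dominates $|\xi+\om_+n|+1$ after the extra $T$ is accounted for. When $|\beta_+|\le M$, $\Gamma$ is bounded below, with the pole at $\beta_+=0$ favorable, and the constant depends on $M$. In the complementary superradiant region the factor is $e^{-2\pi|\beta_+|}$, which explains the restriction.

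\emph{Main obstacle.} The hard step is Step 3–4: making the $X\to-\infty$ limit rigorous and identifying the exact branch/phase, hence the sign of the $e^{\pm\pi\beta_+}$ factor. I would first try it in Schwarzschild ($a=0$), where $\gt=\g$ and the answer must be the classical relation, to fix the signs. Then I would redo the computation with the ingoing variables $(v,\phip)$ via the smooth trace, treating the contribution of $r>r_+$ as lower order through the $O(|X|^{-1})$ remainder.
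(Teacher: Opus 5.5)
Your outline (mode-by-mode Fourier representation of $\psit$, endpoint asymptotics as $X\to-\infty$, a Gamma-function weight with branch factor $e^{\pm\pi\beta_+}$, Plancherel on $\mathcal A_M$) has the right overall shape. However, Step 3 is the step that carries the content, you do not supply it, and the mechanism you propose would fail. In outgoing coordinates,
$\FF_u[\psi_n](\xi,r)=e^{2i(\xi r_*(r)+n\phi_*(r))}\FF_v[\psi_n](\xi,r)$. By the logarithmic form \eqref{eq:rstar-phistar-canonical-Xminus} of $r_*$ and $\phi_*$, this equals, up to an $r$-independent phase,
$e^{2i\xi r}(r-r_+)^{i\beta_+}(r-r_-)^{-i\beta_-}\FF_v[\psi_n](\xi,r)$. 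Here $\FF_v[\psi_n](\xi,\cdot)$ is smooth up to $r_+$, because the acceptable solution is regular across $\HH^+$. Consequently:
\begin{itemize}
\item Your outgoing $\widehat\psi(\xi,r)$ has no limit at $r=r_+$; the physical outgoing trace lives on the past horizon and vanishes.
\item $\pr_r\widehat\psi\sim(r-r_+)^{-1}$ is not integrable, so \emph{integrate by parts, the endpoint dominates} is not legitimate. Even if it were, it would give the amplitude $i/\xi$ with no Gamma factor.
\item The identity \eqref{eq:Rk-commutation} does not rescue this, since its boundary term is again the outgoing trace $\psi_+$.
\end{itemize}
The missing idea is to substitute $u=v-2r_*(r)$, $\phim=\phip-2\phi_*(r)$ inside the $r$-integral. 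Then, up to a unimodular prefactor (so there is no cancellation of linearly growing phases to track), $\FF[\psit_n](\xi,\rt)$ equals $J_n=\int_0^\infty e^{-i\nu r}r^{i\beta_+}h(r)\,dr$, with $\nu=-(X+2)\xi$ and $h(0)=(r_+-r_-)^{-i\beta_-}\FF_v[\psi_n](\xi,r_+)$. The Erd\'elyi asymptotics of Lemma \ref{Le:Erdelyi-Xminus} then give $\lim_{X\to-\infty}|\nu J_n|^2=|h(0)|^2|\Gamma(1+i\beta_+)|^2e^{\pi\beta_+\operatorname{sgn}\xi}$.

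Your guessed amplitude $\Gamma(i\beta_+)/\xi$ is also off by the factor $i\beta_+$. The weight is $r^{i\beta_+}=r^{(1+i\beta_+)-1}$, so the correct factor is $\Gamma(1+i\beta_+)$, which has no pole at $\beta_+=0$. This error breaks Step 5:
\begin{itemize}
\item With $|\Gamma(i\beta)|^2=\pi/(\beta\sinh\pi\beta)$, your weight $|\Gamma(i\beta_+)|^2e^{\pi|\beta_+|}\approx 2\pi/|\beta_+|$ \emph{decays} at large non-superradiant frequencies instead of growing like $1+|\beta_+|$.
\item Your remedy, dominating $|\xi+\om_+n|+1$ by $|\beta_+|^{-1}\xi^2$, needs $|\xi|\gtrsim|\beta_+|$. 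This fails, for example, for $0<\xi\ll\om_+n$, which is still non-superradiant.
\end{itemize}
With the correct $|\Gamma(1+i\beta_+)|^2=\pi\beta_+/\sinh\pi\beta_+$ and $x=\pi|\beta_+|$, one has directly $\frac{x}{\sinh x}e^{x}\ge 1+x$ on $\{\beta_+\xi\ge0\}$, and $\frac{x}{\sinh x}e^{-x}\ge e^{-2\pi M}(1+x)$ on $\{|\beta_+|\le M\}$. This is exactly the weight $1+|\beta_+|$ that produces $|T_+^{1/2}\psi|^2+|\psi|^2$ on $\HH^+$. The fixed-frequency limits are then integrated over $\mathcal A_M(n)$ and summed in $n$ using Fatou's lemma and Plancherel in $\ut$.
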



\begin{proof}
We recall that 
\[
\psit(\vt,\rt,\vphit)
=\int_{r_+}^\infty
\psi\big(\vt+B(\rt)(r-R),r,\vphit\big)\,dr .
\]
Here $\psi$ is initially written in the outgoing coordinates
$(u,r,\vphi=\phi_-)$, with $\vphit$ identified with $\vphi$, and we
suppress the coordinates $\th,\tht$.  Since the
target flux is on the future event horizon $\HH^+$, we pass to the ingoing
coordinates
\[
v=u+2r_*(r),\qquad \phip=\phi_-+2\phi_*(r).
\]
Thus
\[
\psit(\vt,\rt,\vphit)
=\int_{r_+}^\infty
\psi\big(\vt-B_+(\rt)(r-R)+2r_*(r),r,
\vphit+2\phi_*(r)\big)\,dr .
\]
Next we pass to the  outgoing coordinates
$(\ut,\rt,\tht,\vphit_-)$ of \eqref{eq:BL-Whiting-outXneg}.  We recall that 
\[
\vt=\ut+f(\rt),\qquad \vphit=\vphit_-+g(\rt).
\]
Then
\[
\psit(\ut,\rt,\vphit_-)
=\int_{r_+}^\infty
\psi\big(\ut+B(\rt)(r-R)+f(\rt)+2r_*(r),r,
\vphit_-+g(\rt)+2\phi_*(r)\big)\,dr .
\]
We decompose $\psit(\ut,\rt,\vphit_-)$ in Fourier series in $\vphit_-$ and the corresponding $n$-mode is given by
\begin{align*}
\psit_n(\ut,\rt)
&=\frac1{2\pi}\int_0^{2\pi}
\psit(\ut,\rt,\vphit_-)e^{-in\vphit_-}\,d\vphit_-\\
&=e^{ing(\rt)}
\int_{r_+}^\infty e^{2in\phi_*(r)}
\psi_n\big(\ut+B(\rt)(r-R)+2r_*(r)+f(\rt),r\big)\,dr
\end{align*}
Taking the Fourier transform of $\psit_n$ in $\ut$, we derive
\begin{align*}
\FF_{\ut}[\psit_n](\xi,\rt)
&=\int_{-\infty}^\infty e^{-i\ut\xi}\psit_n(\ut,\rt)\,d\ut\\
&=e^{ing(\rt)}
\int_{r_+}^\infty e^{2in\phi_*(r)}\,dr
\int_{-\infty}^\infty e^{-i\ut\xi}
\psi_n\big(\ut+B(\rt)(r-R)+2r_*(r)+f(\rt),r\big)\,d\ut\\
&=e^{ing(\rt)}
\int_{r_+}^\infty e^{2in\phi_*(r)}
e^{-i\xi\big(-B(\rt)(r-R)-2r_*(r)-f(\rt)\big)}
\FF_v[\psi_n](\xi,r)\,dr\\
&=e^{ing(\rt)}e^{i\xi f(\rt)}e^{-i\xi B(\rt)R}
\int_{r_+}^\infty e^{2in\phi_*(r)}
e^{-i\xi\big(-B(\rt)r-2r_*(r)\big)}
\FF_v[\psi_n](\xi,r)\,dr
\\
&=e^{ing(\rt)}e^{i\xi\big(f(\rt)-B(\rt)R\big)}I_n(\xi,\rt),
\end{align*}
where 
\beq
\lab{eq:formular-Fourier-n-Xminus}
\bsplit
I_n(\xi,\rt)&= \int_{r_+}^\infty e^{-if_n(r,\rt,\xi)}
\FF_v[\psi_n](\xi,r)\,dr,\\
f_n(r,\rt,\xi)&=\big(-B(\rt)r-2r_*(r)\big)\xi-2\phi_*(r)n .
\end{split}
\eeq
Note that 
\begin{align*}
\frac{r^2+a^2}{\De}&=1+\frac{2mr}{(r- r_-) (r- r_+) }=1+\frac{2m}{r_+- r_-} \Big(\frac{r_+}{r- r_+}- \frac{r_-}{ r- r_-} \Big)\\
\frac{a}{\De}&= \frac{a}{r_+- r_-} \Big(\frac{1}{r- r_+}- \frac{1}{ r- r_-} \Big)
\end{align*}
Hence, modulo a constant
\beq
\lab{eq:rstar-phistar-canonical-Xminus}
\bsplit
r_*(r)&= r +\frac 12 \la_+ \ln|r-r_+| - \frac 1 2  \la_- \ln|r-r_-|,  \qquad \la_\pm=\frac{4mr_\pm}{r_+- r_-}\\
\phi_*(r)&=\frac 1 2 \la_0\big(\ln |r-r_+|-  \ln |r- r_-| \big), \qquad \qquad \,\,\la_0= \frac{2a}{r_+- r_-}.
\end{split}
\eeq
and with 
\beq
\lab{eq:C-rt-Xminus}
C(\rt)=-B(\rt)-2=-X-2,
\eeq
we write
\begin{align*}
 f_n(r,\rt,\xi) &= \big(-B(\rt)r-2r_*(r)\big)\xi-2\phi_*(r)n\\
 &= \big( C(\rt)r - \la_+ \ln|r-r_+|+ \la_-\ln|r-r_-|\big)\xi-\big(\la_0 \ln |r-r_+|-  \la_0  \ln |r- r_-| \big)n.
\end{align*}
Thus, changing variables yields,
\begin{align*}
I_n(\xi, \rt)&=\int_{r_+}^\infty e^{-i \xi  r C(\rt)} \big(r-r_+\big) ^{i \big(\la_+\xi+\la_0 n\big)}  \big(r-r_-\big) ^{-i\big(\la_-\xi+\la_0 n\big)} \FF_v[\psi_n](\xi,  r) dr
\\
 &= e^{-i \xi r_+ C(\rt)}  \int_{0}^\infty e^{-i \xi  r  C(\rt)}r^ {i\big( \la_+\xi +\la_0 n \big)} \big(r+ r_+-r_-\big) ^{-i \big(\la_-\xi+\la_0 n\big)} \FF_v[\psi_n](\xi,  r+ r_+) dr.
\end{align*}
We rewrite this as
\begin{align*}
I_n(\xi, \rt)&= e^{-i \xi r_+ C(\rt)}   J_n(\xi, \rt)\\
 J_n(\xi, \rt)&= \int_{0}^\infty e^{-i \xi  r  C(\rt)}r^ {i  \beta_+}
 \big(r+ r_+-r_-\big) ^{-i \beta_-} \FF_v[\psi_n](\xi,  r+ r_+) dr.
\end{align*}
where $\beta_\pm=\la_\pm\xi+\la_0n$.
We deduce 
\beq
\lab{eq:expressionI-Xminus}
\bsplit
\FF[\psit_n](\xi, \rt)&= e^{in g(\rt)}e^{i\xi\big(f(\rt)-B(\rt)R-r_+C(\rt)\big)}J_n(\xi,\rt) \\
J_n(\xi, \rt)&=  \int_{0}^\infty e^{-i \xi  r  C(\rt)}r^ {i  \beta_+ }
	 \big(r+ r_+-r_-\big) ^{-i \beta_-} \FF_v[\psi_n](\xi,  r+ r_+) dr.
\end{split}
\eeq
\begin{remark}
Note that, setting   $\xi=-\om $  and $n=\ell_z$,
\[
  \beta_+\xi=\la_+ \xi^2 +\la_0n\xi  =\frac{4  m r_+}{\big(r_+- r_-\big)}\om^2 \big(1-\frac{a}{2mr_+} \frac{\ell_z}{\om}  \big)
\]
Recall\footnote{See, for example, Definition 3.14 in \cite{He-K1}.} that the
frequencies $(\ell_z,\om)$ are called superradiant if
$\frac{a\ell_z}{2mr_+\om}>1$.  Thus $\beta_+(\xi, n)\xi\leq 0$ corresponds precisely to the
superradiant frequencies.
\end{remark} 

We now appeal to the following well-known lemma.\footnote{See Appendix
\ref{section:Erdelyi} for a short proof.}

\begin{lemma} 
\lab{Le:Erdelyi-Xminus}
Consider the integral  $J(\nu)= \int_{0}^\infty e^{- i \nu r}  r ^{ib} h(r) dr $, with $h$   a sufficiently smooth function.
\begin{enumerate}
\item
 As $\nu \to \infty$ we have 
\[
J(\nu)=  h(0)  \frac{\Ga(1+ib)} { (\nu)^{1+ib} } e^{-i\pi/2(1+ib) } +O(\nu^{-2} ).
\]
\item
As $\nu \to -\infty$ we have
\[
J(\nu)= h(0)  \frac{\Ga(1+ib)} { (-\nu)^{1+ib} } e^{i\pi/2(1+ib) } +O(\nu^{-2} ).
\]
\end{enumerate}
\end{lemma}
We write the integral
\[
J_n(\xi,\rt)=\int_0^\infty e^{-i\xi rC(\rt)}
r^{i\beta_+}\big(r+r_+-r_-\big)^{-i\beta_-}
\FF_v[\psi_n](\xi,r+r_+)\,dr
\]
in \eqref{eq:expressionI-Xminus} in the form
\[
J_n=\int_0^\infty e^{-ir\nu}r^{i\beta_+}h(r)\,dr,
\qquad
\nu=C(\rt)\xi,
\]
where
\[
h(r)=\big(r+r_+-r_-\big)^{-i\beta_-}\FF_v[\psi_n](\xi,r+r_+),
\qquad
\beta_\pm=\la_\pm\xi+\la_0n .
\]
Since $C(\rt)=-B(\rt)-2=-X-2>0$ in $\DDt$, we have
$\nu>0$ for $\xi>0$ and $\nu<0$ for $\xi<0$. For  fixed  $\xi>0$,  as $\rt \to \infty$  we have   $\nu =C(\rt) \xi\to \infty$, in view of the first part of Lemma \ref{Le:Erdelyi-Xminus},
\begin{align*}
J_n(\xi,  \rt)  &= \big( r_+-r_-\big) ^{-i \beta_-}\FF_v[\psi_n](\xi,   r_+)  \frac{\Ga(1+i\beta_+)} { (\nu)^{1+i\beta_+} } e^{-i\pi/2(1+i\beta_+) } + O(\nu^{-2})\\
&= \big( r_+-r_-\big) ^{-i \beta_-}\FF_v[\psi_n](\xi,   r_+) \frac{\Ga(1+i\beta_+)} { (C(\rt) \xi)^{1+i \beta_+ } } e^{-i\pi/2(1+i\beta_+)}+ O( \rt^{-2} |\xi|^{-2} )
\end{align*}
Thus, for fixed $\xi>0$, recalling the classical formula
$\lvert\Ga(1+i\beta_+)\rvert^2=\pi \beta_+/\sinh(\pi \beta_+)$, we obtain
 \begin{align*}
    \lim_{\rt\to\infty}|C(\rt)|^2 |\xi|^2 |J_n(\xi, \rt) |^2 &= |\FF_v[\psi_n](\xi, r_+)|^2\big| \Ga(1+i\b_+)\big|^2  e^{\pi  \b_+ }\\
  &=| \FF_v[\psi_n](\xi,   r_+)|^2  \frac{\pi \beta_+ }{\sinh \pi   \beta_+} e^{\pi  \beta_+} .
\end{align*}
Since $C(\rt)=-B(\rt)-2=-S(\rt-\Rt)-2$, we have for $\xi>0$
\beq
 \lab{eq:Flux-interm-Xminusxipos}
 \lim_{\rt\to\infty}\rt^2|\xi|^2 |J_n(\xi,\rt)|^2
=S^{-2}|\FF_v[\psi_n](\xi,r_+)|^2
\frac{\pi |\beta_+|}{\sinh\pi |\beta_+|}e^{\pi \beta_+}.
\eeq
Similarly,  applying the second part of Lemma \ref{Le:Erdelyi-Xminus},  for $\xi <0$ and  $\rt \to +\infty$,
\beq
 \lab{eq:Flux-interm-Xminusxineg}
 \lim_{\rt\to\infty}\rt^2|\xi|^2 |J_n(\xi,\rt)|^2
=S^{-2}|\FF_v[\psi_n](\xi,r_+)|^2
\frac{\pi |\beta_+|}{\sinh\pi |\beta_+|}e^{-\pi \beta_+}.
\eeq
\begin{itemize}
\item{\bf The case of non-superradiant frequencies.}
In the non-superradiant region $\beta_+\xi\ge0$, we rewrite 
	 \beq
	 \lab{eq:Flux-interm-Xminus}
\lim_{\rt\to\infty}\rt^2|\xi|^2 |J_n(\xi,\rt)|^2
=S^{-2}|\FF_v[\psi_n](\xi,r_+)|^2
\frac{\pi |\beta_+|}{\sinh\pi |\beta_+|}e^{\pi |\beta_+|}.
\eeq
and thus\footnote{Because
\(\frac{x}{\sinh x}e^x\ge 1+x\), for \(x\ge0\).}
\beq
\lab{eq:Flux-b_+>0-Xminus}
\big(1+\pi |\beta_+|\big)|\FF_v[\psi_n](\xi,r_+)|^2
\le S^2\lim_{\rt\to\infty}\rt^2|\xi|^2|J_n(\xi,\rt)|^2 .
	 \eeq
	 
	 \item{\bf The case of bounded frequencies.}
In the bounded frequency region $|\beta_+|\le M$, we obtain
 \[
\lim_{\rt\to\infty}\rt^2|\xi|^2 |J_n(\xi,\rt)|^2
\geq S^{-2}|\FF_v[\psi_n](\xi,r_+)|^2
\frac{\pi |\beta_+|}{\sinh\pi |\beta_+|}e^{-\pi |\beta_+|}.
\]
and thus
\beq
\lab{eq:low-fr-Xminus}
\bsplit
(1+\pi|\beta_+|)\,|\FF_v[\psi_n](\xi,r_+)|^2
&\le S^2e^{2\pi|\beta_+|}\lim_{\rt\to\infty}\rt^2|\xi|^2 |J_n(\xi,\rt)|^2\\
&\le S^2e^{2\pi M}\lim_{\rt\to\infty}\rt^2|\xi|^2 |J_n(\xi,\rt)|^2.
\end{split}
\eeq
\end{itemize}

Recall the controlled frequency region $\mathcal A_M$ defined in
\eqref{eq:controlled-frequency-region}.
According to \eqref{eq:Flux-b_+>0-Xminus} and \eqref{eq:low-fr-Xminus},
we obtain, throughout $\mathcal A_M$,
\beq\lab{eq:controlled-frequency-horizon-trace}
(1+|\beta_+|)\,|\FF_v[\psi_n](\xi,r_+)|^2
\le C_M
\lim_{\rt\to\infty}\rt^2|\xi|^2|J_n(\xi,\rt)|^2 .
\eeq
Returning to \eqref{eq:expressionI-Xminus}, the prefactor relating
\(\FF[\psit_n](\xi,\rt)\) and \(J_n(\xi,\rt)\) has modulus one. Therefore
\beq\lab{eq:controlled-frequency-trace-with-T}
(1+|\beta_+|)|\FF_v[\psi_n](\xi,r_+)|^2
\le C_M
\lim_{\rt\to\infty}\rt^2|\xi|^2|\FF[\psit_n](\xi,\rt)|^2 .
\eeq
We now justify the passage from this fixed-frequency $\rt$-limit, in \eqref{eq:controlled-frequency-trace-with-T},  to the  desired integrated
estimate.  Integrating over the slices
$\mathcal A_M(n)$, summing over $n$ and using Fatou's lemma gives
\begin{align*}
&\sum_n\int_{\mathcal A_M(n)}
(1+|\beta_+|)|\FF_v[\psi_n](\xi,r_+)|^2\,d\xi\\
&\qquad\leq C_M\liminf_{\rt\to\infty}
\sum_n\int_{\mathcal A_M(n)}
\rt^2|\xi|^2|\FF[\psit_n](\xi,\rt)|^2\,d\xi\\
&\qquad\leq C_M\liminf_{\rt\to\infty}
\sum_n\int_{\mathcal A_M(n)}
\rt^2|\xi|^2|\FF[\psit_n](\xi,\rt)|^2\,d\xi .
\end{align*}
Applying Plancherel
gives
\[
\lim_{\rt\to\infty}
\sum_n\int_{\mathcal A_M(n)}
\rt^2|\xi|^2|\FF[\psit_n](\xi,\rt)|^2\,d\xi
\simeq
\int_{\IIt^+}|\T\psit_\infty|^2\,d\ut\,d\omega .
\]
Consequently,
\beq\lab{eq:controlled-frequency-HH-flux}
\int_{\HH^+}
\big|\Pi_{\mathcal A_M}\T_+^{1/2}\psi\big|^2+\big|\Pi_{\mathcal A_M}\psi\big|^2
\;dv\,d\omega
\lesssim_M
\int_{\IIt^+}\big|\Pi_{\mathcal A_M}\T\psit_\infty\big|^2\,d\ut\,d\omega .
\eeq
This proves \eqref{eq:psi-psit-flux-strong-Xminus}.  The complementary region 
\beq
\lab{define:BB_M}
\BB_M=\{\beta_+\xi<0\}\cap \{|\beta_+|>M\}
\eeq
 is the large superradiant region  not covered by the argument above.
\end{proof}


\section{The  $X>0$ branch}
\lab{sec:Xpos}

We now turn to the positive branch $\DDt_+:=\{X>0\}$.  Recall that the rescaled metric $|\qt|^2\gt$ is Lorentzian in the region $X>X_+(\tht)$ and
Riemannian in $0<X<X_+(\tht)$.  Moreover, $\T$ is timelike in the
Lorentzian region $X>X_+(\tht)$. We now treat the large superradiant frequency region
$\mathcal B_M=\{\beta_+\xi\leq0,\ |\beta_+|>M\}$ by woking in the positive
 branch.  There are two specific difficulties which do not occur
in the subdomain $\DDt$ of the negative branch $\DDt_-$. 
\begin{enumerate}
\item  First,  when  we  apply    the $T$-energy estimate to $|\qt|^2\square_{\gt}\psit=|\qt|^2\Nt$  we generate a boundary  term  at  $X=0$, rather than the  $X=-2$ null boundary   in the previous case. Remarkably, because of the form of the metric $\gt$,   this boundary term    actually  vanishes. 
  
\item The above procedure   leads to difficulties, near $X=0$,  due  to the presence of the nontrivial inhomogenuous term $|\qt|^2 \Nt$ generated by the  $\chi(\tauh)$ cut-off in the  original  wave equation. 
To fix this issue, we  replace $\T$ with a  multiplier
of the form $f(X)\T$, with $f(X)=0$ near $X=0$ and $f(X)=1$ when $X$ is large.

\item The multiplier $f(X)\T$  produces an additional bulk term.  In order to control this bulk term, we turn to the localized elliptic estimate in
a small   annular  region near $X=0$. 
\end{enumerate}

\subsection{Modified energy estimate}
We implement the $f(X)\T$-multiplier strategy mentioned above.  Fix
$\de>0$ sufficiently small and choose a smooth non-negative function
$f(X)$ such that
\beq
\lab{eq:defoff(X)}
f(X)=0\quad\hbox{for }0<X<2\de,\qquad
f(X)=1\quad\hbox{for }X>3\de,
\eeq
with $0\leq f\leq1$ and then
\[
|f'(X)|\leq C\de^{-1}\mathbf 1_{\{2\de<X<3\de\}}.
\]
In particular,
\beq\lab{eq:fX-comparison}
f(X)\leq \de^{-1}X=\de^{-1}B(\rt)
\qquad\hbox{on }\DDt_+.
\eeq
By abuse of notation, we denote
\begin{align*}
&\DDt_+(\taut_1, \taut_2):=\DDt_+\cap \{\taut_1\leq \taut\leq \taut_2\}\\
&\IIt:=\{\ut, X=\infty, \tht, \vphit_-\}\\
&\IIt^+(\taut_1, \taut_2):=\IIt^+\cap \{\taut_1\leq \taut\leq \taut_2\}\\
&\Sit(s):=\{\taut=s\}\cap\DDt_+
\end{align*}
where $\taut$ is a smooth time function\footnote{ Near $X=0$ we set $\taut=\tt$, and smoothly interpolate between $\tt$ and
$\ut-m^2/\rt$ in a fixed intermediate $X$-region so that the level
hypersurfaces remain spacelike for large $X$.} transversal to $\IIt^+$.

We now use the Penrose compactification of the asymptotic end in $\DDt_+$.  Set
\[
\rhot:=(X+1)^{-1},\qquad
\ghat:=(X+1)^{-2}\gt,\qquad
\psihat:=(X+1)\psit ,\qquad \widehat{\DDt_+}:=\DDt_+\cup\{\rhot=0\}.
\]
By Lemma \ref{lem:appendix-conformal-wave-equation},
\beq\lab{eq:conformal-wave-equation-Xplus}
|\qt|^2\square_{\gt}\psihat+V\psihat=(X+1)|\qt|^2\Nt,
\qquad
V<0
\quad\hbox{in }\widehat{\DDt_+} 
\eeq
where
\beq
|\qt|^2\square_{\gt}=\pr_{\a'}\hbt^{\a'\b'}\pr_{\b'}+\frac{1}{\sin\tht}\pr_{\a'}\sin\tht O^{\a'\b'}\pr_{\b'}.
\eeq
is expressed in the compactified outgoing coordinates $(\ut, \rhot=(	X+1)^{-1},\tht, \vphit_-)$.

The compactified metric extends smoothly to
$\IIt^+=\{\rhot=0\}$.  The radiation field at $\IIt^+$ is defined as follows
\beq\lab{eq:null-flux-radiation-field-pos}
\psit_\infty
:=\left.\psihat\right|_{\IIt^+}
=\lim_{X\to\infty}(X+1)\psit .
\eeq

\begin{proposition}[Weighted Conformal Energy]
\lab{Prop:conf-en(sk-0)Xpos}
Let $\psi$ be an acceptable solution of $\square_{\g}\psi=N$. Then the following estimate holds true for  $\psit$.
\beq
\lab{eq:confenergy-psitXpos}
\bsplit
&\int_{\IIt^+}|\T^2T^{1/2}\psit_\infty|^2\,d\ut\,d\omega
\lesssim
\left|\int_{\DDt_+}
\Re{\big((f(X)\T^2T^{1/2}\psit)\c\overline{TT^{1/2}(|\qt|^2\Nt})\big)}\,|\qt|^{-2}dv_\gt\right|\\
&+\int_{\DDt_+}\mathbf1_{\{2\de<X<3\de\}}\big(|T^2T^{1/2}\psit|^2+|ZTT^{1/2}\psit|^2+|\pr_{\rt}TT^{1/2}\psit|^2+|TT^{1/2}\psit|^2\big)\,|\qt|^{-2}dv_{\gt}.\end{split}
\eeq
\end{proposition}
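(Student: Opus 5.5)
We follow the proof of Proposition \ref{Prop:conf-en(sk-0)}, but replace the Killing current by the current generated by the multiplier $f(X)\T$. We work with the compactified field $\psihat=(X+1)\psit$ satisfying \eqref{eq:conformal-wave-equation-Xplus}, and first commute with $TT^{1/2}$. This is allowed because $\T$ and the fractional multiplier commute with $|\qt|^2\square_\gt$, and $V$ is stationary. We then use the potential energy tensor
\[
\widehat\QQ^{\,V}_{\at\bt}[\Psi]:=\widehat\QQ_{\at\bt}[\Psi]+V|\Psi|^2\ghat_{\at\bt},\qquad \Psi:=TT^{1/2}\psihat,
\]
and the current $J^{\at}:=f(X)(\widehat\QQ^{\,V})^{\at}{}_{\bt}\T^{\bt}$.

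Since $\T$ is Killing and $V,\ghat$ are stationary, the divergence of $J$ has two parts. The first is $f(X)$ times the source term $2\Re\big((\T\Psi)\overline{TT^{1/2}((X+1)|\qt|^2\Nt)}\big)$. The second is the deformation term $f'(X)\,\widehat\QQ^{\,V}[\Psi](\T,\widehat\D X)$. The weight $X+1$ is lost against the measure, exactly as in the $X<-2$ case. Undoing the conformal rescaling, the first part gives the source integral in \eqref{eq:confenergy-psitXpos} with $|\qt|^{-2}dv_\gt$. For the second part, $f'$ is supported in $\{2\de<X<3\de\}$ and bounded by $C\de^{-1}$ there. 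In that region $X$ is bounded above and below, so $\widehat\D X$ is a smooth vector field with components in $\pr_\ut,\pr_\rt,\pr_{\vphit}$, and $V$ is bounded. Hence $|f'\widehat\QQ^{\,V}(\T,\widehat\D X)|$ is bounded by $|T\Psi|^2+|Z\Psi|^2+|\pr_\rt\Psi|^2+|\Psi|^2$. The angular $\pr_\tht$ part drops out because $\gt$ has no $\tht$-cross terms with $\rt,\ut$ and $X$ depends only on $\rt$. Translated back to $\psit$, this is the second line of \eqref{eq:confenergy-psitXpos}, with a $\de$-dependent constant.

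The key point is the boundary analysis. Because $f\equiv0$ for $X<2\de$, the current vanishes near the singular hypersurface $X=0$ and on the whole Riemannian and degenerate zone $X\le X_+(\tht)$. This needs $\de$ chosen with $2\de>\sup_\tht X_+(\tht)=\frac{2|a|}{m-|a|}$. If instead $\de$ must be small, so that the cutoff region meets the non-Lorentzian zone, the bound of the deformation term must be made algebraically from $\Hbt$ rather than from causality. Its size there is still controlled pointwise by the listed derivatives, since the coefficients are bounded. So no boundary term at $X=0$ appears.

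The remaining boundaries are the slices $\Sit(\taut_1),\Sit(\taut_2)$ and $\IIt^+$. On $\IIt^+$ we have $f=1$. By Proposition \ref{prop:nullinfinities}, the compactified metric extends smoothly, the generator is proportional to $\T=\pr_\ut$, and $V<0$. Exactly as in Proposition \ref{Prop:conf-en(sk-0)}, the flux is then $\gtrsim|\T^2T^{1/2}\psit_\infty|^2\,d\ut\,d\omega$ and nonnegative.

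The main obstacle is the slice fluxes. First, they must be nonnegative on the region where $f\neq0$, which holds because $\T$ is timelike in the Lorentzian region and $V<0$. Second, they must vanish along sequences $\taut_i\to\pm\infty$. For this we invoke part 3 of Lemma \ref{lem:acceptablesol}: the $\EE[TT^{1/2}\psit]$ norm dominates the $f$-weighted $\T$-energy density, since on $\{X>2\de\}$ the weights $X^2$ are bounded below. Letting $\taut_1\to-\infty$ and $\taut_2\to\infty$ along those sequences, and discarding the nonnegative final slice flux, gives \eqref{eq:confenergy-psitXpos}.
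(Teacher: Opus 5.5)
Your strategy is the paper's. You apply the multiplier $f(X)\T$ to the compactified equation for $TT^{1/2}\psihat$. The $f'$-bulk is confined to $\{2\de<X<3\de\}$ and bounded by the listed derivatives. The weight $X+1$ cancels against the measure in the source term, the $\IIt^+$ flux is comparable to $|\T\psit_\infty|^2$, and the slice terms are removed along the sequences of Lemma \ref{lem:acceptablesol}.

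What does not work as written is the covariant framework. The paper needs $\de$ small, since Lemma \ref{lem:positive-superradiant-tangential-coercivity} and Proposition \ref{prop:positive-localized-elliptic-estimate} live on $\de<X<4\de$. So your option $2\de>\sup_\tht X_+(\tht)$ is not available.

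For $a\neq0$ and small $\de$, the support of $f$ meets the hypersurface $X=X_+(\tht)$. Even the annulus $\{2\de<X<3\de\}$ meets it at angles near the equator. There $\det\Hbt=0$, the conformal factor $|\qt|^2=\sin\tht\sqrt{|\det\Hbt|}$ vanishes, and $\ghat$ is not defined. Hence $\widehat\QQ_{\at\bt}$, $\widehat{\D}X$ and the divergence theorem for $\ghat$ cannot be used across it.

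The support of $f$ also meets the Riemannian zone $2\de<X<X_+(\tht)$, where your claim that the slice fluxes are nonnegative fails. There $\Hbt$ is positive definite. So in the paper's slice density $\EE^T[\psihat]$, the coefficient $-(\hbt^{\taut\taut}+O^{\taut\taut})$ of $|\T\psihat|^2$ is negative, while the coefficient of $|\pr_\tht\psihat|^2$ is $+1$.

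The repair is exactly the paper's formulation, which does not depend on the signature. Write $|\qt|^2\square_\gt\psihat+V\psihat=(X+1)|\qt|^2\Nt$ with the smooth coefficients $\hbt^{\at\bt}$, $O^{\at\bt}$. Multiply by $f(X)\overline{\T\psihat}$ and integrate by parts against the coordinate measure $d\taut\,d\rhot\,d\omega$. This produces:
\begin{itemize}
\item the slice terms $-\frac12\int f\,\EE^T[\psihat]$;
\item the flux $-\hbt^{\taut\rhot}|\T\psihat|^2$ on $\IIt^+$;
\item the source term;
\item a single bulk term $\pr_{\rhot}f\,\hbt^{\rhot\b}\pr_{\b}\psihat\,\overline{\T\psihat}$.
\end{itemize}
Since $\taut=\tt$ on the annulus, $\hbt^{\rhot\taut}=\hbt^{\rhot\vphit}=0$ there. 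So the bulk term involves only $\hbt^{\rhot\rhot}\pr_{\rhot}\psihat$, with bounded coefficient.

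No sign of the slice terms is needed. Lemma \ref{lem:acceptablesol} gives sequences $\taut_1\to-\infty$ and $\taut_2\to+\infty$ along which both slice terms tend to zero. Your domination of the $f$-weighted slice density by $\EE[TT^{1/2}\psit]$ on $\{X>2\de\}$ is exactly what this step requires. With these changes your argument coincides with the paper's.
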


\begin{proof}
Let
\begin{align*}
\EE^T[\psihat]&:=-(\hbt^{\taut\taut}+O^{\taut\taut})|T\psihat|^2+\hbt^{\rhot\rhot}|\pr_{\rhot}\psihat|^2+2\hbt^{\rhot\vphit_-}\Re(\pr_{\rhot}\psihat\overline{ Z\psihat})\\
&\qquad+O^{\tht\tht}|\pr_{\tht}\psihat|^2+(\hbt^{\vphit_-\vphit_-}+O^{\vphit_-\vphit_-})|Z\psihat|^2+V|\psihat|^2.
\end{align*}
We multiply the equation \eqref{eq:conformal-wave-equation-Xplus} 
\[
|\qt|^2\square_{\gt}\psihat+V\psihat=(X+1)|\qt|^2\Nt
\]
 by $f(X)T\psihat$ and integrate over the region $\widehat{\DDt_+}(\taut_1,\taut_2)$ with respect to $d\taut d\rhot d\omega$ to derive
\begin{align*}
&\int_{\widetilde\Sigma_{s}}-\frac12f(X)\EE^T[\psihat]\,d\rhot d\omega\Big|_{s=\taut_1}^{s=\taut_2}
+\int_{\IIt^+(\taut_1,\taut_2)}-\hbt^{\taut\rhot}|T\psihat|^2d\taut d\omega\\
&\qquad=\Re\int_{\widehat{\DDt_+}(\taut_1,\taut_2)}\pr_{\rhot}f(X)\hbt^{\rhot\b}\pr_\be\psihat\overline{T\psihat}\,d\taut d\rhot d\omega+\Re\int_{\widehat{\DDt_+}(\taut_1,\taut_2)}
(f(X)\T\psihat)\overline{(X+1)|\qt|^2\Nt}\,d\taut d\rhot d\omega.
\end{align*}
First, since $\psihat|_{\IIt^+}=\psit_\infty$ and $\hbt^{\taut\rhot}\sim 1$ and $\taut=\ut-m^2/\rt$ at $\IIt^+$, it follows that 
\[
\int_{\IIt^+(\taut_1,\taut_2)}\hbt^{\taut\rhot}|T\psihat|^2d\taut d\omega \sim \int_{\IIt^+(\taut_1,\taut_2)}|\T\psit_\infty|^2\,d\ut\,d\omega.
\]
Second, since $\T\psihat=(X+1)\,\T\psit$ and $d\taut d\rhot d\omega\sim |X+1|^{-2}d\taut d\rt d\omega=|X+1|^{-2}|\qt|^{-2}dv_{\gt}$, it follows that 
\[
\int_{\widehat{\DDt_+}}
(f(X)\T\psihat)\overline{(X+1)|\qt|^2\Nt}\,d\taut d\rhot d\omega \sim \int_{\DDt_+}
(f(X)\T\psit)\c\overline{|\qt|^2\Nt}\,|\qt|^{-2}dv_\gt.
\]
Third, since $
|\pr_{\rhot}f(X)|\les \de^{-1}\mathbf 1_{\{2\de<X<3\de\}}$ and in the region $2\de\leq X\leq 3\de$
\[
|\hbt^{\rhot\rhot}|\sim X, \quad  \hbt^{\rhot\taut}=\hbt^{\rhot \vphit}=0
\]
it follows that
\[
|\pr_{\rhot}f(X)\hbt^{\rhot\b}\pr_\be\psihat\overline{T\psihat}|\les \mathbf 1_{\{2\de<X<3\de\}}\Big(|T\psit|^2+|Z\psit|^2+|\pr_{\rt}\psit|^2+|\psit|^2\Big)\]
and thus
\[
\left|\Re\int_{\widehat{\DDt_+}(\taut_1,\taut_2)}\pr_{\rhot}f(X)\hbt^{\rhot\b}\pr_\be\psihat\overline{T\psihat}\,d\taut d\rhot d\omega\right|\les \int_{\DDt_+(\taut_1,\taut_2)}\mathbf1_{\{2\de<X<3\de\}}\big(|T\psit|^2+|Z\psit|^2+|\pr_{\rt}\psit|^2+|\psit|^2\big)\,|\qt|^{-2}dv_{\gt}.
\]
All the above analysis also apply to $TT^{1/2}\psit$.

Finally, by Lemma \ref{lem:acceptablesol}, the energy fluxes terms associated to $TT^{1/2}\psit$ at $\taut_1, \taut_2$ vanish as $\taut_1\to-\infty$ and $\tau_2\to\infty$. Therefore, letting
$\taut_1\to-\infty$ and $\taut_2\to\infty$ gives \eqref{eq:confenergy-psitXpos}.
\end{proof}

\subsection{Localized elliptic estimate}
In order to control the second term on the right side of the estimate \eqref{eq:confenergy-psitXpos}, we will make use of the following frequency localized   coercivity lemma. 

\begin{lemma}[Coercivity in the superradiant region]
\lab{lem:positive-superradiant-tangential-coercivity}
Let $\Phi$ have Fourier support in
$\mathcal B_M=\{\beta_+\xi<0,\ |\beta_+|>M\}$.  For \(\de>0\) sufficiently
small and $\de<X<4\de$, we have
\beq\lab{eq:positive-superradiant-tangential-coercivity}
\bsplit
&\int|\qt|^2\gt^{\taut\taut}|T\Phi|^2
+2|\qt|^2\gt^{\taut\vphit}
\Re(T\Phi\,\overline{Z\Phi})
+|\qt|^2\gt^{\vphit\vphit}|Z\Phi|^2\,d\taut d\vphit\gtrsim \int |T\Phi|^2+\frac{1}{\sin^2\tht}|Z\Phi|^2 \,d\taut d\vphit.
\end{split}
\eeq
\end{lemma}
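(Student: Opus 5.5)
The plan is to reduce \eqref{eq:positive-superradiant-tangential-coercivity} to a pointwise inequality for a quadratic symbol, via Plancherel in $(\taut,\vphit)$ at fixed $(X,\tht)$. Then I would use the explicit form of the metric near $X=0$: at $X=0$ the quadratic form collapses to a perfect square, and in the superradiant regime that square is nondegenerate.

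\textbf{Step 1 (coefficients near $X=0$).} I would work in the BL-type coordinates of Section \ref{subsection:BLcoords}. Writing $\rt-\Rt=X/S$ in \eqref{def-f(rt)-general} and \eqref{eq:BL-hbt.coordinates-general}, one gets
\[
\hbt^{\tt\tt}=\frac{X\big((a^2-m^2)X^2+4a^2X+4a^2\big)}{X+2},\qquad \hbt^{\tt\vphit}=aX,
\]
which equal $2a^2X+O(X^2)$ and $aX$ respectively. Adding the unchanged $O$-part, the $(T,Z)$-block of $|\qt|^2\gt^{-1}$ is
\[
a^2\sin^2\tht+2a^2X+O(X^2),\qquad a(1+X),\qquad \csc^2\tht .
\]
For $\de<X<4\de$ we have $\taut=\tt$ near $X=0$. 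The intermediate interpolation region can be placed at larger $X$, or its effect on these coefficients is $O(\de)$. The integrand therefore equals
\[
\csc^2\tht\,\big|a\sin^2\tht\,T\Phi+Z\Phi\big|^2+\mathcal R,\qquad |\mathcal R|\lesssim \de\big(|T\Phi|^2+|T\Phi||Z\Phi|\big).
\]

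\textbf{Step 2 (Plancherel).} Next I would take the Fourier transform in $\taut$ and the Fourier series in $\vphit$, with $T\to i\xi$ and $Z\to in$; the $Z$-eigenvalue is the same in the $\vphit$ and $\vphit_-$ coordinates. The left side becomes $\sum_n\int Q_X(\xi,n,\tht)|\widehat\Phi|^2\,d\xi$ with
\[
Q_X=\csc^2\tht\,(a\sin^2\tht\,\xi+n)^2+O(\de)(\xi^2+|\xi n|).
\]
It therefore suffices to show $Q_X\gtrsim \xi^2+n^2\csc^2\tht$ on $\mathcal B_M$, uniformly in $\tht$ and in $X\in(\de,4\de)$.

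\textbf{Step 3 (superradiant lower bound), the main point.} Since $\beta_+=\la_+(\xi+\omega_+n)$, the condition $\beta_+\xi<0$ means that $\xi$ and $\xi+\omega_+ n$ have opposite signs. Hence $n\xi<0$ and $|n|>\omega_+^{-1}|\xi|=\frac{2mr_+}{|a|}|\xi|$. Then
\[
|a\sin^2\tht\,\xi+n|\ge |n|-|a||\xi|\ge |n|\Big(1-\frac{a^2}{2mr_+}\Big).
\]
Because $2mr_+=r_+^2+a^2>a^2$, this is $\ge c_{a,m}|n|$. Consequently
\[
\csc^2\tht\,(a\sin^2\tht\,\xi+n)^2\ge c^2n^2\csc^2\tht\gtrsim n^2\csc^2\tht+\xi^2,
\]
using $|\xi|\lesssim|n|$ and $\csc\tht\ge1$. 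The error satisfies $O(\de)(\xi^2+|\xi n|)\lesssim \de\, n^2\lesssim \de\, n^2\csc^2\tht$, so it is absorbed once $\de$ is small depending only on $a,m$. The condition $|\beta_+|>M$ is not really needed; it only ensures $n\neq0$.

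The step I expect to need most care is Step 1. One must verify that the $O(X)$ corrections are uniform near the axis, that is, that no $\csc^2$-weighted error carries a bad factor; here $\hbt^{\tt\vphit}$ has no $\csc$ factor, so this is fine. One must also check that the choice of time function $\taut$ in the strip $\de<X<4\de$ does not change the block beyond $O(\de)$. If the interpolation of $\taut$ were to overlap this strip, I would instead first redo the computation in $(\tt,\vphit)$ and note that $\pr_{\taut}=\pr_{\tt}$ when the change of time is a function of $\rt$ only, while the $(T,Z)$ quadratic form at fixed $\rt$ picks up only $\hbt^{\rt\cdot}$-terms, which are $O(X)$.
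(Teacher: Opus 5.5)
Your proposal is correct and follows essentially the same route as the paper: Plancherel in $(\taut,\vphit)$, the observation that at $X=0$ the $(T,Z)$-symbol is the perfect square $\csc^2\tht\,(a\sin^2\tht\,\xi+n)^2$, the superradiant bound $|\xi|<\frac{|a|}{2mr_+}|n|$ giving $|a\sin^2\tht\,\xi+n|\ge\big(1-\frac{a^2}{2mr_+}\big)|n|$, and absorption of the $O(X)$ corrections by taking $\de$ small. One small slip: your intermediate claim $n\xi<0$ holds only for $a>0$, which the paper assumes via the symmetry $a\to-a$, $\vphit\to-\vphit$. It is harmless, since your triangle-inequality step uses only $|\xi|<|\omega_+|\,|n|$, which holds for either sign of $a$.
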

\begin{proof}
Assume in what follows that $a \ge 0$.\footnote{Because we have the symmetry $a\to -a$ and $\vphit\to -\vphit$.}
According to the definition of $\taut$, we have $\taut=\tt$ on $\de<X<4\de$.  Hence \eqref{eq:BL-hbt.coordinates-general} gives
\[
  |\qt|^2\gt^{\tt\tt}=a^2\sin^2\tht+O(X),  \quad
 |\qt|^2\gt^{\tt\vphit}= a+aX, \quad |\qt|^2\gt^{\vphit\vphit}=\frac{1}{\sin^2\tht}.
 \]
By Plancherel, the term on the
left side of \eqref{eq:positive-superradiant-tangential-coercivity} can be written as 
\beq\lab{eq:positive-tangential-form-mode}
\sum_{n}\int \mathfrak q_X(\xi,n)|\FF_{\taut}[\Phi](\xi)|^2\,d\xi 
\eeq
where
\beq\lab{eq:positive-tangential-symbol-mode}
\mathfrak q_X(\xi,n)
=\left(a\sin\tht\,\xi+\frac{n}{\sin\tht}\right)^2
+O(X)\left(\xi^2+2a|\xi| |n|\right).
\eeq
Since
\[
\beta_+\xi=\la_+\xi(\xi+\omega_+n)=\la_+\xi(\xi+\frac{a}{2mr_+}n)\leq 0,
\]
we obtain\footnote{We can safely assume $n\neq 0$.} with $y=\xi/n$
\[
-\frac{a}{2mr_+}\leq y \leq 0.
\]
Then we have
\[
|ay\sin^2\tht +1|
\geq |1-\frac{a^2}{2mr_+}\sin^2\tht|\\
\geq |1-\frac{a^2}{2mr_+}|\geq \frac12
\]
and thus 
\[
\mathfrak q_{X=0}(\xi,n)
=\frac{n^2}{\sin^2\tht}|ay\sin^2\tht +1|^2\geq \frac14\frac{n^2}{\sin^2\tht}\gtrsim \xi^2+\frac{n^2}{\sin^2\tht}.
\]
Choosing $\de>0$ sufficiently small,  we then have 
\[
\mathfrak q_X(\xi,n)\gtrsim \xi^2+\frac{n^2}{\sin^2\tht},
\qquad \de\leq X\leq 4\de.
\]
Applying Plancherel again proves
\eqref{eq:positive-superradiant-tangential-coercivity}.
\end{proof}

Now we are ready to prove the following localized elliptic estiamte.
We use the notation
\beq\lab{eq:positive-elliptic-density}
\mathcal E_{\mathrm{ell}}[\Phi]
:=|\T\Phi|^2+\frac{1}{\sin^2\tht}|\Z\Phi|^2+|\pr_{\tht}\Phi|^2
+X(X+2)|\pr_{\rt}\Phi|^2+|\Phi|^2
\eeq

\begin{proposition}[Localized elliptic estimate]
\lab{prop:positive-localized-elliptic-estimate}
Let $\psi$ be an acceptable solution of $\square_{\g}\psi=N$. With
$\mathcal E_{\mathrm{ell}}[\Pi_{\mathcal{B}_M}\psit]$ defined in
\eqref{eq:positive-elliptic-density}, we have
\beq\lab{eq:positive-localized-elliptic-estimate}
\bsplit
\int_{2\de\leq X \leq 3\de}\mathcal E_{\mathrm{ell}}[\Pi_{\mathcal{B}_M}T T^{1/2}\psit]\,|\qt|^{-2}dv_\gt
&\lesssim \left|\Re\int_{\de\leq X\leq 4\de}
\zeta(X)\Pi_{\mathcal{B}_M}TT^{1/2}\psit\c\overline{\Pi_{\mathcal{B}_M}TT^{1/2}(|\qt|^2\Nt)}\,|\qt|^{-2}dv_{\gt}\right|\\
&\quad+\int_{\de\leq X\leq 4\de}\de^{-1}|\Pi_{\mathcal{B}_M}TT^{1/2}\psit|^2\,|\qt|^{-2}dv_{\gt}.
\end{split}
\eeq
The conclusion remains true after applying stationary  $T, \Z$ of  appropriate  $1/2$ versions of them.
\end{proposition}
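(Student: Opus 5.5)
We prove the estimate by a localized energy (elliptic) identity: multiply the transformed equation by $\zeta(X)\overline{\Phi}$, with $\Phi:=\Pi_{\mathcal B_M}TT^{1/2}\psit$, and integrate by parts. Here $\zeta=\zeta(X)\ge0$ is a smooth cutoff with $\zeta=1$ on $[2\de,3\de]$, supported in $[\de,4\de]$, and $|\zeta'|\lesssim\de^{-1}$.

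First, since $\Pi_{\mathcal B_M}$, $T$ and $T^{1/2}$ are Fourier multipliers in $(\taut,\vphit)$, they commute with $|\qt|^2\square_\gt$. The coefficients $\hbt,O$ are independent of $\taut,\vphit$, and $\taut=\tt$ on $\de<X<4\de$. Hence $\Phi$ solves $|\qt|^2\square_\gt\Phi=\Pi_{\mathcal B_M}TT^{1/2}(|\qt|^2\Nt)$ there. We use the BL-type coordinates \eqref{eq:BL-hbt.coordinates-general}, in which $\hbt^{\tt\rt}=0$, and the flat density \eqref{eq:flatvolume}. Multiplying by $\zeta\overline\Phi$ and integrating by parts in all variables gives
\[
\int\zeta\Big(\hbt^{\rt\rt}|\pr_\rt\Phi|^2+|\pr_\tht\Phi|^2+\mathcal T[\Phi]\Big)
=-\Re\int\zeta\,\Phi\,\overline{\Pi_{\mathcal B_M}TT^{1/2}(|\qt|^2\Nt)}-\Re\int\pr_\rt\zeta\,\hbt^{\rt\rt}\pr_\rt\Phi\,\overline\Phi .
\]
Here $\mathcal T[\Phi]$ denotes the tangential quadratic form in $(T\Phi,Z\Phi)$ appearing in Lemma \ref{lem:positive-superradiant-tangential-coercivity}. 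No boundary terms arise, because $\zeta$ is compactly supported in $X$ and $\Phi$ decays in $\taut$; the decay in $\taut$ follows from acceptability and the $L^2$ bounds of Corollary \ref{cor:filtered-L2-outgoing-transform}.

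Second, we bound each term on the left from below. Since $\hbt^{\rt\rt}=S^{-2}X(X+2)>0$ for $X>0$, the radial term is coercive and controls $X(X+2)|\pr_\rt\Phi|^2$. For the tangential term we integrate in $(\taut,\vphit)$ at fixed $(\rt,\tht)$ and apply Lemma \ref{lem:positive-superradiant-tangential-coercivity}. This requires $\Phi$ to have Fourier support in $\mathcal B_M$, which is why we commute the projection in before multiplying. The lemma gives $\gtrsim|T\Phi|^2+\sin^{-2}\tht|Z\Phi|^2$. The $|\Phi|^2$ part of $\mathcal E_{\mathrm{ell}}$ is trivially bounded by the last term on the right of the proposition.

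Third, we handle the commutator term. Here $|\pr_\rt\zeta|\lesssim\de^{-1}$ and $\hbt^{\rt\rt}\sim X\lesssim\de$. By Cauchy--Schwarz,
\[
|\pr_\rt\zeta\,\hbt^{\rt\rt}\pr_\rt\Phi\,\overline\Phi|\le\eta\,\zeta'\text{-region}\cdot X|\pr_\rt\Phi|^2+C_\eta\de^{-1}|\Phi|^2 .
\]
The $\pr_\rt\Phi$ piece lives where $\zeta$ is not identically $1$, so it cannot be absorbed directly into the left side. The cleanest route is to multiply by $\zeta^2\overline\Phi$ instead of $\zeta\overline\Phi$. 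The error term is then $2\zeta\zeta'\hbt^{\rt\rt}\pr_\rt\Phi\overline\Phi$, which is bounded by $\tfrac12\zeta^2\hbt^{\rt\rt}|\pr_\rt\Phi|^2+2|\zeta'|^2\hbt^{\rt\rt}|\Phi|^2$. The first piece is absorbed on the left. The second is $\lesssim\de^{-2}\cdot\de\,|\Phi|^2=\de^{-1}|\Phi|^2$, which produces exactly the last term in \eqref{eq:positive-localized-elliptic-estimate}. The inhomogeneous term is kept as the absolute value of the pairing, with $\zeta^2$ in place of $\zeta$. Restricting the left side to $2\de\le X\le3\de$, where $\zeta=1$, concludes the proof. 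The same argument works after further commuting with $T$, $Z$ and their $1/2$-powers, since these are multipliers that preserve the Fourier support.

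The main obstacle is the tangential coercivity. The quadratic form $\mathcal T$ is indefinite in general, because $\gt^{\tt\tt}$ is small near $X=0$ and the cross term $2a\,T\Phi\overline{Z\Phi}$ is large. Positivity holds only after Fourier localization to $\mathcal B_M$ and for $\de$ small, and it must be applied slice-wise in $(\rt,\tht)$ via Plancherel. The point to check is that the $O(X)$ corrections in the symbol are uniformly dominated on $\de\le X\le4\de$, which is guaranteed by Lemma \ref{lem:positive-superradiant-tangential-coercivity}.
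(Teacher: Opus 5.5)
Your proposal is correct and follows essentially the paper's argument. You multiply the equation for $\Phi=\Pi_{\mathcal B_M}TT^{1/2}\psit$ by an $X$-cutoff times $\overline\Phi$, obtain coercivity from Lemma \ref{lem:positive-superradiant-tangential-coercivity} applied slice-wise together with $\hbt^{\rt\rt}=S^{-2}X(X+2)>0$ and $O^{\tht\tht}=1$, and bound the cutoff-derivative term by $\de^{-1}|\Phi|^2$. The only difference is in that last step, and it is minor: the paper keeps the multiplier $\zeta\overline\Phi$ and writes $\Re(\pr_\rt\Phi\,\overline\Phi)=\frac12\pr_\rt|\Phi|^2$, then integrates by parts in $\rt$ using $|\pr_\rt(\zeta' S\hbt^{\rt\rt})|\lesssim\de^{-1}$, so no absorption is needed; your $\zeta^2$-plus-Cauchy--Schwarz version reaches the same bound using only $|\zeta'|\lesssim\de^{-1}$ and the positivity of $\hbt^{\rt\rt}$.
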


\begin{proof}
We let
\[
\Phi=\Pi_{\mathcal{B}_M}TT^{1/2}\psit.
\]
Let $\zeta=\zeta(X)$ be a non-negative cutoff satisfying
\[
\zeta=1\quad\hbox{for  }2\de\leq X\leq 3\de,\qquad
\zeta=0\quad\hbox{for  }X\in(-\infty, \de]\cup [4\de,\infty).
\]
Multiplying
the equation $|\qt|^2\square_{\gt}\Phi=\Pi_{\mathcal{B}_M}TT^{1/2}(|\qt|^2\Nt)$ for $\Phi$ by $\zeta\overline\Phi$, integrating over $\DDt_+$ with respect to $d\taut d\rt d\omega$, and taking the real part gives
\beq\lab{eq:ellpticest2}
\bsplit
\Re\int_{\DDt_+}
\zeta(X)|\qt|^2\gt^{\at\bt}
\pr_{\at}\Phi\,\pr_{\bt}\overline\Phi
\,d\taut\,d\rt\,d\omega
&+\Re\int_{\{\de<X<4\de\}}
\zeta'(X)\,S\hbt^{\rt\at}
\pr_{\at}\Phi\,\overline\Phi
\,d\taut\,d\rt\,d\omega
=S_\ze,\\
S_\ze
&:=-\Re\int_{\DDt_+}
\zeta(X)
\bigl(\Pi_{\mathcal{B}_M}T T^{1/2}(|\qt|^2\Nt)\bigr)\c\overline\Phi
\,d\taut\,d\rt\,d\omega.
\end{split}
\eeq
We note that there is no boundary term at $X=0$, since
$\hbt^{\rt\at}=O(X)$ there, and the time-boundary terms vanish by Lemma \ref{lem:acceptablesol}.

By Lemma \ref{lem:positive-superradiant-tangential-coercivity}, the
$(T,Z)$-part of the first term on the left side of
\eqref{eq:ellpticest2} is coercive.  Together with
\[
\hbt^{\rt\rt}=S^{-2}X(X+2)>0,
\qquad O^{\tht\tht}=1,
\]
we conclude that the first term is nonnegative and controls $\mathcal E_{\mathrm{ell}}[\Phi]$ on $2\de\leq X\leq 3\de$. 

 The
second term on the left side of \eqref{eq:ellpticest2}, containing
$\zeta'$, is supported in $\de<X<4\de$.  On $0<X<4\de$ we have
$\taut=\tt$, and thus the only nonzero component of $|\qt|^2\gt^{\rt\at}$ is $\hbt^{\rt\rt}$. Integration by parts in $\rt$ gives
\[
\Re\int_{\de\leq X\le 4\de}
\zeta'(X)S\hbt^{\rt\rt}
(\pr_\rt\Phi)\overline\Phi\,
d\taut\,d\rt\,d\omega\\
=-\frac12\int_{\de\leq X\leq 4\de}
\pr_\rt\bigl(\zeta'(X)S\hbt^{\rt\rt}\bigr)
|\Phi|^2\,d\taut\,d\rt\,d\omega .
\]
On the fixed annulus $\de\leq X\leq 4\de$, we have
\[
\left|
\pr_\rt\bigl(\zeta'(X)S\hbt^{\rt\rt}\bigr)
\right|\les \de^{-1}.
\]
Therefore
\[
\left|\Re\int_{\de\leq X\le 4\de}
\zeta'(X)S\hbt^{\rt\rt}
(\pr_\rt\Phi)\overline\Phi\,
d\taut\,d\rt\,d\omega\right|\les \int_{\de\leq X\leq 4\de}\de^{-1}|\Phi|^2\,|\qt|^{-2}dv_{\gt}.
\]
This finishes the proof of \eqref{eq:positive-localized-elliptic-estimate}.
\end{proof}
Combining the localized elliptic estimate in Proposition \ref{prop:positive-localized-elliptic-estimate} with the weighted conformal energy estimate (applied to $\Pi_{\mathcal{B}_M}\psit$) in Proposition \ref{Prop:conf-en(sk-0)Xpos} yields the following.
\begin{proposition}
\lab{Prop:finalenergyXpos}
Let $\psi$ be an acceptable solution of $\square_{\g}\psi=N$. Then 
\beq
\lab{eq:finalenergy-psitXpos}
\bsplit
\int_{\IIt^+}|T^2T^{1/2}\Pi_{\mathcal{B}_M}\psit_\infty|^2\,d\ut\,d\omega
&\lesssim
\left|\int_{\DDt_+}
(f(X)T^2T^{1/2}\Pi_{\mathcal{B}_M}\psit)\c\overline{TT^{1/2}\Pi_{\mathcal{B}_M}(|\qt|^2\Nt)}\,|\qt|^{-2}dv_\gt\right|\\
&\quad+\left|\int_{\de\leq X\leq 4\de}
\zeta(X)TT^{1/2}\Pi_{\mathcal{B}_M}\psit\c\overline{TT^{1/2}\Pi_{\mathcal{B}_M}(|\qt|^2\Nt)}\,|\qt|^{-2}dv_{\gt}\right|\\
&\quad+\int_{\de\leq X\leq 4\de}\de^{-1}|TT^{1/2}\Pi_{\mathcal{B}_M}\psit|^2\,|\qt|^{-2}dv_{\gt}
\end{split}.
\eeq
The conclusion remains true after applying stationary  $T, \Z$ of  appropriate  $1/2$ versions of them.
\end{proposition}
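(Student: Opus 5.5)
Apply Proposition \ref{Prop:conf-en(sk-0)Xpos} to the frequency-localized field $\Pi_{\mathcal{B}_M}\psit$, then absorb its annular error term with Proposition \ref{prop:positive-localized-elliptic-estimate}. Since $\Pi_{\mathcal{B}_M}$ is a Fourier multiplier in $(\taut,\vphit)$, it commutes with $|\qt|^2\square_\gt$. Indeed, $\taut=\tt$ near $X=0$ and all coefficients are independent of $\taut,\vphit$. Away from $X\le 4\de$ the time function is only $\rt$-dependently shifted, so the projection defined in $(\ut,\vphit_-)$ and the one in $(\tt,\vphit)$ differ only by a unimodular phase $e^{i\xi h(\rt)+in g(\rt)}$. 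That phase does not move the support in $(\xi,n)$. Hence $\Pi_{\mathcal{B}_M}\psit$ solves the transformed equation with source $\Pi_{\mathcal{B}_M}(|\qt|^2\Nt)$. Lemma \ref{lem:acceptablesol} still applies to it, because the projection is $L^2$-bounded on each slice, so the time-boundary fluxes vanish along sequences.

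First, Proposition \ref{Prop:conf-en(sk-0)Xpos} with $\psit$ replaced by $\Pi_{\mathcal{B}_M}\psit$ gives the $\IIt^+$ flux bounded by two terms. The first is exactly the first term on the right of \eqref{eq:finalenergy-psitXpos}. The second is the annular integral over $2\de<X<3\de$ of
\[
|T^2T^{1/2}\Phi_0|^2+|ZTT^{1/2}\Phi_0|^2+|\pr_{\rt}TT^{1/2}\Phi_0|^2+|TT^{1/2}\Phi_0|^2,\qquad \Phi_0=\Pi_{\mathcal{B}_M}\psit.
\]
Second, this annular quantity is pointwise bounded by $\mathcal E_{\mathrm{ell}}[\Pi_{\mathcal{B}_M}TT^{1/2}\psit]$. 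On $2\de\le X\le 3\de$ the weight $X(X+2)\sim\de$ is bounded below by a constant depending only on $\de$, and $\sin^{-2}\tht\ge1$. So the annular term is controlled by the left side of \eqref{eq:positive-localized-elliptic-estimate}, with a constant depending on the fixed $\de$.

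Third, Proposition \ref{prop:positive-localized-elliptic-estimate} bounds that quantity by the $\zeta$-weighted bilinear source term, which is the second term of \eqref{eq:finalenergy-psitXpos}, plus $\de^{-1}\int_{\de\le X\le4\de}|TT^{1/2}\Pi_{\mathcal{B}_M}\psit|^2$, which is the third term. Summing gives the claim. The statement with extra $T,Z$ or half-derivatives follows identically, since these commute with everything used.

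The main obstacle is the commutation and compatibility step. One must check that the frequency localization $\Pi_{\mathcal{B}_M}$ is the same operator (up to the harmless phase) in the coordinates used by the conformal energy estimate near $\IIt^+$ and in the coordinates $(\tt,\vphit)$ where Lemma \ref{lem:positive-superradiant-tangential-coercivity} is applied. This holds because $\beta_+$ depends only on $(\xi,n)$, which are preserved by $\rt$-dependent translations in $\taut$ and $\vphit$. I would verify this first. Beyond that, the implicit constants depend on $\de$ (fixed small) and $M$, which is acceptable.
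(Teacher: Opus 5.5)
Your proposal is correct and follows the paper's route. The paper states the argument in a single line: apply Proposition \ref{Prop:conf-en(sk-0)Xpos} to $\Pi_{\mathcal B_M}\psit$, then absorb its annular error on $2\de<X<3\de$ into the left side of Proposition \ref{prop:positive-localized-elliptic-estimate}, at a $\de$-dependent cost from the weight $X(X+2)$. It omits the commutation and compatibility checks for $\Pi_{\mathcal B_M}$ that you supply.

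One small correction to those checks: $\Pi_{\mathcal B_M}$ does not act on each slice separately. It is bounded on spacetime $L^2$ by Plancherel in $(\taut,\vphit)$, and it commutes with the $\rt$-dependent weights and with $\pr_\rt,\pr_\tht$. That spacetime bound is exactly what the finiteness argument in the proof of Lemma \ref{lem:acceptablesol} needs to produce the vanishing slices.
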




\subsection{Control of horizon flux of $\psi$}
\lab{sec:positive-superradiant-horizon-flux}

\begin{proposition}[Large superradiant horizon flux]
\lab{prop:positive-branch-large-superradiant-flux}
Let\footnote{Assume that the radiation field $\psit$ has the desired regularity.}
 $\psi$ be an acceptable solution to $\square_{\g}\psi=N$.  Let
$\psit=\Wk[\psi]$  Then
\beq
\lab{eq:psi-psit-flux-strong-Xplus}
\int_{\HH^+}
\big|\Pi_{\mathcal B_M}\T_+^{1/2}\psi\big|^2+\big|\Pi_{\mathcal B_M}\psi\big|^2
\;dv\,d\omega
\lesssim
\int_{\IIt^+}\big|\Pi_{\mathcal B_M}\T\psit_\infty\big|^2\,d\ut\,d\omega .
\eeq
The conclusion remains true after applying stationary  $T, \Z$ of  appropriate  $1/2$ versions of them.
\end{proposition}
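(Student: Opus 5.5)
The plan mirrors the proof of Proposition \ref{prop:psi-psit-flux-strong-Xminus}, with the asymptotic end now at $X\to+\infty$. I would keep the same change of variables to ingoing coordinates $(v,\phip)$ and the same passage to the outgoing coordinates $(\ut,\rt,\tht,\vphit_-)$ of \eqref{eq:BL-Whiting-outXneg}, now with the lower sign appropriate for $\rt<\Rt$. After Fourier decomposition in $\vphit_-$ and $\ut$, this gives the analogue of \eqref{eq:expressionI-Xminus}:
\[
\FF[\psit_n](\xi,\rt)=e^{i\Theta_n(\xi,\rt)}J_n(\xi,\rt),\qquad J_n=\int_0^\infty e^{-i\xi rC(\rt)}r^{i\beta_+}(r+r_+-r_-)^{-i\beta_-}\FF_v[\psi_n](\xi,r+r_+)\,dr,
\]
where $C(\rt)=-X-2$ and $\Theta_n$ is a real phase of modulus-one type. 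The essential difference from the negative branch is that $C(\rt)\to-\infty$ as $X\to+\infty$. Hence $\nu=C\xi$ has the sign \emph{opposite} to $\xi$.

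Next I apply Lemma \ref{Le:Erdelyi-Xminus} in the opposite regime. For $\xi>0$ we have $\nu\to-\infty$, so the second part of the lemma gives the phase factor $e^{i\pi(1+i\beta_+)/2}$. For $\xi<0$ we have $\nu\to+\infty$, giving $e^{-i\pi(1+i\beta_+)/2}$. Using $|\Gamma(1+i\beta_+)|^2=\pi\beta_+/\sinh(\pi\beta_+)$, I expect
\[
\lim_{X\to\infty}X^2|\xi|^2|J_n|^2=|\FF_v[\psi_n](\xi,r_+)|^2\frac{\pi|\beta_+|}{\sinh\pi|\beta_+|}e^{-\pi\beta_+\operatorname{sgn}\xi}.
\]
On $\mathcal B_M$, where $\beta_+\xi<0$, the exponent equals $+\pi|\beta_+|$. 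This is exactly the favorable weight that was lost on the negative branch. Since $\frac{x}{\sinh x}e^{x}\ge 1+x$, we obtain
\[
(1+\pi|\beta_+|)|\FF_v[\psi_n](\xi,r_+)|^2\lesssim \lim_{X\to\infty}X^2|\xi|^2|\FF[\psit_n](\xi,\rt)|^2
\]
uniformly on $\mathcal B_M$. Note that here no dependence on $M$ enters the constant, consistent with the statement.

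Finally, I integrate over the slices $\mathcal B_M(n)$ and sum over $n$. Fatou's lemma gives a $\liminf$, and Plancherel in $(\ut,\vphit_-)$ together with the definition \eqref{eq:null-flux-radiation-field-pos} of $\psit_\infty=\lim(X+1)\psit$ identifies the right side with $\int_{\IIt^+}|\Pi_{\mathcal B_M}\T\psit_\infty|^2$. On the left, the weight $(1+|\beta_+|)$ is comparable to $1+|\xi+\omega_+n|$, which produces $|\Pi_{\mathcal B_M}T_+^{1/2}\psi|^2+|\Pi_{\mathcal B_M}\psi|^2$ on $\HH^+$. Commuting with $T$, $Z$ and their half powers is harmless, since all the multipliers involved are Fourier multipliers in $(\xi,n)$.

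The main obstacle will be justifying the asymptotics on this branch. First, I must check that the phase relating $\FF[\psit_n]$ to $J_n$ is truly unimodular in the lower-sign coordinates: the $\vphit_-$ shift carries a factor $\log|\rt-\Rt|$, but it enters only through $e^{ing}$. Second, the $O(\nu^{-2})$ remainder in Lemma \ref{Le:Erdelyi-Xminus} must be uniform enough, given the large parameter $\beta_+$ and the factor $h$, to allow the passage through Fatou's lemma. I would first handle this as in the negative branch, using smoothness and compact support of $\FF_v[\psi_n]$ from acceptability, and only at fixed $(\xi,n)$, since Fatou requires only pointwise limits.
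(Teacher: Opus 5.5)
Your proposal is correct and follows the paper's proof, which simply reruns the argument of Proposition \ref{prop:psi-psit-flux-strong-Xminus}, noting that $C(\rt)=-X-2<0$ on the positive branch. Your sign bookkeeping is exactly the detail the paper leaves implicit: the limiting weight $e^{-\pi\beta_+\operatorname{sgn}\xi}$ equals $e^{\pi|\beta_+|}$ precisely on $\mathcal B_M$, and $\frac{x}{\sinh x}e^{x}\ge 1+x$ then gives a constant independent of $M$.
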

\begin{proof}
The proof is the same as that of Proposition \ref{prop:psi-psit-flux-strong-Xminus}, except for the change of sign of $C(\rt)=-S(\rt-\Rt)-2=-X-2<0$ as $\rt\to -\infty$.
 \end{proof}

\section{The proof of  the  main theorem}


\subsection{Statement of the main theorem}
We recall below the $B$ and $P$ norms   uses in \cite{He-K1} (see Section~2.4 of
\cite{He-K1}).
 With
$\DD_{\ntrap}$ denoting the nontrapping region, set
\beq
\lab{eq:defofnorms}
\bsplit
B[\psi](\widehat\tau_1,\widehat\tau_2)
&:=\int_{\DD(\widehat\tau_1,\widehat\tau_2)}
\left(r^{-2}|\Rhat\psi|^2+r^{-4}|\psi|^2\right)\,dv_\g\\
&\quad+\int_{\DD_{\ntrap}(\widehat\tau_1,\widehat\tau_2)}
\left(r^{-2}|e_{3,4}\psi|^2+r^{-1}|\nab\psi|^2\right)\,dv_\g,\\
P[\psi](\tauh_1, \tauh_2)&:=\int_{\DD_{trap}(\tauh_1, \tauh_2) }\left|\mathfrak{T}[\psi]\right|^2, \qquad \mathfrak{T}[\psi]:=\big(\TT T-a(r-m)\Z\big)\psi,
\end{split}
\eeq
where $\TT= r^3 -3 mr^2+ a^2r + a^2 m$. Also
\beq
\bsplit
B_p[\psi](\tauh_1, \tauh_2) &=B[\psi](\tauh_1, \tauh_2)+ \int_{\DD(\tauh_1, \tauh_2)\cap\{r\ge r_0\}} r^{p-3}\big( |\dk \psi|^2 +|\psi|^2\big)\\
\NN_p^s[ N](\tauh_1, \tauh_2) &:=\sum_{k\leq s}\int_{\DD(\tauh_1, \tauh_2)}r^{p+1}|\dk^kN|^2,\\
\NN_p^s[\psi,  N](\tauh_1, \tauh_2) &=\NN_p^s[N](\tauh_1,\tauh_2)+\sum_{k\leq s}\left|\int_{\DD_{trap}{(\tauh_1, \tauh_2)}} (\T, \Z)\dk^k\psi \c\dk^kN\right|.
\end{split}
\eeq
We  also defined  slightly modified versions of the $E$ and $E_p$ norms
\bea
\bsplit
E[\psi](\widehat s)
&:=\int_{\Si(\widehat s)}
\left(|e_4\psi|^2+r^{-1}|e_3\psi|^2
+|\nab\psi|^2+r^{-2}|\psi|^2\right),\\
E_p[\psi](\widehat s)
&:=E[\psi](\widehat s)
+\int_{\Si(\widehat s)\cap\{r\geq r_0\}}r^p
\left(r^{-2}|\nab_4(r\psi)|^2
+r^{-1}|\nab\psi|^2+r^{-2}|\psi|^2\right).
\end{split}
\eea
We   also make use of the  compact energy  $E_{\leq r_0}$ which  is the restriction of $E$ to
$\Si(\widehat s)\cap\{r\leq r_0\}$.

For $Q=B,E,E_{\leq r_0},E_p$, we use the higher-order  derivative conventions
\[
Q^k[\psi]:=\sum_{j=0}^kQ[\dk^j\psi],\qquad \dk\psi=\big(r(e_4,\nab)\psi,e_3\psi\big).
\]
We  recall  below the statement of the main theorem  \ref{thm:mainthm}.
	       		
\begin{theorem}
\lab{thm:cutoff-horizon-flux-estimate}
Let $\psi$ be a solution of $\square_\g\psi=N$. For every $\epsilon>0$, we have for    $p>0$\footnote{See  \eqref{eq:defofnorms} for  the definition of the norms used here. Note that the $E_p$ norms  differs slightly from  the  corresponding norms in \cite{He-K1}. }  and     $r_0>0$   sufficiently large.
\beq\lab{eq:Thm-cutoff-horizon-flux-estimate}
\bsplit
\int_{\HH^+(0,\tauh)}
\big|T_+T^2\psi\big|^2\,dv\,d\omega
&\leq \epsilon \BPE_p^2[\psi](0,\tauh)+C(\epsilon)\int_{\widehat{\tau}}^{\widehat{\tau}+1}
E_{\leq r_0}^2[\psi](\widehat{s})\,d\widehat{s}\\
&\quad+C(\epsilon)\Big(E_p^2[\psi](0)+\NN_p^2[\psi, N](0,\tauh)\Big).
\end{split}
\eeq
\end{theorem}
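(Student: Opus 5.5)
I would prove Theorem \ref{thm:cutoff-horizon-flux-estimate} by applying the machinery of Sections \ref{sec:Xneg}--\ref{sec:Xpos} to the cut-off field $\psi_\chi:=\chi(\tauh)\psi$, with $\chi$ as in \eqref{eq:cut-off-chi} and $\tauh^*=\tauh+1$. Then $\psi_\chi$ is acceptable in the sense of Definition \ref{def:acceptablesol}, after a further harmless approximation by data compactly supported in $r$ that does not affect the final norms. It solves $\square_\g\psi_\chi=N_\chi:=\chi N+|q|^{-2}[|q|^2\square_\g,\chi]\psi$. Since $\chi=1$ on $\HH^+(1,\tauh)$ and the horizon pieces over $[0,1]$ and $[\tauh,\tauh+1]$ are bounded by the energy, it suffices to bound $\int_{\HH^+}|T_+T^2\psi_\chi|^2$.

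\textbf{Step 1: reduce to the two radiation fields.} Split the horizon flux into $\Pi_{\mathcal A_M}$ and $\Pi_{\BB_M}$ parts. Apply Proposition \ref{prop:psi-psit-flux-strong-Xminus} and Proposition \ref{prop:positive-branch-large-superradiant-flux} to $T^2T^{1/2}\psi_\chi$, noting that $|T_+|\les |T_+^{1/2}|^2$ and that on $\BB_M$ the weight $1+|\beta_+|$ is comparable to $|\xi+\omega_+n|$. This bounds the flux by
\[
\int_{\IIt^+(X<-2)}|T^2T^{1/2}\psit_\infty|^2+\int_{\IIt^+(X>0)}|T^2T^{1/2}\Pi_{\BB_M}\psit_\infty|^2 ,
\]
up to harmless adjustments of the powers of $T$. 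Then Proposition \ref{Prop:conf-en(sk-0)} and Proposition \ref{Prop:finalenergyXpos} reduce everything to three quantities:
\begin{itemize}
\item[(a)] the bilinear bulk terms $\int \Re\big(F(X)T^2T^{1/2}\psit\cdot\overline{TT^{1/2}(|\qt|^2\Nt)}\big)$, with $F=1$, $F=f$ or $F=\zeta$;
\item[(b)] the localized term $\de^{-1}\int_{\de\le X\le4\de}|TT^{1/2}\Pi_{\BB_M}\psit|^2$;
\item[(c)] the identity $|\qt|^2\Nt=\Wk[|q|^2N_\chi]$, which holds because the horizon term $\mathcal E_+$ vanishes by Remark \ref{rem:noincomingrad}.
\end{itemize}

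\textbf{Step 2: transfer the bulk terms back to Kerr.} For term (a) I would use the bilinear identity, Proposition \ref{prop:bilinear-outgoing-general}, with $\psi_1=T^2\psi_\chi$ and $\psi_2=T|q|^2N_\chi$; the $T^{1/2}$ factors are absorbed by the $\xi$ multiplier. The cut-offs $f(X)$ and $\zeta(X)$ are handled by writing them as superpositions of interval indicators, or more simply by estimating via Cauchy--Schwarz and Corollary \ref{cor:filtered-L2-outgoing-transform}. This gives bounds of the form $\|T^2\psi_\chi\|_{L^2}\|T\,|q|^2N_\chi\|_{L^2}$ plus Hilbert-transform pairings $\int\psi_1\overline{H_u\psi_2}$, all in the flat density $dr\,du\,d\omega$. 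Term (b) is bounded by \eqref{eq:MainL2estimate} by $\de^{-1}\int|T\psi_\chi|^2$. That term is lower order, but it is not small. I plan to absorb it by choosing the $T$-powers so that it is controlled by $\int_0^{\tauh+1}E$; in the end, however, it must be localized, and the decay it needs comes from $N_\chi$.

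\textbf{Step 3: estimate the Kerr-side sources.} The $\chi N$ contributions are controlled by $\NN_p^2[\psi,N]$ together with $\epsilon\,\BPE_p^2$. Near trapping, the pairing $\int T\dk\psi\cdot\dk N$ is exactly the extra term in $\NN_p$. The commutator part is supported in the two transition slabs $\{0<\tauh<1\}$ and $\{\tauh<\tauh'<\tauh+1\}$. By Lemma \ref{Le:comm-tauh} it has the radiation structure $-2\chi' r\,e_4(r\psi)-2mr\chi'T\psi-2mr\chi''\psi$, so the growing $r$-weights are paid by $e_4(r\psi)$ and by the $r^p$ energy. The initial slab is bounded by $E_p^2[\psi](0)$ via energy boundedness from \cite{He-K1}. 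On the final slab, I split $r\le r_0$, which is controlled by $C(\epsilon)\int_{\tauh}^{\tauh+1}E^2_{\le r_0}$, from $r\ge r_0$, which gives $r_0^{-p}E_p$ norms, made $\le\epsilon\,\BPE_p^2$ by taking $r_0$ large. Corollary \ref{cor:filtered-L2-outgoing-transform}, estimate \eqref{eq:e_4trick}, converts $B(\rt)\pr_\ut$ into $e_4$ and lets me use \eqref{eq:Comm-tauh2}.

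\textbf{Main obstacle.} I expect the hardest step to be the large-$r$ part of Step 3. The transform integrates over all $r$ with unit weight, so the $L^2(dr)$ norms must beat the $r^2$ growth of the commutator. The choice of $\tauh=u+m\ln r$ is precisely what yields the radiation-field form with only $O(r)$ coefficients. What remains delicate is showing that the Hilbert-transform pairing $\int\psi_1\overline{H_u\psi_2}$ with $\psi_2=r\chi'e_4(r\psi)$ is bounded by $\epsilon\,B_p+C r_0^{-p}E_p$ using only $p>0$. I would first try Cauchy--Schwarz with weights $r^{\pm(1+p)/2}$, using boundedness of $H_u$ on $L^2_u$ for each fixed $r$.
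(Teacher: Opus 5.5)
\textbf{There is a genuine gap: nothing in your argument deals with trapping, and with your two-way split $\mathcal A_M\cup\BB_M$ it cannot close.}

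Your overall architecture matches the paper's: cut off in $\tauh$, send the frequencies to the two branches, use the conformal energy on each branch, then pull the bulk pairings back to Kerr via Cauchy--Schwarz, Corollary~\ref{cor:filtered-L2-outgoing-transform} and the $e_4$ trick, paying the $r$-growth of the commutator with \eqref{eq:Comm-tauh2} and $r_0^{-p}$.

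To produce $\epsilon\BPE^2_p$ you must apply Cauchy--Schwarz to the bulk pairing with a small $\varep$. The $\psit$-factor $\int_{\DDt}|T^2T^{1/2}\Pi_{\mathcal A_M}\psit|^2$ then becomes, after Corollary~\ref{cor:filtered-L2-outgoing-transform}, a \emph{global} spacetime integral over $\DD(0,\tauh+1)$ of $|T^2(T,Z)\psi|^2$ (or $|e_4T(T,Z)\psi|^2$). The transform of a slab-supported source is not slab-supported, so this factor cannot be localized in time. In $\DD_{trap}$, $\BPE^2_p$ controls such third-order quantities only through $P$, i.e.\ through $\mathfrak T[\psi]=(\TT T-a(r-m)Z)\psi$; the $B$-part degenerates there. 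The symbol $\TT(r)\xi-a(r-m)n$ of $\mathfrak T$ vanishes at trapped frequencies. By Lemma~3.16 of \cite{He-K1} (used as in \eqref{eq:choiceofsigmaK}) the trapped frequencies are non-superradiant, so they sit inside your $\mathcal A_M$. Hence this factor is not bounded by $\BPE^2_p$.

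The paper's fix is the three-way split $\mathcal A_{M,\sigma}\cup\mathcal A^c_{M,\sigma}\cup\BB_M$. On $\mathcal A^c_{M,\sigma}=\{\beta_+\xi>\sigma\beta_+^2,\ |\beta_+|>M\}$, which contains all large trapped frequencies, the transform is not used at all. There $\xi(\xi+\omega_+n)\ge\sigma\la_+(\xi+\omega_+n)^2$, so this piece of the flux is bounded by $\sigma^{-1}\int_{\HH^+}\Re\big(T_+T^2\psi\,\overline{TT^2\psi}\big)$, up to the other two pieces. The Kerr $T$-energy identity for $T^2(\chi\psi)$, via \eqref{eq:cutoffdivergence}, reduces this to slab energies. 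On the remaining frequencies one fixes $\sigma$ small and $K$ large so that \eqref{eq:choiceofsigmaK} holds, i.e.\ $|\TT(r)\xi-a(r-m)n|\gtrsim|\xi|$ on $\DD_{trap}$; the exception is a bounded set $\{|\xi|\le KM\}$, which is handled by $B$. Only then is the global factor bounded by $C(M,K)\BP^1$, which $\varep$ makes small.

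\textbf{Your handling of term (b) also fails.} A bound by $\int_0^{\tauh+1}E$ grows linearly in $\tauh$ and is not among the norms allowed on the right of \eqref{eq:Thm-cutoff-horizon-flux-estimate}. Nor can $N_\chi$ supply any decay, since (b) contains only $\psit$. What (b) needs is smallness, since it is $\de^{-1}$ times a lower-order but global norm of $\psi$, with $\de$ fixed. The paper gets this from the definition of $\BB_M$: there $|\beta_+|>M$, i.e.\ $|\xi+\omega_+n|\gtrsim M$. So (b) is at most $C(\de)M^{-2}$ times the same quantity with one extra $T_+$, which $\BP^1$ controls because superradiant frequencies are non-trapped. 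This is why $M$ is chosen large after $\de,\sigma,K$ are fixed, and only then $\varep$ and $r_0$.

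\textbf{The obstacle you single out is not the real one.} The bilinear identity of Proposition~\ref{prop:bilinear-outgoing-general} also produces principal-value Hilbert transforms in $r$ at every endpoint of $I$, hence at all $X$ for superpositions representing $f$ and $\zeta$. These couple different radii and times along the rays and do not commute with the $r$-weights you want to use. The paper never uses that identity in this proof. It uses only Cauchy--Schwarz and Corollary~\ref{cor:filtered-L2-outgoing-transform}, with two further ingredients:
\begin{itemize}
\item It inserts $B(\rt)$ where $|X|$ is bounded below: $|X|\ge2$ on $\DDt$, $X\ge2\de$ on the support of $f$, and $X\ge\de$ on that of $\zeta$.
\item It splits $\psi=h(r)\psi+(1-h(r))\psi$, since \eqref{eq:e_4trick} requires support away from $r=r_+$.
\end{itemize}
Also, on the initial slab use a finite-in-time energy estimate rather than energy boundedness from \cite{He-K1}, since this theorem is meant to feed into that paper.
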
	
For simplicity we ignore\footnote{The additional terms due to $N$ in  the expansion \eqref{eq:mainestiamteinlaststep}  can be easily treated  by Cauchy-Schwarz.  } below the  contribution of the inhomogeneous   $N$ in the proof, i.e. we assume $\square \psi=0$.
In what follows   we treat  acceptable solutions  of the form $\square \psi'=N'$, where    $N'$  is  generated by the cut-off of the original solution  of  the homogeneous equation.   By abuse of language we  set   $\psi'=\psi$ and  $N=N'$ in  the proof below.

\subsubsection{Main constants in the proof}

The following constant are involved in the proof of the main theorem
\[
\de, K, \sigma, M, \varep, r_0, \ep.
\]
The constant $\de$ is the size of the annular region where localized elliptic estimate is applied.  The constants $\sigma, K$ are used to describe the superradiant region in a quantitative way.  The constant $M$ is tied to the size of the bounded frequency region. The constant $\varep$ is a small constant appearing in the main Cauchy-Schwarz inequality \eqref{eq:CS-main}.  The constant $r_0$ is a large constant tied to  the  definitions of the  compact  energy norm. Finally, $\epsilon$ is the (small) constant  which appears in the statement of our  main theorem. In the proof, we first fix $\de, \sigma, K$, which  are independent of each other.  Next we choose $M$ sufficiently large. Then we choose $\varep$ sufficiently small. Finally, we  choose $r_0$ sufficiently large.

\subsection{$\Nt$-conditional  horizon flux estimate}
\lab{sec:combined-horizon-flux}

We now combine the results from sections \ref{sec:Xneg} and \ref{sec:Xpos}.  We recall, see \eqref{eq:controlled-frequency-region} and  \eqref{define:BB_M},
\[
\mathcal A_{M}
=\{\beta_+\xi\ge0\}\cup\{|\beta_+|\le M\},
\qquad
\mathcal B_{M}
=\{\beta_+\xi<0,\ |\beta_+|>M\}.
\]
We further define subsets of $\mathcal{A}_M$ as follows
\beq
\mathcal A_{M,\sigma}
:=\{0\leq \beta_+\xi\leq  \sigma\beta_+^2\}\cup\{|\beta_+|\le M\},\qquad \mathcal A^c_{M,\sigma}:=\{\beta_+\xi> \sigma\beta_+^2, |\beta_+|>M\}\eeq
We note that $\mathcal A_{M,\sigma}, \mathcal A^c_{M,\sigma}, \mathcal B_{M}$ are disjoint and their union is the full
frequency space.  
\begin{proposition}[Full horizon flux estimate]
\lab{prop:combined-horizon-flux}
Let $\psi$ be an acceptable solution of $\square_{\g}\psi=N$. Then for any $M>0$ and $\sigma>0$, we have
\beq\lab{eq:combined-horizon-flux}
\bsplit
&\int_{\HH^+}
|T_+T^2\psi|^2+|T_+^{1/2}T^{5/2}\psi|^2
\;dv\,d\omega\\
&\quad\les C_1(M,\sigma)\left|\int_{\DDt}
(\T^3(T_+^{1/2}, T^{1/2})\Pi_{\mathcal A_{M,\sigma}}\psit)\,
\overline{T^2(T_+^{1/2}, T^{1/2})\Pi_{\mathcal A_{M,\sigma}}(|\qt|^2\Nt)}\,|\qt|^{-2}dv_\gt\right|\\
&\qquad+C_2(\sigma)\left|\int_{\DDt_+}
(f(X)T^3(T_+^{1/2}, T^{1/2})\Pi_{\mathcal{B}_M}\psit)\c\overline{T^2(T_+^{1/2}, T^{1/2})\Pi_{\mathcal{B}_M}(|\qt|^2\Nt)}\,|\qt|^{-2}dv_\gt\right|\\
&\qquad+C_2(\sigma)\left|\int_{\de\leq X\leq 4\de}
(\zeta(X)T^2(T_+^{1/2}, T^{1/2})\Pi_{\mathcal{B}_M}\psit)\c\overline{T^2(T_+^{1/2}, T^{1/2})\Pi_{\mathcal{B}_M}(|\qt|^2\Nt)}\,|\qt|^{-2}dv_{\gt}\right|\\
&\qquad+C_2(\sigma)\int_{\de\leq X\leq 4\de}\de^{-1}|T^2(T_+^{1/2}, T^{1/2})\Pi_{\mathcal{B}_M}\psit|^2\,|\qt|^{-2}dv_{\gt}\\
&\qquad+C\sigma^{-1}\int_{\HH^+}
\Re\big(T_+T^2\psi\c\overline{TT^2\psi}\big)\,dvd\omega.
\end{split}
\eeq
The first three terms on the right side are generated from the inhomogeneous term $|\qt|^2\Nt$ and the fourth term is lower order.
\end{proposition}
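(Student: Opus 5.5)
The plan is to split the horizon flux on the left of \eqref{eq:combined-horizon-flux} according to the disjoint frequency decomposition $\mathcal A_{M,\sigma}\cup\mathcal A^c_{M,\sigma}\cup\mathcal B_M$, using Plancherel on $\HH^+$ in $(v,\phip)$. The projections are Fourier multipliers in $(\xi,n)$, so the flux splits orthogonally:
\[
\int_{\HH^+}|T_+T^2\psi|^2=\sum_{\mathcal C}\int_{\HH^+}|\Pi_{\mathcal C}T_+T^2\psi|^2 .
\]
The same holds for $|T_+^{1/2}T^{5/2}\psi|^2$. Each piece is then treated by the branch of $\KKt$ adapted to it.

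\textbf{The region $\mathcal A_{M,\sigma}$.} Here I apply Proposition \ref{prop:psi-psit-flux-strong-Xminus}, commuted with $T^2(T_+^{1/2},T^{1/2})$, which is allowed by the remark at the end of that proposition. This bounds the horizon flux of $\Pi_{\mathcal A_{M,\sigma}}$ by $\int_{\IIt^+}|T^3(\cdot)\Pi\psit_\infty|^2$.

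The point is that on $\mathcal A_{M,\sigma}$ the symbols $|\beta_+|$ and $|\xi|$ are comparable, with constants depending on $M$ and $\sigma$. In the bounded piece both are $\lesssim M$. In the piece $0\le\beta_+\xi\le\sigma\beta_+^2$ one has $|\xi|\le\sigma|\beta_+|$, while $\beta_+\xi\ge0$ forces $\xi$ and $\xi+\omega_+n$ to have the same sign, so that $|\xi+\omega_+n|\ge|\xi|$. Hence the weight $(1+|\beta_+|)$ in \eqref{eq:controlled-frequency-trace-with-T} dominates $|\xi+\omega_+n|\,|\xi|^4$ against $|\xi|^5$ on the $\IIt^+$ side, up to $C_1(M,\sigma)$.

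Finally, the $\IIt^+$ flux is estimated by Proposition \ref{Prop:conf-en(sk-0)} applied to $\Pi_{\mathcal A_{M,\sigma}}\psit$. Since the projection commutes with $\square_\gt$ by stationarity and axisymmetry, this produces the first term on the right.

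\textbf{The region $\mathcal B_M$.} I use Proposition \ref{prop:positive-branch-large-superradiant-flux} together with Proposition \ref{Prop:finalenergyXpos}, commuted in the same way. This produces the second, third and fourth terms. The constant is $C_2(\sigma)$, again coming from comparing symbol weights. On $\mathcal B_M$, $\xi$ lies between $0$ and $-\omega_+n$, so $|\xi+\omega_+n|\lesssim|n|$, which I compare to $|\xi|$ only up to the Fourier weights already present.

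\textbf{The region $\mathcal A^c_{M,\sigma}$ (the main obstacle).} This region is not covered by a clean coercive comparison. There $\beta_+\xi>\sigma\beta_+^2$, which gives $|\xi|>\sigma|\beta_+|\sim\sigma|\xi+\omega_+n|$; the symbol of $T_+$ is dominated by $\sigma^{-1}$ times that of $T$, and the two have the same sign. I would therefore write the flux pointwise in frequency as
\[
|\xi+\omega_+n|^2|\xi|^4\le\sigma^{-1}(\xi+\omega_+n)\,\xi\,|\xi|^4,
\]
which after Plancherel is exactly $C\sigma^{-1}\Re\int_{\HH^+}T_+T^2\psi\,\overline{TT^2\psi}$ restricted to $\mathcal A^c_{M,\sigma}$. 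The half-derivative term $|T_+^{1/2}T^{5/2}\psi|^2$ is handled the same way.

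The remaining difficulty is removing the restriction to $\mathcal A^c_{M,\sigma}$ in order to obtain the unrestricted last term. I would handle this by noting that on the complementary regions this bilinear symbol is either nonnegative (where $\beta_+\xi\ge0$) or already controlled by the estimates above. On $\mathcal B_M$ and on the bounded region the symbol is signed the wrong way or merely bounded, and its contribution is absorbed into the previously obtained terms by adjusting $C_1,C_2$. This bookkeeping is the step I expect to require the most care.
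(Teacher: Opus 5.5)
Your proposal is essentially the paper's proof. It uses the same three-way splitting and the same pointwise bound $|\xi+\omega_+n|^2\lesssim\sigma^{-1}(\xi+\omega_+n)\xi$ on $\mathcal A^c_{M,\sigma}$. It also removes the restriction the same way, via $\Pi_{\mathcal A^c_{M,\sigma}}=1-\Pi_{\mathcal A_{M,\sigma}}-\Pi_{\mathcal B_M}$, with the wrong-signed cross terms on $\mathcal A_{M,\sigma}\cup\mathcal B_M$ bounded by $|\Pi\, T_+^{1/2}T^{5/2}\psi|^2$. That is why this flux appears on the left, and it is the actual source of the $\sigma$-dependence of $C_2$, not a symbol comparison. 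You then conclude with Propositions \ref{prop:psi-psit-flux-strong-Xminus}, \ref{prop:positive-branch-large-superradiant-flux}, \ref{Prop:conf-en(sk-0)} and \ref{Prop:finalenergyXpos}, as the paper does.

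One correction: $|\beta_+|$ and $|\xi|$ are not comparable on $\mathcal A_{M,\sigma}$; for example, $\xi=-\omega_+n$ with $|n|$ large lies in the bounded piece. You do not need this comparability, because commuting with $T_+^{1/2}T^2$ and using $1+|\beta_+|\gtrsim|\xi+\omega_+n|$ already gives the required horizon weights on both $\mathcal A_{M,\sigma}$ and $\mathcal B_M$.
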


\begin{proof}
Decompose the horizon trace into the three frequency pieces
\[
1=\Pi_{\mathcal A_{M,\si}}+\Pi_{\mathcal B_{M}}+\Pi_{\mathcal A^c_{M,\si}}
\]
Since the three frequency regions are disjoint, Plancherel gives
\begin{align*}
\int_{\HH^+}
|\T_+T^2\psi|^2+|\T_+^{1/2}T^{5/2}\psi|^2\,dv\,d\omega
&=
\int_{\HH^+}
\big|\Pi_{\mathcal A_{M,\sigma}}\T_+T^2\psi\big|^2+\big|\Pi_{\mathcal A_{M,\sigma}}\T_+^{1/2}T^{5/2}\psi\big|^2\,dv\,d\omega\\
&+\int_{\HH^+}
\big|\Pi_{\mathcal B_{M}}\T_+T^2\psi\big|^2+\big|\Pi_{\mathcal B_{M}}\T_+^{1/2}T^{5/2}\psi\big|^2\,dv\,d\omega\\
&+\int_{\HH^+}
\big|\Pi_{\mathcal A^c_{M,\sigma}}\T_+T^2\psi\big|^2+\big|\Pi_{\mathcal A^c_{M,\sigma}}\T_+^{1/2}T^{5/2}\psi\big|^2\,dv\,d\omega.
\end{align*}
In the frequency region $\mathcal A^c_{M,\sigma}$, we have
\[
\xi(\xi+\frac{a}{2mr_+}n)=\frac{1}{\la_+}\xi\beta_+>\frac{\sigma}{\la_+}\beta_+^2=\sigma\la_+(\xi+\frac{a}{2mr_+}n)^2,\qquad \xi(\xi+\frac{a}{2mr_+}n)=|\xi||\xi+\frac{a}{2mr_+}n|
\]
and thus
\begin{align*}
&\int_{\HH^+}
\big|\Pi_{\mathcal A^c_{M,\sigma}}\T_+T^2\psi\big|^2+\big|\Pi_{\mathcal A^c_{M,\sigma}}\T_+^{1/2}T^{5/2}\psi\big|^2\,dv\,d\omega\\
&\quad \les \sigma^{-1} \int_{\HH^+}
\Re\big(\Pi_{\mathcal A^c_{M,\sigma}}T_+T^2\psi\c\overline{\Pi_{\mathcal A^c_{M,\sigma}}TT^2\psi}\big)\,dvd\omega.
\end{align*}
We further estimate 
\begin{align*}
&\int_{\HH^+}
\Re\big(\Pi_{\mathcal A^c_{M,\sigma}}T_+T^2\psi\c\overline{\Pi_{\mathcal A^c_{M,\sigma}}TT^2\psi}\big)\,dvd\omega\\
&\quad=\int_{\HH^+}
\Re\big(T_+T^2\psi\c\overline{TT^2\psi}\big)\,dvd\omega-\int_{\HH^+}
\Re\big((\Pi_{\mathcal A_{M,\sigma}}+\Pi_{\mathcal B_{M}})T_+T^2\psi\c\overline{(\Pi_{\mathcal A_{M,\sigma}}+\Pi_{\mathcal B_{M}})TT^2\psi}\big)\,dvd\omega\\
&\quad\leq \int_{\HH^+}
\Re\big(T_+T^2\psi\c\overline{TT^2\psi}\big)\,dvd\omega+\int_{\HH^+}
|\Pi_{\mathcal A_{M,\sigma}}T_+^{1/2}T^{5/2}\psi|^2\,dvd\omega+\int_{\HH^+}
|\Pi_{\mathcal B_{M}}T_+^{1/2}T^{5/2}\psi|^2\,dvd\omega.
\end{align*}
Therefore, 
\begin{align*}
\int_{\HH^+}
|\T_+T^2\psi|^2+|\T_+^{1/2}T^{5/2}\psi|^2\,dv\,d\omega
&\leq
C(1+\sigma^{-1})\int_{\HH^+}
\big|\Pi_{\mathcal A_{M,\sigma}}\T_+T^2\psi\big|^2+\big|\Pi_{\mathcal A_{M,\sigma}}\T_+^{1/2}T^{5/2}\psi\big|^2\,dv\,d\omega\\
&+C(1+\sigma^{-1})\int_{\HH^+}
\big|\Pi_{\mathcal B_{M}}\T_+T^2\psi\big|^2+\big|\Pi_{\mathcal B_{M}}\T_+^{1/2}T^{5/2}\psi\big|^2\,dv\,d\omega\\
&+C\sigma^{-1}\int_{\HH^+}
\Re\big(T_+T^2\psi\c\overline{TT^2\psi}\big)\,dvd\omega.
\end{align*}
The $\mathcal A_{M,\sigma}$ contribution is controlled by Proposition
\ref{prop:psi-psit-flux-strong-Xminus}, using the subdomain $\DDt$ in negative the branch
$\DDt_-$.  The $\mathcal B_{M}$ contribution is controlled by Proposition
\ref{prop:positive-branch-large-superradiant-flux}, using the positive branch
$\DDt_+$. 
Therefore, we have 
\beq
\lab{eq:fullflux}
\bsplit
&\int_{\HH^+}
|\T_+T^2\psi|^2+|\T_+^{1/2}T^{5/2}\psi|^2\,dv\,d\omega\\
&\qquad\leq
C_1(M,\sigma)\int_{\IIt^+(X<-2)}
\big|\Pi_{\mathcal A_{M,\sigma}}\T^3(T_+^{1/2},T^{1/2})\psit_{\infty}\big|^2\,d\ut\,d\omega\\
&\qquad+
C_2(\sigma)\int_{\IIt^+(X>0)}
\big|\Pi_{\mathcal B_{M}}\T^3(T_+^{1/2}, T^{1/2})\psit_{\infty}\big|^2\,d\ut\,d\omega\\
&\qquad+C\sigma^{-1}\int_{\HH^+}
\Re\big(T_+T^2\psi\c\overline{TT^2\psi}\big)\,dvd\omega.
\end{split}
\eeq
Apply Proposition \ref{Prop:conf-en(sk-0)} to $\Pi_{\mathcal A_{M,\sigma}}(T^{1/2}T_+^{1/2}, T)\psit$ yields
\beq
\lab{eq:negnullinfinity}
\bsplit
&\int_{\IIt^+(X<-2)}
\big|\Pi_{\mathcal A_{M,\sigma}}\T^3T_+^{1/2}\psit_{\infty}\big|^2\,d\ut\,d\omega\\
&\qquad\les\left|\int_{\DDt}
(\T^3(T_+^{1/2}, T^{1/2})\Pi_{\mathcal A_{M,\sigma}}\psit)\,
\overline{T^2(T_+^{1/2}, T^{1/2})\Pi_{\mathcal A_{M,\sigma}}(|\qt|^2\Nt)}\,|\qt|^{-2}dv_\gt\right|.
\end{split}
\eeq
Applying Proposition \ref{Prop:finalenergyXpos} to  $(T^{1/2}T_+^{1/2}, T)\psit$, we have
\beq
\lab{eq:posnullinfinity}
\bsplit
&\int_{\IIt^+}|\Pi_{\mathcal{B}_M}T^3T_+^{1/2}\psit_\infty|^2\,d\ut\,d\omega\\
&\qquad\lesssim
\left|\int_{\DDt_+}
(f(X)\T^3(T_+^{1/2}, T^{1/2})\Pi_{\mathcal{B}_M}\psit)\c\overline{T^2(T_+^{1/2}, T^{1/2})\Pi_{\mathcal{B}_M}(|\qt|^2\Nt)}\,|\qt|^{-2}dv_\gt\right|\\
&\qquad+\left|\int_{\de\leq X\leq 4\de}
(\zeta(X)T^2(T_+^{1/2}, T^{1/2})\Pi_{\mathcal{B}_M}\psit)\c\overline{T^2(T_+^{1/2}, T^{1/2})\Pi_{\mathcal{B}_M}(|\qt|^2\Nt)}\,|\qt|^{-2}dv_{\gt}\right|\\
&\qquad+\int_{\de\leq X\leq 4\de}\de^{-1}|T^2(T_+^{1/2}, T^{1/2})\Pi_{\mathcal{B}_M}\psit|^2\,|\qt|^{-2}dv_{\gt}.
\end{split}
\eeq
Putting \eqref{eq:fullflux}, \eqref{eq:negnullinfinity} and \eqref{eq:posnullinfinity} together proves \eqref{eq:combined-horizon-flux}.
\end{proof}

\subsection{ Estimates  for  $\Nt$  and end of the proof}

We apply Proposition \ref{prop:combined-horizon-flux} to $\chi\psi$ to obtain
\beq
\lab{eq:mainestiamteinlaststep}
\bsplit
&\int_{\HH^+}
|T_+T^2(\chi\psi)|^2+|T_+^{1/2}T^{5/2}(\chi\psi)|^2
\;dv\,d\omega\\
&\quad\les C_1(M,\sigma)\left|\int_{\DDt}
(\T^3(T_+^{1/2}, T^{1/2})\Pi_{\mathcal A_{M,\sigma}}\psit)\,
\overline{T^2(T_+^{1/2}, T^{1/2})\Pi_{\mathcal A_{M,\sigma}}(|\qt|^2\Nt)}\,|\qt|^{-2}dv_\gt\right|\\
&\qquad+C_2(\sigma)\left|\int_{\DDt_+}
(f(X)T^3(T_+^{1/2}, T^{1/2})\Pi_{\mathcal{B}_M}\psit)\c\overline{T^2(T_+^{1/2}, T^{1/2})\Pi_{\mathcal{B}_M}(|\qt|^2\Nt)}\,|\qt|^{-2}dv_\gt\right|\\
&\qquad+C_2(\sigma)\left|\int_{\de\leq X\leq 4\de}
(\zeta(X)T^2(T_+^{1/2}, T^{1/2})\Pi_{\mathcal{B}_M}\psit)\c\overline{T^2(T_+^{1/2}, T^{1/2})\Pi_{\mathcal{B}_M}(|\qt|^2\Nt)}\,|\qt|^{-2}dv_{\gt}\right|\\
&\qquad+C_2(\sigma)\int_{\de\leq X\leq 4\de}\de^{-1}|T^2(T_+^{1/2}, T^{1/2})\Pi_{\mathcal{B}_M}\psit|^2\,|\qt|^{-2}dv_{\gt}\\
&\qquad+C\sigma^{-1}\int_{\HH^+}
\Re\big(T_+T^2(\chi\psi)\c\overline{TT^2(\chi\psi)}\big)\,dvd\omega.
\end{split}
\eeq
Let $h(r)$ be a smooth cutoff such that
\[
h(r)=1\quad\hbox{for }r\leq \frac{2r_++\rhat_1}{3},
\qquad
h(r)=0\quad\hbox{for }r\geq\frac{r_++\rhat_1}{2}.
\]
We write
\beq
\psi=\psi_1+\psi_2,
\qquad
\psi_1:=h(r)\psi,
\qquad
\psi_2:=(1-h(r))\psi.
\eeq
Thus $\psi_1$ is supported near the event horizon and outside
the trapping region, whereas $\psi_2$ is supported away from the event horizon.  On the  Kerr side we have $\square_{\g}(\chi\psi)=N$ where
\[
|q|^2N=\big[|q|^2\square_{\g}, \chi(\widehat\tau)\big]\psi
=\big[|q|^2\square_{\g}, \chi(\widehat\tau)\big]\psi_1
+\big[|q|^2\square_{\g}, \chi(\widehat\tau)\big]\psi_2
:=|q|^2N_1+|q|^2N_2.
\]
Correspondingly, we define
\[
   |\qt|^2\Nt =\Wk[|q|^2N_\chi], \qquad |\qt|^2\Nt_{1}:=\Wk[|q|^2N_1],
\qquad
|\qt|^2\Nt_{2}:=\Wk[|q|^2N_2].
\]
We shall analyze the right side of \eqref{eq:mainestiamteinlaststep} term by term.

{\bf The first term.} We first analyze the term in \eqref{eq:mainestiamteinlaststep}
\[
\left|\int_{\DDt}
(\T^3(T_+^{1/2}, T^{1/2})\Pi_{\mathcal A_{M,\sigma}}\psit)\,
\overline{T^2(T_+^{1/2}, T^{1/2})\Pi_{\mathcal A_{M,\sigma}}(|\qt|^2\Nt)}\,|\qt|^{-2}dv_\gt\right|.
\]
It suffices to handle the term 
\beq
\left|\int_{\DDt}
(\T^3T_+^{1/2}\Pi_{\mathcal A_{M,\sigma}}\psit)\,
\overline{T^2T_+^{1/2}\Pi_{\mathcal A_{M,\sigma}}(|\qt|^2\Nt)}\,|\qt|^{-2}dv_\gt\right|
\eeq
as the case of $T^{1/2}$ can be analyzed in the same way.

Applying Cauchy-Schwarz yields
\beq
\lab{eq:CS-main}
\left|\int_{\DDt}
(\T^3T_+^{1/2}\Pi_{\mathcal A_{M,\sigma}}\psit)\,
\overline{T^2T_+^{1/2}\Pi_{\mathcal A_{M,\sigma}}(|\qt|^2\Nt)}\,|\qt|^{-2}dv_\gt\right|\leq 
\varep\Romanupper{1}_1+C(\varep)\Romanupper{2}_1
\eeq
where
\beq
\Romanupper{1}_1=\int_{\DDt}
\big|T^3T_+^{1/2}\Pi_{\mathcal A_{M,\sigma}}\psit\big|^2\,|\qt|^{-2}dv_\gt,\qquad
\Romanupper{2}_1=\int_{\DDt}
\big|T^2T_+^{1/2}(|\qt|^2\Nt)\big|^2\,|\qt|^{-2}dv_\gt.
\eeq
 
 We now analyze the term $\Romanupper{2}_1$. Since $\DDt=\{X\leq -2\}$, we have $|B(\rt)|=|X|\geq 2$. This lower bound of $|B(\rt)|$ is the  reason that the following
insertion of $B(\rt)$ is legitimate:
\beq
\bsplit
\Romanupper{2}_1&\lesssim\int_{\DDt}\left|T^{2}T_+^{1/2}(|\qt|^2\Nt_1)\right|^2|\qt|^{-2}dv_{\gt}+\int_{\DDt}\left|T^{2}T_+^{1/2}|\qt|^2\Nt_2\right|^2|\qt|^{-2}dv_{\gt}\\
&\les \int_{\DDt}\left|T^{2}T_+^{1/2}|\qt|^2\Nt_1\right|^2|\qt|^{-2}dv_{\gt}+\int_{\DDt}\left|B(\rt)T^{2}T_+^{1/2}|\qt|^2\Nt_2\right|^2|\qt|^{-2}dv_{\gt}.
\end{split}
\eeq
We justify the second term in the last line.  Since
$|\qt|^2\Nt_2=\Wk[|q|^2N_2]$ and the $(T,Z)$-derivatives commute with
$\Wk$,
\[
B(\rt)T^{2}T_+^{1/2}(|\qt|^2\Nt_2)
=B(\rt)\pr_\ut\Wk[TT_+^{1/2}(|q|^2N_2)].
\]
Since the source $N_2$ is supported away from the event horizon, applying the second estimate of Corollary
\ref{cor:filtered-L2-outgoing-transform} on $\DDt\subset\DDt_-$ yields
\begin{align*}
\int_{\DDt}\left|B(\rt)\pr_\ut\Wk[TT_+^{1/2}(|q|^2N_2)]\right|^2|\qt|^{-2}dv_{\gt}
&\les
\int_{\DD}\left|e_4^{(out)}T^{1/2}T_+^{1/2}(|q|^2N_2)\right|^2|q|^{-2}dv_\g\\
&\les
\int_{\DD}\left|e_4(T,Z)(|q|^2N_2)\right|^2|q|^{-2}dv_\g.
\end{align*}
Hence, applying the first estimate of Corollary
\ref{cor:filtered-L2-outgoing-transform} on $\DDt\subset\DDt_-$ yields
\beq
\bsplit
 \Romanupper{2}_1&\les
 \int_{\DDt}\left|T^{2}T_+^{1/2}|\qt|^2\Nt_1\right|^2|\qt|^{-2}dv_{\gt}+\int_{\DD}\left|e_4(T,Z)(|q|^2N_2)\right|^2|q|^{-2}dv_\g\\
&\les \int_0^1 +\int_{\widehat{\tau}^*-1}^{\widehat{\tau}^*} E_{\leq r_0}[T(T,Z)\psi](\widehat{s})\,d\widehat{s}+\int_{\DD}\left|e_4(T,Z)(|q|^2N_2)\right|^2|q|^{-2}dv_\g.
\end{split}
\eeq
where $r_0\geq \frac{r_++\rhat_1}{2}$. Therefore, it remains to analyze $|q|^2N_2$ (more precisely, $e_4(|q|^2N_2)$). By Lemma \ref{Le:comm-tauh}, after commuting with
$(T,Z)$, we have for $r$ sufficiently large
\[
e_4(T,Z)(|q|^2N_2)
=O(1)\sum_{j=1}^{4}|\chi^{(j)}|
\left(\dk^{\leq1}(T,Z)^{\leq 1}\psi
+re_4\dk^{\leq1}(T,Z)^{\leq 1}\psi\right).
\]
This implies that 
\begin{align*}
\int_{\DD}\left|e_4(T,Z)(|q|^2N_2)\right|^2|q|^{-2}dv_\g
&\les \int_0^1 +\int_{\widehat{\tau}^*-1}^{\widehat{\tau}^*} E^1_{\leq r_0}[(T,Z)^{\leq1}\psi](\widehat{s})\,d\widehat{s}\\
&+r_0^{-p}\bigg(
\int_0^1  +\int_{\widehat{\tau}^*-1}^{\widehat{\tau}^*}E^1_p[(T,Z)^{\leq1}\psi](\widehat{s})\,d\widehat{s}
\bigg).
\end{align*}
and thus
\beq\lab{eq:term2_1}
\bsplit
\Romanupper{2}_1\les\int_0^1 +\int_{\widehat{\tau}^*-1}^{\widehat{\tau}^*} E^1_{\leq r_0}[(T,Z)^{\leq1}\psi](\widehat{s})\,d\widehat{s}+r_0^{-p}\bigg(
\int_0^1  +\int_{\widehat{\tau}^*-1}^{\widehat{\tau}^*}E^1_p[(T,Z)^{\leq1}\psi](\widehat{s})\,d\widehat{s}
\bigg)
\end{split}
\eeq
where $r_0$ is sufficiently large. This finishes the analysis of the term $\Romanupper{2}_1$.

We now turn to the term $\Romanupper{1}_1$. We define
\beq
\psit_1:=\Wk[\chi\psi_1],\quad \psit_2:=\Wk[\chi\psi_2]
\eeq
and estimate
\beq
\lab{eq:controlofterm1_1}
\bsplit
\Romanupper{1}_1&\lesssim\int_{\DDt}\left|T^{3}T_+^{1/2}\Pi_{\mathcal A_{M,\sigma}}\psit_1\right|^2|\qt|^{-2}dv_\gt+\int_{\DDt}\left|T^3T_+^{1/2}\Pi_{\mathcal A_{M,\sigma}}\psit_2\right|^2|\qt|^{-2}dv_\gt\\
&\les \int_{\DDt}\left|T^3T_+^{1/2}\psit_1\right|^2|\qt|^{-2}dv_\gt+\int_{\DDt}\left|B(\rt)T^3T_+^{1/2}\Pi_{\mathcal A_{M,\sigma}}\psit_2\right|^2|\qt|^{-2}dv_\gt\\
\end{split}
\eeq
 where we use again that
$|B(\rt)|\ge2$ on $\DDt$.  Hence, using Corollary \ref{cor:filtered-L2-outgoing-transform}  again,
\beq
\lab{eq:treatmentofI_}
\bsplit
\Romanupper{1}_1&\les \int_{\DDt}\left|\Wk[T^3T_+^{1/2}(\chi\psi_1)]\right|^2|\qt|^{-2}dv_\gt+\int_{\DDt}\left|\Wk[e^{(out)}_4T^2T_+^{1/2}\Pi_{\mathcal A_{M,\sigma}}(\chi\psi_2)]\right|^2|\qt|^{-2}dv_\gt\\
&\les \int_{\DD}\left|T^2(T,Z)(\chi\psi_1)\right|^2|q|^{-2}dv_\g+ \int_{\DD}\left|e_4T(T,Z)\Pi_{\mathcal A_{M,\sigma}}(\chi\psi_2)\right|^2|q|^{-2}dv_\g\\
&\les B^1[(T,Z)^{\leq1}\psi](0,\tauh^*)+ \int_{\DD}\left|e_4T(T,Z)\Pi_{\mathcal A_{M,\sigma}}(\chi\psi_2)\right|^2|q|^{-2}dv_\g.
\end{split}
\eeq
We further decompose $
\mathcal{A}_{M,\sigma}=\mathcal{A}^{\mathrm{bd}}_{M,\sigma}\cup \mathcal{A}^{\mathrm{nt}}_{M,\sigma}$ where\footnote{ Note that the analysis in the  first region works for any $K$.  We actually fix   $K$ sufficiently large, later,  in the analysis  of  $\mathcal{A}^{\mathrm{nt}}_{M,\sigma}$.}
\[
\mathcal{A}^{\mathrm{bd}}_{M,\sigma}:=\{|\beta_+|\leq M, \sigma\beta_+^2< \beta_+\xi< K\beta_+^2\},\qquad \mathcal{A}^{\mathrm{nt}}_{M,\sigma}:=\mathcal{A}_{M,\sigma}\setminus \mathcal{A}^{\mathrm{bd}}_{M,\sigma}.
\]
Hence
\beq
\lab{eq:furtherdecom}
\bsplit
\int_{\DD}\left|e_4T(T,Z)\Pi_{\mathcal A_{M,\sigma}}(\chi\psi_2)\right|^2|q|^{-2}dv_\g&\leq \int_{\DD}\left|e_4T(T,Z)\Pi_{\mathcal A^{\mathrm{bd}}_{M,\sigma}}(\chi\psi_2)\right|^2|q|^{-2}dv_\g\\
&+\int_{\DD}\left|e_4T(T,Z)\Pi_{\mathcal A^{\mathrm{nt}}_{M,\sigma}}(\chi\psi_2)\right|^2|q|^{-2}dv_\g.
\end{split}
\eeq
A straightforward calculation implies
\[
\mathcal{A}^{\mathrm{bd}}_{M,\sigma}\subset\{|\xi|\leq KM\},
\]
and thus
\beq\lab{eq:boundedfre}
\bsplit
\int_{\DD}\left|
e_4T(T,Z)\Pi_{\mathcal A_{M,\sigma}^{\mathrm{bd}}}(\chi\psi_2)\right|^2|q|^{-2}\,dv_\g
&\leq C(M,K)\int_{\DD}\left|
e_4(T,Z)\Pi_{\mathcal A_{M,\sigma}^{\mathrm{bd}}}(\chi\psi_2)\right|^2|q|^{-2}\,dv_\g\\
&\leq C(M,K) B^1[(T,Z)^{\leq1}\psi](0,\widehat\tau^*).
\end{split}
\eeq
As for the contribution from the term $\Pi_{\mathcal A^{\mathrm{nt}}_{M,\sigma}}(\chi\psi_2)$, we write
\begin{align*}
&\int_{\DD}\left|e_4T(T,Z)\Pi_{\mathcal A^{\mathrm{nt}}_{M,\sigma}}(\chi\psi_2)\right|^2|q|^{-2}dv_\g\\
&\quad=\int_{\DDntrap}\left|e_4T(T,Z)\Pi_{\mathcal A^{\mathrm{nt}}_{M,\sigma}}(\chi\psi_2)\right|^2|q|^{-2}dv_\g+\int_{\DDtrap}\left|e_4T(T,Z)\Pi_{\mathcal A^{\mathrm{nt}}_{M,\sigma}}(\chi\psi_2)\right|^2|q|^{-2}dv_\g\\
&\quad \les B^1[(T,Z)^{\leq1}\psi](0,\widehat{\tau}^*)+\int_{\DDtrap}\left|e_4T(T,Z)\Pi_{\mathcal A^{\mathrm{nt}}_{M,\sigma}}(\chi\psi_2)\right|^2|q|^{-2}dv_\g.
\end{align*}
In the trapping region $\DDtrap=[\rhat_1-\de_{trap}, \rhat_2+\de_{trap}]$, we further analyze the trapping term $P$  
\[
P[\psi](\tau_1, \tau_2):=\int_{\DD_{trap}(\tau_1, \tau_2) }\left|\mathfrak{T}[\psi]\right|^2, \qquad \mathfrak{T}[\psi]:=\big(\TT(r) T-a(r-m)\Z\big)\psi.
\]
We now use Lemma~3.16 from section~3.5.2 of
\cite{He-K1}.  It states, in the strictly subextremal case, that
\[
-2mr_+<\frac{\TT(\rhat_1)}{\rhat_1-m}.
\]
Since $\TT(r)/(r-m)$ is increasing, we may choose $\de_{trap}>0$
sufficiently small that, for some $c_0>0$,
\[
-2mr_+(1-2c_0)\leq \frac{\TT(r)}{r-m}\leq \frac{\TT(\rhat_2+\de_{trap})}{\rhat_2+\de_{trap}-m}\quad\hbox{on}\quad
[\rhat_1-\de_{trap},\rhat_2+\de_{trap}].
\]
In the region 
\[
\{\beta_+\xi\leq \sigma\beta_+^2\}\cup\{\beta_+\xi\geq K\beta_+^2\},
\]
 we have
\[
-\frac{K\la_+}{K\la_+-1}\frac{a}{2mr_+}\leq \frac{\xi}{n}\leq \frac{\sigma\la_+}{1-\sigma\la_+}\frac{a}{2mr_+}.
\]
Then we may choose $\sigma>0$ sufficiently small and $K>0$ sufficiently large such that 
\[
-(1+c_0)\frac{a}{2mr_+}\leq \frac{\xi}{n}\leq c_0\frac{a}{2mr_+} \quad \text{for}\quad (\xi,n)\in\{\beta_+\xi\leq \sigma\beta_+^2\}\cup\{\beta_+\xi\geq K\beta_+^2\}
\]
Consequently, for $r\in[\rhat_1-\de_{trap},\rhat_2+\de_{trap}]$ and $(\xi,n)\in  \{\beta_+\xi\leq \sigma\beta_+^2\}\cup\{\beta_+\xi\geq K\beta_+^2\}  $
\beq
\lab{eq:choiceofsigmaK}
\left|\TT(r)\xi-a(r-m)n\right|
=(r-m)|n|
\left|\frac{\TT(r)}{r-m}\frac{\xi}{n}-a\right|
\gtrsim a|n|\gtrsim|\xi|.
\eeq
Since  $  \mathcal{A}^{\mathrm{nt}}_{M,\sigma}\subset    \{\beta_+\xi\leq \sigma\beta_+^2\}\cup\{\beta_+\xi\geq K\beta_+^2\} $  we conclude that \eqref{eq:choiceofsigmaK} holds in 
 $  \mathcal{A}^{\mathrm{nt}}_{M,\sigma} $.
Therefore
\begin{align*}
&\int_{\DDtrap}\left|e_4T(T,Z)\Pi_{\mathcal A^{\mathrm{nt}}_{M,\sigma}}(\chi\psi_2)\right|^2|q|^{-2}dv_\g\\
&\quad\les \int_{\DDtrap}\left|\big(\TT(r)T-a(r-m)Z\big)e_4(T,Z)\Pi_{\mathcal A^{\mathrm{nt}}_{M,\sigma}}(\chi\psi_2)\right|^2|q|^{-2}dv_\g \les \BP^1[(T,Z)^{\leq 1}\psi](0,\tauh^*).
\end{align*}
and then
\beq
\lab{eq:nt}
\int\left|e_4T(T,Z)\Pi_{\mathcal A^{\mathrm{nt}}_{M,\sigma}}(\chi\psi_2)\right|^2|q|^{-2}dv_\g\les \BP^1[(T,Z)^{\leq1}\psi](0,\tauh^*).
\eeq
Putting \eqref{eq:treatmentofI_}, \eqref{eq:furtherdecom}, \eqref{eq:boundedfre} and \eqref{eq:nt} gives
\beq
\lab{eq:term1_1}
\Romanupper{1}_1\leq C(M,K)\BP^1[(T,Z)^{\le 1}\psi](0,\tauh^*).
\eeq
This finishes the analysis of the term $\Romanupper{1}_1$.

Finally, combining \eqref{eq:term2_1} and \eqref{eq:term1_1}, we obtain
\beq
\lab{eq:firstterm}
\bsplit
&\left|\int_{\DDt}
(\T^3(T_+^{1/2}, T^{1/2})\Pi_{\mathcal A_{M,\sigma}}\psit)\,
\overline{T^2(T_+^{1/2}, T^{1/2})\Pi_{\mathcal A_{M,\sigma}}(|\qt|^2\Nt)}\,|\qt|^{-2}dv_\gt\right|\\
&\quad\leq \varep C(M,K)\BP^1[(T,Z)^{\leq1}\psi](0,\tauh^*)\\
&\quad+C(\varep)\Big(\int_0^1 +\int_{\widehat{\tau}^*-1}^{\widehat{\tau}^*}
E^1_{\leq r_0}[(T,Z)^{\leq1}\psi](\widehat{s})\,d\widehat{s}\Big)\\
&\quad+C(\varep)r_0^{-p}\Big(
\int_0^1 +\int_{\widehat{\tau}^*-1}^{\widehat{\tau}^*}
E^1_p[(T,Z)^{\leq1}\psi](\widehat{s})\,d\widehat{s}\Big)\end{split}.
\eeq

{\bf The second term.}  We recall the second term in \eqref{eq:mainestiamteinlaststep}
\[
\left|\int_{\DDt_+}
(f(X)T^3(T_+^{1/2}, T^{1/2})\Pi_{\mathcal{B}_M}\psit)\c\overline{T^2(T_+^{1/2}, T^{1/2})\Pi_{\mathcal{B}_M}(|\qt|^2\Nt)}\,|\qt|^{-2}dv_\gt\right|.
\]
The proof uses two important ingredients.
\begin{enumerate}
\item  Since $f(X)$ satistifies $|f(X)|\leq 1$ and $f(X)=0$ for $0\leq X\leq \de$, $B(\rt)$ may be
inserted before using the commutation relation
\eqref{eq:Rk-commutation} and Corollary
\ref{cor:filtered-L2-outgoing-transform}. 
\item Since $\mathcal{B}_{M}\subset\{\beta_+\xi\leq 0\}$, the result in \eqref{eq:choiceofsigmaK} is also true for the second term. That is, there is no trapping here, in the sense that $(T,Z)$ derivatives can be controlled by $P$.
\end{enumerate}
Therefore, the estimate of the second term is the same as that of the first term, with the resulting constants now
depending on $\de$. Therefore,
\beq
\lab{eq:secondterm}
\bsplit
&\left|\int_{\DDt_+}
(f(X)T^3(T_+^{1/2}, T^{1/2})\Pi_{\mathcal{B}_M}\psit)\c\overline{T^2(T_+^{1/2}, T^{1/2})\Pi_{\mathcal{B}_M}(|\qt|^2\Nt)}\,|\qt|^{-2}dv_\gt\right|\\
&\quad\leq \varep C(\de)\BP^1[(T,Z)^{\leq 1}\psi](0,\tauh^*)\\
&\quad+C(\varep,\de)\Big(\int_0^1 +\int_{\widehat{\tau}^*-1}^{\widehat{\tau}^*}
E^1_{\leq r_0}[(T,Z)^{\leq1}\psi](\widehat{s})\,d\widehat{s}\Big)\\
&\quad+C(\varep,\de)r_0^{-p}\Big(
\int_0^1 
+\int_{\widehat{\tau}^*-1}^{\widehat{\tau}^*}
E^1_p[(T,Z)^{\leq1}\psi](\widehat{s})\,d\widehat{s}\Big)\end{split}.
\eeq

{\bf The third term.} The analysis of the third term in \eqref{eq:mainestiamteinlaststep}
\[
\left|\int_{\de\leq X\leq 4\de}
(\zeta(X)T^2(T_+^{1/2}, T^{1/2})\Pi_{\mathcal{B}_M}\psit)\c\overline{T^2(T_+^{1/2}, T^{1/2})\Pi_{\mathcal{B}_M}(|\qt|^2\Nt)}\,|\qt|^{-2}dv_{\gt}\right|
\]
is the same (in fact simpler because we do not need to make use of $P$) as that of the second term. Therefore
\beq
\lab{eq:thirdterm}
\bsplit
&\left|\int_{\de\leq X\leq 4\de}
(\zeta(X)T^2(T_+^{1/2}, T^{1/2})\Pi_{\mathcal{B}_M}\psit)\c\overline{T^2(T_+^{1/2}, T^{1/2})\Pi_{\mathcal{B}_M}(|\qt|^2\Nt)}\,|\qt|^{-2}dv_{\gt}\right|
\\
&\quad\leq \varep C(\de)B^1[(T,Z)^{\leq 1}\psi](0,\tauh^*)\\
&\quad+C(\varep,\de)\Big(\int_0^1 +\int_{\widehat{\tau}^*-1}^{\widehat{\tau}^*}
E^1_{\leq r_0}[(T,Z)^{\leq1}\psi](\widehat{s})\,d\widehat{s}\Big)\\
&\quad+C(\varep,\de)r_0^{-p}\Big(
\int_0^1 
+\int_{\widehat{\tau}^*-1}^{\widehat{\tau}^*}
E^1_p[(T,Z)^{\leq1}\psi](\widehat{s})\,d\widehat{s}\Big)
\end{split}
\eeq

{\bf The fourth term.} As for the fourth term  in \eqref{eq:mainestiamteinlaststep}
\[
\int_{\de\leq X\leq 4\de}\de^{-1}|T^2(T_+^{1/2}, T^{1/2})\Pi_{\mathcal{B}_M}\psit|^2\,|\qt|^{-2}dv_{\gt},
\]
using $\mathcal{B}_{M}=\{\beta_+\xi\leq 0, |\beta_+|>M\}$, we derive
\beq
\lab{eq:fourthterm}
\bsplit
&\int_{\de\leq X\leq 4\de}\de^{-1}|T^2(T_+^{1/2}, T^{1/2})\Pi_{\mathcal{B}_M}\psit|^2\,|\qt|^{-2}dv_{\gt}\\
&\quad\leq \frac{C(\de)}{M^2}\int_{\de\leq X\leq 4\de}|T^2T_+(T_+^{1/2}, T^{1/2})\Pi_{\mathcal{B}_M}\psit|^2\,|\qt|^{-2}dv_{\gt}\leq \frac{C(\de)}{M^2}\BP^1[(T,Z)^{\le1}\psi](0,\tauh^*).
\end{split}
\eeq

 {\bf The last term.} To control the last term in \eqref{eq:mainestiamteinlaststep}
 \[
 \Re\big(T_+T^2(\chi\psi)\c\overline{TT^2(\chi\psi)}\big)\,dvd\omega,
  \]
we apply the $T$-energy identity to the wave equation of $T^2(\chi\psi)$. More specifically, we compute
\[
 \D^\b \big( \T^\a \QQ_{\a\b }[T^2(\chi \psi)]\big)= \D^\b \big( \T^\a \QQ_{\a\b }[\chi T^2 \psi]\big)+\D^\b\big(T^\a\QQ_{\a\b}[T^2(\chi\psi)]-T^\a\QQ_{\a\b}[\chi T^2\psi]\big).
  \] 
Using \eqref{eq:cutoffdivergence}, we further write
\begin{align*}
 \D^\b \big( \T^\a \QQ_{\a\b }[T^2(\chi \psi)]\big)&= 2\chi\chi'\QQ[\T^2\psi](T, \D\tau)
 +\D^\b \Big((\T^2\psi)^2\T^\a  \QQ_{\a\b }[\chi]+2\chi T^2\psi\, \T^\a \QQ_{\a\b}[\chi,T^2\psi]\Big)\\
 &+\D^\b\big(T^\a\QQ_{\a\b}[T^2(\chi\psi)]-T^\a\QQ_{\a\b}[\chi T^2\psi]\big).
\end{align*}
We note that    the terms under the divergence  sign on the  right side vanish  in regions where  $\chi'=0$. Integrating the above identity over the region $\DD(-1, \tauh^*+1)$ gives
\beq
\lab{eq:lastterm}
\bsplit
&\int_{\HH^+} \Re\big(T_+T^2(\chi\psi)\c\overline{TT^2(\chi\psi)}\big)\,dvd\omega\\
&\quad\leq \int_{\DD}\big(-2\chi\chi'\QQ[\T^2\psi](T, \D\tau)\big)\,dv_{\g}+ 
(\int_0^1+\int_{\widehat{\tau}^*-1}^{\widehat{\tau}^*})
E_{\leq r_0}[T^{\leq2}\psi](\widehat{s})\,d\widehat{s}\\
&\quad \les 
\int_0^1E[T^{\leq2}\psi](\widehat{s})\,d\widehat{s}+\int_{\widehat{\tau}^*-1}^{\widehat{\tau}^*}
E_{\leq r_0}[T^{\leq2}\psi](\widehat{s})\,d\widehat{s}
 \end{split}
 \eeq

Putting \eqref{eq:mainestiamteinlaststep}, \eqref{eq:firstterm}, \eqref{eq:secondterm}, \eqref{eq:thirdterm} ,\eqref{eq:fourthterm} and \eqref{eq:lastterm} together yields
\begin{align*}
\int_{\HH^+}
\big|T_+T^2(\chi\psi)\big|^2\,dv\,d\omega
&\leq \big(\varep C_1(M,K,\sigma,\de)+\frac{C(\de)}{M^2})\BP^1[(T,Z)^{\leq 1}\psi](0,\tauh^*)\\
&+C_2(\varep,M,\sigma,\de)\Big(\int_0^1E[T^{\leq2}\psi](\widehat{s})\,d\widehat{s}+\int_{\widehat{\tau}^*-1}^{\widehat{\tau}^*}
E_{\leq r_0}^1[(T,Z)^{\leq 1}\psi](\widehat{s})\,d\widehat{s}\Big)\\
&+C_2(\varep, M,\sigma,\de)r_0^{-p}\Big(
\int_0^1 +\int_{\widehat{\tau}^*-1}^{\widehat{\tau}^*}
E^1_p[(T,Z)^{\leq1}\psi](\widehat{s})\,d\widehat{s}\Big)
\end{align*}
For any $\epsilon>0$, we first fix $\de>0$.  At the same time, $K,\sigma$ are fixed such that  \eqref{eq:choiceofsigmaK} holds true. Next
$M$ is chosen sufficiently large such that $\frac{C(\de)}{M^2}\leq \frac{\epsilon}{2}$.  Then $\varep$ is chosen sufficiently small such that $\varep C_1(M,K,\sigma,\de)\leq \frac{\epsilon}{2}$. Finally, choose $r_0$ sufficiently large that
$C_2(\varep, M, \sigma,\de)r_0^{-p}\leq \epsilon$.  This is the reason for the large-$r_0$ requirement
in the statement of the theorem. Therefore, we have
\begin{align*}
\int_{\HH^+}
\big|T_+T^2(\chi\psi)\big|^2\,dv\,d\omega
&\leq \epsilon\Big(\BP^2[\psi](0,\tauh^*)+\int_{\widehat{\tau}^*-1}^{\widehat{\tau}^*}
E^2_p[\psi](\widehat{s})\,d\widehat{s}\Big)\\
&+C_2(\epsilon)\Big(\int_0^1
E^2_p[\psi](\widehat{s})\,d\widehat{s}+\int_{\widehat{\tau}^*-1}^{\widehat{\tau}^*}
E_{\leq r_0}^2[\psi](\widehat{s})\,d\widehat{s}\Big).
\end{align*}
Applying finite-in-time energy estimate on $\tauh\in[0,1]$ and $\tauh\in[\tauh^*-1, \tauh^*]$, we obtain 
\[
\int_{\HH^+(0,\tauh^*-1)}
\big|T_+T^2\psi\big|^2\,dv\,d\omega
\leq \varepsilon \BPE_p^2[\psi](0,\tauh^*-1)+C(\varepsilon)\Big(E_p^2[\psi](0)+\int_{\widehat{\tau}^*-1}^{\widehat{\tau}^*}
E_{\leq r_0}^2[\psi](\widehat{s})\,d\widehat{s}\Big)
\]
and this finishes the proof of 
Theorem~\ref{thm:cutoff-horizon-flux-estimate}.



  \section{The $\Wk$-transform for  non  zero spin  wave equations}
  
  \lab{section-spins.waves}


\subsection{Spin-$\sk$ wave equations}

We consider the  spin-$\sk$  wave equation 
 \beq
 \lab{eq:Teuk1}
 \Lks_{a,m}\as=N
 \eeq
where  $\Lks=\Lks_{a,m} $  denotes  the general Teukolsky  wave operator. 
In  BL coordinates\footnote{Here $\Box_{a,m}$ is the standard wave operator in $\KK(a,m)$.}, 
\beq
 \lab{eq:Teuk2}
|q|^2\Lks=|q|^2 \Box_{a,m} +2s(r-m)\partial_r +\As(r,\th)\pr_t +\Bs(r,\th)\pr_\phi +\Vs(r,\th)
\eeq
 with  the complex  scalar  coefficients
\beq
\lab{eq:Teuk3}
\bsplit
\As(r,\th)&= 2s\Big(\frac{m(r^2-a^2)}{\Delta}- q\Big)= 2s \Big(\frac{(r-m)(r^2+a^2)}{\Delta}-2r-ia\cos\theta\Big)\\
\Bs(r,\th)&= 2s \Big(\frac{a(r-m)}{\Delta}+\frac{i \cos\theta}{\sin^2\theta}\Big)\\
\Vs(r,\th)&= \Big(s-s^2\cot^2\theta\Big)
\end{split}
\eeq
\begin{remark}
One often expresses  equation \eqref{eq:Teuk1}  in the renormalized form   
\beq
 \De^{-\frac s 2 }\Lkshat  \big(\De^{\frac s 2 } \as \big)=  N  
\eeq
where 
\[
|q|^2\Lkshat_{a,m}=|q|^2\square_{a,m} + \As\pr_t+\Bs\pr_\phi+ \widehat{V}^{[s]}, \quad \widehat{V}^{[s]}= \Vs - s \big(1+ s\frac{(r-m)^2}{\De}\big).
\]
\end{remark}
As in the case of the standard wave equation  we can decompose  the operator  $\Lks$
 as follows,
 \beq
 \bsplit
 |q|^2 \Lks&=\RR^{[s]} +\OO^{[s]}\\
 \RR^{[s]}&= \RR+ 2 s(r-m)\partial_r +\Big( \Re(\As)\pr_t  + \Re(\Bs)\pr_\phi \Big)\\
 \OO^{[s]}&=\OO + \Big(i\Im(\As)\pr_t  +i \Im(\Bs)\pr_\phi + \Vs \Big).
 \end{split}
 \eeq
 where $\RR$ and $\OO$ are  those of \eqref{eq:RR-OO}. 
 Clearly $\big[\RR^{[s]}, \OO^{[s]} \big]=0$.

\subsubsection{Spin-$\sk$ wave operators  in outgoing coordinates} 
In the outgoing EF coordinates $(u=t-r_*, r, \th, \vphi= \phi-\phi_*)$  the BL coordinate  derivative  $\pr_r$  transforms to 
$-\frac{r^2+a^2}{\De} \pr_u +\pr_r -\frac{a}{\De} \pr_\vphi$. 
Thus $\Lks$ takes the form
\begin{align*}
|q|^2\Lks&= |q|^2 \Box_{\g} +2s(r-m)\partial_r+\Big( \As-2s(r-m)\frac{r^2+a^2}{\De} \Big)\pr_u + \Big(\Bs- 2 s(r-m)\frac{a}{\De} \Big)\pr_\vphi\\
&+\Vs
\end{align*}
and therefore
\beq
\lab{eq:Teuk-outgoing}
\bsplit
  |q|^2\Lks&=|q|^2  \Box_{\g} +2s(r-m)\partial_r+ 2 s \Big(-2r-ia\cos\theta\Big)\partial_u + 2i s \frac{\cos\th}{\sin^2\th}\pr_\vphi\\
  &+\big(s-s^2\cot^2\theta\big).
  \end{split}
\eeq

  
  \subsection{Transformed Teukolsky equations}
  \lab{sect:TransformedTeuk}

   We write the spin-$\sk$ Teukolsky wave operators in the  outgoing  coordinates, see  \eqref{eq:Teuk-outgoing},
  \begin{align*}
  |q|^2\Lks_{a,m}&=|q|^2  \Box_{a,m} +2s(r-m)\partial_r+ 2 s \Big(-2r-ia\cos\theta\Big)\partial_u + 2i s \frac{\cos\th}{\sin^2\th}\pr_\vphi\\
  &+\big(s-s^2\cot^2\theta\big).
\end{align*}

\begin{lemma}
\lab{Le:Lkst}
Under the same assumptions as in Lemma \ref{Lemma:Whiting-metric-general} the following transformation formula holds true
\[
|\qt|^2 \Lkst[\psit]= \Wk\big[|q|^2\Lks\psi\big]
\]
where, with metric $\gt$ as in Lemma \ref{Lemma:Whiting-metric-general},
\beq
\lab{eq:Lkst-psit}
\bsplit
 |\qt|^2\Lkst\psit&=|\qt|^2\square_\gt\psit
-\frac{2s}{S}(X+2)\pr_{\rt}\psit
-2s\bigl((R-m)X+2R+ia\cos\tht\bigr)\pr_{\vt}\psit \\
&+ 2is  \frac{\cos\tht}{\sin^2\tht} \pr_\vphit \psit-\big( s+ s^2 \cot^2 \tht  \big) \psit.
\end{split}
\eeq
Moreover, denoting
\beq
\lab{eq:def-psits}
\psits= (\rt-\Rt)^{-s}\psit,
\eeq
and introducing the operator
\beq
\lab{eq:def-Pgt-spin}
\mathfrak{P}_{\gt}^{[s]}
:=|\qt|^2\square_\gt
-2ias\cos\tht\pr_{\vt}
+\frac{2is\cos\tht}{\sin^2\tht}\pr_{\vphit}-s^2\left(1+\frac2X+\cot^2\tht\right)
\eeq
we derive
\beq
\lab{eq:Lkst-psits}
\mathfrak{P}_{\gt}^{[s]}\psits= (\rt-\Rt)^{-s} \Wk\big[|q|^2\Lks\psi\big].
\eeq
\end{lemma}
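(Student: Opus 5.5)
The plan is to prove the two assertions in turn: first the transformed operator \eqref{eq:Lkst-psit}, then the conjugated form \eqref{eq:Lkst-psits}. Both are purely algebraic consequences of Proposition \ref{Lemma:Whiting-metric-general} and Lemma \ref{lem:Rk-commutation}, specialized to the outgoing case $\varepsilon=-1$, with $w=u$ renamed $\vt$.

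\emph{Step 1: the principal part.} Write $|q|^2\Lks$ in the outgoing form \eqref{eq:Teuk-outgoing}. For the term $|q|^2\Box_\g$, invoke Proposition \ref{Lemma:Whiting-metric-general}. Under the same assumptions as there (compact support in $r$, vanishing trace on the past horizon, cf. Remark \ref{rem:noincomingrad}), the boundary term $\mathcal E_+$ drops and we get $|\qt|^2\square_\gt\psit$. The purely angular and $\pr_\vphi$ terms $2is\frac{\cos\th}{\sin^2\th}\pr_\vphi$, $-2ias\cos\th\,\pr_u$ and $s-s^2\cot^2\th$ commute with $\Wk$ by part 1 of Lemma \ref{lem:Rk-commutation}, since $\tht=\th$. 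They therefore pass through unchanged.

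\emph{Step 2: the terms linear in $r$.} These are $2s(r-m)\pr_r$ and $-4sr\,\pr_u$. For each, split $r=(r-R)+R$ and use the two identities of Lemma \ref{lem:Rk-commutation} with vanishing horizon trace:
\[
\Wk[(r-R)\pr_u\psi]=S^{-1}\pr_\rt\psit,\qquad \Wk[f\pr_r\psi]=-X\pr_\vt\Wk[f\psi]-\Wk[f'\psi].
\]
Applying these with $f=2s(r-m)$ gives
\[
-\tfrac{2s}{S}X\pr_\rt\psit-2s(R-m)X\pr_\vt\psit-2s\psit .
\]
The term $-4sr\pr_u$ contributes $-\tfrac{4s}{S}\pr_\rt\psit-4sR\pr_\vt\psit$. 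Collecting terms reproduces the coefficients $-\frac{2s}{S}(X+2)$ and $-2s\bigl((R-m)X+2R\bigr)$ in \eqref{eq:Lkst-psit}. The potential becomes $s-2s-s^2\cot^2\tht=-(s+s^2\cot^2\tht)$.

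\emph{Step 3: conjugation by $\rho^{s}$, where $\rho:=\rt-\Rt=X/S$.} Substitute $\psit=\rho^s\psits$. Use $\hbt^{\rt\rt}=\rho(\rho+2/S)$ and $\hbt^{\vt\rt}=S^{-1}\bigl((R-m)X^2+2RX\bigr)$ from \eqref{eq:hbt-gencomponents'}, and note that $\rho$ depends only on $\rt$. Then the Leibniz rule gives three kinds of terms.
\begin{itemize}
\item The radial cross term $2s\rho^{-1}\hbt^{\rt\rt}\pr_\rt\psits=\frac{2s}{S}(X+2)\pr_\rt\psits$. It cancels the radial first-order term exactly.
\item The mixed cross term $2s\rho^{-1}\hbt^{\vt\rt}\pr_\vt\psits=2s\bigl((R-m)X+2R\bigr)\pr_\vt\psits$. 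It cancels the $R$-dependent $\pr_\vt$ term, leaving only $-2ias\cos\tht\,\pr_\vt$ as in \eqref{eq:def-Pgt-spin}.
\item Zeroth-order terms. These are
\[
\rho^{-s}\pr_\rt\bigl(\hbt^{\rt\rt}\pr_\rt\rho^s\bigr)\qquad\text{and}\qquad -\tfrac{2s}{S}(X+2)\,s\rho^{-1},
\]
combined with the already computed potential $-s-s^2\cot^2\tht$.
\end{itemize}

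The main obstacle is the bookkeeping of these zeroth-order terms, which must add up to $-s^2(1+\frac2X+\cot^2\tht)$. A first pass gives
\[
s(s+1)\Bigl(1+\frac2X\Bigr)-2s^2\Bigl(1+\frac2X\Bigr)-s-s^2\cot^2\tht ,
\]
which leaves a residual multiple of $s/X$. I would recheck this carefully. The candidates are the sign conventions for $\varepsilon$ and the horizon term in Step 2, and whether the $\pr_\vt(\hbt^{\vt\rt}\pr_\rt\cdot)$ ordering in \eqref{eq:defofqtsquaregt} contributes. If a genuine discrepancy remains, the normalization exponent in \eqref{eq:def-psits} or the constant in the $\pr_\rt$ coefficient would need adjusting. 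Finally, multiplying the identity of Step 2 by $\rho^{-s}$ yields \eqref{eq:Lkst-psits}.
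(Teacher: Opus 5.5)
Steps 1 and 2 are correct and match the paper's computation. The conjugation strategy of Step 3 is also the right one; the paper expands $|\qt|^2\square_\gt(\rho^{-s}\psit)$ rather than substituting $\psit=\rho^s\psits$, but the two are equivalent. The gap is exactly where you flagged it. Your ``first pass'' value of $\rho^{-s}\pr_\rt(\hbt^{\rt\rt}\pr_\rt\rho^s)$ is wrong. As a result the proposal never proves the identity, and it suggests altering a statement that is in fact correct.

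You wrote $\hbt^{\rt\rt}=\rho^2(1+\frac2X)$ and differentiated as if $1+\frac2X$ were constant, which gives $s(s+1)(1+\frac2X)$. But $X=S\rho$ depends on $\rt$. The correct computation is
\[
\hbt^{\rt\rt}\pr_\rt\rho^s=s\rho^{s-1}\rho\Big(\rho+\frac2S\Big)=s\rho^{s+1}+\frac{2s}{S}\rho^s,
\qquad
\rho^{-s}\pr_\rt\big(\hbt^{\rt\rt}\pr_\rt\rho^s\big)=s(s+1)+\frac{2s^2}{S\rho}=s(s+1)+\frac{2s^2}{X}.
\]
This differs from your value by exactly your residual $\frac{2s}{X}$: differentiating the factor $\frac2X$ contributes $-\frac{2s}{X}$. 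With the corrected value, the zeroth-order terms add up to
\[
s(s+1)+\frac{2s^2}{X}-2s^2\Big(1+\frac2X\Big)-s-s^2\cot^2\tht=-s^2\Big(1+\frac2X+\cot^2\tht\Big),
\]
which is precisely the potential in $\mathfrak{P}^{[s]}_{\gt}$.

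None of the other suspects you listed is involved. The $\pr_\vt(\hbt^{\vt\rt}\pr_\rt\,\cdot)$ term gives no zeroth-order contribution, because $\hbt^{\vt\rt}$ and $\rho$ are independent of $\vt$. The $\varepsilon=-1$ conventions and the dropped horizon traces in Step 2 were already handled correctly. So no change to the exponent in $\psits$ or to the $\pr_\rt$ coefficient is needed. Once this derivative is fixed, multiplying through by $\rho^{-s}$ gives \eqref{eq:Lkst-psits}.
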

\begin{proof}
In view of Lemma \ref{Lemma:Whiting-metric-general}   and  the commutation properties in Lemma \ref{lem:Rk-commutation}
\begin{align*}
\Wk[|q|^2\Lks\psi]&= |\qt|^2\square_\gt \psit+ 2s\Wk[ (r-m)\pr_r \psi]- 4 s \Wk[r \pr_u \psi]\\
&- 2ia s\Wk[\cos\tht   \pr_u \psi] + 2is  \Wk[ \frac{\cos\tht}{\sin^2\tht}\pr_\vphi \psi] +\Wk[\big(s-s^2\cot^2\tht\big)\psi]\\
&=  |\qt|^2\square_\gt \psit+ 2s\Wk[ (r-m)\pr_r \psi]- 4 s \Wk[r \pr_u \psi]\\
&- 2ia s \cos\tht \pr_\vt \psit  + 2is  \frac{\cos\tht}{\sin^2\tht} \pr_\vphit \psit +\big(s-s^2\cot^2\tht\big)\psit
\end{align*}
The commutation identities
\eqref{eq:Rk-commutation} give
\beq
\lab{eq:general-spin-commutations}
\bsplit
\Wk[(r-m)\pr_r\psi]
&=-X\pr_{\vt}\Wk[(r-m)\psi]-\psit\\
&=-\frac{X}{S}\pr_{\rt}\psit-X(R-m)\pr_{\vt}\psit-\psit,\\
\Wk[r\pr_u\psi]
&=\pr_{\vt}\Wk[r\psi]
=\frac1S\pr_{\rt}\psit+R\pr_{\vt}\psit.
\end{split}
\eeq
Consequently,
\beq\lab{eq:general-spin-real-first-order-transform}
\bsplit
2s\Wk[(r-m)\pr_r\psi]-4s\Wk[r\pr_u\psi]
&=-\frac{2s}{S}(X+2)\pr_{\rt}\psit\\
&\quad-2s\bigl((R-m)X+2R\bigr)\pr_{\vt}\psit-2s\psit.
\end{split}
\eeq
  Hence
  \begin{align*}
  \Wk\big[|q|^2\Lks\psi\big]&=|\qt|^2\square_\gt \psit
 -\frac{2s}{S}(X+2)\pr_{\rt}\psit-2s\bigl((R-m)X+2R\bigr)\pr_{\vt}\psit-2s\psit\\
&- 2ia s \cos\tht \pr_\vt \psit  + 2is  \frac{\cos\tht}{\sin^2\tht} \pr_\vphit \psit +\big(s-s^2\cot^2\tht\big)\psit\\
&=|\qt|^2\square_\gt\psit
-\frac{2s}{S}(X+2)\pr_{\rt}\psit
-2s\bigl((R-m)X+2R+ia\cos\tht\bigr)\pr_{\vt}\psit + 2is  \frac{\cos\tht}{\sin^2\tht} \pr_\vphit \psit\\
&-\big( s+ s^2 \cot^2 \tht  \big) \psit
  \end{align*}
  from which \eqref{eq:Lkst-psit} follows.
  
 To remove  the first order derivatives with real coefficients in \eqref{eq:Lkst-psit}, we set  
 \[
  \psits= (\rt-\Rt)^{-s}\psit
  \]
and    derive
\begin{align*}
|\qt|^2\square_{\gt}\psits&=(\rt-\Rt)^{-s}|\qt|^2\square_{\gt}\psit+2\hbt^{\rt\rt}\pr_\rt(\rt-\Rt)^{-s}\c\pr_\rt\psit\\
&+2\hbt^{\rt\vt}\pr_\rt(\rt-\Rt)^{-s}\c\pr_\vt\psit+(|\qt|^2\square_{\gt}(\rt-\Rt)^{-s})\c\psit\\
&=(\rt-\Rt)^{-s}|\qt|^2\square_{\gt}\psit-2s\frac{X+2}{S}(\rt-\Rt)^{-s}\pr_\rt\psit\\
&-2s((R-m)X+2R)(\rt-\Rt)^{-s}\pr_\vt\psit+\Big(s(s-1)+\frac{2s^2}{X}\Big)\psits.
\end{align*}
Combining this with \eqref{eq:Lkst-psit} we finally obtain
\[
\mathfrak{P}_{\gt}^{[s]}\psits= (\rt-\Rt)^{-s} \Wk\big[|q|^2\Lks\psi\big]
\]
as stated in \eqref{eq:Lkst-psits}.
	 \end{proof}

	  \begin{remark}[Comparison with Ma's equations]\lab{rem:Ma-comparison}
	  The transformed spin $\sk=\pm2$ equations \eqref{eq:Lkst-psits}
	  should be compared with the physical-space Chandrasekhar hierarchy
	  used by Ma in the slowly rotating Kerr case
	  \cite[(1.23)--(1.27)]{MaLG}.  There, after applying suitable
	  differential transformations to the Teukolsky variables
	  \cite[(1.23a),(1.23b)]{MaLG}, one obtains the coupled systems
	  \cite[(1.24),(1.25)]{MaLG} of spin-weighted Regge--Wheeler type
	  equations \cite[(1.27a),(1.27b)]{MaLG}.  In the present setting the
	  transformed equations appear at least as favorable, in fact better:
	  the operator $\mathfrak{P}_{\gt}^{[s]}$, defined in \eqref{eq:def-Pgt-spin}, is a scalar wave operator for
	  $\gt$ plus explicit lower order spin terms in the transformed
	  variables, while Ma's original equations contain additional terms
	  coupled with the original Teukolsky variables. 
	  \end{remark}


   \subsection{  Energy estimate  for $\mathfrak{P}_{\gt}^{[s]}\psits=|\qt|^2\Nt$}
   \lab{section:energy-spin-sk}
   
   In this section we show how to extend  the result of Proposition
   \ref{Prop:conf-en(sk-0)} to  the higher spin wave equations.
    Recall, see \eqref{eq:Lkst-psits},
   that 
   \[
\mathfrak{P}_{\gt}^{[s]}\psits= (\rt-\Rt)^{-s} \Wk\big[|q|^2\Lks\psi\big]
\]
and
\[
\mathfrak{P}_{\gt}^{[s]}
=|\qt|^2\square_\gt
-2ias\cos\tht\pr_{\vt}
+\frac{2is\cos\tht}{\sin^2\tht}\pr_{\vphit}-s^2\left(1+\frac2X+\cot^2\tht\right).
\]
  %

\begin{proposition}
\lab{Prop:conf-en(sk)} 
The result of Proposition \ref{Prop:conf-en(sk-0)} extends to  the spin $\sk $-wave  equations 
$\mathfrak{P}_{\gt}^{[s]}\psits=|\qt|^2\Nt$.
\end{proposition}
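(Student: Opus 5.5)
The plan is to repeat the proof of Proposition \ref{Prop:conf-en(sk-0)} for the operator $\mathfrak{P}_{\gt}^{[s]}$. The extra lower order terms in \eqref{eq:def-Pgt-spin} should contribute nothing to the time-integrated $\T$-energy identity. The argument works in the time-integrated form from the outset, with $\taut_1\to-\infty$, $\taut_2\to\infty$ along the sequences of Lemma \ref{lem:acceptablesol}. First I would check that the analogue of Lemma \ref{lem:acceptablesol} holds for $\psits=(\rt-\Rt)^{-s}\psit$ on $\DDt=\{X<-2\}$. This should be immediate, since $(\rt-\Rt)^{-s}$ is smooth and nonvanishing there and polynomially bounded as $\rt\to\infty$. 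The estimate of Corollary \ref{cor:filtered-L2-outgoing-transform} then gives the vanishing of the energy fluxes through $\Sit(\taut_i)$.

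The key step is to decompose $\mathfrak{P}_{\gt}^{[s]}=|\qt|^2\square_\gt+iA\,\T+iB\,\Z+V$. Here the coefficients $A=-2as\cos\tht$ and $B=2s\cos\tht/\sin^2\tht$ are real, and $V=-s^2(1+2/X+\cot^2\tht)$ is real. All three depend only on $(\rt,\tht)$, and $X$ depends only on $\rt$. Multiply by $\overline{\T\Phi}$, with $\Phi=TT^{1/2}\psits$, take real parts and integrate over $\DDt$. The principal part gives exactly the boundary fluxes of Proposition \ref{prop:energy-gt}. For the first order terms, I would Fourier decompose in $(\taut,\vphit)$, with frequencies $(\xi,n)$ and $\xi$ real. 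Since $A$ and $B$ are independent of $\taut,\vphit$, the integrand becomes $i(A\xi+Bn)\xi|\widehat\Phi|^2$ up to signs. This quantity is purely imaginary, so its real part integrates to zero. The potential term gives $\tfrac12\T(V|\Phi|^2)$, which is a total $\taut$-derivative. It therefore integrates to boundary terms on $\Sit(\taut_i)$ that vanish in the limit. To be safe near $\sin\tht=0$, I would perform the angular integrations mode by mode in spin-weighted harmonics, so that $\cot^2\tht$ and $\cos\tht/\sin^2\tht$ combine into the nonnegative spin-weighted angular operator.

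Next comes the conformal compactification $\psihat=|X+1|\Phi$, $\ghat=|X+1|^{-2}\gt$, exactly as in \eqref{eq:conformal-wave-equation-Xminus}. The potential now picks up the contribution of Lemma \ref{lem:appendix-conformal-wave-equation} plus $|X+1|^{-2}V$. The first order terms become $|X+1|^{-2}(iA\T+iB\Z)$, which are still purely imaginary in frequency and hence harmless. The horizon flux is unaffected by any potential because $\T$ is null on $\HHt^+$, so it is still $\sim|\T\Phi|^2$. At $\IIt^+$ the flux is $|\pr_\ut\psihat|^2$ plus potential contributions weighted by $\ghat(\T,\nu_{\IIt^+})$. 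I expect these weights to vanish, since $\nu_{\IIt^+}\sim\T$ is null for $\ghat$ there.

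The main obstacle is this last point, namely the coercivity of the $\IIt^+$ flux. It requires checking that after conformal rescaling the combined potential is bounded and extends smoothly to $\rhot=0$. The term $s^2\cot^2\tht$ is the delicate one. I would handle it by keeping it inside the spin-weighted angular operator rather than the potential, so that it multiplies $\ghat$-tangential terms that do not enter the null flux along $\T$. Once that is settled, letting $\taut_{1,2}\to\mp\infty$ yields \eqref{eq:confenergy-psit} with $\psits$ in place of $\psit$.
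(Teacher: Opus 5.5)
Your argument for the energy identity is correct. It has the same skeleton as the paper's proof: the $\T$-multiplier, the compactification of Proposition \ref{Prop:conf-en(sk-0)}, and the observation that every new boundary term carries a factor $\T\c\nu$, which vanishes on $\HHt^+\cup\IIt^+$. You differ from the paper in how you dispose of the first order terms. The paper works pointwise. For $\square_\g\Psi+|q|^{-2}\big[i(f\T\Psi+h\Z\Psi)-v\Psi\big]=N$ with $f,h,v$ real and stationary, it uses the modified current
\[
P^{\mu}=\QQ^\mu{}_\nu\T^\nu-|q|^{-2}h\psi\big(\Z(\psi^*)\T^\mu-\T(\psi^*)\Z^\mu\big)-\tfrac12|q|^{-2}v|\Psi|^2\T^\mu ,
\]
whose divergence is $\Re(\overline{\T\Psi}\,N)$. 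The $f$-term drops out pointwise, and the $h$-term is absorbed by the antisymmetric $(\T,\Z)$-current. Since $\Z\c\nu=0$ on all boundaries, the only new fluxes are multiples of $\T\c\nu$.

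Your Plancherel computation in $(\taut,\vphit)$ is the time-integrated form of the same cancellation: equivalently, $\int\Z\Phi\,\overline{\T\Phi}\,d\taut\,d\vphit$ is real after integrating by parts. It shows very transparently why these terms are harmless. The price is that it holds only after integrating over the whole $\taut$-line. To pass from the slabs $\{\taut_1\le\taut\le\taut_2\}$ to the limit you therefore need integrability of $B\,\Z\Phi\,\overline{\T\Phi}$ on $\DDt$, where $B=2s\cos\tht/\sin^2\tht$ is singular at the poles. The local current instead gives an exact identity on every slab, with the extra terms appearing only in the slice fluxes, which are sent to zero anyway. Your treatment of the potential as $\frac12\T(V|\Phi|^2)$ is exactly the paper's $v$-current, and your care at the poles goes beyond what the paper writes.

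Two details need correcting.

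First, in the conformal step the lower order coefficients do not acquire a factor $|X+1|^{-2}$. By Lemma \ref{lem:appendix-conformal-wave-equation} they are unchanged in the $|\qt|^2\square_\gt$ normalization, and multiplied by $|X+1|^2/|\qt|^2=O(1)$ in the $\square_{\ghat}$ normalization. So the potential $-s^2(1+2/X+\cot^2\tht)$ does not decay at $\IIt^+$. This is harmless only because its flux there carries $\ghat(\T,\nu_{\IIt^+})=0$, as you note.

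Second, and more important, the analogue of Lemma \ref{lem:acceptablesol} for $\psits$ is not immediate when $s<0$. The weight $(\rt-\Rt)^{-s}$ then grows like $\rt^{|s|}$. By the asymptotics in the proof of Proposition \ref{prop:psi-psit-flux-strong-Xminus}, $\psit$ decays only like $\rt^{-1}$ times the horizon trace of $\psi$. Hence $|X+1|\psits$ need not have a limit at $\IIt^+$, and the slice energies need not be integrable in $\taut$. You need $\psi$ to vanish to order $|s|$ on $\HH^+$. This is the behaviour of fields regular at $\HH^+$ for \eqref{eq:Teuk-outgoing}, which are $\Delta^{-s}$ times a smooth function there. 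The paper does not address this point either.
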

  In section \ref{section;Prop:conf-en(sk)}   we provide the main  new  ingredients 
  of the proof of the Proposition.


   \subsubsection{Proof of Proposition \ref{Prop:conf-en(sk)}  }
   \lab{section;Prop:conf-en(sk)}
   
   We consider below a slightly more general  class  of wave equations   of the form  
\beq
\lab{eq:ReducedgRW-|q|^2}
\square_\g  \Psi+\frac{1}{|q|^2}  \Big[i \big(  f  \T \Psi  + h  \Z\Psi \big) -v\Psi\Big]=N
\eeq
where
\begin{enumerate}
\item  The metric  $\g$  is  a  stationary, axially symmetric,  asymptotically flat     Lorentz metric, with  the corresponding  Killing vectorfields  $\T, Z$.
\item The functions  $f, h, v$ are real and   $\Psi=\psi+i \psi^*$.
\item The energy momentum  $\QQ_{\mu\nu}=\Re\Big[\pr_\mu \Psi\ov{\pr_\nu \Psi}- \frac 1 2 \g_{\mu\nu} \big(\g^{\a\b} \pr_\a\Psi\ov{ \pr_\b \Psi })\Big]$ verifies
\[
\D^\nu \QQ_{\mu\nu} =\Re\big[\pr_\mu \Psi\c \ov{\square_\g \Psi}\big].
\]
\end{enumerate} 
   We define
   \beq
 P^{\mu}= \QQ^\mu_\nu \T^\nu -  |q|^{-2}h \psi \Big( Z(\psi^*) \T^\mu- \T(\psi^*) \Z^\mu\Big) - \frac 1 2|q|^{-2} v  |\Psi|^2 \T^\mu.
       \eeq
       A straightforward calculation implies that 
       \beq
        \Div P=\Re\big( \ov{ T \Psi} \c N\big).
               \eeq
  Since all the the future normalized normals $\nu$ to the boundaries  are orthogonal to $Z$, i.e. $Z\c\nu=0$, it follows that 
\beq
P\c \nu= \QQ(\T, \nu)-|q|^{-2} h\psi \Z(\psi^*) \T\c \nu  -\frac12|q|^{-2} v \big(|\psi|^2 +|\psi^*|^2 \big)\T\c \nu .
\eeq
 In the conformal energy estimate, since $T\c\nu=0$ at $\HHt^+\cup \IIt^+$, it follows that the energy flux at $\HHt^+\cup \IIt^+$ is the same as in the spin-$0$, scalar wave equation, case.




\appendix

\section{The Schur complement criterion}
\lab{appendix:Schur-complement}

We record the elementary linear algebra fact used in Section
\ref{sec:GenWhitingTr}.

\begin{lemma}[Schur complement positivity criterion]
\lab{lemma:Schur-complement-positivity}
Let
\[
M=
\begin{pmatrix}
A&B\\
B^T&D
\end{pmatrix}
\]
be a real symmetric matrix, where $D$ is symmetric and positive definite.
Then $M$ is positive definite if and only if the Schur complement
\[
S_D:=A-BD^{-1}B^T
\]
is positive definite.
\end{lemma}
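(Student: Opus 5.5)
The plan is to prove the criterion through an explicit block congruence that diagonalizes $M$, using only the invertibility of $D$, which follows from its positive definiteness. Define the block matrix
\[
L=\begin{pmatrix} I & BD^{-1}\\ 0 & I\end{pmatrix},
\]
which is invertible, being block upper triangular with identity diagonal blocks.

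The first step is to verify the identity
\[
M=L\begin{pmatrix} S_D & 0\\ 0 & D\end{pmatrix}L^T .
\]
Multiplying out, the upper-left block is
\[
S_D+BD^{-1}DD^{-1}B^T=S_D+BD^{-1}B^T=A .
\]
The off-diagonal blocks are $BD^{-1}D=B$ and its transpose $B^T$. The lower-right block is $D$. This uses only that $D$ is symmetric, so that $(D^{-1})^T=D^{-1}$.

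The second step is to observe that positive definiteness is invariant under congruence by an invertible matrix. For $z\neq 0$, set $w=L^Tz$, which is nonzero. Then
\[
z^TMz=w^T\,\mathrm{diag}(S_D,D)\,w .
\]
Thus $M$ is positive definite if and only if $\mathrm{diag}(S_D,D)$ is. Writing $w=(x,y)$, we have
\[
w^T\,\mathrm{diag}(S_D,D)\,w=x^TS_Dx+y^TDy .
\]
Since $D$ is positive definite, this quantity is positive for all $w\neq0$ if and only if $S_D$ is positive definite. For the direction "$M>0\Rightarrow S_D>0$", take $y=0$ and $x\neq 0$. For the converse, note that both terms are nonnegative and at least one is strictly positive whenever $w\neq 0$.

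I do not expect a genuine obstacle; the only care needed is the bookkeeping in the block multiplication. For the application in Lemma \ref{lemma:det-Hbt-Pepsilon} and Proposition \ref{Prop:general-signature-classification}, I would also record the companion consequence of the same factorization, via Sylvester's law of inertia. The inertia of $M$ equals that of $S_D$ plus that of $D$, and
\[
\det M=\det D\,\det S_D .
\]
This is exactly what the paper uses to read off the Lorentzian, Riemannian, and split signatures of $\Hbt$ from the $2\times2$ block $K$, with the roles of the blocks interchanged: the positive angular block $\mathrm{diag}(\csc^2\tht,1)$ plays the part of $D$.
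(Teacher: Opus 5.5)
Your proof is correct and is essentially the paper's argument: the same factorization $M=L\,\mathrm{diag}(S_D,D)\,L^T$ with the same $L$, and your choice $w=(x,0)$ in the direction $M>0\Rightarrow S_D>0$ is exactly the paper's explicit choice $y=-D^{-1}B^Tx$. Your inertia and determinant remark is also accurate, and it is what Proposition \ref{Prop:general-signature-classification} actually relies on beyond the lemma as stated. No interchange of blocks is needed there, however: the angular block $\mathrm{diag}(\csc^2\tht,1)$ already sits in the lower-right $D$ position of $\Hbt$.
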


\begin{proof}
Since $D$ is positive definite, $D$ is invertible.  We factor $M$ by
elementary block Gaussian elimination:
\[
\begin{pmatrix}
I&BD^{-1}\\
0&I
\end{pmatrix}
\begin{pmatrix}
A-BD^{-1}B^T&0\\
0&D
\end{pmatrix}
\begin{pmatrix}
I&0\\
D^{-1}B^T&I
\end{pmatrix}
=
\begin{pmatrix}
A&B\\
B^T&D
\end{pmatrix}
=M.
\]
Equivalently, for vectors $x,y$ of the corresponding sizes,
\[
\begin{pmatrix}x\\ y\end{pmatrix}^T
M
\begin{pmatrix}x\\ y\end{pmatrix}
=
x^T(A-BD^{-1}B^T)x
+(y+D^{-1}B^Tx)^TD(y+D^{-1}B^Tx).
\]
If $S_D=A-BD^{-1}B^T$ is positive definite, then the right hand side is
strictly positive for every non-zero pair $(x,y)$: if $x\ne0$ the first
term is positive, while if $x=0$ and $y\ne0$ the second term is positive
because $D>0$.  Thus $M>0$.

Conversely, assume $M>0$.  Fix any non-zero $x$ and choose
\[
y=-D^{-1}B^Tx.
\]
Then
\[
0<
\begin{pmatrix}x\\ -D^{-1}B^Tx\end{pmatrix}^T
M
\begin{pmatrix}x\\ -D^{-1}B^Tx\end{pmatrix}
=x^T(A-BD^{-1}B^T)x.
\]
Therefore $S_D$ is positive definite.  This proves the equivalence.
\end{proof}

 \section{ Proof of  the  Erdelyi Lemma} 
 \lab{section:Erdelyi}
  We give below a short informal proof of Lemma \ref{Le:Erdelyi-Xminus}.
To start with we     can assume that $h=h\chi $  where  $\chi $ is supported in a neighborhood  of $r=0$ and $\chi(0)=1$. 
Expanding $h(r)  $   near $r=0$  we  can further reduce  $J(\nu) $ to the integral
\[
  J(\nu) = \int_{0}^\infty e^{- i \nu r}    \chi(r)r ^{ib}  dr
\]
Rescaling  $r=s/\nu$
\[
  J(\nu) = \int_{0}^\infty e^{- i \nu r}    \chi(r)r ^{ib}  dr=\nu^{-1-ib} \int_0^\infty e^{-i  s} s^{ib}\chi(s/\nu) ds 
\]
Thus as $\nu\to \infty$
\[
J(\nu)\to   \nu^{-1-ib}  h(0)  \int_0^\infty e^{-i  s} s^{ib}  ds   
\]
We are thus left with the integral
\[
 \int_0^\infty e^{-i  s} s^{ib}  ds 
\]
We  rotate the contour of  integration  in $\CCC$   by setting 
$s = e^{-i\pi/2} t = -it$, such that
\[
ds = -i\,dt, \quad e^{-is} = e^{-t}, \quad s^{ib}= e^{-i\frac{\pi}{2}  (ib)} =e^{\frac{\pi}{2} b} t^{ib}.
\]
and 
\[
\int_0^\infty e^{-is} s^{ib}  ds=  e^{-i\frac{\pi}{2}}  e^{\frac{\pi}{2} b} \int_0^\infty e^{-t} t^{ib} dt = 
  e^{-i\frac{\pi}{2}}  e^{\frac{\pi}{2} b}\Ga(ib+1).
\]
Hence
\begin{align*}
J(\nu)&\to  \nu^{-1-ib}  h(0)\Big( e^{-i\frac{\pi}{2}}  e^{\frac{\pi}{2} b}\Ga(ib+1)+O(\nu^{-1} ) \Big)\\
&=h(0) \Big( \frac{\Ga(1+ib)} { (\nu)^{1+ib} } e^{-i\pi/2(1+ib) } +O(\nu^{-2} )\Big)
\end{align*}
as stated.
To prove the second part of the Lemma, for $\nu \to -\infty$, we set $\nu =-\mu$
\[
J(\nu)= \int_{0}^\infty e^{ i \mu r}  r ^{ib} h(r) dr  :=J(\mu)
\]
Proceeding as before  we have
\[
J(\mu)=  \mu^{-1-ib}  h(0)  \int_0^\infty e^{i  s} s^{ib}  ds
\]
We then rotate  the contour of  integration  in $\CCC$   by setting 
$s = e^{i\pi/2} t = it$, such that
\[
ds = i\,dt, \quad e^{is} = e^{-t}, \quad s^{ib}= e^{i\frac{\pi}{2}  (ib)} =e^{-\frac{\pi}{2} b} t^{ib}.
\]
and 
\[
\int_0^\infty e^{is} s^{ib}  ds=  e^{i\frac{\pi}{2}}  e^{-\frac{\pi}{2} b} \int_0^\infty e^{-t} t^{ib} dt = 
  e^{i\frac{\pi}{2}}  e^{-\frac{\pi}{2} b}\Ga(ib+1).
\]
Therefore, as $\mu \to \infty$
\begin{align*}
J(\mu)&\to  \mu^{-1-ib}  h(0) \Big(   e^{i\frac{\pi}{2}}  e^{-\frac{\pi}{2} b}\Ga(ib+1) +O(\mu^{-1}) \Big)\\
&= h(0) \Big( \frac{\Ga(1+ib)} { (-\nu)^{1+ib} } e^{i\pi/2(1+ib) } +O(\nu^{-2} )\Big)
\end{align*}
as stated.

\section{The conformal wave equation}
\lab{appendix:conformal-wave-equation}

\begin{lemma}[Conformal wave equation]
\lab{lem:appendix-conformal-wave-equation}
Let $\psit$ solve
$|\qt|^2\square_\gt\psit=|\qt|^2\Nt$. We work in the (compactified) outgoing coordinates $
(\ut,\rhot=|X+1|^{-1},\tht,\vphit_-)$.  
\begin{enumerate}
\item We set
\[
\psihat:=|X+1|\psit,\qquad \ghat:=(X+1)^{-2}\gt .
\]
Then
\beq\lab{eq:appendix-conformal-wave-equationXpos}
|\qt|^2\square_{\gt}\psihat+V(\rt,\tht)\psihat=|X+1||\qt|^2\Nt,
\qquad
V(\rt,\tht)=-\frac{2}{(X+1)^2}.
\eeq
\item In particular, in the region $X\leq -2$, we choose the conformal factor as
\[
|\qt|^2=\sqrt{\sin^2\tht |\det\Hbt|}=\frac{|X|}{|S|}\sqrt{|P(X,\tht)|}
\]
and set
\[
\psihat:=|X+1|\psit,\qquad \ghat:=|X+1|^{-2}\gt .
\]
Then
\beq\lab{eq:appendix-conformal-wave-equation}
\square_{\ghat}\psihat+V(\rt,\tht)\psihat=|X+1|^3\Nt,
\qquad
V(\rt,\tht)=-\frac{2}{|\qt|^2}.
\eeq
\end{enumerate}
\end{lemma}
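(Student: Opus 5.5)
We will verify both parts by a direct conformal computation in the compactified outgoing coordinates $(\ut,\rhot,\tht,\vphit_-)$, using the explicit form of the rescaled inverse metric from Lemma \ref{lem:exact-outgoing-inverse-metric}. The starting point is the product rule
\[
|\qt|^2\square_\gt(\Omega\psit)=\Omega\,|\qt|^2\square_\gt\psit+2\hbt^{\at\bt}\pr_\at\Omega\,\pr_\bt\psit+(|\qt|^2\square_\gt\Omega)\psit,
\qquad \Omega=|X+1|,
\]
where $|\qt|^2\square_\gt$ is understood in the divergence form \eqref{eq:defofqtsquaregt}. Since $\Omega$ depends only on $\rt$, only the components $\hbt^{\rt\bt}$ enter. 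In the outgoing coordinates these are $|\qt|^2\gt^{\rt\rt}=(\rt-\Rt)(\rt-\Rt+2/S)=S^{-2}X(X+2)$, together with $|\qt|^2\gt^{\rt\ut}$ and $|\qt|^2\gt^{\rt\vphit_-}$.

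\emph{Part 1.} I will rewrite everything in terms of $\psihat$, substituting $\psit=\Omega^{-1}\psihat$ and expanding. The cross term $2\hbt^{\rt\bt}\pr_\rt\Omega^{-1}\pr_\bt\psihat$ must combine with the first-order terms coming from $\pr_\rhot$ after the change of variables $\rt\mapsto\rhot$. The key algebraic identity I expect is that $\hbt^{\rt\rt}=S^{-2}\big((X+1)^2-1\big)$ is quadratic in $X+1$. Because of this, the radial part $\pr_\rt(\hbt^{\rt\rt}\pr_\rt\cdot)$, conjugated by $\Omega$, produces exactly $\pr_\rhot(\hbt^{\rhot\rhot}\pr_\rhot\cdot)$ plus a zeroth-order remainder. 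The remainder comes from $\pr_\rt(\hbt^{\rt\rt}\pr_\rt\Omega^{-1})\Omega$: taking the second derivative of the quadratic and dividing by $(X+1)^2$ yields the constant $-2/(X+1)^2$. The mixed terms $\hbt^{\rt\ut}$ and $\hbt^{\rt\vphit_-}$ act on $\Omega$ only through $\pr_\rt\Omega$. Since $\pr_\ut$ and $\pr_{\vphit_-}$ commute with $\Omega$, the symmetric pair $\pr_\ut(\hbt^{\ut\rt}\pr_\rt)+\pr_\rt(\hbt^{\rt\ut}\pr_\ut)$ conjugates to its $\rhot$ form with no zeroth-order term. Care is needed that the first-order terms $\pr_\rt(\hbt^{\rt\ut})\,\pr_\ut$ cancel between the two orderings; this is the standard fact that a divergence-form operator conjugated by a function of one variable only produces the second-derivative term on $\Omega$. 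The right side is $\Omega|\qt|^2\Nt$.

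\emph{Part 2.} With $|\qt|^2=\sin\tht\sqrt{|\det\Hbt|}$, Remark \ref{rem:conformalfactor} identifies $|\qt|^2\square_\gt$ with $|\qt|^2$ times the genuine Laplace–Beltrami operator. In dimension four, the conformal change $\ghat=\Omega^{-2}\gt$ gives $\square_{\ghat}(\Omega\psit)=\Omega^3\square_\gt\psit+(\text{scalar curvature terms})$. I will not invoke curvature; instead I will divide the Part 1 identity by $|\qt|^2$ and use $\square_\ghat=\Omega^{2}\cdot\Omega^{-1}$-weighted $\square_\gt$. Concretely, $\sqrt{|\det\ghat|}=\Omega^{-4}\sqrt{|\det\gt|}$ and $\ghat^{-1}=\Omega^2\gt^{-1}$, so $\square_\ghat F=\Omega^4\sqrt{|\gt|}^{-1}\pr(\Omega^{-2}\sqrt{|\gt|}\gt^{-1}\pr F)$. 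Matching this against Part 1 after multiplying by $\Omega^2/|\qt|^2$, the potential becomes $-2|\qt|^{-2}$ and the source becomes $\Omega^3\Nt$. The factor $\Omega^{-2}$ inside the divergence is exactly what makes the first-order terms match.

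\emph{Main obstacle.} The hardest step is the bookkeeping of first-order terms when passing from $\rt$ to $\rhot$ and inserting the $\Omega^{-2}$ weight. I will handle it by working throughout in divergence form, so that only the second derivative of the quadratic $\hbt^{\rt\rt}$ survives as the potential.
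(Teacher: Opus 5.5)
Your proposal is correct and is essentially the paper's argument. In Part 1, the cross term coming from $\pr_\rt\Omega$ is absorbed exactly by the extra first-order term $2\rhot^{-1}(\pr_\rt\rhot)\,\hbt^{\rt\bt}\pr_\bt$ produced by rewriting the divergence-form operator in the $\rhot$ coordinate, and the potential comes from $\hbt^{\rt\rt}=S^{-2}\big((X+1)^2-1\big)$. Part 2 follows, as in the paper, from $\sqrt{|\det\ghat|}=\Omega^{-4}\sqrt{|\det\gt|}$, which gives $\square_\ghat=\Omega^2|\qt|^{-2}\,\big(|\qt|^2\square_\gt\big)$ with the latter operator taken in $\rhot$ coordinates, after which one multiplies Part 1 by $\Omega^2/|\qt|^2$.

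Two side remarks need correcting. First, the first-order terms produced by conjugation, such as $2\Omega\,\pr_\rt(\Omega^{-1})\,\hbt^{\rt\ut}\pr_\ut$, do not cancel between the two orderings, and they do not vanish in fixed coordinates. They are cancelled only because $\frac{1}{\pr_\rt\rhot}\pr_\rt\big(\pr_\rt\rhot\,\cdot\,\big)=\Omega^2\pr_\rt\big(\Omega^{-2}\,\cdot\,\big)$, as you correctly say elsewhere. Second, the potential $-2/(X+1)^2$ comes from the constant term $-S^{-2}$ of $\hbt^{\rt\rt}$, not from its second derivative: the $S^{-2}\Omega^2$ part makes $\hbt^{\rt\rt}\pr_\rt\Omega^{-1}$ constant and so contributes nothing.
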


\begin{proof}
We recall that in outgoing coordinates $(\ut, \rt, \tht, \vphit_-)$,
\[
|\qt|^2\square_{\gt}\psit=\pr_{\at}(\hbt^{\at\bt}\pr_{\bt}\psit)+\frac{1}{\sin\tht}\pr_{\at}(\sin\tht O^{\at\bt}\pr_{\bt}\psit)=|\qt|^2\Nt.
\]
Let $\hbt^{\a'\b'}, O^{\a'\b'}$ be the metric components in the compactified outgoing coordiantes $(\ut, \rhot=|X+1|^{-1},\tht, \vphit_-)$. Then it follows that
\begin{align*}
\pr_{\a'}\hbt^{\a'\b'}\pr_{\b'}+\frac{1}{\sin\tht}\pr_{\a'}\sin\tht O^{\a'\b'}\pr_{\b'}&=\pr_{\at}\hbt^{\at\bt}\pr_{\bt}+\frac{1}{\sin\tht}\pr_{\at}\sin\tht O^{\at\bt}\pr_{\bt}+\pr_{\rhot}(\frac{\pr\rhot}{\pr\rt})\hbt^{\rt\bt}\pr_{\bt}\\
&=\pr_{\at}\hbt^{\at\bt}\pr_{\bt}+\frac{1}{\sin\tht}\pr_{\at}\sin\tht O^{\at\bt}\pr_{\bt}+2\rhot^{-1}\frac{\pr\rhot}{\pr\rt}\hbt^{\rt\bt}\pr_{\bt}.
\end{align*}
Therefore, with $|\qt|^2\square_{\gt}=\pr_{\a'}\hbt^{\a'\b'}\pr_{\b'}+\frac{1}{\sin\tht}\pr_{\a'}\sin\tht O^{\a'\b'}\pr_{\b'}$, we have
\begin{align*}
|\qt|^2\square_{\gt}\psihat&=|\qt|^2\square_{\gt}(|X+1|\psit)=|X+1||\qt|^2\square_{\gt}\psit+2\hbt^{\a'\b'}\pr_{\a'}|X+1|\pr_{\be'}\psit+(|\qt|^2\square_{\gt}|X+1|)\c\psit\\
&=|X+1||\qt|^2\Nt+2|X+1|\rhot^{-1}\frac{\pr\rhot}{\pr\rt}\hbt^{\rt\bt}\pr_{\bt}\psit-2\rhot^{-2}\hbt^{\rt\bt}\pr_{\rt}\rhot\pr_{\bt}\psit+\frac{S}{X+1}(\pr_{\rt}\hbt^{\rt\rt}-2\frac{S}{X+1}\hbt^{\rt\rt})\c\psihat\\
&=|X+1||\qt|^2\Nt+\frac{S}{X+1}(\pr_{\rt}\hbt^{\rt\rt}-2\frac{S}{X+1}\hbt^{\rt\rt})\c\psihat\\
&=|X+1||\qt|^2\Nt+\frac{2}{(X+1)^2}\psihat.
\end{align*}
This proves \eqref{eq:appendix-conformal-wave-equationXpos}.

In the case of $X\leq -2$. Since $\ghat_{\at\bt}=(X+1)^{-2}\gt_{\at\bt}$
\[
\ghat^{\at\bt}=(X+1)^2\gt^{\at\bt},
\qquad
\sqrt{|\det\ghat|}=(X+1)^{-4}\sqrt{|\det\gt|}.
\]
we have
\begin{align*}
\square_{\ghat}\psihat
&=\frac{(X+1)^4}{\sqrt{|\det\gt|}}
\pr_{\at}\Big((X+1)^{-2}\sqrt{|\det\gt|}
\gt^{\at\bt}\pr_{\bt}\psihat\Big)  \\
&=\frac{(X+1)^4}{|\qt|^2\sin\tht}
\pr_{\at}\Big((X+1)^{-2}\sin\tht |\qt|^2\gt^{\at\bt}\pr_{\bt}\psihat\Big)\\
&=\frac{(X+1)^2}{|\qt|^2\sin\tht}
\pr_{\at}\Big(\sin\tht |\qt|^2\gt^{\at\bt}\pr_{\bt}\psihat\Big)+2\frac{(X+1)^2}{|\qt|^2}\rhot^{-1}\frac{\pr\rhot}{\pr\rt}\hbt^{\rt\bt}\pr_{\bt}\psihat\\
&=\frac{|X+1|^2}{|\qt|^2}\frac{1}{\sin\tht}\pr_{\at'}\Big(\sin\tht |\qt|^2\gt^{\at'\bt'}\pr_{\bt'}\psihat\Big)=\frac{|X+1|^2}{|\qt|^2}|\qt|^2\square_{\gt}\psihat.
\end{align*}
This proves \eqref{eq:appendix-conformal-wave-equation}.
\end{proof}

\begin{remark}[Scalar curvature of the conformal metric]
With our convention for \(\square\), the conformal wave operator in dimension
four is $
\square_{\ghat}-\frac16R_{\ghat}.$
Thus the zeroth-order coefficient in Lemma
\ref{lem:appendix-conformal-wave-equation} is  the same as  $V(\rt,\tht)=-\frac16R_{\ghat}$.
\end{remark}




\end{document}